\pgfplotsset{compat=newest} 
\pgfplotsset{plot coordinates/math parser=false}
\newcommand{\bqq}{\begin{equation}}
\newcommand{\eqq}{\end{equation}}
\newcommand{\bqs}{\begin{equation*}}
\newcommand{\eqs}{\end{equation*}}
\newcommand{\C}{\mathbb{C}}
\newcommand{\bbE}{\mathbb{E}}
\newcommand{\R}{\mathbb{R}} 
\newcommand{\N}{\mathbb{N}}
\newcommand{\Z}{\mathbb{Z}}
\renewcommand{\H}{\mathscr{H}}
\newcommand{\br}{\mathbf{r}}
\newcommand{\bG}{\mathbf{G}}
\newcommand{\bW}{\mathbf{W}}
\newcommand{\G}{\mathscr{G}}
\newcommand{\md}{\mathrm{d}}
\newcommand{\mbi}{\mathbf{i}}
\renewcommand{\Re}{\mathrm{Re}}
\renewcommand{\Im}{\mathrm{Im}}
\newtheorem{lem}{Lemma}[section]
\newtheorem{thm}{Theorem}
\newtheorem{prop}[lem]{Proposition}
\numberwithin{equation}{section}
\title{The logarithmic Bramson correction for Fisher-KPP equations on the lattice $\Z$}
\author[1]{Christophe Besse, Gr\'egory Faye\footnote{Corresponding author: \texttt{gregory.faye@math.univ-toulouse.fr}}, Jean-Michel Roquejoffre \& Mingmin Zhang}
\affil[1]{\small CNRS, UMR 5219, Institut de Math\'ematiques de Toulouse, 31062 Toulouse Cedex, France}
\begin{document}
\maketitle

\begin{abstract}
We establish in this paper the logarithmic Bramson correction for Fisher-KPP equations on the lattice $\Z$. The level sets of solutions with step-like initial conditions are located at position $c_*t-\frac{3}{2\lambda_*}\ln t+O(1)$ as $t\rightarrow+\infty$  for some explicit positive constants $c_*$ and $\lambda_*$. This extends a well-known result of Bramson in the continuous setting to the discrete case using only PDE arguments. A by-product of our analysis also gives that the solutions approach the family of logarithmically shifted traveling front solutions with minimal wave speed $c_*$ uniformly on the positive integers, and that  the solutions converge along their level sets to the minimal traveling front for large times.
\end{abstract}

\noindent {\small {\bf Keywords:} discrete Fisher-KPP equations; Bramson logarithmic corrections; Green's function.}
\bigskip

\section{Introduction}

We consider the following Cauchy problem
\bqq
\left\{
\begin{split}
\frac{\md}{\md t} u_j(t) &= u_{j-1}(t)-2u_j(t)+u_{j+1}(t)+f(u_j(t)), \quad t>0, \quad j\in\Z,\\
u_j(0) & =u_j^0, \quad j\in\Z,
\end{split}
\right.
\label{KPP}
\eqq
for some nontrivial bounded initial sequence $(u_j^0)_{j\in\Z}\in\ell^\infty(\Z)$. Throughout, we let $\ell^\infty(\Z)$ denote the Banach space of bounded valued sequences indexed by $\Z$ and equipped with the norm:
\bqs
\|u\|_{\ell^\infty(\Z)}:=\underset{j\in\Z}{\sup}|u_j|\,.
\eqs
Fur future reference, we also let $\ell^q(\Z)$ with $1\leq q<+\infty$ denote the Banach space of sequences indexed by $\Z$ such that the $\ell^q(\Z)$ norm, defined for $u:\Z\to\C$ by
\bqs
\| u\|_{\ell^q(\Z)}:=\left(\sum_{j\in\Z}|u_j|^q\right)^{1/q}\,,
\eqs
is finite.  Here the reaction term $f\in\mathscr{C}^2([0,1])$ is assumed to be of Fisher-KPP type, that is
\bqs
f(0)=f(1)=0, \quad f'(0)>0, \quad f'(1)<0, \text{ and } 0<f(u)\leq f'(0)u \text{ for all } u\in(0,1).
\eqs
Without loss of generality, we extend $f$ linearly on $(-\infty,0)\cup(1,+\infty)$. Throughout, we will further assume that the nontrivial initial sequence $(u_j^0)_{j\in\Z}\in\ell^\infty(\Z)$ satisfies
\bqs
0 \leq u_j^0 \leq 1, \quad \forall j\in\Z, \quad \text{ and } \quad 
u_j^0=0, \quad j\geq J,
\eqs
for some integer $J\in\Z$. The solution $u_j(t)$ to \eqref{KPP}  is classical for $t>0$ in the sense that $(u_j)_{j\in\Z}\in \mathscr{C}^1((0,\infty),\ell^\infty(\Z)) \cap \mathscr{C}^0([0,\infty),\ell^\infty(\Z))$ and verifies $0<u_j(t)<1$ for each $t>0$ and $j\in\Z$ thanks to the strong maximum principle which applies in the discrete setting. 

The Cauchy problem \eqref{KPP} can be interpreted as the discrete version of the standard spatially-extended Fisher-KPP equation
\bqq
\left\{
\begin{split}
&\partial_t u = \partial_x^2 u  +f(u),  \quad t>0, \quad x\in\R, \\
&u(0,x) = u^0(x), ~~~\quad x\in\R,
\end{split}
\right.
\label{KPPcont}
\eqq
for some step-like initial datum $0\leq u^0\leq 1$ with $u^0\not\equiv 0$ and $u^0(x)=0$ for $x\geq A$ with $A\in\R$. Such discrete and continuous equations arise in many mathematical models 
in biology, ecology, epidemiology or genetics, see e.g. \cite{fisher,KPP37,W82,BF21}, and $u$ typically stands for the density of a population. Much is known regarding the long time asymptotics of the solutions to \eqref{KPPcont} and a famous result due to Aronson-Weinberger \cite{AW78} states that the solutions have asymptotic spreading speed $c_*=2\sqrt{f'(0)}$. That is
\bqs
\underset{t\rightarrow+\infty}{\lim}~\underset{|x| \leq ct}{\min}~ u(t,x) = 1, \quad \text{ for all } c\in(0,c_*),
\eqs
and 
\bqs
\underset{t\rightarrow+\infty}{\lim}~\underset{x \geq ct}{\max}~  u(t,x) = 0, \quad \text{ for all } c>c_*.
\eqs
In the celebrated papers \cite{Bram1,Bram2}, Bramson obtained sharp asymptotics of the location of the level sets of $u(t,x)$ through probabilistic arguments using the relationship between the classical Fisher-KPP equation \eqref{KPPcont} and branching Brownian motion. More precisely, for any $m\in(0,1)$, if $x_m(t):=\sup \left\{ x \in\R ~|~ u(t,x) \geq m\right\}$ denotes the leading edge of the solution $u$ at level $m$, then Bramson proved that
\bqq
x_m(t) = c_*t-\frac{3}{2\lambda_*} \ln t + \sigma_m +o(1), \quad \text{ as } t\rightarrow+\infty,
\label{BramsonShift}
\eqq
for some shift $\sigma_m$ depending on $m$ and the initial datum with $\lambda_*:= c_*/2$. There is a long history of works \cite{KPP37,Lau,Uchiyama} regarding the large time behavior for the solutions of the Cauchy problem \eqref{KPPcont} dating back to the pioneer work of Kolmogorov, Petrovskii and Piskunov which relate spreading properties of the solutions to the convergence towards a selected traveling front solution, often referred to as the critical pulled front. Indeed for \eqref{KPPcont}, it is well-known \cite{AW78,KPP37} that there exist traveling front solutions of the form $\varphi_c(x-ct)$ where $0<\varphi_c<1$ with limits $\varphi_c(-\infty)=1$ and $\varphi_c(+\infty)=0$ if and only if $c\geq c_*=2\sqrt{f'(0)}$. Furthermore the profiles $\varphi_c$ are decreasing, unique up to translations and verify
\bqq
\varphi_c''+c\varphi_c'+f(\varphi_c)=0.
\label{TFkppCont}
\eqq

On the other hand, the logarithmic shift in KPP type equations has been much revisited in the recent years. Most notably, we mention the recent developments \cite{BD15,BBD17,HNRR13,NRR17,NRR19} which recover and extend Bramson's results using PDE arguments, or the extensions of his work using the connection with branching Brownian motion and branching random walks \cite{ABR,Aidekon,roberts}. As postulated in \cite{EvS00}, the logarithmic shift has a universal character and has been retrieved in several other contexts: Fisher-KPP equations with nonlocal diffusion \cite{Graham,Roque22} and nonlocal interactions \cite{BHR20}, in a periodic medium \cite{HNRR16} or in more general monostable reaction-diffusion equations \cite{Giletti}. It is also intimately linked to the selection of pulled fronts, and we refer to the recent work \cite{AS20} and references therein. In this work, we show that the Bramson logarithmic correction also holds in the discrete setting of \eqref{KPP} rigorously justifying some formal asymptotic results from \cite{EvSP}. Apart from its own mathematical interest, our motivation for investigating the Bramson logarithmic shift for \eqref{KPP} also stems from our recent works \cite{BF20,BF21} on SIR epidemic models set on graphs where the discrete Fisher-KPP equation naturally arises for a specific choice of the nonlinearity $f$.  In this modeling context, a precise description of the long time dynamics of the solutions is of importance to sharply describe the spatial spread of an epidemic outbreak.

\paragraph{Main results.} From \cite{W82,HH19}, we know that the solutions $u_j(t)$ to \eqref{KPP} satisfy the following spreading property:
\bqq
\underset{t\rightarrow+\infty}{\lim}~\underset{j \leq ct}{\min}~ u_j(t) = 1, \quad \text{ for all } c\in(0,c_*),
\label{spreading1}
\eqq
and
\bqq
\underset{t\rightarrow+\infty}{\lim}~\underset{j \geq ct}{\max}~  u_j(t) = 0,  \quad \text{ for all } c>c_*.
\label{spreading0}
\eqq
Here, the spreading speed $c_*>0$ is uniquely defined as
\bqq
c_*:= \underset{\lambda>0}{\min}~  \frac{e^{\lambda}-2+e^{-\lambda}+f'(0)}{\lambda}.
\label{wavespeed}
\eqq
Let us note that there is a unique $\lambda_*>0$ where the above minimum is attained so that the couple $(c_*,\lambda_*)$ is solution of the nonlinear problem
\bqq
\left\{
\begin{split}
c_*\lambda_* &= e^{\lambda_*}-2+e^{-\lambda_*}+f'(0),\\
c_* & =e^{\lambda_*}-e^{-\lambda_*}.
\end{split}
\right.
\label{systemclamb}
\eqq
 Indeed, as in the continuous case, the spreading speed $c_*$ is also the threshold to the existence of traveling front solutions to \eqref{KPP}. More precisely, combining the results from \cite{CG02,CG2003,CC04,CFG06,ZHH91} for each $c\geq c_*$, there exists a unique (up to translation) monotone front $U_c \in \mathscr{C}^1(\R)$ solution of 
	\bqq
	\left\{
	\begin{split}
		0 &= c U_c'(x)+U_c(x+1)-2U_c(x)+U_c(x-1)+f(U_c(x)),\quad x\in\R,\\
		&U_c(-\infty)  =1, \quad U_c(+\infty)=0, \quad 0< U_c<1.
	\end{split}
	\right.
	\label{TFkpp}
	\eqq
We normalize the minimal traveling front profile $U_{c_*}$, according to its asymptotic behavior at $+\infty$. More precisely, using the result of \cite{CC04}, we normalize $U_{c_*}$ so as to satisfy asymptotically 
	\bqq
	\frac{U_{c_*}(x)}{x e^{-\lambda_* x}}\underset{x\to+\infty}{\longrightarrow} 1.
	\label{normalization}
	\eqq

As previously explained, our aim is to provide sharp asymptotics of the level sets of $u_j(t)$. We define for any $m\in(0,1)$  and for each $t\in(0,+\infty)$ the quantity 
\bqs
j_m(t):=\sup \left\{ j \in\Z ~|~ u_j(t)\geq m \right\}.
\eqs
Our main result is the following.

\begin{thm}\label{thmlog} Let $c_*>0$ and $\lambda_*>0$ be defined in \eqref{wavespeed} and \eqref{systemclamb}. For each $m\in(0,1)$ there exist $C>1$ and $T>1$, such that
\bqs
j_m(t) \in \Z \cap \left[c_*t-\frac{3}{2\lambda_*}\ln t-C,c_*t-\frac{3}{2\lambda_*}\ln t+C\right], \quad t\geq T.
\eqs
\end{thm}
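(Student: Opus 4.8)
The plan is to adapt the PDE strategy of Hamel--Nolen--Roquejoffre--Ryzhik and Nolen--Roquejoffre--Ryzhik to the lattice, using the discrete heat kernel (Green's function) in place of the Gaussian. First I would pass to the moving frame $v_j(t) := u_{j+\lceil c_*t\rceil}(t)$ (or work directly with the continuous shift $c_*t$, carrying the integer-part discrepancy as an $O(1)$ term), and linearize: since $0 < f(u) \le f'(0)u$, the function $v$ is a subsolution of the linear equation $\partial_t w_j = w_{j-1} - 2w_j + w_{j+1} - c_*(\text{discrete drift}) + f'(0) w_j$. Conjugating by the exponential weight $e^{-\lambda_* j}$ turns this, via the dispersion relation \eqref{systemclamb}, into a pure discrete heat equation $\partial_t z_j = z_{j-1} - 2z_j + z_{j+1}$ (up to the lattice analogue of the second-order expansion of $e^{\lambda_*} - 2 + e^{-\lambda_*}$ around $\lambda_*$, which contributes the effective diffusivity). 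The upper bound $j_m(t) \le c_*t - \frac{3}{2\lambda_*}\ln t + C$ then follows from the standard Gaussian-type upper bound for the discrete heat kernel together with the $t^{-3/2}$ decay one gets by requiring $w$ to vanish near $j=0$ — the extra $3/2$ (rather than $1/2$) power is produced exactly as in the continuum by the Dirichlet-type absorption at the front, and it is transparent once one has sharp local central limit / heat-kernel estimates on $\Z$.

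The lower bound is the harder half and requires genuine two-sided control. Here I would build an explicit subsolution of the form $\underline v_j(t) = e^{-\lambda_* \xi_j(t)}\big(\xi_j(t) + \text{correction}\big) \cdot (\text{cutoff})$, where $\xi_j(t) = j - c_*t + \frac{3}{2\lambda_*}\ln t$, mimicking the known behavior \eqref{normalization} of the minimal front $U_{c_*}$; one checks it is a subsolution of the full nonlinear equation for $\xi$ in a window $[K, \sqrt{t}]$ using $f(u) \ge f'(0)u - Ku^2$ near $0$, and combines it with the spreading property \eqref{spreading1} to control the region $\xi_j(t) \le K$. The Green's function is used to show that after a finite time the solution $u$ dominates a shifted copy of this subsolution (the mass injected by spreading into the linear region, propagated forward by the discrete heat kernel, is bounded below by the required $t^{-3/2}e^{-\lambda_* \xi}$ profile). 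Comparison then yields $u_j(t) \ge \underline v_j(t)$, and evaluating at the level $m$ gives $j_m(t) \ge c_*t - \frac{3}{2\lambda_*}\ln t - C$.

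The main obstacle I anticipate is obtaining sufficiently sharp pointwise estimates on the discrete Green's function $\mathbf{G}$ of the linearized lattice operator — both Gaussian upper bounds and matching lower bounds on the relevant parabolic scale, including the behavior near the moving Dirichlet boundary at $\xi \approx 0$. In the continuum these are classical heat-kernel facts; on $\Z$ one must work a bit harder (Fourier representation, saddle-point / steepest-descent analysis of $\int_{-\pi}^{\pi} e^{ij\theta + t(\cos\theta - 1)}\,d\theta$, with careful tracking of the correction terms because the lattice Laplacian's symbol $2\cos\theta - 2$ is not exactly quadratic). A secondary technical point is that the traveling front $U_{c_*}$ has the anomalous $x e^{-\lambda_* x}$ (rather than pure exponential) decay at $+\infty$ because $\lambda_*$ is a double root of the characteristic equation; the subsolution ansatz must respect this, and the algebra of verifying the subsolution inequality for the discrete operator — expanding $U_{c_*}(\xi \pm 1)$ and collecting the polynomially-corrected exponential terms — is where most of the routine-but-delicate computation lives. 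Once the Green's function bounds are in hand, the rest is a fairly mechanical sub/supersolution argument of the now-standard type.
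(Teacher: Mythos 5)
Your overall strategy (exponential conjugation, sharp Green's function estimates, sub/supersolutions built from a linear solution with ``Dirichlet-at-the-front'' behavior, then comparison with shifted copies of $U_{c_*}$) is indeed the route the paper takes, but two of your key reductions do not survive on the lattice, and they are precisely where the real work lies. First, the conjugation $r_j=e^{-\lambda_*(j-c_*t)}w_j$ does \emph{not} produce a pure discrete heat equation: using \eqref{systemclamb} one gets
$\frac{\md}{\md t}w_j=e^{\lambda_*}(w_{j-1}-2w_j+w_{j+1})-c_*(w_{j+1}-w_j)$,
and the drift term $-c_*(w_{j+1}-w_j)$ cannot be removed, because there is no chain rule that would let you absorb it into a shift of indices — the frame change $j\mapsto j+\lceil c_*t\rceil$ is not even differentiable in $t$ and generates no drift between its jump times. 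Consequently the kernel you propose to analyze by steepest descent, $\int_{-\pi}^{\pi}e^{\mbi j\theta+t(\cos\theta-1)}\,\md\theta$, is the kernel of the wrong operator; the correct Green's function is centered at $j=c_*t$ and its expansion to order $t^{-3/2}$ (with a nontrivial cubic correction, Proposition~\ref{proprefined}) requires either a careful saddle-point analysis of the full symbol $e^{\lambda_*}(e^{-\mbi\xi}-2+e^{\mbi\xi})-c_*(e^{-\mbi\xi}-1)$ or, as in the paper, a resolvent/Laplace-inversion argument with contours adapted to the ratio $(j-c_*t)/t$.

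Second, the ``Dirichlet-type absorption at the front'' that produces the $t^{-3/2}$ factor in the continuum is not directly available on $\Z$: the front position $c_*t$ is generically not a lattice point, so one cannot impose a moving Dirichlet condition, and the odd-initial-data surrogate $w_j(t)=\sum_\ell(\G_{j-\ell}-\G_{j+\ell})w_\ell^0$ is only known to be positive (and of size $(j-c_*t)t^{-3/2}$) in the diffusive window $1\le j-c_*t\lesssim\sqrt t$. Your barriers therefore need additional ingredients that your sketch does not anticipate: a small cosine perturbation of size $(1+t)^{-3/2+\beta}$ to dominate the possibly negative values of $w_j$ for $-t^\delta\le j-c_*t\le 0$, and (for the supersolution) an exponential tail correction beyond the scale $\sqrt t$, where the sign of $w_j$ is not controlled. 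Checking the supersolution inequality for the discrete operator across the cutoff interfaces of these corrections is where most of the delicate computation sits; without them the comparison argument you describe cannot be closed on either side of $j-c_*t\approx 0$.
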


The above Theorem~\ref{thmlog} is closely related to some results obtained on discrete-time branching random walks \cite{ABR,Aidekon} where the Bramson logarithmic correction is known to greater precision, that is up to $o(1)$ error as in \eqref{BramsonShift}.  As explained in \cite[Appendix A]{Graham}, the connection between continuous in time branching random walks and the Fisher-KPP equation  with nonlocal diffusion can be established when the reaction $f$ takes a special form and the kernel defining the nonlocal diffusion is a Borel probability measure. Let us finally insist on the fact that the Bramson logarithmic shift proved in \cite[Theorem 1.1]{Graham} for nonlocal Fisher-KPP equations includes the  diffusion measure $\delta_1-2\delta_0+\delta_{-1}$ studied here. However, our main Theorem~\ref{thmlog} allows to handle slightly more general initial conditions than the Heaviside step data considered in \cite{Graham}. Most notably, the core of our proof greatly differs from \cite{Graham} where key  estimates for the long time behavior of the linear Dirichlet problem are proved using probabilistic arguments via a Feynman-Kac representation. 

Combining the above Theorem~\ref{thmlog} with the spreading property \eqref{spreading1}, we get that
\bqq
\underset{t\rightarrow+\infty}{\liminf}\left(~\underset{0\leq j \leq c_*t-\frac{3}{2\lambda_*}\ln t-C}{\min}~ u_j(t) \right) \longrightarrow 1 \text{ as } C\rightarrow+\infty,
\label{spreading1refined}
\eqq
since $u_j(t)\rightarrow 1$ as $t\rightarrow+\infty$ locally uniformly in $j$. Furthermore, using that $\underset{j\rightarrow+\infty}{\lim}~ u_j(t)= 0$ for each $t\geq0$, we also deduce that
\bqq
\underset{t\rightarrow+\infty}{\limsup}\left(~\underset{ j \geq c_*t-\frac{3}{2\lambda_*}\ln t+C}{\max}~ u_j(t) \right) \longrightarrow 0 \text{ as } C\rightarrow+\infty.
\label{spreading2refined}
\eqq
The limits \eqref{spreading1refined} and \eqref{spreading2refined} show that the region in the lattice where the solution $u_j(t)$ is bounded away from $0$ and $1$ is located around the position $c_*t-\frac{3}{2\lambda_*}\ln t$ and has a bounded width in the limit $t\rightarrow+\infty$.

Refining the arguments of Theorem~\ref{thmlog}, one can actually prove that the solution $u_j(t)$ approaches the family of shifted traveling fronts $U_{c_*}\left(j-c_*t+\frac{3}{2\lambda_*}\ln t+\zeta\right)$ uniformly in $j\geq0$. Our second main result reads as follows.

\begin{thm}\label{thmconv} Let $c_*>0$ and $\lambda_*>0$ be defined in \eqref{wavespeed} and \eqref{systemclamb}. There exist a constant $C>0$ and a function $\zeta:(0,+\infty)\rightarrow\R$ with $|\zeta(t)|\leq C$ such that
\bqq
\underset{t\rightarrow+\infty}{\lim}~  \underset{j \geq 0}{\sup} \left| u_j(t)- U_{c_*}\left(j-c_*t+\frac{3}{2\lambda_*}\ln t+\zeta(t)\right) \right|  =0.
\label{convfront}
\eqq
Furthermore, for every $m\in(0,1)$ and every sequence $(t_n,j_n)_{n\in\N}$ such that $t_n\to+\infty$ as $n\to+\infty$ and $j_n= j_m(t_n)$ for all $n\in\N$, there holds
	$$u_{j+j_n}(t+t_n)\to U_{c_*}(j-c_*t+U^{-1}_{c_*}(m))~~~\text{locally uniformly in}~(t,j)\in\R\times\Z,$$
	where $U^{-1}_{c_*}$ denotes the inverse of the function $U_{c_*}$.
\end{thm}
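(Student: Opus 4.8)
The plan is to bootstrap from Theorem~\ref{thmlog}: the bounded-width information on the level sets already confines the solution, and the task is to upgrade convergence of level-set positions to convergence of the whole profile to the critical front. First I would introduce the moving-frame coordinate $\xi = j - c_*t + \frac{3}{2\lambda_*}\ln t$ and look at $v_j(t) := u_j(t)$ evaluated in this frame. The equation for $v$ is a discrete reaction-diffusion equation with an $O(1/t)$ drift coming from the logarithmic shift, which vanishes as $t\to+\infty$. The key structural fact I would exploit is that any bounded entire (in time, defined for all $t\in\R$) solution of the limiting equation $0 = c_*w'(x) + w(x+1) - 2w(x) + w(x-1) + f(w(x))$ which is trapped between $0$ and $1$, is monotone, and has the correct behavior at $\pm\infty$ must be a translate of $U_{c_*}$ — this is a discrete Liouville-type classification that follows from the sliding method together with the uniqueness statement recalled around \eqref{TFkpp}. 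So the strategy is: (i) show the $\omega$-limit set of $u$ in the moving frame, in the topology of local uniform convergence, consists only of translates of $U_{c_*}$; (ii) promote local convergence to uniform convergence on $j\ge 0$ by controlling the tails; (iii) extract the shift function $\zeta(t)$ and prove it stays bounded, using Theorem~\ref{thmlog}.

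For step (i), I would take a sequence $t_n\to+\infty$ and, using parabolic-type interior estimates for the lattice equation (the semigroup generated by the discrete Laplacian is analytic on $\ell^\infty(\Z)$, giving uniform bounds on $\frac{d}{dt}u_j$ and on discrete spatial differences), pass to a locally uniform limit $u_\infty(t,j)$ which solves the limiting equation on $\R\times\Z$, extended to a function $w$ of a continuous variable via the traveling-wave ansatz in the standard way. The Bramson-type bound from Theorem~\ref{thmlog} guarantees $w$ is neither $0$ nor $1$ identically: indeed it forces the level set $j_m$ of any limit to sit at a bounded location, so $w$ interpolates between $1$ and $0$. The subtle points are (a) ruling out that $w$ hits $0$ or $1$ at an interior point, which the strong maximum principle handles, and (b) establishing monotonicity of $w$ in $j$; for (b) I would invoke a steepness/sliding argument, using that the step-like initial data and the comparison principle propagate a form of monotonicity, or alternatively that $w$ must be a front by the classification results cited. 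Once $w = U_{c_*}(\cdot + \sigma)$ for some $\sigma\in\R$, the limit is identified.

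For step (ii), the uniform-in-$j$ statement, I would need upper and lower barriers. On the right tail ($j$ large), a supersolution of the form $A(t) (j - c_*t + \frac{3}{2\lambda_*}\ln t) e^{-\lambda_*(j - c_*t + \frac{3}{2\lambda_*}\ln t)}$ — i.e., the shape of $U_{c_*}$ at $+\infty$ from \eqref{normalization} — dominates $u$ by the comparison principle once the constant and the shift are tuned using Theorem~\ref{thmlog}; a matching subsolution of similar form but multiplied by $(1 - B e^{-\delta j})$ type corrections gives the lower bound, which is where most of the technical work lies. These barriers, built from the front itself, force $\sup_{j\ge 0}|u_j(t) - U_{c_*}(j - c_*t + \frac{3}{2\lambda_*}\ln t + \zeta(t))|$ to be small outside a bounded window in $j$, and on the bounded window local uniform convergence from step (i) finishes the argument. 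Finally, for the last assertion, given $t_n\to+\infty$ and $j_n = j_m(t_n)$, the same compactness argument applied to $u_{j+j_n}(t+t_n)$ yields a limit that is a translate of $U_{c_*}$; the normalization $u_{j_n}(t_n)\ge m > u_{j_n+1}(t_n)$ pins the translate so that its value at $0$ is $m$ up to the lattice discretization, forcing the shift to be exactly $U_{c_*}^{-1}(m)$ in the limit. The main obstacle I anticipate is step (i)(b)–(ii): proving that every element of the $\omega$-limit set is monotone and constructing the sharp matching sub/supersolutions near the leading edge, since the discrete setting lacks the exact self-similar structure available in the continuous case and one must work with the precise $x e^{-\lambda_* x}$ asymptotics of $U_{c_*}$ rather than a Gaussian.
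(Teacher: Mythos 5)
Your overall architecture (compactness to an entire solution, a Liouville-type classification, reduction of the $\sup_{j\ge 0}$ to a bounded window, and pinning the shift by the normalization $u_{j_n}(t_n)=m$ for the last assertion) matches the paper's, but there is a genuine gap at the classification step. You propose to identify the entire limit solution as a translate of $U_{c_*}$ via monotonicity (sliding) plus uniqueness of monotone fronts. That is not enough: a bounded entire solution of the lattice equation on $\R\times\Z$ connecting $1$ to $0$, even if monotone in $j$ and with transition zone moving at speed $c_*$, is not a priori a traveling front — the uniqueness statement around \eqref{TFkpp} classifies \emph{profiles} solving the wave ODE, not entire solutions of the evolution problem. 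The hypothesis that actually makes a Liouville theorem applicable is that the entire solution is \emph{trapped between two finite translates of the same minimal front}, i.e. $U_{c_*}(j+\hat a)\le (u_\infty)_{j+\lfloor c_*t\rfloor}(t)\le U_{c_*}(j+\hat b)$ for all $t$ and $j\ge 1$. This trapping does not follow from the statement of Theorem~\ref{thmlog} (which only locates level sets); it comes from the two-sided estimate \eqref{5-conclusion}, $(1-\varepsilon)U_{c_*}(\cdot+\hat a)\le u_j(t)\le(1+\varepsilon)U_{c_*}(\cdot+\hat b)$ uniformly in $1\le j-c_*t+\frac{3}{2\lambda_*}\ln t\le t^\eta$, which is a byproduct of the \emph{proof} of Theorem~\ref{thmlog} (Steps 1--2 of Section 5.2). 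With that input, the paper invokes the discrete Liouville result \cite[Proposition~3.3]{GH2006} and concludes. Your plan, which bootstraps only from the level-set statement, would stall here unless you re-derive the sandwich estimate.

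Two further points of comparison. First, for the uniformity in $j\ge 0$ you propose building sharp sub/supersolutions with the $xe^{-\lambda_*x}$ tail of $U_{c_*}$; the paper avoids this entirely by using the refined spreading limits \eqref{spreading1refined}--\eqref{spreading2refined} together with $U_{c_*}(-\infty)=1$, $U_{c_*}(+\infty)=0$ to reduce the supremum over $j\ge 0$ to a maximum over a bounded window $|j|\le L$, where local uniform convergence suffices. Your barrier route is not wrong but is substantially more work than needed. Second, the paper's argument for \eqref{convfront} is by contradiction (negating the existence of a bounded $\zeta(t)$ along some sequence $t_n$), which sidesteps having to construct $\zeta(t)$ explicitly; your direct $\omega$-limit formulation can be made to work but you would still need the trapping estimate above at its core.
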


\paragraph{Strategy of proof.} The strategy of our proof is inspired by the PDE arguments that were developed recently in the continuous setting \cite{HNRR13}.  The take home message from \cite{HNRR13} is that the long time dynamics of \eqref{KPPcont} can be read out from the solutions of the linearized problem around the unstable state $u=0$ with a Dirichlet boundary condition at $x=c_*t$. Indeed, these solutions can be compared to the critical front $\varphi_{c_*}$, appropriately shifted to the position $c_*t-\frac{3}{2\lambda_*}\ln t$, in the diffusive regime of the linearized equation, that is, for all $x\in[c_*t,c_*t+\vartheta \sqrt{t}]$ and for any $\vartheta>0$. One of the main difficulty in our analysis comes from the discrete nature of our equation such that the above argument has to be largely adapted. Our starting point is also the linearized equation around the unstable state $0$. We first perform,  in the linearized equation of $r_j(t)$, the change of variable $r_j(t)=e^{-\lambda_*(j-c_*t)}w_j(t)$ for some new sequence $(w_j(t))_{j\in\Z}$ which solves 
\bqs
\frac{\md}{\md t} w_j(t) =e^{\lambda_*} \left(w_{j-1}(t)-2w_j(t)+w_{j+1}(t)\right)-c_*(w_{j+1}(t)-w_j(t)), \quad t>0, \quad j\in\Z.
\eqs
Solutions of the above equation starting from some initial sequence $(w_j^0)_{j\in\Z}\in\ell^\infty(\Z)$ are given through the representation formula
\bqs 
w_j(t)= \sum_{\ell \in \Z} \mathscr{G}_{j-\ell}(t) w_\ell^0, \quad t>0, \quad j\in \Z,
\eqs
where $(\mathscr{G}_j(t))_{j\in\Z}$ stands for the temporal Green's function (see \eqref{temporalGreen} below for a precise definition). The key step of the analysis is to obtain sharp pointwise estimates on the temporal Green's function. We establish that $\mathscr{G}_j(t)$ behaves like a Gaussian profile centered at $j=c_*t$ for all $|j-c_*t| \leq \vartheta t^\alpha$ for any $\vartheta>0$, $\alpha\in(0,1)$ and $t>1$. Actually, we prove a sharper result, which is of independent interest, by showing that the temporal Green's function can be decomposed as a universal Gaussian profile plus some reminder term which can be bounded and which also satisfies a generalized Gaussian estimate. We refer to Proposition~\ref{proprefined} for a precise statement. Let us note that similar generalized  Gaussian bounds have been recently derived for discrete convolution powers \cite{CF20,DSC14,Coeuret22} and are reminiscent of so-called local limit theorems in probability theory \cite{Petrov75}.  Owing to this sharp estimate on the temporal Green's function, we manage in a second step to construct appropriate sub and super solutions which allow us to precisely locate any level sets of the solutions to \eqref{KPP}. For this part, we take benefit from the new results obtained by one of the authors~\cite{Roque22} in the continuous setting with nonlocal diffusion. Let us finally emphasize that our analysis of the temporal Green's function does not rely on Fourier analysis but rather on a {\em spatial } point of view through the Laplace inversion formula by defining each $\mathscr{G}_j(t)$ as
\bqs
\forall \, t>0, \quad \forall j\in\Z, \, \quad \mathscr{G}_j(t) = \frac{1}{2\pi \mbi}\int_\Gamma e^{\nu t} \bG_j(\nu)\md \nu,
\eqs
where $\Gamma\subset\C$ is some well-chosen contour in the complex plane and $(\bG_j(\nu))_{j\in\Z}\in\ell^2(\Z)$ is the associated spatial Green's function (see \eqref{spatialGreen} below for a precise definition). 

Compared to Bramson's result in the continuous setting, our main Theorem~\ref{thmlog} only captures the logarithmic correction up to some $O(1)$ terms as $t\rightarrow+\infty$. We expect that our logarithmic expansion could be refined along the lines of \cite{NRR17,NRR19} with convergence to a single traveling front solution. We leave it for a future work.

\paragraph{Outline.} The rest of the paper is mainly dedicated to the proof of Theorem~\ref{thmlog}. For expository reasons,
instead of focusing directly on it, we first revisit in Section~\ref{seccontinuous} the continuous case and provide an alternate proof of the Bramson's logarithmic correction up to some $O(1)$ terms as $t\rightarrow+\infty$. Compared to \cite{HNRR13}, the novelty of this alternate proof, which takes its inspiration from the nonlocal continuous case \cite{Roque22}, is to solely focus on the linearized problem around the unstable state $u=0$ with a Dirichlet boundary condition at $x=c_*t$ without relying on self-similar variables. Indeed, the use of self-similar variables for the discrete Fisher-KPP equation is prohibited. Then, in Section~\ref{seclinear}, we study the linearized problem for the discrete Fisher-KPP equation and, in a first step, we prove a generalized Gaussian bound for the associated temporal Green's function. In a second step, we provide in Section~\ref{secRefined} a sharp asymptotic expansion for the temporal Green's function in the sub-linear regime which will be crucial to the proof of Theorem~\ref{thmlog}. Finally, in Section~\ref{secproofthmlog} we provide the lower and upper bounds in our main Theorem~\ref{thmlog} following the strategy presented in the continuous case. In the last Section~\ref{secconv}, we study the convergence to the logarithmically shifted minimal front and prove Theorem~\ref{thmconv}.

\paragraph{Notations.} Throughout the manuscript, we will use the notation $f\lesssim g$ whenever $f \leq C g$ for some universal constant $C > 0$ independent of $t$ and $x$ or $j$. Furthermore, for two functions $f(t)$ and $g(t)$, we use the notation $f(t)\ll g(t)$ whenever $\frac{f(t)}{g(t)}\rightarrow0$ as $t\rightarrow+\infty$, while we use $f(t) \sim g(t)$ whenever $\frac{f(t)}{g(t)}\rightarrow C$ as $t\rightarrow+\infty$ for some universal constant $C > 0$.

\section{An alternative proof of the logarithmic Bramson correction in the continuous case}\label{seccontinuous}

In this section, we revisit \cite{HNRR13} and propose an alternative proof of the logarithmic Bramson correction in the continuous case. More precisely, the purpose of this section is to prove the following result.

\begin{prop}\label{propCont}
Let $u$ be the solution of the Cauchy problem~\eqref{KPPcont} starting from some step-like initial datum $0\leq u^0\leq 1$ with $u^0\not\equiv 0$ and $u^0(x)=0$ for $x\geq A$ with $A\in\R$. Then, for any $\varepsilon>0$ very small, there exist $T_0>1$ large enough, $-\infty<\hat b<\hat a<+\infty$ and $\eta\in(0,1/2)$ such that
\begin{equation}
\label{1-two TWs}
(1-\varepsilon)\varphi_{c_*}\left(x-c_*t+\frac{3}{2\lambda_*}\ln t+\hat a\right)\le u(t,x)\le (1+\varepsilon) \varphi_{c_*}\left(x-c_*t+\frac{3}{2\lambda_*}\ln t+\hat b\right),
\end{equation}
uniformly in $1\le x-c_*t +\frac{3}{2\lambda_*}\ln t\le t^\eta$ for all $t\geq T_0$, where $c_*=2\sqrt{f'(0)}$, $\lambda_*=c_*/2$ and the minimal traveling front $\varphi_{c_*}$ solution of \eqref{TFkppCont} is normalized such that 
\begin{equation*}
\frac{\varphi_{c_*}(x)}{xe^{-\lambda_*x}}\underset{x\to+\infty}{\longrightarrow} 1\,.
\end{equation*}
\end{prop}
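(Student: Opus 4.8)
The plan is to recover the two-sided bound \eqref{1-two TWs} from a careful analysis of the linearized Dirichlet problem posed on the moving half-line $\{x > c_*t\}$, following the philosophy of \cite{HNRR13,Roque22} but bypassing self-similar variables. First I would fix a large constant $K>0$ and work on the shifted domain by writing $\xi := x - c_*t + \frac{3}{2\lambda_*}\ln t$; the goal is the control stated for $1 \le \xi \le t^\eta$. The starting point is the observation that, because $0 < f(u) \le f'(0)u$, the solution $u$ is a subsolution of the linear equation $\partial_t v = \partial_x^2 v + f'(0) v$, so $u(t,x) \le v(t,x)$ where $v$ solves the linear Cauchy problem with the same step-like data; after the exponential conjugation $v = e^{-\lambda_*(x - c_*t)} w$ this becomes the heat equation for $w$, whose solution is an explicit Gaussian convolution. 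Evaluating this representation in the regime $x - c_*t \sim \frac{3}{2\lambda_*}\ln t$ produces, via a standard Laplace/stationary-phase estimate on the Gaussian, the algebraic-times-exponential profile $\xi e^{-\lambda_* \xi} t^{-3/2} \cdot t^{3/2} = \xi e^{-\lambda_*\xi}$ up to a multiplicative constant, which (after matching constants through the normalization of $\varphi_{c_*}$) yields the upper bound in \eqref{1-two TWs} for $\xi$ in the stated window, at least after absorbing lower-order terms into the shift $\hat b$.

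For the lower bound I would proceed in two stages. First, the spreading property \eqref{spreading1}–\eqref{spreading0} (Aronson–Weinberger) guarantees that at some large time $t_0$ the solution is already close to $1$ on an interval of the form $[c_*t_0 - \delta t_0, c_* t_0 - 2]$ and is bounded below by a small positive constant at a single point slightly ahead of $c_*t_0$; this gives us a nondegenerate "bump" to start a sub-solution argument. Second, I would run the linear equation from this bump: since the KPP nonlinearity is bounded below near $0$ in a way compatible with $f'(0)u$ only from above, the correct device is to use a sub-solution of the form $(1-\varepsilon)\varphi_{c_*}(\xi + \hat a)$ directly for the nonlinear equation, exploiting that $\varphi_{c_*}$ solves \eqref{TFkppCont} and that $(1-\varepsilon)\varphi_{c_*}$ is a (strict) sub-solution because $f((1-\varepsilon)s)\ge (1-\varepsilon)f(s)$ by concavity-type bounds that follow from $f \in \mathscr{C}^2$ and $f(u)\le f'(0)u$ — more precisely, one checks $f((1-\varepsilon)\varphi)-(1-\varepsilon)f(\varphi)\ge 0$ for $\varphi$ small, which is exactly the relevant range once $\xi \ge 1$. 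Comparison on the parabolic region $\{1 \le \xi \le t^\eta\}$, with the boundary ordering at $\xi = 1$ supplied by the bump propagation and at $\xi = t^\eta$ supplied by the (exponentially small there) upper bound, then closes the lower estimate, after choosing $\hat a$ large enough and $\eta$ small enough that the parabolic boundary terms are controlled.

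The main obstacle, and the step requiring the most care, is the matching at the inner boundary $\xi = 1$ and the propagation of the linear-regime information into genuinely nonlinear comparison — equivalently, showing that the Gaussian estimate for the linearized problem with the Dirichlet condition at $x = c_*t$ is sharp enough (both above and below) to pin the constant in front of $\xi e^{-\lambda_*\xi}$ and hence the logarithmic shift. Concretely, one needs the linear solution restricted to $\{x>c_*t\}$ (not merely the free-space solution) to behave like $c\,\xi e^{-\lambda_*\xi} (1+o(1))$ as $t\to\infty$ uniformly for $1\le \xi\le t^\eta$; the Dirichlet condition kills the $O(1)$ (non-$\xi$) part of the heat profile and is precisely what forces the $\frac{3}{2\lambda_*}\ln t$ rather than $\frac{1}{2\lambda_*}\ln t$ correction. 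I would obtain this by the reflection/Duhamel representation for the half-line heat equation, estimating $\int_0^\infty \big(G(\xi - y, t) - G(\xi + y, t)\big) y\,e^{-\lambda_* y}\,\mathrm{d}y$ and showing the difference of Gaussians contributes the extra factor of $t^{-1}$; the remaining error terms from the nonlinearity and from the initial bump are then lower order, and the monotonicity/uniqueness of $\varphi_{c_*}$ lets us convert the two-sided algebraic bound into the two-sided front bound \eqref{1-two TWs}. The discreteness issues flagged in the introduction do not arise here since this proposition is purely in the continuous setting; its role is to be the template adapted later with the Green's function $\mathscr{G}_j(t)$ in place of the Gaussian.
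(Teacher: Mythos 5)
Your high-level plan (read the $\frac{3}{2\lambda_*}\ln t$ correction off the linearized problem with a Dirichlet-type condition at $x=c_*t$, then match with the normalized front) is indeed the paper's philosophy, but two of your key steps would fail as written.

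First, the upper bound. The comparison $u\le v$ with the \emph{free-space} linear solution, after the conjugation $v=e^{-\lambda_*(x-c_*t)}w$, leaves $w$ solving the heat equation from an integrable (but not odd) initial datum; its large-time behaviour in the diffusive window is $c\,t^{-1/2}e^{-(x-c_*t)^2/4t}$, \emph{not} $(x-c_*t)\,t^{-3/2}e^{-(x-c_*t)^2/4t}$. This only locates the level sets at $c_*t-\frac{1}{2\lambda_*}\ln t$ and is therefore too weak for \eqref{1-two TWs}. The extra factor $(x-c_*t)/t$ comes only from odd data (equivalently the half-line Dirichlet problem), and the obstruction you cannot bypass is that $v$ does \emph{not} vanish at $x=c_*t$, so $v$ is not dominated by $\overline\xi(t)w(t,x)$ there: $w<0$ for $x<c_*t$ while $v>0$. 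The missing idea is the Fife--McLeod-type corrector used in the paper: one adds $\frac{1}{(1+t)^{3/2-\beta}}\cos\bigl(\frac{x-c_*t}{(1+t)^\alpha}\bigr)$ on $-t^\delta\le x-c_*t\le\frac{3\pi}{2}(1+t)^\alpha$ and lets $\overline\xi(t)$ increase slowly, with exponents tuned as in \eqref{1-parameters}, so that the sum is a genuine supersolution on $\{x-c_*t\ge -t^\delta\}$ whose boundary value at $x-c_*t=-t^\delta$ (of order $t^{-3/2+\beta}$) dominates $v\le e^{-\lambda_*t^\delta+\lambda_*c_*T_0}$ there. Without this device your Dirichlet heuristic in the third paragraph never becomes a comparison argument.

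Second, the lower bound. The boundary ordering at $\xi=t^\eta$ cannot be ``supplied by the (exponentially small there) upper bound'': an upper bound on $u$ gives no lower bound, and both $u$ and $(1-\varepsilon)\varphi_{c_*}(\xi+\hat a)$ are of the same exponentially small order $\xi e^{-\lambda_*\xi}$ there, so the constant matters. What is needed is a separate \emph{lower} barrier for $v$, namely $\kappa\max\bigl(0,\underline\xi(t)w(t,x)\bigr)$ with $\underline\xi$ solving $\underline\xi'=-CM\underline\xi^2(1+t)^{-3/2}$ to absorb the quadratic defect $f(s)-f'(0)s\ge -Ms^2$; this pins $t^{3/2}v\gtrsim \xi e^{-0}$ at $\xi=t^\eta$ and lets one choose the shift $a$ so that $\phi\le \kappa t^{3/2}\underline v$ on that boundary. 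Relatedly, your ``bump'' step is based on a false premise: by the very result being proved, $u(t,c_*t-2)\to 0$, so the solution is not close to $1$ on $[c_*t_0-\delta t_0,c_*t_0-2]$. Finally, $(1-\varepsilon)\varphi_{c_*}$ being a subsolution requires $f((1-\varepsilon)s)\ge(1-\varepsilon)f(s)$, which the KPP hypothesis $0<f\le f'(0)u$ does not give (its sign near $0$ is governed by $f''(0)$); the paper instead compares $t^{3/2}v$ with $e^{\lambda_*\xi}\varphi_{c_*}(\xi+a)$ in the window $|\xi|\le t^\eta$, where the only defect is the harmless $\frac{3}{2\lambda_*t}\varphi_{c_*}'=O(t^{-(1-\eta)})$, and closes the estimate with the auxiliary supersolution $t^{-\lambda}\cos((x-c_*t)/t^\gamma)$ for $(V-\phi)^-$.
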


A direct consequence of the above proposition is that for each $m\in(0,1)$, there exists $C> 0$  such that
\bqs
x_m(t)\in \left[c_*t-\frac{3}{2\lambda_*}\ln t-C,c_*t-\frac{3}{2\lambda_*}\ln t+C\right], \text{ as } t\rightarrow+\infty,
\eqs
where $x_m(t)$ denotes the leading edge of the solution $u$ at level $m$. Moreover, it is worth to note that, based on \eqref{1-two TWs},  the argument for the large time convergence of the solutions to the family of shifted traveling fronts as well as of the solutions along their level sets to the profile of the minimal traveling front in Theorem 1.2 of \cite{HNRR13} can be simplified by applying this time directly the Liouville type result Theorem 3.5 of \cite{BH2007} instead of using Lemma 4.1 in \cite{HNRR13}.

\subsection{Preliminaries}\label{1-sec-2}

In a first step, we establish upper and lower barriers for a variant $v$ (see \eqref{1-v} below) of the solution $u$, by using the solution for the linear equation \eqref{1-linear-eqn-w} for $t$ sufficiently large and $x\in\R$ ahead of the position $x-c_*t \approx 0$ albeit with a small shift. These upper and lower bounds will play a crucial role in  showing a refined estimate on the expansion of the level sets of $u$ in the sequel.

Set
\begin{equation}\label{1-v}
	v(t,x)=e^{\lambda_*(x-c_*t)}u(t,x).
\end{equation} $$$$ 
This leads to
\bqq
\label{1-v-equation}
\left\{
\begin{split}
	&v_t -v_{xx}+c_*v_x+R(t,x;v)=0, \quad t>0,~x\in\R,\\
	&v(0,x)  =v^0(x)=e^{\lambda_*x}u^0(x),~~~~ \quad x\in\R,
\end{split}
\right.
\eqq
in which the nonlinear term $R(t,x;s)$ has the following  precise form
\begin{equation}
	\label{1-R term}
	R(t,x;s):=f'(0)s-e^{\lambda_*(x-c_*t)}f\left(e^{-\lambda_*(x-c_*t)}s\right)\ge 0
\end{equation}
 for $s\in\R$ and for $(t,x)\in(0,+\infty)\times\R$, due to the assumption on $f$ that $0<f(s)\le f'(0)s$ for $s\in(0,1)$ and due to the linear extension of $f$ in $(-\infty,0)\cup(1,+\infty)$. One then has that $R(t,x;v)\ge 0$ for $(t,x)\in (0,+\infty)\times\R$. Note that the change of function \eqref{1-v} is motivated by our forthcoming study of the discrete case where it is not possible to write the equation in a moving frame due to the discrete nature of the problem.
 
 Let us now consider the linear equation
\bqq
(\partial_t-\mathcal{L})w:=w_t-w_{xx}+c_*w_x=0, \quad t>0,~x\in\R.
\label{1-linear-eqn-w}
\eqq
It is easy to see that the function $p(t,y)=w(t,y+c_*t)$ satisfies $p_t-p_{yy}=0$ for $t>0$, $y\in\R$. We impose an odd and compactly supported initial datum $p_0\not\equiv 0$ in $\R$ with $p_0\geq0$ on $[0,\infty)$,  then it follows that
\begin{equation}
	\label{1-formula of w}
w(t,y+c_*t)=p(t,y)=\frac{1}{\sqrt{4\pi t}}\int_0^{+\infty}\left(e^{-\frac{(y-z)^2}{4t}}-e^{-\frac{(y+z)^2}{4t}}\right)p_0(z)dz, \quad t>0,~y\in\R.
\end{equation}
In particular, one has $p(t,y)> 0$ for $t>0$ and $y>0$, and the following asymptotic behavior holds true:
\begin{equation*}
p(t,y)\sim ye^{-\frac{y^2}{4t}}t^{-\frac{3}{2}},~~~\text{as}~t\to+\infty,
\end{equation*}
for $-\sqrt{t}\le y\le \sqrt{t}$. This then implies that $w(t,x)$  changes its sign exactly at $x-c_*t=0$, i.e., $w(t,x)>0$ for $t>0$ and $x\in\R$ with $x-c_*t>0$, whereas $w(t,x)<0$ for $t>0$ and $x\in\R$ with $x-c_*t<0$. Moreover, 
\begin{equation}
\label{1-asymp-w}
w(t,x)\sim (x-c_*t)e^{-\frac{(x-c_*t)^2}{4t}}t^{-\frac{3}{2}},~~~\text{as}~t\to+\infty,
\end{equation}
for $-\sqrt{t}\le x-c_*t\le \sqrt{t}$. 

We are now in position to take advantage of $w(t,x)$ in the constructions of the upper and lower barriers for $v$.

\subsection{Upper barrier for $v$}
We start with the construction of a supersolution to \eqref{1-v-equation} for all $t$ large enough and $x\in\R$ ahead of $x-c_*t\approx 0$ by following the strategy recently proposed in \cite{Roque22} for the continuous setting with nonlocal diffusion, which is itself reminiscent of the strategy used in \cite{HNRR13}. Ideally, the solution $v$ of \eqref{1-v-equation} would be controlled from above by the function $w$ solution of the linear equation \eqref{1-linear-eqn-w}  starting from an odd, nontrivial and compactly supported initial condition, since it is an actual supersolution by construction. However,  as the initial condition is chosen to be odd, the function  $w(t,x)$ is negative for $t>0$ and $x-c_*t<0$ which prevents us from readily comparing the two functions. The key observation from \cite{Roque22}, very much in the spirit of Fife \& McLeod \cite{FMcL}, is that we can add a cosine perturbation to $w$ which will eventually enable us to compare $v$ with this new supersolution slightly to the left of $x-c_*t=0$.

Consider now $\delta\in(0,1/3)$, that will be as small as needed. We now look for a barrier of $v(t,x)$ from above  for $t$ large enough and $x\in\R$ ahead of $x-c_*t=-t^\delta$. To do so, we construct a supersolution for \eqref{1-v-equation} in the form
\begin{equation}
	\label{1-overline v}
	\overline v(t,x)=\overline\xi(t) w(t,x)+\frac{1}{(1+t)^{\frac{3}{2}-\beta}}\cos\left(\frac{x-c_*t}{(1+t)^\alpha}\right)\mathbbm{1}_{\left\{x\in\R~|~ -t^\delta\le x-c_*t\le  \frac{3\pi}{2}(1+t)^\alpha\right\}},
\end{equation}
for $t$ large enough and $x\in\R$ with $x-c_*t\ge -t^\delta$, where the unknown $\overline \xi(t)\in \mathscr{C}^1$ is assumed to be positive and bounded in $(0,+\infty)$, and $\overline \xi'(t)\ge 0$  in $(0,+\infty)$,  $\alpha\in(1/3,1/2)$ and $\beta>0$, all of which will be chosen in the course of investigation. Note that $\delta<\alpha$.  Let us now check  that $\overline v(t,x)$ is a supersolution of \eqref{1-v-equation} for $t$ large enough and $x\in\R$ with $x-c_*t\ge -t^\delta$.

First of all, we look at the region $\{x\in\R~|~x-c_*t\ge \frac{3\pi}{2}(1+t)^\alpha\}$ for all  $t$ large enough, in which $\overline v(t,x)=\overline\xi(t)w(t,x)$. It is obvious to see that 
$(\partial_t-\mathcal{L})\overline v(t,x)=\overline \xi'(t)w(t,x)\ge 0$ since $w(t,x)>0$ for all $t\ge 0$ in this area and since we assume that
 $\overline \xi'(t)\ge 0$ for $t> 0$.
It remains to discuss the region $\{x\in\R~|~ -t^\delta\le x-c_* t\le \frac{3\pi}{2}(1+t)^\alpha\}$ for all large times. We divide it into three zones: 
\begin{align*}
R_1&:=\left\{x\in\R~|~ -t^\delta\le x-c_*t\le 1\right\}, \quad R_2: =\left\{x\in\R~|~ 1\le x-c_*t\le \frac{\pi}{4}(1+t)^\alpha\right\},\\
 R_3&:=\left\{x\in\R~|~ \frac{\pi}{4}(1+t)^\alpha\le x-c_*t\le \frac{3\pi}{2}(1+t)^\alpha\right\}.
\end{align*}

\paragraph{In region $R_1$.} There holds
\bqs
\frac{-t^\delta}{(1+t)^\alpha}\le \frac{x-c_*t}{(1+t)^\alpha}\le \frac{1}{(1+t)^\alpha}.
\eqs
Thanks to \eqref{1-asymp-w}, one gets 
\bqs
 -\frac{1}{(1+t)^{\frac{3}{2}-\delta}} \lesssim w(t,x)\lesssim \frac{1}{(1+t)^{\frac{3}{2}}}~~~\text{for all}~t~\text{large enough}.
\eqs
 Notice that $w(t,x)$ can be negative in this area,  therefore  the cosine perturbation needs to play a role here. We first require that $\beta>\delta>0$ so that the cosine term will be the dorminant term, that is,
\begin{equation}\label{1-BC-upper}
	\overline v(t,x)\sim \frac{1}{(1+t)^{\frac{3}{2}-\beta}}>0 ~~~\text{for all}~t~\text{large enough}.
\end{equation}
 Moreover, a straightforward computation gives that, for $t$ large enough,
 \begin{equation*}
 (\partial_t-\mathcal{L})(\overline\xi(t)w(t,x))=\overline\xi'(t)w(t,x)\gtrsim -\frac{\overline\xi'(t)}{(1+t)^{\frac{3}{2}-\delta}}
 \end{equation*}
 and
\begin{align*}
	&(\partial_t-\mathcal{L})\left(\frac{1}{(1+t)^{\frac{3}{2}-\beta}}\cos\left(\frac{x-c_*t}{(1+t)^\alpha}\right)\right)\\
	=&\frac{\beta-\frac{3}{2}}{(1+t)^{\frac{5}{2}-\beta}}\cos\left(\frac{x-c_*t}{(1+t)^\alpha}\right)+\frac{1}{(1+t)^{\frac{3}{2}-\beta}}\Bigg(\frac{c_*}{(1+t)^\alpha}+\frac{\alpha(x-c_*t)}{(1+t)^{\alpha+1}}\Bigg)\sin\left(\frac{x-c_*t}{(1+t)^\alpha}\right)
	\\
	&+\frac{1}{(1+t)^{\frac{3}{2}-\beta+2\alpha}}\cos\left(\frac{x-c_*t}{(1+t)^\alpha}\right)
	-\frac{c_*}{(1+t)^{\frac{3}{2}-\beta+\alpha}}\sin\left(\frac{x-c_*t}{(1+t)^\alpha}\right)\\
	\sim& \frac{1}{(1+t)^{\frac{3}{2}-\beta+2\alpha}}.
\end{align*}
Therefore, in order to ensure that $(\partial_t-\mathcal{L})\overline{v}(t,x)\ge 0$ for $t$ large enough in this region, it suffices to impose the condition
\bqq
\frac{1}{(1+t)^{\frac{3}{2}-\beta+2\alpha}}\gg\frac{\overline \xi'(t)}{(1+t)^{\frac{3}{2}-\delta}}~~~\text{for all}~t~\text{large enough},
\eqq
 namely,
\begin{equation}\label{1-R1-cdn}
	0\le \overline \xi'(t)\ll \frac{1}{(1+t)^{2\alpha+\delta-\beta}} ~~~\text{for all}~t~\text{large enough}.
\end{equation}

\paragraph{In region $R_2$.} There holds
\bqs
\frac{1}{(1+t)^\alpha}
\le \frac{x-c_*t}{(1+t)^\alpha}\le \frac{\pi}{4}.
\eqs
We notice that $w(t,x)$ is positive for all $t>0$ in this region, as is the cosine perturbation. Since $\overline \xi(t)$ is assumed a priori to be a bounded positive function in $(0,+\infty)$, one has that $\overline v(t,x)>0$ for all $t> 0$ in this region and 
\begin{equation*}
(\partial_t-\mathcal{L})(\overline\xi(t)w(t,x))=\overline \xi'(t)w(t,x)\ge 0
\end{equation*}
for all $t> 0$ in this region, thanks to $\overline \xi'(t)\ge 0$ in $(0,+\infty)$.
Moreover, by the same calculation as the previous region, one also obtains in $R_2$ that
\bqs
(\partial_t-\mathcal{L})\left(\frac{1}{(1+t)^{\frac{3}{2}-\beta}}\cos\left(\frac{x-c_*t}{(1+t)^\alpha}\right)\right)\sim \frac{1}{(1+t)^{\frac{3}{2}-\beta+2\alpha}}>0~~~~\text{for}~t~\text{large enough}.
\eqs
Consequently,  there obviously holds 
$(\partial_t-\mathcal{L})\overline{v}(t,x)\ge 0$ for $t$ large enough in region $R_2$.

\paragraph{In region $R_3$.} There holds
\bqs
\frac{\pi}{4}
\le \frac{x-c_*t}{(1+t)^\alpha}\le \frac{3\pi}{2}.
\eqs
It is worth to note that the cosine perturbation may be negative in this area.
Moreover, following an analogous procedure as in preceding cases, it follows that
\begin{equation*}
(\partial_t-\mathcal{L})(\overline\xi(t)w(t,x))=\overline \xi'(t)w(t,x)\sim \frac{\overline\xi'(t)}{(1+t)^{\frac{3}{2}-\alpha}}\ge 0~~~\text{for}~t~\text{large enough},
\end{equation*}
and
\bqs
(\partial_t-\mathcal{L})\left(\frac{1}{(1+t)^{\frac{3}{2}-\beta}}\cos\left(\frac{x-c_*t}{(1+t)^\alpha}\right)\right)\ge  \frac{-1}{(1+t)^{\frac{3}{2}-\beta+2\alpha}} ~~~\text{for}~t~\text{large enough}.
\eqs
 To ensure that $\overline v(t,x)>0$ and $(\partial_t-\mathcal{L})\overline{v}(t,x)\ge 0$ for $t$ large enough in this region, we require this time $\alpha>\beta$ and
\bqq\label{1-R3-cdn}
\frac{\overline \xi'(t)}{(1+t)^{\frac{3}{2}-\alpha}}\gg\frac{1}{(1+t)^{\frac{3}{2}-\beta+2\alpha}}~~~\text{for}~t~\text{large enough}.
\eqq

\paragraph{Conclusion.} Gathering \eqref{1-R1-cdn} and \eqref{1-R3-cdn}, we should impose 
$$\frac{1}{(1+t)^{3\alpha-\beta}}\ll \overline \xi'(t)\ll \frac{1}{(1+t)^{2\alpha+\delta-\beta}} ~~~\text{for all}~t~\text{large enough}.$$
This is possible so long as $\delta<\alpha$, which is exactly what we have assumed. Let us take 
$$\overline \xi'(t)\sim \frac{1}{(1+t)^{3\alpha-2\beta}}~~ \text{and}~~\overline \xi(t)=1-\frac{1}{(1+t)^{3\alpha-2\beta-1}}~~\text{for}~t\ge 0.$$
Due to our assumption that the function $\overline\xi(t)$ is positive and bounded in $(0,+\infty)$, it suffices to require
$3\alpha-2\beta-1>0$. Hence, we can fix $\delta\in(0,1/4)$ very small, then there exist $\alpha\in(1/3,1/2)$ and $\beta>0$ such that $3\alpha-2\beta>2\alpha+\delta-\beta$. To be more precise, the parameters $\delta, \beta, \alpha$ are chosen such that 
\begin{equation}
	\label{1-parameters}
	0<\delta<\beta<\min\left(\alpha-\delta,\frac{3\alpha-1}{2}\right)<\alpha<\frac{1}{2}.
\end{equation}

As a consequence,  there exists  $T_0>0$  large enough such that $c_*T_0-T_0^\delta>A$ (recall that $A\in\R$ is beyond the support of $u^0$) and such that 
$(\partial_t-\mathcal{L})\overline{v}(t,x)\ge 0$ for $t\ge T_0$ and  $x\in\R$ with $x-c_*t\ge-t^\delta$. Therefore, it immediately follows from \eqref{1-R term} that
$(\partial_t-\mathcal{L})\overline v(t,x)+R(t,x;\overline v)\ge 0$ for $t\ge T_0$ and  $x\in\R$ with $x-c_*t\ge-t^\delta$. On the other hand, we have  $\overline v(t,x)>0$ for $t\ge T_0$ and $x\in\R$ with $x\ge c_*t-t^\delta$.  Hence,  there holds $\overline v(T_0,x)>0=e^{\lambda_*x} u^0(x)=v(0,x)$ for all $x-c_*T_0 \ge -T_0^\delta$. For $x-c_*t=-t^\delta$, we observe that $v(t-T_0,x)=e^{\lambda_*(x-c_*(t-T_0))}u(t-T_0,x)\le e^{\lambda_*(c_*T_0- t^\delta)}$ since  $0\le u(t,x)\le  1$ for all $t\ge 0$ and $x\in\R$, while $\overline v(t,x)\sim (1+t)^{-3/2+\beta}$ for $t\ge T_0$ by \eqref{1-BC-upper}, up to increasing $T_0$. Then, there is $T_1>T_0$ sufficiently large such that $e^{\lambda_*(c_*T_0-t^\delta)}<(1+t)^{-3/2+\beta}$ for all $t\ge T_1$, which will yield that $\overline v(t,x)> v(t-T_0,x)$ at $x-c_*t=t^\delta$ for all $t\ge T_1$. On the other hand, one can choose $K>0$ large enough such that  $K(1+t)^{-3/2+\beta}>e^{\lambda_*(c_*T_0-t^\delta)}$, that is, $K\overline v(t,x)>v(t-T_0,x)$ at $x-c_*t=t^\delta$ for $t\in[T_0,T_1)$. Therefore, there holds $K\overline v(t,x)> v(t-T_0,x)$ for all $t\ge T_0$ and $x\in\R$ with $x-c_*t=-t^\delta$. We then conclude that $K\overline v(t,x)$ is a supersolution of \eqref{1-v-equation} for all $t\ge T_0$ and  $x\in\R$ with $x-c_*t\ge -t^\delta$. The strong maximum principle implies that
\begin{equation}
	\label{2.2-conclusion}
	K\overline v(t+T_0,x)>v(t,x)~~~\text{for}~t\ge0,~x-c_*t> -t^\delta.
\end{equation} 

\subsection{Lower barrier for $v$}

Let $\delta$, $\beta$ and $\alpha$ be fixed as in \eqref{1-parameters}. The idea is to estimate $v(t,x)$ ahead of $x-c_*t=t^\delta$. To do so, let us construct a lower barrier as follows:
\begin{equation}
	\label{1-underline v}
	\underline v(t,x)=\max\big(0,\underline\xi(t) w(t,x)  \big)
\end{equation} 
for $t>0$ and  $x\in\R$, where we assume that the unknown $\underline \xi(t)\in \mathscr{C}^1$  is positive and bounded away from 0 in $(0,+\infty)$ and satisfies $\underline \xi'(t)\le 0$ in $(0,+\infty)$, which will be made clear in the sequel. Remember that $w(t,x)>0$ for $t>0$ and $x\in\R$ with $x-c_*t>0$, whereas $w(t,x)<0$ for $t>0$ and $x\in\R$ with $x-c_*t<0$. We then derive that $\underline v\equiv0$ for $t>0$ and $x-c_*t\le 0$, and $\underline v=\underline\xi(t) w(t,x)$ for $t>0$ and $x-c_*t>0$. Let now identify that $\underline v(t,x)$ is a generalized subsolution of \eqref{1-v-equation} for $t>0$ and $x\in\R$. 

It is sufficient to look at the region $\{(t,x)\in(0,+\infty)\times\R : x>c_*t\}$.  We first claim that, there is $C>0$ sufficiently large such that
\begin{equation}
	\label{1-w-upper control}
	e^{-\lambda_*(x-c_*t)} w(t,x)\le C(1+t)^{-3/2},
\end{equation}
for $t>0$ and $x-c_*t>0$. Indeed, it follows from \eqref{1-asymp-w} that there exist $t_0$ large enough and $C>0$ large enough such that \eqref{1-w-upper control} is true for $t\ge t_0$ and $0<x-c_*t\le \sqrt{t}$. Since $w(t,x)$ is a bounded function for $t>0$ and $x\in\R$, we derive that, up to increasing $C$, \eqref{1-w-upper control} holds true for $t\in[0,t_0]$ and $0<x-c_*t\le \sqrt{t}$. In a similar way, we see that, by increasing $C$ if needed, there holds $e^{-\lambda_*(x-c_*t)} w(t,x)<e^{-\lambda_*\sqrt{t}} w(t,x)\le C(1+t)^{-3/2}$ for $t\ge 0$ and $x-c_*t>\sqrt{t}$.  Therefore, our claim \eqref{1-w-upper control} is proved for $t>0$ and $x-c_*t>0$.
On the other hand, since $f\in\mathscr{C}^2([0,1])$, there exist $M>0$ and $s_0\in(0,1)$ such that $f(s)-f'(0)s\ge -Ms^2$ for $s\in [0,s_0)$.  Eventually, let us require $\underline\xi(t)$ to solve 
\begin{equation}
	\label{1-underline xi}
	\underline\xi'(t)=-CM\underline\xi^2(t)(1+t)^{-3/2},~~~~~\text{for}~t>0,
\end{equation}
starting from $\underline\xi(0)=\underline\xi_0>0$ which is set very small such that  $\underline \xi_0 w(t,x)<s_0$ for $t>0$ and $x-c_*t>0$. We then note that $\underline\xi(t)$ is positive and uniformly bounded from above and below such that
$$0<\frac{\underline\xi_0}{1+2\underline\xi_0CM}\le \underline\xi(t)\le \underline\xi_0<+\infty~~~~~\text{for}~t\ge 0.$$
Moreover, for $t>0$ and $x-c_*t>0$, the function $\underline v(t,x)=\underline\xi(t)w(t,x)$ satisfies
\begin{align*}
	(\partial_t-\mathcal{L})\underline v(t,x)+R(t,x;\underline v)&= \underline\xi'(t) w(t,x)+f'(0)\underline v(t,x)-e^{\lambda_*(x-c_*t)}f(e^{-\lambda_*(x-c_*t)}\underline v(t,x))\\
	&\le  \underline\xi'(t) w(t,x)+M e^{-\lambda_*(x-c_*t)}\underline\xi^2(t)(w(t,x))^2\\
	&=  \left( -CM\underline\xi^2(t)(1+t)^{-3/2}+M\underline\xi^2(t) e^{-\lambda_*(x-c_*t)} w(t,x)\right) w(t,x)\le 0,
\end{align*}
 thanks to \eqref{1-w-upper control} and \eqref{1-underline xi}. 

Since $v(1,x)=e^{\lambda_*(x-c_*)}u(1,x)>0$ in $\R$ and since $\underline\xi_0w(0,x)=\underline\xi_0 p_0$ is bounded and  compactly supported in $(0,+\infty)$, there exists  $\kappa>0$ very small such that $\kappa\underline v(0,x)< v(1,x)$ in $\R$. Therefore,   $\kappa\overline v(t,x)$ is a subsolution of \eqref{1-v-equation} for all $t\ge 0$ and  $x\in\R$. By the strong maximum principle, we then conclude that 
\begin{equation}
	\label{2.3-conclusion}
	\kappa\underline v(t-1,x)<v(t,x)~~~~~\text{for}~t\ge 1,~x\in\R.
\end{equation}

\subsection{Proof of Proposition~\ref{propCont}}
\label{1-sec-proof of thm 1}

We have shown in the previous sections that the function $v$, solution of \eqref{1-v-equation}, has an upper barrier  and a lower barrier  given respectively by \eqref{1-overline v} and \eqref{1-underline v}. We are now in position to prove Proposition~\ref{propCont}. The proof is based on the comparison between $t^{\frac{3}{2}}v$ and a variant of the shifted critical KPP traveling front with logarithmic correction  in a well-chosen moving zone $|x-c_*t|\le t^\eta$ with certain small $\eta$ for all large times. Set 
\begin{equation*}
	V(t,x)=t^{\frac{3}{2}}v(t,x)~~~~\text{for}~t\ge 1,~x\in\R,
\end{equation*}
then we observe that $V$ sloves
\begin{equation}	
	\label{V-eqn}
	\begin{aligned}
		\begin{cases}
			V_t-V_{xx}+c_*V_x-\frac{3}{2t}V+\widehat R(t,x;V)=0,~~&t>1,x\in\R,\cr
			V(1,x)=v(1,x),&x\in\R.
		\end{cases}
	\end{aligned}
\end{equation}
with nonnegative term $\widehat R$ given explicitly by
\begin{equation}
	\label{hat R term}
	\widehat R(t,x;s):=f'(0)s-e^{\lambda_*(x-c_*t+\frac{3}{2\lambda_*}\ln t)}f\left(e^{-\lambda_*(x-c_*t+\frac{3}{2\lambda_*}\ln t)}s\right),~~~t\ge 1,~x\in\R, ~s\in\R.
\end{equation}
 
Let $\delta,\beta,\alpha$ be chosen as in \eqref{1-parameters}. We notice that $\beta<1/4$. Fix now
 \begin{equation}
 	\label{1-varep}
 	\eta=\beta+\varepsilon<\alpha
 \end{equation} 
 for  $\varepsilon>0$ small enough.

\paragraph{Step 1: Upper bound.}   From \eqref{2.2-conclusion}, we deduce that $Kt^{\frac{3}{2}}\overline v(t+T_0,x)>V(t,x)$ for $t$ large enough and $x-c_*t\ge -t^\delta$, with $\overline v$ given in \eqref{1-overline v}. 
Define a function $\psi(t,x)$  by
\begin{equation*}
\psi(t,x)=e^{\lambda_*(x-c_*t+\frac{3}{2\lambda_*}\ln t)}\varphi_{c_*}\left(x-c_*t+\frac{3}{2\lambda_*}\ln t+b\right)~~~\text{for}~t~\text{large enough},~|x-c_*t +\frac{3}{2\lambda_*}\ln t|\le t^\eta,
\end{equation*}
where $b\in\R$ is fixed such that  $\psi(t,x)\geq Kt^{\frac{3}{2}} \overline v(t+T_0,x)$ for $t$ large enough and $x\in\R$ with $x-c_*t +\frac{3}{2\lambda_*}\ln t=t^\eta$.    Substituting $\psi$ into the equation of $V$ yields
\begin{equation*}
\left|\psi_t-\psi_{xx}+c_*\psi_x-\frac{3}{2t}\psi+\widehat R(t,x;\psi)\right|=\frac{3}{2\lambda_* t}e^{\lambda_*(x-c_*t+\frac{3}{2\lambda_*}\ln t)} \left|\varphi'_{c_*}\left(x-c_*t+\frac{3}{2\lambda_*}\ln t+b\right)\right|\lesssim t^{-(1-\eta)}
\end{equation*}
for $t$ large enough and $|x-c_*t  +\frac{3}{2\lambda_*}\ln t|\le t^\eta$. Now, set $s:=\left(V-\psi\right)^+$, where we use the convention that $a^+=\max(0,a)$. We are then led to the following problem
\begin{equation}\label{1-s+}
	\begin{aligned}
		\begin{cases}
			s_t -s_{xx}+c_*s_x  -\frac{3}{2t}s+Q(t,x;s)\lesssim t^{-(1-\eta)},~~~&|x-c_*t +\frac{3}{2\lambda_*}\ln t|\le t^\eta,\\
				s(t,x)  =O\big(t^{\frac{3}{2}}e^{-\lambda_*t^\eta}\big), ~~~~~&x-c_*t +\frac{3}{2\lambda_*}\ln t=-t^\eta,\\
				s(t,x)  =0, ~~~~~~~~~~& x-c_*t +\frac{3}{2\lambda_*}\ln t=t^\eta,
		\end{cases}
	\end{aligned}
\end{equation}
for $t$ large enough.
Here, $Q(t,x;s)=0$ if $s=0$; otherwise,
\begin{align*}
Q(t,x;s)&=\widehat R(t,x;V)-\widehat R(t,x;\psi)\\
&=f'(0)s-e^{\lambda_*(x-c_*t+\frac{3}{2\lambda_*}\ln t)}\left(f(e^{-\lambda_*(x-c_*t+\frac{3}{2\lambda_*}\ln t)}V)-f(e^{-\lambda_*(x-c_*t+\frac{3}{2\lambda_*}\ln t)}\psi)\right)\\
&=f'(0)s-d(t,x)s\ge0, 
\end{align*} 
in which $d(t,x)$ is continuous and  bounded in $L^\infty$ norm by $f'(0)$
since $0<f(s)\le f'(0)s$ for $s\in(0,1)$. We claim that, there holds
\begin{equation}
\label{1-claim1}
\lim\limits_{t\to+\infty}\sup_{x\in\R,~|x-c_*t +\frac{3}{2\lambda_*}\ln t|\le t^\eta} s(t,x)=0.
\end{equation}
We use a comparison argument to verify this. Define 
\begin{equation*}
\overline s(t,x)=\frac{1}{t^\lambda}\cos\left(\frac{x-c_*t}{t^\gamma}\right)~~~~\text{for}~t~\text{large enough and}~|x-c_*t +\frac{3}{2\lambda_*}\ln t|\le t^\eta,
\end{equation*}
 with $0<\eta<1/4<\gamma<1/3$ such that $2\gamma+\eta<1$ and with $0<\lambda<1-2\gamma-\eta$.  One observes that $\overline s(t,x)\sim t^{-\lambda}\gg t^{\frac{3}{2}}e^{-\lambda_*t^\eta}$ for $t$ large enough and $x\in\R$ with $|x-c_*t +\frac{3}{2\lambda_*}\ln t|\le t^\eta$. Moreover, it follows from a direct computation that
\begin{equation*}
\overline s_t -\overline s_{xx}+c_* \overline s_x  -\frac{3}{2t}\overline s \sim  \frac{1}{t^{2\gamma+\lambda}}\gg\frac{1}{t^{1-\eta}} ~~~~\text{for}~t~\text{large enough and}~|x-c_*t +\frac{3}{2\lambda_*}\ln t|\le t^\eta,
\end{equation*} 
 thanks to the choice of the parameters $\eta, \gamma$ and $\lambda$. Since $Q(t,x;\overline s)\ge 0$, one concludes that $\overline s(t,x)$ is a supersolution of \eqref{1-s+} for $t$ large enough and $x\in\R$ with $|x-c_*t +\frac{3}{2\lambda_*}\ln t|\le t^\eta$.
Our claim \eqref{1-claim1} is then reached by noticing that
$$\lim\limits_{t\to+\infty}\sup_{x\in\R,~|x-c_*t +\frac{3}{2\lambda_*}\ln t|\le t^\eta} \overline s(t,x)=0.$$
 Then, it follows that $V(t,x)\le \psi(t,x)+o(1)$ uniformly in $x\in\R$ with $|x-c_*t +\frac{3}{2\lambda_*}\ln t|\le t^\eta$ as $t\to+\infty$. Hence,
\begin{equation}
\label{1-u-upper bound}
u(t,x)\le \varphi_{c_*}\left(x-c_*t+\frac{3}{2\lambda_*}\ln t+b\right)+o(1) e^{-\lambda_*(x-c_*t+\frac{3}{2\lambda_*}\ln t)}
\end{equation}
uniformly in $1\le x-c_*t  +\frac{3}{2\lambda_*}\ln t\le t^\eta$ as $t\to+\infty$.

\paragraph{Step 2: Lower bound.} The proof of this part is similar to Step 1. We sketch it for the sake of completeness. Thanks to \eqref{2.3-conclusion}, we have that  $\kappa t^{\frac{3}{2}}\underline v(t-1,x)< V(t,x)$ for $t\ge 1$  and $x\in\R$, where $\underline v(t,x)$ is given in \eqref{1-underline v}.
Define a function $\phi(t,x)$  by
\begin{equation*}
\phi(t,x)=e^{\lambda_*(x-c_*t+\frac{3}{2\lambda_*}\ln t)}\varphi_{c_*}\left(x-c_*t+\frac{3}{2\lambda_*}\ln t+a\right)~~~\text{for}~t~\text{large enough},~|x-c_*t+\frac{3}{2\lambda_*}\ln t|\le t^\eta.
\end{equation*}
Here, we fix $a\in\R$  such that  $\phi(t,x)\le \kappa t^{\frac{3}{2}}\underline v(t-1,x)$ for $t$ large enough and $x-c_*t +\frac{3}{2\lambda_*}\ln t=t^\eta$. It is also noticed that $a>b$. 

 Analogously to the previous step,
substituting $\phi$ into the equation of $V$ yields
\begin{equation*}
\left|\phi_t-\phi_{xx}+c_*\phi_x -\frac{3}{2t}\phi+\widehat R(t,x;\phi)\right|\lesssim t^{-(1-\eta)}
\end{equation*}
for $t$ large enough and $|x-c_*t +\frac{3}{2\lambda_*}\ln t|\le t^\eta$. Set $z:=\left(V-\phi\right)^-$, where we follow the convention that $a^-=\max(0,-a)$. Then, the function $z$ satisfies
\begin{equation}
	\begin{aligned}
		\begin{cases}
			z_t -z_{xx}+c_*z_x -\frac{3}{2t}z+H(t,x;z)\lesssim t^{-(1-\eta)},~~~~&|x-c_*t +\frac{3}{2\lambda_*}\ln t|\le t^\eta,\\
				z(t,x)  =O\big( t^{\frac{3}{2}}e^{-\lambda_*t^\eta}  \big), ~~~~~~  &x-c_*t +\frac{3}{2\lambda_*}\ln t=-t^\eta,\\
				z(t,x) =0, ~ & x-c_*t +\frac{3}{2\lambda_*}\ln t=t^\eta,
		\end{cases}
	\end{aligned}
\end{equation}
for $t$ large enough.
Here, $H(t,x;z)=0$ when $z=0$; otherwise,
\begin{align*}
H(t,x;z)&= \widehat R(t,x;V)-\widehat R(t,x;\varphi)\\
&=f'(0)z-e^{\lambda_*(x-c_*t+\frac{3}{2\lambda_*}\ln t)}\left(f(e^{-\lambda_*(x-c_*t+\frac{3}{2\lambda_*}\ln t)}V)-f(e^{-\lambda_*(x-c_*t+\frac{3}{2\lambda_*}\ln t)}\varphi)\right)\\
&=f'(0)z-h(t,x)z\ge0, 
\end{align*} 
in which $h(t,x)$ is  continuous and  bounded in $L^\infty$ norm by $f'(0)$
since $0<f(s)\le f'(0)s$ for $s\in(0,1)$. Following the proof of \eqref{1-claim1} in Step 1, one can show that
\begin{equation}
\label{1-claim2}
\lim\limits_{t\to+\infty}\sup_{x\in\R,~|x-c_*t +\frac{3}{2\lambda_*}\ln t|\le t^\eta} z(t,x)=0.
\end{equation}
This implies that $V(t,x)\ge \phi(t,x)+o(1)$  uniformly in   $|x-c_*t +\frac{3}{2\lambda_*}\ln t|\le t^\eta$ as $t\to+\infty$, whence
\begin{equation}
\label{1-u-lower bound}
u(t,x)\ge \varphi_{c_*}\left(x-c_*t+\frac{3}{2\lambda_*}\ln t+a\right)+o(1)e^{-\lambda_*(x-c_*t+\frac{3}{2\lambda_*}\ln t+a)},
\end{equation}
uniformly in  $1\le x-c_*t +\frac{3}{2\lambda_*}\ln t\le t^\eta$ as $t\to+\infty$.

\paragraph{Step 3: Conclusion.} Gathering \eqref{1-u-upper bound} and \eqref{1-u-lower bound}, along with the asymptotics of $\varphi_{c_*}$, it follows that for any small $\varepsilon>0$, there exists $T>0$ sufficiently large and $\hat a, \hat b\in\R$ satisfying $-\infty<\hat b<b<a<\hat a<+\infty$ such that
\begin{equation*}
	(1-\varepsilon) \varphi_{c_*}\left(x-c_*t+\frac{3}{2\lambda_*}\ln t+\hat a\right)\le u(t,x)\le (1+\varepsilon)\varphi_{c_*}\left(x-c_*t+\frac{3}{2\lambda_*}\ln t+\hat b\right)
\end{equation*}
uniformly in  $1\le x-c_*t  +\frac{3}{2\lambda_*}\ln t\le t^\eta$ for all $t\ge T$. We have therefore achieved the conclusion of Proposition~\ref{propCont}.

\section{The linearized problem on $\Z$}\label{seclinear}

One of the key message from Section~\ref{seccontinuous} in the continuous case is that solutions $w(t,x)$ of the linear equation~\eqref{1-linear-eqn-w} starting from odd and compactly supported initial conditions play a crucial role in designing accurate upper and lower barriers for the full nonlinear problem. In fact, the asymptotic expansion~\eqref{1-asymp-w} is the corner stone of the proof. We will dedicate all our efforts to proving an equivalent expansion in the discrete case.  More precisely, we consider the linearized problem
\bqq
\left\{
\begin{split}
\frac{\md}{\md t} r_j(t) &= r_{j-1}(t)-2r_j(t)+r_{j+1}(t)+f'(0)r_j(t), \quad t>0, \quad j\in\Z,\\
r_j(0) & =r_j^0, \quad j\in\Z,
\end{split}
\right.
\label{KPPlin}
\eqq
for some nontrivial bounded initial sequence $(r_j^0)_{j\in\Z}\in\ell^\infty(\Z)$. First, we perform the change of variable $r_j(t)=e^{-\lambda_*(j-c_*t)}w_j(t)$ for some new sequence $(w_j(t))_{j\in\Z}$, which  now satisfies
\bqq
\frac{\md}{\md t} w_j(t) =e^{\lambda_*} \left(w_{j-1}(t)-2w_j(t)+w_{j+1}(t)\right)-c_*(w_{j+1}(t)-w_j(t)), \quad t>0, \quad j\in\Z.
\label{KPPlin}
\eqq
And we recall that $(c_*,\lambda_*)$ are defined through \eqref{systemclamb}.

Our aim in this section is to study the temporal Green's function $(\mathscr{G}_j(t))_{j\in\Z}$ which is the solution of \eqref{KPPlin} starting from the initial sequence $\boldsymbol{\delta}$ which is defined as $\boldsymbol{\delta}_j=1$ if $j=0$ and $\boldsymbol{\delta}_j=0$ otherwise, that is
\bqq
\left\{
\begin{split}
\frac{\md}{\md t}\mathscr{G}_j(t) &=e^{\lambda_*} \left(\mathscr{G}_{j-1}(t)-2\mathscr{G}_j(t)+\mathscr{G}_{j+1}(t)\right)-c_*(\mathscr{G}_{j+1}(t)-\mathscr{G}_j(t)), \quad t>0, \quad j\in\Z.\\
\mathscr{G}_j(0) & =\boldsymbol{\delta}_j, \quad j\in\Z.
\end{split}
\right.
\label{temporalGreen}
\eqq
The motivation for studying the temporal Green's function $(\mathscr{G}_j(t))_{j\in\Z}$ stems from the fact that solutions to \eqref{KPPlin},  starting from some initial sequence $(w_j^0)_{j\in\Z}\in\ell^\infty(\Z)$, can be represented as
\bqs 
w_j(t)= \sum_{\ell \in \Z} \mathscr{G}_{j-\ell}(t) w_\ell^0,  \quad \forall t>0, \quad \forall j\in \Z.
\eqs
Unlike the continuous case, there does not exist an explicit representation formula for the temporal Green's function $(\mathscr{G}_j(t))_{j\in\Z}$. Nevertheless, we will obtain pointwise estimates for each $t>1$ and $j\in\Z$, see Propositions~\ref{propptwestim} and~\ref{proprefined}. Roughly speaking, in a first step, we will prove in Proposition~\ref{propptwestim} that for $|j-c_*t|>\theta_* t$ the temporal  Green's function is both exponentially localized in space and time, whereas for $|j-c_*t|\leq \theta_* t$ it behaves as a Gaussian profile centered at $j=c_*t$, for $t$ large enough and some universal constant $\theta_*>0$. However, such a generalized Gaussian estimate will not be enough to obtain an equivalent asymptotic expansion of \eqref{1-asymp-w} in our discrete setting. We thus need to refine our asymptotics and this is the key result of Proposition~\ref{proprefined} where we provide a full asymptotic expansion of the temporal Green's function up to Gaussian corrections of order $t^{-3/2}$. We really enforce that this result is of independent interest and should be compared to existing results in the fully discrete case for discrete convolution powers \cite{CF20,DSC14,Coeuret22} and to local limit theorems in probability theory \cite{Petrov75}. Eventually, we will show in Section \ref{sec2.4} how sharp asymptotics of $w_j(t)$ emanating from odd and finitely supported initial data can be derived from this improved result.

\subsection{The spatial Green's function}

The starting point of our approach is the study of the so-called spatial Green's function which we now introduce. Let $\mathscr{L}$ denote the operator acting on a sequence $\br=(r_j)_{j\in\Z}$ as 
\bqq
\forall j\in\Z, \quad (\mathscr{L}\br)_j:=e^{\lambda_*} \left(r_{j-1}-2r_j+r_{j+1}\right)-c_*(r_{j+1}-r_j),
\label{deflinopL}
\eqq
 then its spectrum on $\ell^2(\Z)$ is given by the parametrized closed curve 
\bqs
\sigma(\mathscr{L})=\left\{ e^{\lambda_*} \left(e^{-\mbi\xi}-2+e^{\mbi\xi}\right)-c_*(e^{-\mbi\xi}-1) ~|~ \xi \in [-\pi, \pi] \right\}.
\eqs
This is a direct a consequence of the Wiener-Levy theorem, see~\cite{Newman}, which characterizes invertible elements of $\ell^1(\Z)$ for the convolution. Indeed, $\mathscr{L}$ can naturally be written as a convolution product
\bqs
(\mathscr{L}\br)_j=\sum_{\ell \in\Z} a_\ell r_{j-\ell},
\eqs
with the sequence $(a_\ell)_{\ell \in\Z}\in\ell^1(\Z)$ defined as
\bqs
a_{-1}= e^{\lambda_*}-c_*, \quad a_0=-2e^{\lambda_*}+c_*, \quad a_1=e^{\lambda_*}, \quad \text{ with } a_j=0 \text{ for } |j|\geq 2.
\eqs
For each $\nu\in\sigma(\mathscr{L})$, we have $\Re(\nu)=\left(e^{\lambda_*}+e^{-\lambda_*}\right)\left(\cos(\xi)-1\right)\leq 0$ and $\Im(\nu)=\left(e^{\lambda_*}-e^{-\lambda_*}\right)\sin(\xi)$ such that $\sigma(\mathscr{L})$ is an ellipse that touches the imaginary axis only at the origin and is located in the left half plane otherwise. Since $\sigma(\mathscr{L})$ is a closed curve in the complex plane, the resolvent set of $\mathscr{L}$, defined as $\C\backslash\sigma(\mathscr{L})$, is given by the union of two open sets which we refer to as the interior and the exterior sets. The interior set is the region enclosed by $\sigma(\mathscr{L})$, while the exterior, denoted by $\mathscr{U}$, is the complementary region which at least contains the set $\left\{ \nu \in\C ~|~ \Re(\nu)\geq 0\right\} \backslash\{0\}$.  We refer to Figure~\ref{fig:spectrum} for an illustration of the geometry of $\sigma(\mathscr{L})$ in the complex plane.

\begin{figure}[t!]
  \centering
  \includegraphics[width=.35\textwidth]{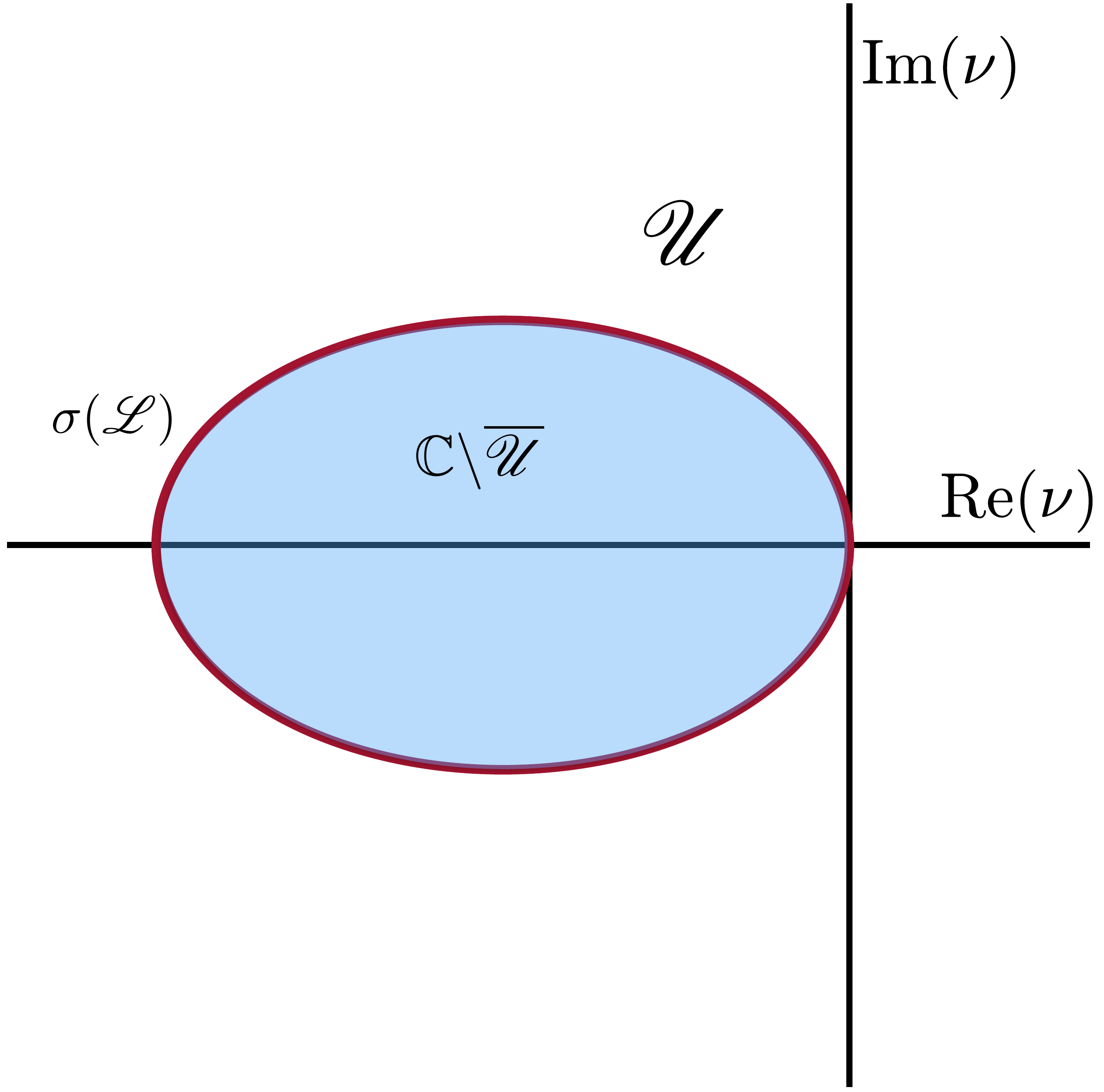}
  \caption{Illustration of the geometry of the spectrum $\sigma(\mathscr{L})$ (dark red curve) in the complex plane $\C$. The curve $\sigma(\mathscr{L})$ is an ellipse that touches the imaginary axis only at the origin and is located in the left half plane otherwise. The resolvent set of $\mathscr{L}$ is composed of two open regions:  the so-called  exterior $\mathscr{U}$ and the interior $\C\backslash\overline{\mathscr{U}}$ (blue shaded region) which is the region enclosed by $\sigma(\mathscr{L})$.}
  \label{fig:spectrum}
\end{figure}

For each $\nu\in\mathscr{U}$, we can define the spatial Green's function as the sequence $\bG(\nu)\in \ell^2(\Z)$ solution of
\bqq
\left(\nu \mathrm{Id}-\mathscr{L}\right)\bG(\nu)=\boldsymbol{\delta},
\label{spatialGreen}
\eqq
where we recall that the sequence $\boldsymbol{\delta}$ is defined as $\boldsymbol{\delta}_j=1$ if $j=0$ and $\boldsymbol{\delta}_j=0$ otherwise, and that we have denoted $\mathrm{Id}$ the identity operator acting on $\ell^2(\Z)$.  Anticipating with the forthcoming section, we already remark that $\mathscr{G}_j(t)$ can be represented through the inverse Laplace formula
\bqs
\mathscr{G}_j(t) = \frac{1}{2\pi \mbi}\int_\Gamma e^{\nu t} \bG_j(\nu)\md \nu, \quad \forall \, t>0, \quad \forall j\in\Z,
\eqs
where $\Gamma\subset\mathscr{U}$ is some well-chosen contour in the complex plane. The key point of the analysis of the remaining of this section will be to obtain pointwise estimates on $\bG_j(\nu)$ which will eventually lead to pointwise estimates for $\mathscr{G}_j(t)$. To do so, we introduce the vector $\bW_j(\nu)=(\bG_{j-1}(\nu),\bG_j(\nu))^\mathbf{T}$ for $j\in\Z$ such that the above relation \eqref{spatialGreen} rewrites
\bqq
\bW_{j+1}(\nu)=\mathbb{A}(\nu)\bW_j(\nu)-e^{\lambda_*}\boldsymbol{\delta}_j \mathbf{e}, \quad j\in\Z,
\label{recurrenceW}
\eqq
where $\mathbf{e}:=(0,1)^\mathbf{T}$ and the matrix $\mathbb{A}(\nu)$ is given by
\bqs
\mathbb{A}(\nu):=\left(
\begin{matrix}
0 & 1 \\
-e^{2\lambda_*} & \frac{\nu+e^{\lambda_*}+e^{-\lambda_*}}{e^{-\lambda_*}}
\end{matrix}
\right).
\eqs
The eigenvalues of the above matrix are given by
\bqq
\rho_\pm(\nu)= \frac{\nu+e^{\lambda_*}+e^{-\lambda_*}}{2e^{-\lambda_*}}\pm \frac{\sqrt{\left(\nu+e^{\lambda_*}+e^{-\lambda_*}\right)^2-4}}{2e^{-\lambda_*}}.
\label{eigenvaluesAnu}
\eqq
For each $\nu\in\mathscr{U}$, we readily remark that we have the spectral splitting $|\rho_+(\nu)|>1$ and $|\rho_-(\nu)|<1$. With these notations in hands, we start to estimate the spatial Green's function away from the tangency point at the origin.

\begin{lem}[Local bounds]\label{lemloc}
For each $\underline{\nu}\in\mathscr{U}$, there exist $C,\delta,\beta>0$ depending on $\underline{\nu}$ such that 
\bqs
\left| \bG_j(\nu)\right| \leq Ce^{-\beta |j|}, \quad \forall \, \nu \in B_\delta(\underline{\nu}), \quad  \forall \, j\in\Z.
\eqs
\end{lem}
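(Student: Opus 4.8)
The plan is to exploit the recurrence \eqref{recurrenceW} for the vector sequence $\bW_j(\nu)=(\bG_{j-1}(\nu),\bG_j(\nu))^\mathbf{T}$ together with the spectral splitting $|\rho_-(\nu)|<1<|\rho_+(\nu)|$ valid on all of $\mathscr{U}$. First I would fix $\underline{\nu}\in\mathscr{U}$. Since $\mathscr{U}$ is open and the eigenvalues $\rho_\pm(\nu)$ in \eqref{eigenvaluesAnu} depend continuously (indeed analytically, away from the branch locus) on $\nu$, there is a radius $\delta>0$ with $B_\delta(\underline{\nu})\subset\mathscr{U}$ and uniform constants $0<\rho_-^{\max}<1<\rho_+^{\min}$ such that $|\rho_-(\nu)|\le \rho_-^{\max}$ and $|\rho_+(\nu)|\ge \rho_+^{\min}$ for all $\nu\in B_\delta(\underline{\nu})$; set $\beta:=\min\bigl(-\ln\rho_-^{\max},\ln\rho_+^{\min}\bigr)>0$. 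One also gets, shrinking $\delta$ if needed, a uniform bound on the diagonalizing change of basis for $\mathbb{A}(\nu)$ (the matrix is never a Jordan block on $\mathscr{U}$ because the two eigenvalues have different moduli), so that all the constants below are uniform on $B_\delta(\underline{\nu})$.

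Next I would record the key structural facts about the homogeneous recursion $\bW_{j+1}=\mathbb{A}(\nu)\bW_j$: it has a one-dimensional stable subspace $E_-(\nu)=\mathrm{span}\,v_-(\nu)$ (eigenvalue $\rho_-$) and a one-dimensional unstable subspace $E_+(\nu)=\mathrm{span}\,v_+(\nu)$ (eigenvalue $\rho_+$), with both eigenvectors bounded away from $0$ and $\infty$ uniformly on $B_\delta(\underline{\nu})$. Because $\bG(\nu)\in\ell^2(\Z)$, the sequence $\bW_j(\nu)$ must decay as $j\to+\infty$, which forces $\bW_j(\nu)\in E_-(\nu)$ for all $j\ge 1$ (the inhomogeneity $-e^{\lambda_*}\boldsymbol{\delta}_j\mathbf{e}$ vanishes for $j\ge 1$); hence $\bW_j(\nu)=\rho_-(\nu)^{\,j-1}\bW_1(\nu)$ for $j\ge 1$. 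Similarly $\bW_j(\nu)$ must decay as $j\to-\infty$, which forces $\bW_j(\nu)\in E_+(\nu)$ for all $j\le 0$, so $\bW_j(\nu)=\rho_+(\nu)^{\,j}\bW_0(\nu)$ for $j\le 0$. The jump relation at $j=0$, namely $\bW_1(\nu)-\mathbb{A}(\nu)\bW_0(\nu)=-e^{\lambda_*}\mathbf{e}$, together with $\bW_1\in E_-$ and $\mathbb{A}(\nu)\bW_0=\rho_+(\nu)\bW_0\in E_+$, determines $\bW_0$ and $\bW_1$ uniquely by decomposing $-e^{\lambda_*}\mathbf{e}$ along the splitting $E_-(\nu)\oplus E_+(\nu)$; since this splitting is uniformly transverse on $B_\delta(\underline{\nu})$, one obtains $|\bW_0(\nu)|+|\bW_1(\nu)|\le C$ for a constant $C$ depending only on $\underline{\nu}$.

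Combining these, for $j\ge 1$ we get $|\bG_j(\nu)|\le |\bW_{j+1}(\nu)| \le C|\rho_-(\nu)|^{\,j}\le Ce^{-\beta j}$, and for $j\le 0$ we get $|\bG_j(\nu)|\le |\bW_j(\nu)|\le C|\rho_+(\nu)|^{\,j}\le Ce^{-\beta|j|}$, which is the claimed estimate (after possibly enlarging $C$ to absorb the finitely many indices near $0$ and to pass from $\bW$-norms to $\bG$-components). I expect the main obstacle to be purely bookkeeping rather than conceptual: one must be careful to make every constant — the eigenvector normalizations, the norm of the projections onto $E_\pm(\nu)$, and the lower bound on the angle between $E_-(\nu)$ and $E_+(\nu)$ — genuinely uniform over the compact closure of $B_\delta(\underline\nu)$, which is where the hypothesis $\underline\nu\in\mathscr{U}$ (so that $|\rho_-|<1<|\rho_+|$ strictly, with room to spare on a small ball) is used in an essential way. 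A minor point needing care is the branch locus where $(\nu+e^{\lambda_*}+e^{-\lambda_*})^2=4$: this set lies inside the interior region bounded by $\sigma(\mathscr{L})$, hence is disjoint from $\mathscr{U}$, so on $B_\delta(\underline\nu)$ the square root in \eqref{eigenvaluesAnu} is analytic and the two eigenvalues stay separated.
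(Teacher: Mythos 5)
Your proof is correct and follows essentially the same route as the paper: both rest on the uniform hyperbolic splitting of $\mathbb{A}(\nu)$ on a small ball in $\mathscr{U}$, the identification of the unique $\ell^2$ solution through its stable component for $j\geq 1$ and unstable component for $j\leq 0$, and uniform bounds on the associated projections/eigenvector decomposition. Your explicit matching of the jump $\bW_1-\mathbb{A}(\nu)\bW_0=-e^{\lambda_*}\mathbf{e}$ along $E_-(\nu)\oplus E_+(\nu)$ reproduces exactly the paper's formulas $\pi^{s}(\nu)\bW_j=-e^{\lambda_*}\rho_-(\nu)^{j-1}\pi^{s}(\nu)\mathbf{e}$ and $\pi^{u}(\nu)\bW_j=e^{\lambda_*}\rho_+(\nu)^{j-1}\pi^{u}(\nu)\mathbf{e}$, obtained there by summing the variation-of-constants series against $\boldsymbol{\delta}$.
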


\begin{proof} We let $\underline{\nu}\in\mathscr{U}$ be fixed. We know that the matrix $\mathbb{A}(\nu)$ above is well defined and holomorphic in a sufficiently small neighborhood of $\underline{\nu}$. Since we have an explicit expression of the two eigenvalues of the matrix $\mathbb{A}(\nu)$, we trivially have a consistant splitting for all $\nu$ close to $\underline{\nu}$. We define $\bbE^{s,u}(\nu)$ the stable/unstable subspaces of $\mathbb{A}(\nu)$ for all $\nu$ close to $\underline{\nu}$. As a consequence, for $\delta>0$ small enough, we have the decomposition
\bqs
\C^2=\bbE^{s}(\nu)\oplus\bbE^{u}(\nu), \quad \forall \, \nu \in B_\delta(\underline{\nu}).
\eqs
And we denote $\pi^{s,u}(\nu)$ the associated projectors, which are given by contour integrals. For instance, we have
\bqs
\pi^s(\nu)=\frac{1}{2\pi \mbi}\int_\gamma (z I_2-\mathbb{A}(\nu))^{-1}\md z,
\eqs
where $\gamma$ is a contour that encloses the stable eigenvalue $\rho_-(\nu)$ and $I_2\in\mathscr{M}_2(\R)$ is the $2\times2$ identity matrix. A similar formula holds for $\pi^u(\nu)$, with this time a contour that encloses the unstable eigenvalue $\rho_+(\nu)$. This shows that $\pi^{s,u}(\nu)$ depends holomorphically on $\nu$ in $B_\delta(\underline{\nu})$ and, consequently, the stable and unstable subspaces $\bbE^{s,u}(\nu)$ also depend holomorphically on $\nu$ in $B_\delta(\underline{\nu})$, see \cite{Kato}. Up to taking $\delta$ even smaller, we can ensure that any $\nu\in B_\delta(\underline{\nu})$ lies in the exterior $\mathscr{U}$ of the resolvent set of $\mathscr{L}$, hence there exists a unique sequence $(\bW_j(\nu))_{j\in\Z}\in\ell^2(\Z)$ solution to \eqref{recurrenceW}.  Since the dynamics of the iteration \eqref{recurrenceW} for such $\nu$ has a hyperbolic dichotomy \cite{CF20}, the solution to \eqref{recurrenceW} is given by integrating either from $j$ to $+\infty$, or from $-\infty$ to $j-1$, depending on whether we compute the unstable or stable components of the vector $\bW_j(\nu)$. As a consequence, we can easily obtain the stable and unstable components of each $\bW_j(\nu)$ for $j\in\Z$ as
\bqs
\pi^{s}(\nu)\bW_j(\nu) = -e^{\lambda_*} \sum_{\ell=-\infty}^{j-1} \boldsymbol{\delta}_{\ell} \rho_-(\nu)^{j-1-\ell}\pi^{s}(\nu)\mathbf{e}, \quad j\in\Z,
\eqs
for the stable component, and
\bqs
\pi^{u}(\nu)\bW_j(\nu) = e^{\lambda_*} \sum_{\ell=0}^\infty \boldsymbol{\delta}_{j+\ell} \rho_+(\nu)^{-1-\ell}\pi^{u}(\nu)\mathbf{e}, \quad j\in\Z,
\eqs
for the unstable one. 
It is important to note that the sequence $\boldsymbol{\delta}$ has only one nonzero coefficient at $j=0$. Hence, we deduce that
\bqs
\pi^{s}(\nu)\bW_j(\nu)=\left\{
\begin{array}{lcl}
0 & \text{for} & j\leq 0,\\
-e^{\lambda_*}\rho_-(\nu)^{j-1}\pi^{s}(\nu)\mathbf{e}& \text{for} & j\geq 1,
\end{array}
\right.
\eqs
while
\bqs
\pi^{u}(\nu)\bW_j(\nu)=\left\{
\begin{array}{lcl}
0 & \text{for} & j\geq 1,\\
e^{\lambda_*} \rho_+(\nu)^{-1+j}\pi^{u}(\nu)\mathbf{e}& \text{for} & j\leq 0.
\end{array}
\right.
\eqs
Next, we remark that there exists $\beta>0$ such that
\bqs
\forall \, \nu \in B_\delta(\underline{\nu}), \quad |\rho_-(\nu)| \leq e^{-\beta}, \quad \text{ and } \quad |\rho_+(\nu)| \geq e^{\beta}.
\eqs
As a consequence, we have
\bqq
\forall \, \nu \in B_\delta(\underline{\nu}) \quad \left| \pi^{s}(\nu)\bW_j(\nu) \right| \leq \left\{
\begin{array}{lcl}
0 & \text{for} & j\leq 0,\\
C e^{-\beta(j-1)}& \text{for} & j\geq 1,
\end{array}
\right.
\label{estimateWs}
\eqq
while
\bqq
\forall \, \nu \in B_\delta(\underline{\nu}) \quad  \left|\pi^{u}(\nu)\bW_j(\nu)\right|\leq \left\{
\begin{array}{lcl}
0 & \text{for} & j\geq 1,\\
Ce^{\beta(j+1)}& \text{for} & j\leq 0,
\end{array}
\right.
\label{estimateWu}
\eqq
where $C>0$ is a uniform bound on $B_\delta(\underline{\nu})$ of $| e^{\lambda_*}\pi^{s,u}(\nu)\mathbf{e} |$. Finally, adding \eqref{estimateWs} and \eqref{estimateWu} concludes the proof of the lemma, since the spatial Green's function $\bG_j(\nu)$ is the second coordinate of the vector $\bW_j(\nu)\in\C^2$.
\end{proof}

\begin{lem}[Bounds at infinity]\label{leminf} There exist a radius $R>1$ and two constants $C>0$ and $\kappa>0$ such that $\left\{ \nu \in \C~|~ |\nu|\geq R\right\}\subset \mathscr{U}$ and
\bqs
\forall \, |\nu|\geq R, \quad  \forall \, j\in\Z, \quad \left| \bG_j(\nu)\right| \leq Ce^{-\kappa |j|}.
\eqs
\end{lem}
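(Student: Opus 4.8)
The plan is to run the same argument as in the proof of Lemma~\ref{lemloc}, but replacing the estimates local around a fixed point by estimates that are \emph{uniform} over the unbounded region $\{|\nu|\ge R\}$. First I would fix the radius. The spectrum $\sigma(\mathscr{L})$ is a bounded subset of $\C$ (the ellipse described above), and the bounded component of the resolvent set, namely the interior $\C\backslash\overline{\mathscr{U}}$ enclosed by $\sigma(\mathscr{L})$, is likewise bounded; hence there is $R_0>0$ with $\sigma(\mathscr{L})\cup(\C\backslash\overline{\mathscr{U}})\subset B_{R_0}(0)$, so that $\{|\nu|>R_0\}\subset\mathscr{U}$, which already yields the claimed inclusion for any $R>R_0$. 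We will enlarge $R\ge R_0$ below in order to gain a quantitative spectral gap.

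Next I would record the behaviour of the roots $\rho_\pm(\nu)$ of \eqref{eigenvaluesAnu} at infinity. Since $\rho_+(\nu)\rho_-(\nu)=\det\mathbb{A}(\nu)=e^{2\lambda_*}$ and $\rho_+(\nu)+\rho_-(\nu)=e^{\lambda_*}(\nu+e^{\lambda_*}+e^{-\lambda_*})$, one gets $|\rho_+(\nu)|+|\rho_-(\nu)|\ge e^{\lambda_*}(|\nu|-e^{\lambda_*}-e^{-\lambda_*})$; and since $|\rho_+(\nu)|>1>|\rho_-(\nu)|$ on $\mathscr{U}$, the root $\rho_+$ is the larger one, whence $|\rho_+(\nu)|\ge\frac{1}{2}e^{\lambda_*}(|\nu|-e^{\lambda_*}-e^{-\lambda_*})$ and $|\rho_-(\nu)|=e^{2\lambda_*}/|\rho_+(\nu)|\le 2e^{\lambda_*}/(|\nu|-e^{\lambda_*}-e^{-\lambda_*})$. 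Consequently, after enlarging $R$, there is $\kappa>0$ such that $|\rho_+(\nu)|\ge e^{\kappa}$ and $|\rho_-(\nu)|\le e^{-\kappa}$ for all $|\nu|\ge R$ (the branch points $\nu=-e^{\lambda_*}-e^{-\lambda_*}\pm2$ of $\rho_\pm$ are bounded and thus avoided, so the splitting stays genuine). Since the eigenvectors of $\mathbb{A}(\nu)$ are $(1,\rho_\pm(\nu))^\mathbf{T}$, the spectral projections evaluated at $\mathbf{e}=(0,1)^\mathbf{T}$ read $\pi^s(\nu)\mathbf{e}=(\rho_-(\nu)-\rho_+(\nu))^{-1}(1,\rho_-(\nu))^\mathbf{T}$ and $\pi^u(\nu)\mathbf{e}=(\rho_+(\nu)-\rho_-(\nu))^{-1}(1,\rho_+(\nu))^\mathbf{T}$; using $|\rho_+(\nu)-\rho_-(\nu)|\ge|\rho_+(\nu)|-|\rho_-(\nu)|\ge e^{\kappa}-e^{-\kappa}$ and $|\rho_+(\nu)|/|\rho_+(\nu)-\rho_-(\nu)|\le(1-e^{-2\kappa})^{-1}$, one checks that $|e^{\lambda_*}\pi^{s}(\nu)\mathbf{e}|$ and $|e^{\lambda_*}\pi^{u}(\nu)\mathbf{e}|$ are both bounded by a constant independent of $\nu$ for $|\nu|\ge R$.

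Finally, with these uniform bounds in hand I would repeat the resolution of the recurrence \eqref{recurrenceW} carried out in Lemma~\ref{lemloc}: summing the stable component forward from $j$ to $+\infty$ and the unstable component backward from $-\infty$ to $j-1$ gives $\pi^s(\nu)\bW_j(\nu)=-e^{\lambda_*}\rho_-(\nu)^{j-1}\pi^s(\nu)\mathbf{e}$ for $j\ge1$ (and $0$ for $j\le0$), and $\pi^u(\nu)\bW_j(\nu)=e^{\lambda_*}\rho_+(\nu)^{j-1}\pi^u(\nu)\mathbf{e}$ for $j\le0$ (and $0$ for $j\ge1$). Extracting the second coordinate $\bG_j(\nu)$ of $\bW_j(\nu)$ and invoking $|\rho_-(\nu)|\le e^{-\kappa}$, $|\rho_+(\nu)|\ge e^{\kappa}$ together with the uniform projector bounds then yields $|\bG_j(\nu)|\le Ce^{-\kappa|j|}$ for all $|\nu|\ge R$ and $j\in\Z$.

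I do not expect a serious obstacle here: the scheme is identical to Lemma~\ref{lemloc}, and everything reduces to elementary manipulations of the explicit formulas for $\rho_\pm$ and for the projectors. The only point that requires genuine care — and the closest thing to a ``difficulty'' — is to make sure the constants $C$ and $\kappa$ are truly uniform over the \emph{unbounded} set $\{|\nu|\ge R\}$ rather than $\nu$-dependent as in the local lemma, which is precisely what the product and sum identities $\rho_+\rho_-=e^{2\lambda_*}$, $\rho_++\rho_-=e^{\lambda_*}(\nu+e^{\lambda_*}+e^{-\lambda_*})$ are used for.
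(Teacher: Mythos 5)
Your proposal is correct and follows essentially the same route as the paper: the paper's (very terse) proof also reduces the lemma to the asymptotics $|\rho_+(\nu)|\sim e^{\lambda_*}|\nu|$ and $|\rho_-(\nu)|\sim e^{\lambda_*}/|\nu|$ at infinity combined with the explicit expressions for $\pi^{s,u}(\nu)\bW_j(\nu)$ from Lemma~\ref{lemloc}. Your derivation of these asymptotics via the trace and determinant identities, and your explicit verification that the projector bounds are uniform on the unbounded region, are just a more detailed write-up of the same argument.
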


\begin{proof}
The proof is similar to Lemma~\ref{lemloc} and simply relies on the fact that from their definition \eqref{eigenvaluesAnu} we have
\bqs
\underset{|\nu|\rightarrow+\infty}{\lim}|\nu| \left| \rho_-(\nu)\right| = e^{\lambda_* }\quad \text{ and } \quad \underset{|\nu|\rightarrow+\infty}{\lim} \frac{\left| \rho_+(\nu)\right|}{|\nu|} = e^{\lambda_*},
\eqs
together with our expression for $\pi^{s,u}(\nu)\bW_j(\nu)$ when $\nu\in\mathscr{U}$.
\end{proof}

Next, we recall that $\sigma(\mathscr{L})$ is tangent to the imaginary axis at the origin. This translates to $\rho_-(0)=1$ with $\rho_+(0)=e^{2\lambda_*}>1$. For $|\nu|\rightarrow 0$, we also note that $\rho_-(\nu)$ has the following expansion
\bqq
\rho_-(\nu)= 1- \frac{\nu}{c_*}+\frac{\nu^2}{c_*^3e^{-\lambda_*}}+O\left(|\nu|^3\right).
\label{expansionrho}
\eqq
It is thus important to examine the behavior of the spatial Green's function close to the origin. Let us remark that the spatial Green's function is well-defined in $B_\delta(0)\cap \mathscr{U}$ for any radius $\delta>0$. Our goal is to extend holomorphically $\bG_j(\nu)$ to a whole neighborhood of $\nu=0$ which amounts to passing through the essential spectrum of $\mathscr{L}$, see \cite{CF20,ZH98} for a similar argument in different contexts.

\begin{lem}[Bounds close to the origin]\label{lemorigin}
There exist some $\epsilon>0$ and constants $C>0$ and $\omega>0$ such that for any $j\in\Z$, the component $\bG_j(\nu)$ defined on $B_\epsilon(0)\cap \mathscr{U}$ extends holomorphically to the whole ball $B_\epsilon(0)$ with respect to $\nu$, and the holomorphic extension (still denoted by $\bG_j(\nu)$) satisfies the bounds
\bqs
\forall \, \nu \in B_\epsilon(0), \quad  \forall \, j\leq0, \quad \left| \bG_j(\nu)\right| \leq C e^{-\omega|j|} ,
\eqs
and
\bqs
\forall \, \nu \in B_\epsilon(0), \quad  \forall \, j\geq1, \quad \left|\bG_j(\nu)-\frac{1}{c_*} e^{\varpi(\nu)j} \right| \leq C e^{\Re(\varpi(\nu))j}|\nu|,
\eqs
where
\bqq
\varpi(\nu) := \ln( \rho_-(\nu))=- \frac{\nu}{c_*}+\frac{\cosh(\lambda_*)}{c_*^3}\nu^2+O\left(|\nu|^3\right), \quad \forall \, \nu \in B_\epsilon(0),
\label{expansionvarpinu}
\eqq
is holomorphic in the ball $B_\epsilon(0)$.
\end{lem}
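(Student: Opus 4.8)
The plan is to understand the spatial Green's function near the origin by passing from the scalar resolvent equation \eqref{spatialGreen} to the first-order recursion \eqref{recurrenceW} and exploiting the hyperbolic splitting, just as in the proof of Lemma~\ref{lemloc}, but this time tracking the singular behavior of the stable eigenvalue $\rho_-(\nu)$ as $\nu\to0$. The crucial structural fact is that for $\nu\in\mathscr{U}$ near $0$ we already know (from the proof of Lemma~\ref{lemloc}) that
\bqs
\bG_j(\nu)=\begin{cases} e^{\lambda_*}\big[\pi^u(\nu)\rho_+(\nu)^{-1+j}\mathbf e\big]_2, & j\leq 0,\\[1mm] -e^{\lambda_*}\big[\pi^s(\nu)\rho_-(\nu)^{j-1}\mathbf e\big]_2, & j\geq 1.\end{cases}
\eqs
Here $\rho_+(\nu)$, $\pi^u(\nu)$, $\pi^s(\nu)$ are holomorphic in a full neighborhood of $0$ (since $\rho_+(0)=e^{2\lambda_*}$ is a simple eigenvalue of $\mathbb A(0)$ staying away from the unit circle and from $\rho_-(0)=1$), so the $j\leq 0$ formula directly furnishes a holomorphic extension to $B_\epsilon(0)$ with the bound $|\bG_j(\nu)|\le Ce^{-\omega|j|}$, $\omega=\ln e^{2\lambda_*}$-ish, simply because $|\rho_+(\nu)|\ge e^{\beta}>1$ uniformly on $B_\epsilon(0)$ and $\pi^u(\nu)\mathbf e$ is bounded. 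For $j\ge1$, because $\rho_-(\nu)$ is a simple eigenvalue that does not vanish near $0$, the map $\nu\mapsto\rho_-(\nu)$ is itself holomorphic and nonzero on $B_\epsilon(0)$, so $\varpi(\nu):=\ln\rho_-(\nu)$ is a well-defined holomorphic function there; expanding \eqref{expansionrho} gives the stated Taylor expansion of $\varpi$. Thus $\rho_-(\nu)^{j-1}=e^{\varpi(\nu)(j-1)}$ extends holomorphically, and $\pi^s(\nu)$ does too, giving the holomorphic extension of $\bG_j(\nu)$ on all of $B_\epsilon(0)$.

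Next I would pin down the leading coefficient. Write $\bG_j(\nu)=-e^{\lambda_*}e^{-\varpi(\nu)}\,g(\nu)\,e^{\varpi(\nu)j}$ for $j\ge1$, where $g(\nu):=[\pi^s(\nu)\mathbf e]_2$ is holomorphic near $0$. Evaluating at $\nu=0$: $\rho_-(0)=1$, $\rho_+(0)=e^{2\lambda_*}$, so $\mathbb A(0)$ has eigenvector $(1,1)^{\mathbf T}$ for $\rho_-$ and $(1,e^{2\lambda_*})^{\mathbf T}$ for $\rho_+$; decomposing $\mathbf e=(0,1)^{\mathbf T}$ in this basis gives $\mathbf e=\frac{e^{2\lambda_*}}{e^{2\lambda_*}-1}(1,1)^{\mathbf T}-\frac{1}{e^{2\lambda_*}-1}(1,e^{2\lambda_*})^{\mathbf T}$, hence $[\pi^s(0)\mathbf e]_2=\frac{e^{2\lambda_*}}{e^{2\lambda_*}-1}$ and $-e^{\lambda_*}e^{-\varpi(0)}[\pi^s(0)\mathbf e]_2=-\frac{e^{3\lambda_*}}{e^{2\lambda_*}-1}$. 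One checks this equals $-\frac{1}{e^{-\lambda_*}-e^{\lambda_*}}=\frac{1}{e^{\lambda_*}-e^{-\lambda_*}}=\frac1{c_*}$ using the second line of \eqref{systemclamb}; indeed $\frac{e^{3\lambda_*}}{e^{2\lambda_*}-1}=\frac{e^{\lambda_*}}{1-e^{-2\lambda_*}}\cdot\frac{1}{1}$ — after multiplying numerator and denominator by $e^{-\lambda_*}$ one gets $\frac{e^{\lambda_*}}{e^{\lambda_*}-e^{-\lambda_*}}$, and a short manipulation (or re-deriving the coefficient directly from the scalar equation $\rho_+(0)\rho_-(0)=e^{2\lambda_*}$ together with the Wronskian-type identity $e^{\lambda_*}(\rho_+(0)-\rho_-(0))=c_*\,$? ) produces the clean value $1/c_*$; in the write-up I would simply compute $[\pi^s(0)\mathbf e]_2$ and the prefactor explicitly and verify the identity with \eqref{systemclamb}. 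So $\bG_j(0)=\frac1{c_*}e^{\varpi(0)j}=\frac1{c_*}$ for all $j\ge1$, consistent with the claimed formula at $\nu=0$.

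Finally I would upgrade to the error estimate uniform in $j$. Set $h(\nu):=-e^{\lambda_*}e^{-\varpi(\nu)}g(\nu)$, holomorphic on $B_\epsilon(0)$ with $h(0)=1/c_*$, so $|h(\nu)-\tfrac1{c_*}|\le C|\nu|$ on a possibly smaller ball. Then
\bqs
\bG_j(\nu)-\frac1{c_*}e^{\varpi(\nu)j}=\big(h(\nu)-\tfrac1{c_*}\big)e^{\varpi(\nu)j},
\eqs
and since $|e^{\varpi(\nu)j}|=e^{\Re(\varpi(\nu))j}$ we get exactly $|\bG_j(\nu)-\frac1{c_*}e^{\varpi(\nu)j}|\le Ce^{\Re(\varpi(\nu))j}|\nu|$ for $j\ge1$, which is the desired bound. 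I would also note that, shrinking $\epsilon$ if needed, $\Re(\varpi(\nu))=-\Re(\nu)/c_*+O(|\nu|^2)$ is controlled, so these estimates are genuinely useful on the contour $\Gamma$ later. The main obstacle is conceptual rather than computational: justifying that the stable/unstable projectors and the eigenvalue $\rho_-$ extend holomorphically \emph{across} the essential spectrum of $\mathscr L$ (i.e. that the formula derived in $\mathscr U$ continues to define $\bG_j$ on the other side of $\sigma(\mathscr L)$). This is legitimate precisely because $\rho_\pm(\nu)$ are given by the explicit algebraic formula \eqref{eigenvaluesAnu}, which is holomorphic and keeps the spectral gap $|\rho_-|<1<|\rho_+|$ on a neighborhood of $0$ in $\C$ (not just in $\mathscr U$) once one checks that the discriminant $(\nu+e^{\lambda_*}+e^{-\lambda_*})^2-4$ is nonzero and bounded away from the negative reals near $\nu=0$, so no branch crossing occurs; the extension then agrees with $\bG_j(\nu)$ on $B_\epsilon(0)\cap\mathscr U$ by uniqueness of holomorphic continuation, exactly the mechanism used in \cite{CF20,ZH98}.
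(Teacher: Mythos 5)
Your proposal follows essentially the same route as the paper: holomorphic continuation of $\rho_-$, of the projectors, and hence of $\bG_j(\nu)=\Psi(\nu)e^{\varpi(\nu)j}$ across the essential spectrum via the explicit algebraic formula \eqref{eigenvaluesAnu} (the discriminant equals $c_*^2>0$ at $\nu=0$), followed by the identity $\Psi(0)=1/c_*$ and the Lipschitz bound $|\Psi(\nu)-1/c_*|\le C|\nu|$, which is exactly how the paper obtains the error estimate. The one flaw is the arithmetic in your eigenbasis decomposition of $\mathbf e=(0,1)^{\mathbf T}$: the correct coefficients are $\mathbf e=-\tfrac{1}{e^{2\lambda_*}-1}(1,1)^{\mathbf T}+\tfrac{1}{e^{2\lambda_*}-1}(1,e^{2\lambda_*})^{\mathbf T}$, so $[\pi^s(0)\mathbf e]_2=-\tfrac{1}{e^{2\lambda_*}-1}$ and the prefactor is $\tfrac{e^{\lambda_*}}{e^{2\lambda_*}-1}=\tfrac{1}{c_*}$ directly, whereas your value $\tfrac{e^{2\lambda_*}}{e^{2\lambda_*}-1}$ gives a vector whose first component is $1\ne 0$ and leads to a negative prefactor --- a slip you flagged yourself and which does not affect the (correct) conclusion.
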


\begin{proof}
Most ingredients are similar to the ones we already used in the proof of Lemma~\ref{lemloc}. We just need to adapt our notation, since the hyperbolic dichotomy of $\mathbb{A}(\nu)$ does not hold any longer in a whole neighborhood of the origin $\nu=0$. We readily see that $\rho_-(\nu)$ extends holomorphically to a neighborhood of $\nu=0$, with corresponding eigenvector
\bqs
\bbE^0(\nu):=\left(
\begin{matrix}
1\\
\rho_-(\nu)
\end{matrix}
\right)\in\C^2,
\eqs
which also depends holomorphically on $\nu$ in a neighborhood of $\nu=0$. Note that $\bbE^0(\nu)$ contributes to the stable subspace of $\mathbb{A}(\nu)$ for those $\nu\in\mathscr{U}$ close to $\nu=0$ but the situation is less clear as $\nu$ passes the essential spectrum. Nevertheless, we still have the decomposition
\bqs
\C^2=\bbE^0(\nu)\oplus \bbE^{u}(\nu), \quad \forall \, \nu \in B_\epsilon(0),
\eqs 
for a sufficiently small $\epsilon>0$. We denote by $\pi^0(\nu)$ and $\pi^u(\nu)$ the holomoprhic projectors associated with the above decomposition. We readily see that for each $\nu\in B_\epsilon(0) \cap \mathscr{U}$, we can reuse the expressions derived in Lemma~\ref{lemloc} to write
\bqs
\pi^0(\nu)\bW_j(\nu) = -e^{\lambda_*} \sum_{\ell=-\infty}^{j-1} \boldsymbol{\delta}_{\ell} \rho_-(\nu)^{j-1-\ell}\pi^0(\nu)\mathbf{e}, \quad j\in\Z,
\eqs
 and
\bqs
\pi^{u}(\nu)\bW_j (\nu)= e^{\lambda_*} \sum_{\ell=0}^\infty \boldsymbol{\delta}_{j+\ell} \rho_+(\nu)^{-1-\ell}\pi^{u}(\nu)\mathbf{e}, \quad j\in\Z.
\eqs
The unstable component $\pi^{u}(\nu)\bW_j(\nu)$ obviously extends holomorphically to the whole ball $B_\epsilon(0)$, as a consequence, we get
\bqs
\left| \pi^{u}(\nu)\bW_j (\nu)\right| \leq C e^{-\omega|j|}, \quad \forall \, \nu \in B_\epsilon(0), \quad  \forall \, j\leq0,
\eqs
for appropriate constants $C>0$ and $\omega>0$. We now focus on the vector $\pi^0(\nu)\bW_j(\nu) $ and readily remark that it also extends holomorphically to the whole ball $B_\epsilon(0)$, and we deduce that
\bqq
\forall \, \nu \in B_\epsilon(0), \quad \bG_j(\nu)=\langle \pi^0(\nu)\bW_j (\nu),\mathbf{e}\rangle_{\C^2} =  \left\{
\begin{array}{lcl}
0 & \text{for} & j\leq 0,\\
\Psi(\nu)e^{\varpi(\nu)j}& \text{for} & j\geq 1,
\end{array}
\right.
\label{defGjnu0}
\eqq
where we have set
\bqq
\Psi(\nu):= - e^{\lambda_*}\frac{\langle \pi^0(\nu)\mathbf{e},\mathbf{e}\rangle_{\C^2}}{e^{\varpi(\nu)}}, \quad \forall \, \nu \in B_\epsilon(0).
\label{defPsinu}
\eqq
Note that eventually upon reducing the size of $\epsilon$, we can always ensure that $\varpi(\nu)=\ln (\rho_-(\nu))$ is holomorphic on $B_\epsilon(0)$, and thus 
$\Psi$ is also holomorphic on $B_\epsilon(0)$  with $\Psi(0)=\frac{1}{c_*}>0$.  Indeed, the projection $\pi^0(\nu)$ can be expressed as
\bqs
\pi^0(\nu)= \frac{1}{\rho_+(\nu)-\rho_-(\nu)} \left\langle \, \cdot \,,  \left( \begin{matrix} \rho_+(\nu) \\ -1 \end{matrix}\right) \right\rangle_{\C^2} \left( \begin{matrix} 1 \\ \rho_-(\nu) \end{matrix}\right),
\eqs
such that
\bqs
\Psi(0)=-e^{\lambda_*} \langle \pi^0(0)\mathbf{e}, \mathbf{e} \rangle_{\C^2}= \frac{e^{\lambda_*}}{e^{2\lambda_*}-1}=\frac{1}{c_*}>0.
\eqs
Here, we denoted by $\langle \cdot, \cdot \rangle_{\C^2}$ the usual scalar product on $\C^2$.
\end{proof}

Recalling that $\sigma(\mathscr{L})$ is an ellipse tangent to the imaginary axis at the origin, with $\epsilon>0$ fixed in Lemma~\ref{lemorigin}, there exist $\gamma_\epsilon>0$ and $\omega_\epsilon \in(0,\pi/2)$ such that the sector $\left\{ \gamma_\epsilon+\nu \in \C ~|~ |\arg(\nu)| \leq \frac{\pi}{2}+\omega_\epsilon\right\}$ is contained in $\mathscr{U}$ and intersects $B_\epsilon(0)$ twice in the left-half plane. We then denote
\bqs
\mathscr{S}_{\epsilon,R}:=\left\{ \gamma_\epsilon+\nu \in \C ~|~ |\arg(\nu)| \leq \frac{\pi}{2}+\omega_\epsilon\right\} \cap \overline{\left( B_R(0) \backslash B_\epsilon(0)\right)},
\eqs
with $R>1$ defined Lemma~\ref{leminf}, and readily remark that $\mathscr{S}_{\epsilon,R}\subset \mathscr{U}$.

\begin{lem}[Intermediate bounds]\label{lemint}
For $R>1$ and $\epsilon>0$ fixed in Lemma~\ref{leminf} and Lemma~\ref{lemorigin} respectively, there exist $C>0$ and $\beta_0>0$ such that 
\bqs
\forall \, \nu \in  \mathscr{S}_{\epsilon,R},  \quad \left| \bG_j(\nu)\right| \leq C \left\{
\begin{array}{lcl}
 e^{-\beta_0|j|}, & \text{for} & j\leq 0,\\
1, & \text{for} & j\geq 1.
\end{array}
\right.
\eqs
\end{lem}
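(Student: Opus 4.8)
The plan is to obtain Lemma~\ref{lemint} by patching the local estimate of Lemma~\ref{lemloc} over the compact intermediate window $\mathscr{S}_{\epsilon,R}$. The first thing I would record is that $\mathscr{S}_{\epsilon,R}$ is a \emph{compact} subset of the \emph{open} resolvent region $\mathscr{U}$: it is the intersection of a closed sector with the closed annulus $\overline{B_R(0)\setminus B_\epsilon(0)}$, hence closed and bounded, and by construction $\mathscr{S}_{\epsilon,R}\subset\mathscr{U}$. In particular, contrary to the situation near the tangency point handled in Lemma~\ref{lemorigin}, the hyperbolic dichotomy of $\mathbb{A}(\nu)$ holds throughout $\mathscr{S}_{\epsilon,R}$, and no holomorphic continuation through the essential spectrum is needed here.

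Given this, the quickest route is a finite subcover argument. For each $\underline\nu\in\mathscr{S}_{\epsilon,R}$, Lemma~\ref{lemloc} yields a ball $B_{\delta(\underline\nu)}(\underline\nu)$ and constants $C(\underline\nu),\beta(\underline\nu)>0$ with $|\bG_j(\nu)|\le C(\underline\nu)e^{-\beta(\underline\nu)|j|}$ on it. Extracting a finite subcover $B_{\delta_1}(\underline\nu_1),\dots,B_{\delta_N}(\underline\nu_N)$ of $\mathscr{S}_{\epsilon,R}$ and setting $C:=\max_{1\le i\le N}C(\underline\nu_i)$, $\beta_0:=\min_{1\le i\le N}\beta(\underline\nu_i)>0$, one gets $|\bG_j(\nu)|\le Ce^{-\beta_0|j|}$ for all $\nu\in\mathscr{S}_{\epsilon,R}$ and $j\in\Z$. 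This is in fact stronger than the claimed bound, which only asks for exponential decay when $j\le 0$ and mere boundedness when $j\ge 1$.

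If one prefers a self-contained argument with explicit constants (which may be more convenient for the later contour estimates), I would instead re-run the computation of Lemma~\ref{lemloc} with uniform data. By compactness of $\mathscr{S}_{\epsilon,R}$ and continuity of $\nu\mapsto\rho_\pm(\nu)$, the quantities $\sup_{\mathscr{S}_{\epsilon,R}}|\rho_-(\nu)|<1$ and $\inf_{\mathscr{S}_{\epsilon,R}}|\rho_+(\nu)|>1$ are attained, so there is $\beta_0>0$ with $|\rho_-(\nu)|\le e^{-\beta_0}$ and $|\rho_+(\nu)|\ge e^{\beta_0}$ on $\mathscr{S}_{\epsilon,R}$; likewise the holomorphic projectors $\pi^{s,u}(\nu)$ satisfy $C_0:=\sup_{\mathscr{S}_{\epsilon,R}}\big(|\pi^s(\nu)\mathbf{e}|+|\pi^u(\nu)\mathbf{e}|\big)<+\infty$. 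Feeding these into the explicit formulas $\pi^s(\nu)\bW_j(\nu)=-e^{\lambda_*}\rho_-(\nu)^{j-1}\pi^s(\nu)\mathbf{e}$ for $j\ge1$ (and $0$ for $j\le0$), $\pi^u(\nu)\bW_j(\nu)=e^{\lambda_*}\rho_+(\nu)^{j-1}\pi^u(\nu)\mathbf{e}$ for $j\le0$ (and $0$ for $j\ge1$), and reading off the second coordinate, one finds $|\bG_j(\nu)|\le e^{\lambda_*}C_0e^{-\beta_0(j-1)}\le e^{\lambda_*}C_0$ for $j\ge1$ and $|\bG_j(\nu)|\le e^{\lambda_*}C_0e^{-\beta_0(1-j)}\le e^{\lambda_*}C_0e^{-\beta_0|j|}$ for $j\le0$, which is the assertion with $C:=e^{\lambda_*}C_0$.

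There is no genuine obstacle here; the statement is a routine uniformization of Lemma~\ref{lemloc}. The only point deserving explicit mention is that $\mathscr{S}_{\epsilon,R}$ keeps a positive distance from $\sigma(\mathscr{L})$ — equivalently, is compactly contained in $\mathscr{U}$ — which is exactly why the defining sector was shifted by $\gamma_\epsilon>0$ and the small ball $B_\epsilon(0)$ was removed; this is what makes the spectral gap $\beta_0$ and the projector bound $C_0$ genuinely uniform in $\nu\in\mathscr{S}_{\epsilon,R}$.
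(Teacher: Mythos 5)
Your proposal is correct and is essentially the paper's own argument: the paper proves Lemma~\ref{lemint} by the same one-line observation that $\mathscr{S}_{\epsilon,R}$ is a compact subset of $\mathscr{U}$, so the local bounds of Lemma~\ref{lemloc} can be made uniform by a finite subcover. Your additional remarks (that one in fact gets exponential decay for all $j\in\Z$, and the explicit uniformization of $\rho_\pm$ and the projectors) are consistent refinements rather than a different route.
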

\begin{proof}
By construction, the set $\mathscr{S}_{\epsilon,R}$ is a compact domain of $\C$. As a consequence, we can  obtain uniform constants from Lemma~\ref{lemloc} by using a simple compactness argument.
\end{proof}

\subsection{The temporal Green's function}

In this section, we study the temporal Green's function $(\mathscr{G}_j(t))_{j\in\Z}$ solution of \eqref{temporalGreen} starting from the initial sequence $\boldsymbol{\delta}$.  We use the inverse Laplace transform to express $(\mathscr{G}_j(t))_{j\in\Z}$ as the following contour integral
\bqs
\forall \, t>0, \quad \forall j\in\Z, \, \quad \mathscr{G}_j(t) = \frac{1}{2\pi \mbi}\int_\Gamma e^{\nu t} \bG_j(\nu)\md \nu,
\eqs
where $\Gamma\subset\C$ is some well-chosen contour in the complex plane which does not intersect the spectrum of $\mathscr{L}$. For example, one can take the sectorial contour $\Gamma=\left\{ \gamma_0-\gamma_1|\xi|+\mbi \xi ~|~ \xi\in\R\right\}$ for some well chosen $\gamma_{0,1}>0$. Our aim is to obtain pointwise bounds on $\mathscr{G}_j(t)$ for $t\geq 1$ and our main technique is to deform the contour $\Gamma$ in the complex plane in order to obtain sharp asymptotics. Note that we allow the deformed contour to depend eventually on $j$ and $t$. We will divide the analysis into several cases. We will first show that the pointwise temporal Green's function $\mathscr{G}_j(t)$ decays exponentially in time and space whenever $j-c_*t$ is sufficiently large and $t\geq1$ whereas for $j\approx c_*t$ we will prove that the pointwise temporal Green's function $\mathscr{G}_j(t)$ has a generalized Gaussian estimate. Let us note that the choice of the contours are rather standard, and we refer to \cite{CF20,ZH98} for similar computations in different settings.


\subsubsection{Exponential pointwise estimates away from $j\approx c_*t$}

In the next lemmas, we show that the pointwise temporal Green's function $\mathscr{G}_j(t)$ decays exponentially in time and space whenever $j-c_*t$ is sufficiently large and $t\geq1$.

\begin{lem}\label{lemjlarge}
There exists a constant $L>4R/\kappa$ with $R>1$ chosen in Lemma~\ref{leminf}, such that for $|j|>Lt$ and $t\geq 1$, one has the estimate
\bqs
\left| \mathscr{G}_j(t) \right| \leq C e^{-\kappa_1 t-\kappa_2|j|},
\eqs
for some $C>0$ and $\kappa_{1,2}>0$.
\end{lem}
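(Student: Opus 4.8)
The plan is to read the bound off directly from the Laplace representation
\bqs
\mathscr{G}_j(t) = \frac{1}{2\pi\mbi}\int_\Gamma e^{\nu t}\,\bG_j(\nu)\,\md\nu
\eqs
after deforming $\Gamma$ onto a circle centered at the origin. Note first that $|j|>Lt$ together with $t\geq 1$ and $L>4R/\kappa>1$ forces $j\neq 0$. Since $\mathscr{L}$ is a nearest-neighbour operator, $\mathscr{L}^k\boldsymbol{\delta}$ is supported in $\{-k,\dots,k\}$, so $(\mathscr{L}^k\boldsymbol{\delta})_j=0$ whenever $k<|j|$; consequently $\bG_j(\nu)=(\nu\,\mathrm{Id}-\mathscr{L})^{-1}\boldsymbol{\delta}$ admits, for $|\nu|$ large, the Laurent expansion $\bG_j(\nu)=\sum_{k\geq |j|}(\mathscr{L}^k\boldsymbol{\delta})_j\,\nu^{-k-1}$, which in particular decays like $|\nu|^{-|j|-1}$ at infinity. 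Because $\{|\nu|\geq R\}\subset\mathscr{U}$ by Lemma~\ref{leminf}, the radius $R$ exceeds the spectral radius of $\mathscr{L}$ on $\ell^2(\Z)$, so this expansion converges uniformly on $\{|\nu|=R\}$; a term-by-term integration (equivalently, the standard deformation of the sectorial contour onto a loop encircling $\sigma(\mathscr{L})$, as in \cite{CF20,ZH98}) then gives, for every $r\geq R$,
\bqs
\mathscr{G}_j(t) = \frac{1}{2\pi\mbi}\oint_{|\nu|=r} e^{\nu t}\,\bG_j(\nu)\,\md\nu,
\eqs
the circle being positively oriented.

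Next I would estimate this integral crudely. On $\{|\nu|=r\}$ with $r\geq R$, Lemma~\ref{leminf} yields $|\bG_j(\nu)|\leq Ce^{-\kappa|j|}$, while $|e^{\nu t}|\leq e^{rt}$ trivially, so bounding the integral by $\tfrac{1}{2\pi}$ times its length times the supremum of the integrand gives $|\mathscr{G}_j(t)| \leq C\,r\,e^{rt-\kappa|j|}$. Taking the fixed radius $r=R$ and using $|j|>Lt$ with $L>4R/\kappa$, one has $\kappa|j|>\kappa Lt>4Rt$, hence $Rt-\kappa|j|\leq \tfrac{1}{4}\kappa|j|-\kappa|j|=-\tfrac{3}{4}\kappa|j|$ and therefore $|\mathscr{G}_j(t)|\leq CR\,e^{-\frac{3}{4}\kappa|j|}$. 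Splitting the exponent once more via $|j|>Lt$, namely $-\tfrac{3}{4}\kappa|j|=-\tfrac{3}{8}\kappa|j|-\tfrac{3}{8}\kappa|j|\leq-\tfrac{3}{8}\kappa L\,t-\tfrac{3}{8}\kappa|j|$, yields the claimed bound with $\kappa_1=\tfrac{3}{8}\kappa L$, $\kappa_2=\tfrac{3}{8}\kappa$ and constant $CR$. (One may instead optimise over $r$ by taking $r=\kappa|j|/(2t)$, which is $\geq R$ precisely because $|j|/t>L>4R/\kappa$, and obtain a bound of the same form after absorbing the algebraic prefactor.)

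The computation itself is essentially immediate; the only point demanding real care is the first paragraph, i.e.\ justifying the passage from the sectorial contour $\Gamma$ to the circle $\{|\nu|=R\}$. This is exactly where it matters that $j\neq 0$ (so $\bG_j$ decays algebraically, not merely like $1/\nu$, at infinity) and that $\{|\nu|\geq R\}$ lies in the unbounded resolvent component $\mathscr{U}$; it is the analogue of the routine contour manipulations in \cite{CF20,ZH98}. If one prefers to avoid the circle entirely and stay closer to the contours used in the subsequent lemmas, the same argument works by deforming $\Gamma$ only so that its bounded part is pushed outside $B_R(0)$ while its two unbounded tails remain in $\{\Re\nu\to-\infty\}$: Lemma~\ref{leminf} then controls $|\bG_j(\nu)|$ by $Ce^{-\kappa|j|}$ on every piece that matters, the tails contribute a super-exponentially small remainder in the arclength parameter, and the conclusion follows in the same way.
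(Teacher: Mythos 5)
Your argument is correct, and it rests on exactly the same two ingredients as the paper's proof: the uniform bound $\left|\bG_j(\nu)\right|\le Ce^{-\kappa|j|}$ from Lemma~\ref{leminf} on a contour contained in $\{|\nu|\ge R\}$, and the arithmetic $|j|>Lt$ with $L>4R/\kappa$, which lets the spatial decay absorb the temporal growth of $|e^{\nu t}|$. The only real divergence is the choice of contour. The paper never closes the contour: it takes the sectorial curve $\left\{2R-\delta|\xi|+\mbi\xi \,:\, \xi\in\R\right\}$ with $0<\delta<\sqrt{3}$, which stays entirely in $\{|\nu|>R\}$, so no new representation formula is needed and the integral converges thanks to the factor $e^{-\delta|\xi|t}$ (producing a harmless extra $1/t$); the exponent $2Rt-\kappa|j|$ is then split exactly as you split $Rt-\kappa|j|$. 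You instead collapse onto the circle $\{|\nu|=R\}$, which makes the final estimate a one-liner but obliges you to justify the closed-contour (Dunford) representation of $\mathscr{G}_j(t)$; your Neumann-series, term-by-term integration does this cleanly — it identifies the circle integral with $\sum_{k\ge|j|}(\mathscr{L}^k\boldsymbol{\delta})_j\,t^k/k!=(e^{t\mathscr{L}}\boldsymbol{\delta})_j$ directly from the definition of $\mathscr{G}$, and the finite propagation speed of $\mathscr{L}$ is precisely what gives the $|\nu|^{-|j|-1}$ decay that makes this legitimate. (If one insists on deforming rather than using the series, the truncated sectorial contour must be closed on the \emph{left}, where $\Re\nu\to-\infty$ kills $e^{\nu t}$, not by arcs of large radius, where $e^{\nu t}$ would overwhelm the algebraic decay of $\bG_j$; your final paragraph describes exactly this, and that variant is in substance the paper's proof.) Either route yields the stated bound, with constants of the same form.
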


\begin{proof}
We let $|j|>Lt$ and $t\geq 1$.  We recall the estimate of Lemma~\ref{leminf} 
\bqs
\forall |\nu| \geq R, \quad  \forall \, j\in\Z, \quad \left| \bG_j(\nu)\right| \leq Ce^{-\kappa  |j|}.
\eqs
Let $L>4R/\kappa$ and we take the contour which is defined as
\bqs
\Gamma=\left\{ 2R-\delta |\xi|+\mbi \xi ~|~ \xi\in\R\right\},
\eqs
with $0<\delta<\sqrt{3}$ small enough such that $|\nu|>R$ for all $\nu\in\Gamma$. As a consequence, we obtain that
\bqs
\left| \frac{1}{2\pi \mbi}\int_{\Gamma} e^{\nu t} \bG_j(\nu)\md \nu \right| \leq C e^{2Rt -\kappa|j|} \int_0^\infty e^{-\delta \xi t}\md \xi \leq C_0 \frac{e^{2Rt -\kappa|j|}}{t}.
\eqs
Since $|j|>Lt$, we have that
\bqs
2Rt -\kappa|j| < \underbrace{\left(2R-\frac{\kappa L}{2}\right)}_{<0}t -\frac{\kappa}{2}|j|,
\eqs
and the result follows. 
\end{proof}

From now on, we will always assume that $|j| \leq Lt$ and $t\geq1$. We first deal with the case where $j\leq0$.

\begin{lem}
There are some $C>0$ and $\beta_{1,2}>0$ such that for any $-Lt \leq j \leq 0$ with $t\geq1$, one has the bound
\bqs
\left| \mathscr{G}_j(t) \right| \leq C e^{-\beta_1 t-\beta_2|j|}.
\eqs
\end{lem}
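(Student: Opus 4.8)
The plan is to start from the inverse Laplace representation
$$\mathscr{G}_j(t)=\frac{1}{2\pi\mbi}\int_\Gamma e^{\nu t}\bG_j(\nu)\,\md\nu$$
and to deform $\Gamma$ far into the left half-plane, exploiting that for $j\le 0$ the spatial Green's function $\bG_j$ is holomorphic not only on $\mathscr{U}$ but, by Lemma~\ref{lemorigin}, also on $B_\epsilon(0)$, hence on the whole connected open set $\mathscr{U}\cup B_\epsilon(0)$. Since $\sigma(\mathscr{L})$ is an ellipse tangent to $\mbi\R$ only at the origin, this set contains, for $\epsilon_0\in(0,\epsilon)$ small enough, the vertical segment $\{-\epsilon_0+\mbi\xi\ :\ |\xi|\le M\}$ for every $M>0$: the segment meets the interior of the ellipse only in a small arc pinched at the origin, which is contained in $B_\epsilon(0)$, and lies in $\mathscr{U}$ otherwise.

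Concretely I would fix $\epsilon_0\in(0,\epsilon)$ small, $M>\max\{R,e^{\lambda_*}-e^{-\lambda_*}\}$ large with $R$ from Lemma~\ref{leminf}, a small $\gamma_1>0$, and replace $\Gamma$ by
$$\Gamma_1:=\big\{-\epsilon_0+\mbi\xi\ :\ |\xi|\le M\big\}\ \cup\ \big\{-\epsilon_0-\gamma_1(|\xi|-M)+\mbi\xi\ :\ |\xi|\ge M\big\}.$$
For $\epsilon_0$ small the vertical part lies in $\mathscr{U}\cup B_\epsilon(0)$, while the two oblique tails lie in $\mathscr{U}$ (they run above and below the ellipse and satisfy $|\nu|\ge M\ge R$); moreover $\Gamma_1$ is homotopic to $\Gamma$ inside $\mathscr{U}\cup B_\epsilon(0)$ and on $\Gamma$ one has $|e^{\nu t}\bG_j(\nu)|\to 0$ as $|\Im\nu|\to\infty$ for $t>0$, so Cauchy's theorem gives $\mathscr{G}_j(t)=\frac{1}{2\pi\mbi}\int_{\Gamma_1}e^{\nu t}\bG_j(\nu)\,\md\nu$. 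The whole point of this choice is that $\Re(\nu)\le-\epsilon_0$ everywhere on $\Gamma_1$.

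It then remains to split this integral and insert the bounds on $\bG_j$ for $j\le 0$ already at our disposal: $|\bG_j(\nu)|\le Ce^{-\omega|j|}$ on the sub-arc of the vertical segment inside $B_\epsilon(0)$ (Lemma~\ref{lemorigin}); $|\bG_j(\nu)|\le Ce^{-\beta|j|}$ on the remaining part of the vertical segment, which is a compact subset of $\mathscr{U}$ (Lemma~\ref{lemloc} together with a compactness argument, as in Lemma~\ref{lemint}); and $|\bG_j(\nu)|\le Ce^{-\kappa|j|}$ on the two oblique tails (Lemma~\ref{leminf}). On the vertical part $|e^{\nu t}|=e^{-\epsilon_0 t}$ and $\xi$ ranges over a bounded interval; on each oblique tail $|e^{\nu t}|=e^{-\epsilon_0 t}e^{-\gamma_1(|\xi|-M)t}$ is integrable in $\xi$ with integral $\lesssim(\gamma_1 t)^{-1}\le\gamma_1^{-1}$ for $t\ge1$. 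Adding the three contributions gives $|\mathscr{G}_j(t)|\le Ce^{-\epsilon_0 t}e^{-\beta_2|j|}$ with $\beta_2:=\min\{\omega,\beta,\kappa\}>0$, which is the claim with $\beta_1:=\epsilon_0$.

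The main obstacle is the construction of $\Gamma_1$ itself: it must lie inside the holomorphy domain $\mathscr{U}\cup B_\epsilon(0)$ of $\bG_j$ for $j\le 0$ — which forces every crossing of the essential spectrum to take place inside the small ball $B_\epsilon(0)$, in a narrow pinch around the origin — and at the same time stay in a fixed half-plane $\{\Re(\nu)\le-\beta_1\}$ with $\beta_1>0$ independent of $j$ and $t$. This last requirement is the crucial one, because when $j=0$ there is no spatial decay of $\bG_j$ left to trade against, so the whole exponential rate in $t$ must be produced by $e^{\Re(\nu)t}$. Everything after the contour is in place is a routine decomposition of the contour integral relying on Lemmas~\ref{lemloc}, \ref{leminf} and \ref{lemorigin}.
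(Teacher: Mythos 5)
Your proof is correct and follows essentially the same route as the paper: deform to a contour with $\Re(\nu)\le-\eta<0$ throughout, crossing $\sigma(\mathscr{L})$ only inside $B_\epsilon(0)$ where Lemma~\ref{lemorigin} provides the holomorphic extension, and combine the resulting factor $e^{-\eta t}$ with the spatial decay of $\bG_j$ for $j\le 0$ from Lemmas~\ref{lemorigin}, \ref{lemloc}/\ref{lemint} and \ref{leminf}. The only cosmetic difference is where the contour turns oblique (the paper exits $B_\epsilon(0)$ immediately along rays covered by Lemma~\ref{lemint}, while you prolong the vertical segment to $|\xi|=M>R$ and use a compactness bound there), which changes nothing in substance.
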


\begin{proof}
We consider $\Gamma$ as the union of $\Gamma_1$ and $\Gamma_2$ where $\Gamma_1$ is a vertical line within the ball $B_\epsilon(0)$ and $\Gamma_2$ is given as $\Gamma_2=\left\{-\delta |\xi|+\mbi \xi~|~ \xi\in\R,~ |\xi| \geq \xi_*\right\}$. More precisely, there exists $\eta \in(0,\epsilon)$ small enough such that $-\eta\pm \mbi \sqrt{\epsilon^2-\eta^2} \in \mathscr{U}$. We define $\Gamma_1=\left\{ - \eta + \mbi \xi ~|~ \xi\in\R,~ |\xi| \leq \sqrt{\epsilon^2-\eta^2} \right\} \subset B_\epsilon(0)$ and $\delta=\frac{\eta}{\sqrt{\epsilon^2-\eta^2}}$ with $\xi_*=\sqrt{\epsilon^2-\eta^2}$ in the definition of $\Gamma_2$. We have
\bqs
\left| \frac{1}{2\pi \mbi}\int_{\Gamma_1} e^{\nu t} \bG_j(\nu)\md \nu \right| \leq C e^{-\eta t - \omega |j|},
\eqs
using the estimate on $\bG_j(\nu)$ from Lemma~\ref{lemorigin}. We denote $\Gamma_2^{in}=\Gamma_2 \cap B_R(0)$ and $\Gamma_2^{out}=\Gamma_2\backslash \Gamma_2^{in}$. On $\Gamma_2^{in}$, we use Lemma~\ref{lemint}
\bqs
\left| \frac{1}{2\pi \mbi}\int_{\Gamma_2^{in}} e^{\nu t} \bG_j(\nu)\md \nu \right| \leq \frac{C}{t} e^{-\eta t-\beta_0|j|},
\eqs
and on $\Gamma_2^{out}$ we use the bound of Lemma~\ref{leminf} to get
\bqs
\left| \frac{1}{2\pi \mbi}\int_{\Gamma_2^{out}} e^{\nu t} \bG_j(\nu)\md \nu \right| \leq \frac{C}{t} e^{-\eta t-\beta|j|}.
\eqs
\end{proof}

Actually, the above estimate can be easily extended for $j\in\Z$ with $1\leq j\leq (c_*-\theta)t$ for any $\theta\in(0,c_*)$.

\begin{lem}
For all $\theta\in(0,c_*)$, there exist $C>0$ and $\beta_{1,2}>0$ such that for all $1\leq j\leq (c_*-\theta)t$ with $t\geq1$  such that $(c_*-\theta)t\geq1$, one has the bound
\bqs
\left| \mathscr{G}_j(t) \right| \leq C e^{-\beta_1 t-\beta_2|j|}.
\eqs
\end{lem}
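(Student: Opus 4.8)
The plan is to mimic the contour‐deformation argument of the preceding two lemmas, but now translating the contour $\Gamma$ in accordance with the ray $j = vt$ for $v \in (0, c_*)$, so that the exponential rate picks up a strictly negative prefactor. The key algebraic identity we exploit is the one from Lemma~\ref{lemorigin}: for $\nu \in B_\epsilon(0)$ and $j\geq 1$ we have $\bG_j(\nu) = \tfrac1{c_*}e^{\varpi(\nu)j} + O(e^{\Re \varpi(\nu) j}|\nu|)$ with $\varpi(\nu) = -\nu/c_* + O(\nu^2)$. So on a vertical segment $\nu = -\eta + \mbi\xi$ inside $B_\epsilon(0)$ the integrand $e^{\nu t}\bG_j(\nu)$ is bounded (up to constants) by $e^{-\eta t}\,e^{\Re\varpi(-\eta+\mbi\xi)\,j}$, and since $\Re\varpi(-\eta) = \eta/c_* + O(\eta^2) > 0$ for $\eta$ small, this is $\lesssim e^{-\eta t + \tfrac{\eta}{c_*}(1+O(\eta))\,j}$. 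With $j \le (c_*-\theta)t$ this exponent is at most $-\eta t + \tfrac{\eta}{c_*}(1+O(\eta))(c_*-\theta)t = -\eta\big(\tfrac{\theta}{c_*} + O(\eta)\big)t$, which is $\leq -\tfrac{\eta\theta}{2c_*}t$ once $\eta$ is chosen small enough (depending on $\theta$). That is the decaying factor in $t$; the decaying factor in $j$ is then recovered by noting $e^{-\eta t + \frac{\eta}{c_*}(1+O(\eta))j}$ can be split as $e^{-\frac{\eta\theta}{4c_*}t}\cdot e^{-\beta_2 j}$ after absorbing a fraction of the $t$–gain, using $j \le (c_*-\theta)t$ once more to trade $t$ for $j$.

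Concretely, I would take the contour $\Gamma = \Gamma_1 \cup \Gamma_2$ exactly as in the previous lemma: $\Gamma_1 = \{-\eta + \mbi\xi : |\xi| \le \sqrt{\epsilon^2-\eta^2}\} \subset B_\epsilon(0)$, and $\Gamma_2 = \{-\delta|\xi| + \mbi\xi : |\xi| \ge \sqrt{\epsilon^2-\eta^2}\}$ with $\delta = \eta/\sqrt{\epsilon^2-\eta^2}$, further split into $\Gamma_2^{in} = \Gamma_2 \cap B_R(0)$ and $\Gamma_2^{out} = \Gamma_2 \setminus B_R(0)$. On $\Gamma_1$ I use the refined estimate of Lemma~\ref{lemorigin} as above, obtaining a bound $\lesssim e^{-\eta t}\int_{|\xi|\le\sqrt{\epsilon^2-\eta^2}} e^{\Re\varpi(-\eta+\mbi\xi)j}\,\md\xi \lesssim e^{-\eta t}\,e^{\Re\varpi(-\eta)j}$, since $\xi \mapsto \Re\varpi(-\eta + \mbi\xi)$ is maximized at $\xi = 0$ for $\epsilon$ small (because $\varpi(\nu) = -\nu/c_* + O(\nu^2)$ so $\Re\varpi(-\eta+\mbi\xi) = \eta/c_* - (\text{positive quadratic in }\xi) + \dots$); then apply the displayed arithmetic with $j \le (c_*-\theta)t$. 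On $\Gamma_2^{in}$ I use Lemma~\ref{lemint} (which gives $|\bG_j(\nu)| \le C$ for $j \ge 1$) to get $\lesssim \tfrac1t e^{-\eta t}$, and since $1 \le j \le (c_*-\theta)t$ this is $\lesssim e^{-\beta_1 t - \beta_2 j}$ after splitting $\eta t$. On $\Gamma_2^{out}$ I use Lemma~\ref{leminf}, $|\bG_j(\nu)| \le Ce^{-\kappa j}$, and the fact that $\Re\nu \le -\delta|\xi| \le -\delta R$ on this piece decays the $\xi$–integral, yielding $\lesssim \tfrac1t e^{-\delta R t}e^{-\kappa j}$. Summing the three contributions gives the claimed bound $|\mathscr{G}_j(t)| \le Ce^{-\beta_1 t - \beta_2|j|}$.

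**The main obstacle** I expect is bookkeeping of the two competing exponential rates on $\Gamma_1$: one must choose $\eta$ small enough, uniformly in $v = j/t \in (0, c_*-\theta]$, so that the gain $-\eta t$ from $e^{\nu t}$ strictly dominates the loss $+\Re\varpi(-\eta)\,j \le \tfrac{\eta}{c_*}(1+O(\eta))(c_*-\theta)t$ from $\bG_j(\nu)$, and so that after this domination there is still a uniformly positive fraction of $t$ left to convert into the spatial rate $-\beta_2 j$ via $j \le (c_*-\theta)t$. This is why the constant $\eta$ — and hence $\beta_{1,2}$ — depends on $\theta$, consistent with the statement. One should also check that the monotonicity of $\xi \mapsto \Re\varpi(-\eta+\mbi\xi)$ on the short vertical segment genuinely holds after shrinking $\epsilon$; this follows from the expansion $\varpi(\nu) = -\nu/c_* + \tfrac{\cosh\lambda_*}{c_*^3}\nu^2 + O(|\nu|^3)$ given in Lemma~\ref{lemorigin}, which shows the $\xi^2$ coefficient of $\Re\varpi(-\eta+\mbi\xi)$ is $-\cosh(\lambda_*)/c_*^3 + O(\eta) < 0$. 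Everything else is a verbatim repetition of the contour estimates already carried out, so no genuinely new idea is needed.
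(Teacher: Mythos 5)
Your proposal is correct and follows essentially the same route as the paper: the same contour $\Gamma_1\cup\Gamma_2^{in}\cup\Gamma_2^{out}$, the same use of Lemmas~\ref{lemorigin}, \ref{lemint} and \ref{leminf} on the respective pieces, and the same arithmetic $-\eta t+j\,\Re(\varpi)\le \eta t\left(-\tfrac{\theta}{c_*}+O(\sqrt{\eta})\right)$ followed by trading a fraction of the $t$-decay for $j$-decay via $j\le (c_*-\theta)t$. The only (harmless) variation is on $\Gamma_1$: you bound $\sup_{\xi}\Re\varpi(-\eta+\mbi\xi)$ by its value at $\xi=0$ using evenness and concavity in $\xi$, whereas the paper splits $\Gamma_1$ into $\Gamma_1^{\pm}$ and uses the parabolic shape of the zero set of $\Re\varpi$ to confine $\Gamma_1^{+}$ to $|\Im\nu|\lesssim\sqrt{\eta}$; both yield $\Re\varpi\le \tfrac{\eta}{c_*}+o(\eta)$ and the same conclusion.
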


\begin{proof}
We consider the same contours as in the previous Lemma. On $\Gamma_2^{out}$, we have the exact same estimates since the spatial Green's function enjoys the same pointwise bound. On $\Gamma_2^{in}$, we note that we have
\bqs
\left| \frac{1}{2\pi \mbi}\int_{\Gamma_2^{in}} e^{\nu t} \bG_j(\nu)\md \nu \right| \leq \frac{C}{t} e^{-\eta t},
\eqs
and as $j \leq (c_*-\theta)t$ we have $-\eta t\leq -\frac{\eta}{2} t -\frac{\eta}{2(c_*-\theta)} |j|$.  The main difference, is that now, on $\Gamma_1$, we have
\bqs
\forall \nu \in \Gamma_1,\quad | \bG_j(\nu)| \leq Ce^{\Re(\varpi(\nu))j},
\eqs
from Lemma~\ref{lemorigin}. We denote by $\Gamma_1^\pm$ the portion of $\Gamma_1$ where $\Re(\varpi(\nu))$ is respectively positive and negative. On $\Gamma_1^-$, we readily obtain
\bqs
\left| \frac{1}{2\pi \mbi}\int_{\Gamma_1^{-}} e^{\nu t} \bG_j(\nu)\md \nu \right| \leq C e^{-\eta t}.
\eqs
Next, from the expansion \eqref{expansionvarpinu}, we get that
\bqs
\forall \nu \in B_\epsilon(0), \quad \Re(\varpi(\nu))=-\frac{\Re(\nu)}{c_*}+\frac{\cosh(\lambda_*)}{c_*^3}\left( \Re(\nu)^2-\Im(\nu)^2\right)+O(|\nu|^3),
\eqs
such that we can solve $ \Re(\varpi(\nu))=0$ for $\Re(\nu)$ as a function of $\Im(\nu)$ and we get
\bqs
\Re(\varpi(\nu))=0 \Leftrightarrow \Re(\nu) = -\frac{\cosh(\lambda_*)}{c_*^2} \Im(\nu)^2 + O(|\Im(\nu)|^3) \text{ for } \nu \in B_\epsilon(0).
\eqs
This implies that there exists $\beta_0>0$ such that
\bqs
|\Im(\nu)| \leq \beta_0 \sqrt{\eta}, \quad \nu \in  \Gamma_1^+.
\eqs
We deduce that
\bqs
\Re(\varpi(\nu)) \leq \frac{\eta}{c_*} +\beta_1 \eta ^{3/2},
\eqs
for some $\beta_1>0$ and any $\nu \in  \Gamma_1^+$. As a consequence, we can derive the following bound
\bqs
-\eta t+j \Re(\varpi(\nu)) \leq  \eta t \left( -1 + \frac{j}{c_* t} + \beta_1 \frac{\sqrt{\eta}j}{ t}\right)  \leq \eta t \left( -\frac{\theta}{c_*} + \beta_1(c_*-\theta)\sqrt{\eta}\right).
\eqs
We can always reduce $\eta$ such that $\beta_1(c_*-\theta)\sqrt{\eta}< \frac{\theta}{2c_*}$ and we get
\bqs
\left| \frac{1}{2\pi \mbi}\int_{\Gamma_1^{+}} e^{\nu t} \bG_j(\nu)\md \nu \right| \leq C e^{-\frac{\eta\theta}{2c_*} t}.
\eqs
\end{proof}

\subsubsection{Generalized Gaussian estimate}

We are thus now led to study the case where $  (c_*-\theta)t \leq j \leq Lt$ with $t\geq1$ and the main result of this section is a generalized Gaussian estimate for the Green's function which reads as follows. Let us already note that the large constant $L>0$ from Lemma~\ref{lemjlarge} can be fixed large enough such that it further verifies $L>c_*$, which we assume throughout the sequel.

\begin{lem}[Generalized Gaussian estimate]\label{lemgengauss}
For all $\theta\in(0,c_*)$,  there exist two constants $C>0$ and $\beta>0$ such that for any $(c_*-\theta)t \leq j \leq Lt$ with $j\in\Z$ and $t\geq1$, the temporal Green's function satisfies the estimate
\bqs
\left| \mathscr{G}_j(t) \right| \leq   \frac{C}{\sqrt{t}} \exp\left(-\beta \frac{(j-c_*t)^2}{t}\right).
\eqs
\end{lem}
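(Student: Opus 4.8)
The plan is to follow the same contour-deformation strategy as in the previous lemmas, but this time, instead of working with a fixed vertical contour near the origin, I would choose a contour that passes through a saddle point of the exponent depending on the ratio $j/t$. Recall that for $j\geq 1$ and $\nu\in B_\epsilon(0)$ we have from Lemma~\ref{lemorigin} the representation $\bG_j(\nu)=\Psi(\nu)e^{\varpi(\nu)j}$ with $\Psi$ holomorphic and $\Psi(0)=1/c_*$, and $\varpi(\nu)=-\nu/c_*+\frac{\cosh(\lambda_*)}{c_*^3}\nu^2+O(|\nu|^3)$. Hence, on a contour staying inside $B_\epsilon(0)$, the relevant piece of $\mathscr{G}_j(t)$ is
\bqs
\frac{1}{2\pi\mbi}\int_{\Gamma_{in}} \Psi(\nu)\, e^{\nu t+\varpi(\nu)j}\,\md\nu,
\eqs
and the exponent $\nu t+\varpi(\nu)j = \nu(t-j/c_*)+\frac{\cosh(\lambda_*)}{c_*^3}\nu^2 j+O(|\nu|^3 j)$ has a critical point near $\nu_j \approx -\frac{c_*^3(t-j/c_*)}{2\cosh(\lambda_*)j} = -\frac{c_*^2(c_*t-j)}{2\cosh(\lambda_*)j}$. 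Writing $x:=j-c_*t$, so that $|x|\leq \theta' t$ roughly and $j\asymp t$, the critical value of the exponent is of order $-\beta (c_*t-j)^2/t = -\beta x^2/t$ for some $\beta>0$, which is exactly the Gaussian exponent we want.

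\textbf{Key steps.} First, I would split the contour $\Gamma$ into an inner part $\Gamma_{in}\subset B_\epsilon(0)$ and an outer part $\Gamma_{out}$, as in the previous two lemmas. On $\Gamma_{out}$ (broken further into the piece inside $B_R(0)$, handled by Lemma~\ref{lemint}, and the piece outside, handled by Lemma~\ref{leminf}), the contribution is bounded by $Ct^{-1}e^{-\eta t}$ for some $\eta>0$; since $j\leq Lt$ one has $(j-c_*t)^2/t \leq (L-c_*)^2 t$, so $e^{-\eta t}\lesssim t^{-1/2}e^{-\beta(j-c_*t)^2/t}$ after shrinking $\beta$, and this term is absorbed. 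The main work is on $\Gamma_{in}$. Here I would take $\Gamma_{in}$ to be a short vertical segment $\{\nu_0+\mbi s : |s|\leq \epsilon_0\}$ where the real shift $\nu_0=\nu_0(j,t)\in(-\eta_0,0]$ is chosen to be (a truncation of) the real saddle point $-\frac{c_*^2(c_*t-j)}{2\cosh(\lambda_*)j}$ when $j\leq c_*t$, and $\nu_0=0$ (or a small fixed positive value, staying in $\mathscr U$) when $j\geq c_*t$; one must check $\nu_0$ stays in $B_\epsilon(0)\cap\mathscr U$, which holds because $|c_*t-j|\lesssim t$ and $j\gtrsim t$ make $|\nu_0|\lesssim 1$, and one can always rescale by taking $\epsilon$ smaller or restricting the range of $\theta$. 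Along this segment, $\Re(\nu t+\varpi(\nu)j)$ attains its maximum near $s=0$ with value $\leq -c\,(c_*t-j)^2/t + C$ by Taylor expansion of $\varpi$ (the quadratic term in $s$ is negative definite since $\Re(\varpi''(0))<0$, controlling the $\mbi s$ direction), while $|\Psi(\nu)|$ is uniformly bounded on $B_\epsilon(0)$. Integrating $e^{-c s^2 j}$ over $|s|\leq\epsilon_0$ gives a factor $\lesssim j^{-1/2}\lesssim t^{-1/2}$, and combining with the saddle value yields exactly $\frac{C}{\sqrt t}\exp(-\beta(j-c_*t)^2/t)$. Finally I would combine the inner and outer estimates and invoke Cauchy's theorem to justify that the original contour integral equals the sum over the deformed pieces (no poles are crossed since $\bG_j$ is holomorphic on $\mathscr U$ and, near the origin, on all of $B_\epsilon(0)$ by Lemma~\ref{lemorigin}).

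\textbf{Main obstacle.} The delicate point is the uniform control of the inner integral as the ratio $j/t$ ranges over the full interval $[c_*-\theta, L]$: the saddle point $\nu_0(j,t)$ moves with $j/t$, and one must verify that (i) it stays within a fixed ball $B_{\eta_0}(0)\subset B_\epsilon(0)\cap\mathscr U$ on which the expansion of $\varpi$ and the bound on $\Psi$ are valid, uniformly in the allowed range — this may force choosing $\epsilon$ small and possibly enlarging $L$ or constraining things, but is ultimately a compactness-type argument; and (ii) the second-order Taylor remainder $O(|\nu|^3 j)$ along the contour is genuinely lower order than the quadratic term $\Re(\varpi''(0))s^2 j/2$, which requires $\epsilon_0$ small enough (independent of $j,t$) so that $|\nu|^3 j \ll |\nu|^2 j$ on the segment — again fine once $\epsilon_0$ is fixed small. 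A secondary bookkeeping issue is matching the inner and outer contours continuously at $\partial B_\epsilon(0)$ and absorbing the resulting boundary/corner terms, which decay like $e^{-\eta t}$ and are dominated as above. None of these steps is conceptually hard, but the uniformity in $j/t$ is where the care is needed.
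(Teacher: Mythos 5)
Your overall strategy (deform the contour through the real saddle point of $\nu\mapsto \nu t+\varpi(\nu)j$, which sits at $\nu_s\approx \frac{c_*^2(j-c_*t)}{2\cosh(\lambda_*)j}$ and yields the saddle value $\approx-\frac{(j-c_*t)^2}{4\cosh(\lambda_*)t}$) is exactly the mechanism of the paper's proof: the curves $\Gamma_p$ there are level curves of the quadratic form bounding $\Re(\varpi)$, chosen to cross the real axis at $p=\zeta/\gamma$, which is your $\nu_s$ up to replacing $\cosh(\lambda_*)$ by $\beta^*$. However, there is a genuine gap in your truncation rule. You constrain $\nu_0\in(-\eta_0,0]$ and prescribe $\nu_0=0$ (or a \emph{fixed} small positive value) for all $j\geq c_*t$, i.e.\ you abandon the saddle precisely on the right half of the range. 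At $\nu_0=0$ the exponent at the center of the segment is $0$, so the segment integral is only $O(t^{-1/2})$ with no Gaussian factor; for, say, $j-c_*t=t^{3/4}$ the claimed bound is $\frac{C}{\sqrt t}e^{-\beta\sqrt t}$, which your contour cannot produce. The alternative of a fixed positive abscissa $\iota>0$ fails in the complementary way: the exponent there is $-\frac{\iota}{c_*}(j-c_*t)+C'\iota^2 j$, and the positive term $C'\iota^2 j\sim \iota^2 t$ is not compensated when $0\leq j-c_*t\ll t$. The fix — which is what the paper does — is to keep following the moving saddle $p=\zeta/\gamma>0$ for $j>c_*t$ as long as it stays in a fixed ball (case $-\eta/2\leq\zeta/\gamma\leq\iota_\epsilon$), and only switch to the fixed abscissa $\iota_\epsilon$ when $\zeta/\gamma>\iota_\epsilon$, i.e.\ when $j-c_*t\gtrsim t$, at which point the linear term $-2\zeta\iota_\epsilon$ genuinely dominates $\gamma\iota_\epsilon^2$ and the resulting $e^{-ct}$ can be subsumed into the Gaussian.

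A secondary point: your dismissal of the connecting pieces between the vertical segment and $\Gamma_{out}$ as ``corner terms decaying like $e^{-\eta t}$'' is too quick. On a horizontal connector at $\Im(\nu)=\pm\epsilon_0$ running from $\Re(\nu)=\nu_0$ back to $\Re(\nu)=-\eta$, the term $\Re(\nu)(t-j/c_*)$ can be of order $+\eta_0\theta t/c_*$ and competes with the decay $-c\epsilon_0^2 t$ coming from $-\frac{\beta_*}{c_*^3}\Im(\nu)^2 j$; this is exactly why the paper glues $\Gamma_p$ to the line $\Re(\nu)=-\eta$ only for $|\Im(\nu)|\geq\xi_*$, with $\xi_*$ tuned so that $\psi(-\eta)+\frac{\beta_*}{c_*^2}\xi_*^2=\psi(p)$ and the junction estimate closes. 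This part is repairable with care, but it is not automatic.
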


Before proceeding with the proof, we need to introduce some notations. First, from Lemma~\ref{lemorigin}, one can find $\epsilon_*>0$ and two constants $0<\beta_*<\cosh(\lambda_*)<\beta^*$ such that for each $\epsilon \in (0,\epsilon_*)$ there exist constants $C>0$ and $\omega>0$ such that
\bqs
\forall \nu\in B_\epsilon(0),\quad |\bG_j(\nu)| \leq \left\{
\begin{split}
C e^{-\omega|j|}, &\quad j\leq0,\\
C e^{\Re(\varpi(\nu))j},& \quad j\geq 1,
\end{split}
\right.
\eqs
together with the bound
\bqq
\Re(\varpi(\nu))\leq -\frac{1}{c_*}\Re(\nu)+\frac{\beta^*}{c_*^3}\Re(\nu)^2-\frac{\beta_*}{c_*^3}\Im(\nu)^2, \quad \forall \nu \in B_\epsilon(0),
\label{boundRevarpi}
\eqq
where $\varpi$ is holomorphic in $B_\epsilon(0)$ and given in Lemma~\ref{lemorigin}. Note that the bound \eqref{boundRevarpi} and the existence of the constants $0<\beta_*<\cosh(\lambda_*)<\beta^*$ directly come from the expansion \eqref{expansionvarpinu} of $\varpi$. Note also that $\beta_*,\beta^*$ only depend on $\epsilon_*>0$ and are uniform in $\epsilon\in (0,\epsilon_*)$. Next, following the strategy developed in \cite{ZH98,CF20}, we introduce a family of  parametrized curves of the form
\bqq
\Gamma_p:=\left\{ \Re(\nu)-\frac{\beta^*}{c_*^2} \Re(\nu)^2+ \frac{\beta_*}{c_*^2}\Im(\nu)^2= \psi(p) ~|~ -\eta  \leq \Re(\nu) \leq p \right\},
\label{contourGp}
\eqq
with $\psi(p):=p-\frac{\beta^*}{c_*^2}p^2$. We readily note that the curve $\Gamma_p$ intersects the real axis at $\nu=p$. We now explain how we select $\eta>0$ and $p>-\eta$ in the above definition of $\Gamma_p$.

\begin{figure}[t!]
  \centering
  \includegraphics[width=.45\textwidth]{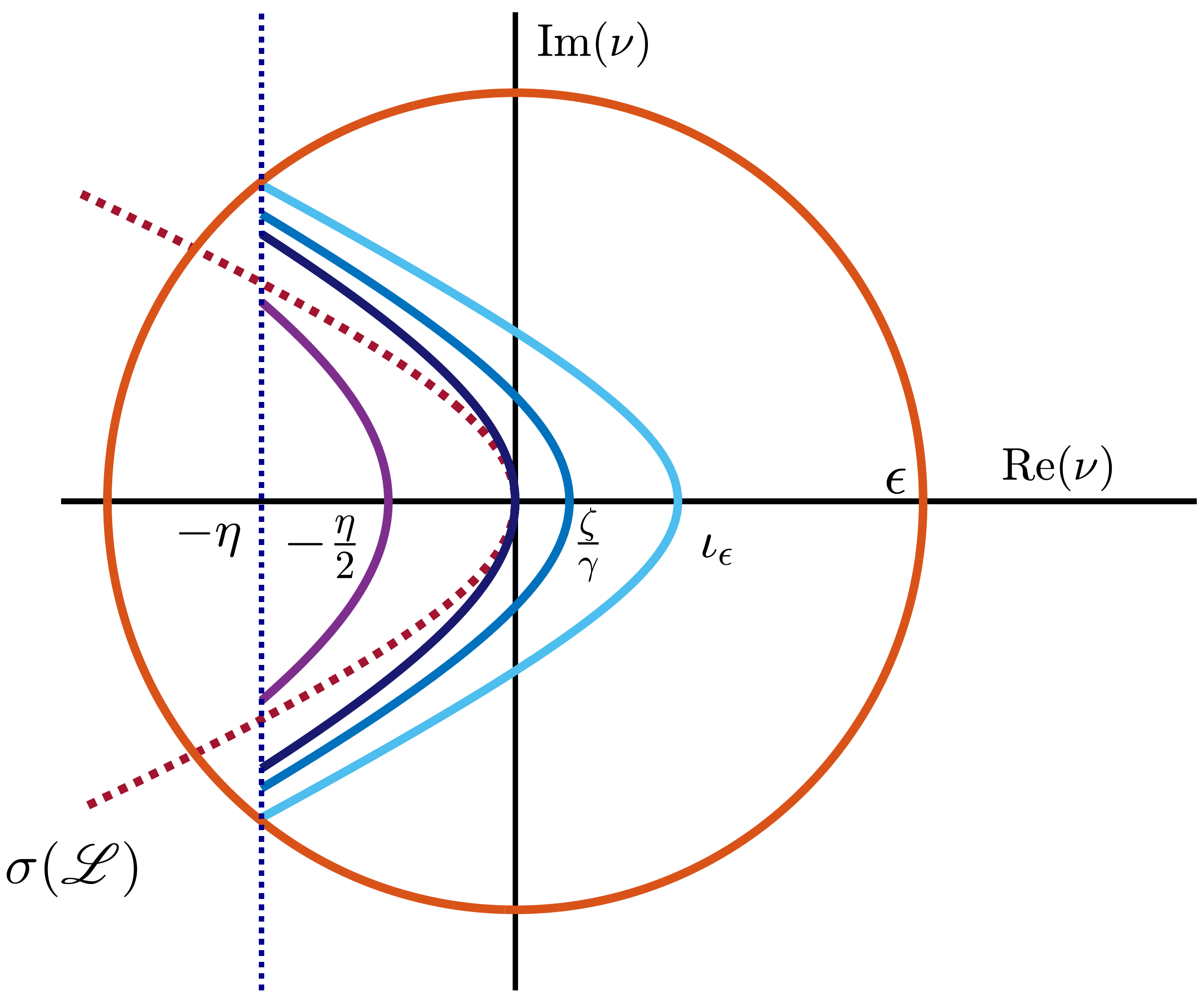}
  \caption{Illustration of the geometry of the family of parametrized curved $\Gamma_p$ within the ball $B_\epsilon(0)$ for different values of $p \in 
  [-\eta/2,\iota_\epsilon]$, adapted from \cite{CF20}. The extremal curves are given when $p=-\eta/2$ to the left (magenta curve) and when $p=\iota_\epsilon$ to the right (light 
  blue curve) where $0<\iota_\epsilon<\epsilon$ is precisely defined such that $\Gamma_p$ with $p=\iota_\epsilon$ intersects the line $-\eta 
  +\mathbf{i} \R$ on the boundary of $B_\epsilon(0)$. The dashed dark red curve represents the spectrum $\sigma(\mathscr{L})$.}
  \label{fig:CurveGammap}
\end{figure}

First,  for each $\epsilon \in (0,\epsilon_*)$, we denote by $\nu_\epsilon, \overline{\nu_\epsilon} \in \partial B_\epsilon(0) \cap \sigma(\mathscr{L})$ and let $\Re(\nu_\epsilon)=-\eta_\epsilon<0$. Then, we fix $\eta>0$ such that the curve $\Gamma_p$ with $p=0$ intersects the line $-\eta+\mathbf{i}\R$ inside the open ball $B_\epsilon(0)$ and readily note that $\eta\in(0,\eta_\epsilon)$. And next, we denote $\iota_\epsilon\in(0,\epsilon)$ the unique real number such that $\Gamma_{p}$ with $p=\iota_\epsilon$ intersects the line $-\eta+\mathbf{i}\R$ precisely on the boundary of $B_\epsilon(0)$ with $\eta$ fixed previously. Finally, we also introduce 
\bqs
\zeta := \frac{j-c_*t}{2t} \quad \text{ and } \quad \gamma :=\frac{j}{t}\frac{\beta^*}{c_*^2}>0,
\eqs
and the specific value of $p$ in the definition of $\Gamma_p$ is fixed depending on the ratio $\zeta/\gamma$ as follows 
\bqs
p:=\left\{
\begin{array}{lc}
\frac{\zeta}{\gamma}, & \text{ if } -\frac{\eta}{2}\leq \frac{\zeta}{\gamma} \leq \iota_\epsilon,\\
\iota_\epsilon , & \text{ if }  \frac{\zeta}{\gamma} > \iota_\epsilon,\\
-\frac{\eta}{2} , & \text{ if } \frac{\zeta}{\gamma} < -\frac{\eta}{2}.
\end{array}
\right.
\eqs
The geometry of the family of parametrized curves $\Gamma_p$ is illustrated in Figure~\ref{fig:CurveGammap} for different values of $p$. Note that with our careful choice of parametrization, we have that $\Gamma_p$ with $p=0$ (dark blue curve in Figure~\ref{fig:CurveGammap}) lies to the right of the spectral curve with tangency at the origin. 

Let us remark that the motivation for introducing such quantities comes from our above estimate on the Green's function. Indeed, for all $\nu\in\Gamma_p\subset B_\epsilon(0)$ and any $j\geq1$, we have
\begin{eqnarray}
j \Re(\varpi(\nu))&\leq j\left(-\frac{1}{c_*}\Re(\nu)+\frac{\beta^*}{c_*^3}\Re(\nu)^2-\frac{\beta_*}{c_*^3}\Im(\nu)^2\right)=j\left(-\frac{\psi(p)}{c_*}\right) \nonumber \\
&=\frac{j}{c_*}\left(-p+\frac{\beta^*}{c_*^2}p^2 \right)=-t p+\frac{t}{c_*}\left(-2\zeta p+\gamma p^2 \right) \nonumber \\
&=-t p-\frac{t\zeta^2}{c_*\gamma}+\frac{t}{c_*}\gamma\left(p-\frac{\zeta}{\gamma} \right)^2.
\label{EstimatejRenu}
\end{eqnarray}
Our precise choice of $p$, which depends on the ratio $\zeta/\gamma$, will always allow us to control the above terms. Furthermore, since we consider here the range $(c_*-\theta)t \leq j \leq Lt$, we readily have that $-\frac{\theta}{2}\leq\zeta\leq \frac{L-c_*}{2}$, and our generalized Gaussian estimate will come from those $\zeta\approx 0$.

Before proceeding with the analysis, we make two final claims that will be useful in the course of the proof of Lemma~\ref{lemgengauss}. 
\begin{enumerate}
\item We note that for each $\nu\in\Gamma_p$, with $\Gamma_p$ defined in \eqref{contourGp}, there exists some constant $c_\star>0$ such that
\bqq
\Re(\nu)\leq p-c_\star \Im(\nu)^2.
\label{IneqRenu}
\eqq
Indeed, since the function $\psi$, defined as $\psi(p)=p-\frac{\beta^*}{c_*^2}p^2$, satisfies $\psi'(0)=1$, there exists $c_0>0$ such that one has $\psi'(p)\leq c_0$ for each $p\in[-\eta,\epsilon]$. This implies that for each $\nu\in\Gamma_p$ we have 
\bqs
-\frac{\beta_*}{c_*^2}\mathrm{Im}(\nu)^2=\psi(\mathrm{Re}(\nu))-\psi(p)=-\int_{\mathrm{Re}(\nu)}^p\psi'(t)\mathrm{d} t \geq c_0(\mathrm{Re}(\nu)-p),
\eqs
which gives \eqref{IneqRenu} with  $c_\star=\frac{\beta_*}{c_*^2c_0}$.
\item Up to further reducing the size of $\epsilon_*>0$, a direct application of the implicit function theorem demonstrates that there exists a smooth function $\Phi:(-\epsilon_*,\epsilon_*)\times(-\epsilon_*,\epsilon_*)\rightarrow\R$ and some $C>0$ such that for any $\epsilon\in(0,\epsilon_*)$ and $p\in(-\epsilon,\epsilon)$, the curve $\Gamma_p$ can be parametrized as
\bqs
\Gamma_p=\left\{\nu\in B_\epsilon(0)~|~ \Re(\nu)=\Phi(\Im(\nu),p)\right\},
\eqs 
with 
\bqq
\Re(\nu)=p-\frac{\beta^*}{c_*^2}\Im(\nu)^2+O\left( |\Im(\nu)|^3+|p|^3\right),
\label{TaylorRenu}
\eqq
together with 
\bqs
\left|\frac{\partial \Phi(\Im(\nu),p)}{\partial \Im(\nu)}\right|\leq C, \quad \text{ for each } \quad |\Im(\nu)|\leq \epsilon \text{ and } |p|\leq \epsilon.
\eqs
\end{enumerate}

\begin{figure}[t!]
  \centering
  \includegraphics[width=.45\textwidth]{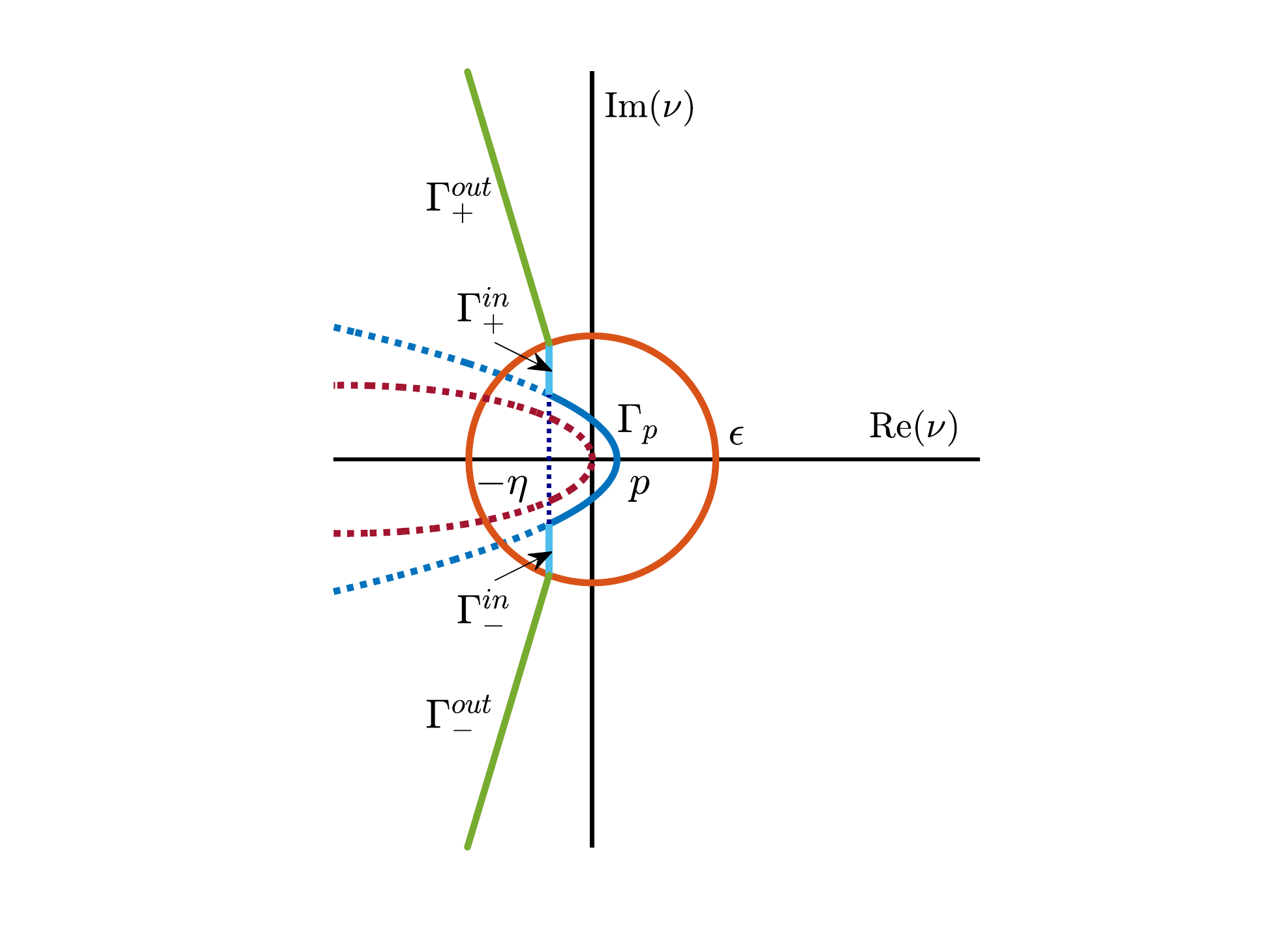}
  \caption{Illustration of the contour used in the case $ -\frac{\eta}{2}\leq \frac{\zeta}{\gamma} \leq \iota_\epsilon$ when $(c_*-\theta)\leq j \leq Lt$ with $t\geq1$. The contour is composed of $\Gamma_-^{out}\cup\Gamma_-^{in}\cup\Gamma_p\cup\Gamma_+^{in}\cup\Gamma_+^{out}$. The contours 
  $\Gamma_\pm^{in}$ are the portions of the line $-\eta+\mbi \R$ which lie inside $B_\epsilon(0)$ (light blue curves) while the contours $\Gamma_\pm^{out}$ are half lines lying in the resolvent set $\mathscr{U}$ with $|\Im(\nu)|\geq \sqrt{\epsilon^2-\eta^2}$ (green curves).  Finally, $\Gamma_p$ is defined in \eqref{contourGp} and intersects the real axis at $p$ (blue curve).}
  \label{fig:contour}
\end{figure}

We are now ready to prove Lemma~\ref{lemgengauss}.

\begin{proof}[Proof of Lemma~\ref{lemgengauss}.]
Throughout, we have $(c_*-\theta)\leq j \leq Lt$ with $t\geq 1$ and for each $\epsilon\in(0,\epsilon_*)$ the constant $\eta\in(0,\eta_\epsilon)$ has been fixed as explained above. We divide the analysis into three cases depending on the ratio $\zeta/\gamma$.
\paragraph{Case: $-\frac{\eta}{2}\leq \frac{\zeta}{\gamma} \leq \iota_\epsilon$.} We will consider a contour $\Gamma$ which is the union of three contours given by $\Gamma_p$ with $p=\zeta/\gamma$, together with $\Gamma^{in}$ and $\Gamma^{out}$ which are defined as follows. We refer to Figure~\ref{fig:contour} for a geometrical illustration. The contour $\Gamma^{in}$ is composed of the two portions of the segment $\Re(\nu)=-\eta$ which lie inside the ball $B_\epsilon(0)$, that is
\bqs
\Gamma^{in}=\Gamma^{in}_-\cup\Gamma^{in}_+=\left\{ -\eta + \mbi \xi ~|~  -\sqrt{\epsilon^2-\eta^2}\leq \xi \leq - \xi_* \right\}\cup\left\{ -\eta + \mbi \xi ~|~ \xi_*\leq \xi \leq \sqrt{\epsilon^2-\eta^2}\right\},
\eqs
with 
\bqs
\xi_*:=\sqrt{\frac{c_*^2}{\beta_*}\left(\psi(p)-\psi(-\eta)\right)}>0.
\eqs
On the other hand, the contour $\Gamma^{out}$ is taken of the form
\bqs
\Gamma^{out}=\Gamma^{out}_-\cup\Gamma^{out}_+=\left\{-\delta_0+\delta_1\xi+\mbi \xi ~|~ \xi \leq -\sqrt{\epsilon^2-\eta^2}\right\}\cup\left\{-\delta_0-\delta_1\xi+\mbi \xi ~|~ \xi \geq \sqrt{\epsilon^2-\eta^2}\right\},
\eqs
for two constants $\delta_0>0$ and $\delta_1>0$ chosen such that $\Gamma^{out}\subset \mathscr{U}$ and $\Gamma^{out}_\pm$ intersect $\Gamma^{in}_\pm$ exactly on $\partial B_\epsilon(0)$. The later condition implies that $\delta_0=\eta-\delta_1\sqrt{\epsilon^2-\eta^2}$, and then one can take $0<\delta_1<\frac{\eta}{\sqrt{\epsilon^2-\eta^2}}$ as small as required such that both $\delta_0>0$ and $\Gamma^{out}\subset \mathscr{U}$ are verified. As a consequence, we have that
\bqs
\mathscr{G}_j(t)=\frac{1}{2\pi \mbi}\int_{\Gamma_{\zeta/\gamma}} e^{\nu t} \bG_j(\nu)\md \nu +\frac{1}{2\pi \mbi}\int_{\Gamma^{in}} e^{\nu t} \bG_j(\nu)\md \nu+\frac{1}{2\pi \mbi}\int_{\Gamma^{out}} e^{\nu t} \bG_j(\nu)\md \nu,
\eqs
and we have to estimate the above three integrals. We start with the first one, which is the one that will produce the desired Gaussian estimate. For each $\nu \in \Gamma_{p}\subset B_\epsilon(0)$, we know from \eqref{EstimatejRenu}-\eqref{IneqRenu} that
\begin{align*}
t\Re(\nu)+j \Re(\varpi(\nu)) &\leq t\left(\Re(\nu) -p\right)-\frac{t\zeta^2}{c_*\gamma}+\frac{t}{c_*}\gamma\left(p-\frac{\zeta}{\gamma} \right)^2\\
& \leq -c_\star \Im(\nu)^2 t -\frac{t\zeta^2}{c_*\gamma}+\frac{t}{c_*}\gamma\left(p-\frac{\zeta}{\gamma} \right)^2
\end{align*}
such that, with $p=\zeta/\gamma$, we obtain that
\bqs
t\Re(\nu)+j \Re(\varpi(\nu))\leq -c_\star \Im(\nu)^2t -\frac{\zeta^2}{\gamma}\frac{t}{c_*}.
\eqs
As a consequence, we have
\begin{align*}
\left|\frac{1}{2\pi \mbi}\int_{\Gamma_{\zeta/\gamma}} e^{\nu t} \bG_j(\nu)\md \nu\right| & \lesssim \int_{\Gamma_{\zeta/\gamma}} e^{t\Re(\nu)+j \Re(\varpi(\nu)) } |\md \nu|\\
&\lesssim e^{-\frac{\zeta^2}{\gamma}\frac{t}{c_*}} \int_{\Gamma_{\zeta/\gamma}} e^{-c_\star \Im(\nu)^2t} |\md \nu|\\
&\lesssim e^{-\frac{\zeta^2}{\gamma}\frac{t}{c_*}} \int_{-\xi_*}^{\xi_*} e^{-c_\star \xi^2 t}\md \xi\\
&\lesssim \frac{e^{-\frac{\zeta^2}{\gamma}\frac{t}{c_*}}}{\sqrt{t}}.
\end{align*}

Finally, we remark that
\bqs
-\frac{\zeta^2}{\gamma}\frac{t}{c_*}=- \frac{(j-c_*t)^2}{4t}\frac{1}{\gamma c_*} \leq - \frac{(j-c_*t)^2}{4t}\frac{c_*}{L\beta^*},
\eqs
since $(c_*-\theta)\frac{\beta^*}{c_*^2}\leq \gamma \leq \frac{L\beta^*}{c_*^2}$ as $(c_*-\theta)t\leq j \leq Lt$, and we have obtained the desired Gaussian estimate. 

Next, for all $\nu=-\eta+\mbi \xi \in\Gamma^{in}\subset B_\epsilon(0)$ and $j\geq1$ we have that
\begin{align*}
j \Re(\varpi(\nu)) & \leq j\left(-\frac{1}{c_*}\Re(\nu)+\frac{\beta^*}{c_*^3}\Re(\nu)^2-\frac{\beta_*}{c_*^3}\Im(\nu)^2\right)=j\left(-\frac{\psi(-\eta)}{c_*}-\frac{\beta_*}{c_*^3}\xi^2\right)\\
&=j\left( -\frac{\psi(p)}{c_*}-\frac{\beta_*}{c_*^3}\underbrace{\left(\xi^2-\xi_*^2\right)}_{\geq 0}\right) \leq j\left( -\frac{p}{c_*}+\frac{\beta^*}{c_*^3}p^2\right),
\end{align*}
since $\psi(-\eta)+\frac{\beta_*}{c_*^2}\xi_*^2=\psi(p)$ by definition of $\xi_*>0$. Thus, we have
\begin{align*}
t \Re(\nu)+j \Re(\varpi(\nu))&\leq - \eta t +j\left(-\frac{p}{c_*}+\frac{\beta^*}{c_*^3}p^2\right)\\
&=\frac{t}{c_*}\left[ -(\eta+p) c_* -2\zeta p+ \gamma p^2 \right]\\
&\leq -\frac{\eta}{2}t- \frac{t}{c_*}\frac{\zeta^2}{\gamma},
\end{align*}
since $-\eta/2\leq \zeta/\gamma=p$ in this case. As a consequence, we obtain an estimate of the form
\bqs
\left| \frac{1}{2\pi \mbi}\int_{\Gamma^{in}} e^{\nu t} \bG_j(\nu)\md \nu \right| \lesssim e^{-\frac{\eta}{2}t- \frac{t}{c_*}\frac{\zeta^2}{\gamma}}.
\eqs
This term can be subsumed into the previous Gaussian estimate. 

For the remaining contribution on $\Gamma^{out}$, we further split it into two parts $\Gamma^{out}=\Gamma^{out}_1\cup\Gamma^{out}_2$ where $\Gamma^{out}_1:=\Gamma^{out}\cap B_R(0)$ and $\Gamma_2^{out}$ is the complementary part. It follows from Lemma~\ref{leminf} and Lemma~\ref{lemint} that
\begin{align*}
\left| \frac{1}{2\pi \mbi}\int_{\Gamma^{out}_1} e^{\nu t} \bG_j(\nu)\md \nu \right| \lesssim e^{-\delta_0 t},\\
\left| \frac{1}{2\pi \mbi}\int_{\Gamma^{out}_2} e^{\nu t} \bG_j(\nu)\md \nu \right| \lesssim e^{-\delta_0 t-\kappa j} \underbrace{\int_R^{+\infty}e^{-\delta_1 \xi t}\md \xi}_{\leq \frac{C}{t}}.
\end{align*}
It only remains to check that a temporal exponential can be subsumed into a Gaussian estimate. Indeed, using the definition of $\zeta$ and $\gamma$ together with the fact we supposed that $-\frac{\eta}{2}\leq \frac{\zeta}{\gamma} \leq \iota_\epsilon$, we can always find a constant $\kappa_0>0$ such that
\bqs
\kappa_0 \frac{(j-c_*t)^2}{t}\leq t.
\eqs

\paragraph{Case: $\frac{\zeta}{\gamma} > \iota_\epsilon$.} The contour $\Gamma$ is decomposed into $\Gamma_p\cup\Gamma^{out}$ with
\bqs
\Gamma^{out}=\Gamma^{out}_-\cup\Gamma^{out}_+=\left\{-\delta_0+\delta_1\xi+\mbi \xi ~|~ \xi \leq -\sqrt{\epsilon^2-\eta^2}\right\}\cup\left\{-\delta_0-\delta_1\xi+\mbi \xi ~|~ \xi \geq \sqrt{\epsilon^2-\eta^2}\right\},
\eqs
for two constants $\delta_0>0$ and $\delta_1>0$ chosen as in the previous case. This time for $\Gamma_p$ we select $p=\iota_\epsilon$. Note that for each $\nu\in\Gamma_{\iota_\epsilon}\subset B_\epsilon(0)$, we have from \eqref{EstimatejRenu}-\eqref{IneqRenu}
\begin{align*}
t\Re(\nu)+j \Re(\varpi(\nu)) &\leq -c_\star \Im(\nu)^2 t +\frac{t}{c_*}\left(-2\zeta p+\gamma p^2 \right) \\
& = -c_\star \Im(\nu)^2 t +\frac{t}{c_*}\left(-2\zeta \iota_\epsilon +\gamma \iota_\epsilon^2 \right).
\end{align*}
But as $\frac{\zeta}{\gamma} > \iota_\epsilon$, we get in particular that $\zeta>\gamma \iota_\epsilon>0$ such that
\bqs
-2\zeta \iota_\epsilon +\gamma \iota_\epsilon^2 < - \gamma \iota_\epsilon^2<0,
\eqs
and thus
\bqs
\left|\frac{1}{2\pi \mbi}\int_{\Gamma_{\iota_\epsilon}} e^{\nu t} \bG_j(\nu)\md \nu\right| \lesssim \frac{e^{- \frac{t}{c_*} \gamma \iota_\epsilon^2}}{\sqrt{t}}.
\eqs
Now, since $\frac{\zeta}{\gamma} > \iota_\epsilon$, we get that $2\gamma\iota_\epsilon t<2\zeta t = j-c_*t \leq (L-c_*)t$, and thus there exists some constant $\kappa_1>0$ such that
\bqs
 \frac{(j-c_*t)^2}{t^2}\leq \kappa_1,
\eqs
which shows that the above exponential decaying in time bound can be subsumed into a Gaussian estimate. The estimate on $\Gamma^{out}$ is similar as in the previous case and we get
\bqs
\left| \frac{1}{2\pi \mbi}\int_{\Gamma^{out}} e^{\nu t} \bG_j(\nu)\md \nu \right| \leq \left| \frac{1}{2\pi \mbi}\int_{\Gamma^{out}_1} e^{\nu t} \bG_j(\nu)\md \nu \right| + \left| \frac{1}{2\pi \mbi}\int_{\Gamma^{out}_2} e^{\nu t} \bG_j(\nu)\md \nu \right|  \lesssim e^{-\delta_0 t},
\eqs
which can once again be subsumed into a Gaussian estimate.

\paragraph{Case: $\frac{\zeta}{\gamma} <-\frac{\eta}{2}$.} Once again we divide the contour $\Gamma$ into $\Gamma_p\cup \Gamma^{in}\cup \Gamma^{out}$ with $p=-\frac{\eta}{2}$, and $\Gamma^{in}$ and $\Gamma^{out}$ defined as previously. This time, for all $\nu\in\Gamma_{-\frac{\eta}{2}}\subset B_\epsilon(0)$, we have
\begin{align*}
t\Re(\nu)+j \Re(\varpi(\nu)) &\leq -c_\star \Im(\nu)^2 t +\frac{t}{c_*}\left(-2\zeta p+\gamma p^2 \right) \\
& \leq -c_\star \Im(\nu)^2 t +\frac{t}{c_*}\left(\eta \zeta  +\gamma\left(\frac{\eta}{2}\right)^2 \right).
\end{align*}
Note that this time $\zeta<-\eta \gamma/2$, and we have
\bqs
\eta \zeta  +\gamma\left(\frac{\eta}{2}\right)^2 < -\gamma \frac{\eta^2}{4},
\eqs
thus
\bqs
\left|\frac{1}{2\pi \mbi}\int_{\Gamma_{-\frac{\eta}{2}}} e^{\nu t} \bG_j(\nu)\md \nu\right| \lesssim  \frac{e^{- \frac{t}{c_*}\gamma \frac{\eta^2}{4}}}{\sqrt{t}}.
\eqs
And once again we can conclude by noticing that $-\theta t\leq j-c_*t\leq -\eta \gamma t$ due to $\zeta<-\eta \gamma/2$, and the above exponential decaying in time bound can be subsumed into a Gaussian estimate.

Next, for all $\nu\in \Gamma^{in}\subset B_\epsilon(0)$, we have
\begin{align*}
t \Re(\nu)+j \Re(\varpi(\nu))&\leq \frac{t}{c_*}\left[ -(\eta+p) c_* -2\zeta p+ \gamma p^2 \right]\\
&=\frac{t}{c_*}\left[ -\frac{\eta}{2}c_* +\zeta \eta + \gamma \left( \frac{\eta}{2}\right)^2 \right]\\
&< - \frac{\eta}{2}t- \frac{t}{c_*}\gamma \frac{\eta^2}{4},
\end{align*}
and we obtain an estimate of the form
\bqs
\left| \frac{1}{2\pi \mbi}\int_{\Gamma^{in}} e^{\nu t} \bG_j(\nu)\md \nu \right| \lesssim e^{- \frac{\eta}{2}t- \frac{t}{c_*}\gamma \frac{\eta^2}{4}}.
\eqs
This term can be subsumed into a Gaussian estimate. Finally, the contribution along $\Gamma^{out}$ can be handled as previously. This concludes the proof of the Lemma.
\end{proof}

As a conclusion, summarizing all the above lemmas, we have obtained the following intermediate result.

\begin{prop}[Pointwise estimates]\label{propptwestim}
There are constants $\theta_*>0$, $\beta_i>0$ for $i=1,2,3$ and $C>0$ such that the temporal Green's function $\mathscr{G}(t)$ satisfies the following pointwise estimates:
\begin{itemize}
\item for $|j-c_*t|>\theta_*t$ and $t\geq 1$, one has
\bqs
\left| \mathscr{G}_j(t) \right| \leq C\exp\left(-\beta_1 t -\beta_2|j|\right);
\eqs
\item for $|j-c_*t| \leq \theta_*t$ and  $t\geq1$, one has
\bqs
\left| \mathscr{G}_j(t) \right| \leq \frac{C}{\sqrt{t}}\exp\left(-\beta_3 \frac{|j-c_*t|^2}{t}\right).
\eqs
\end{itemize}
\end{prop}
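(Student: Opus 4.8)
The statement is a bookkeeping assembly of the four preceding lemmas, so the plan is simply, for each fixed $t\ge1$, to partition $\Z$ according to the position of $j$ relative to the moving point $c_*t$ and to invoke the appropriate estimate on each piece. First I would fix the constants: pick $\theta_*\in(0,c_*)$ arbitrarily — the parameter $\theta$ occurring in Lemma~\ref{lemgengauss} and in the two lemmas immediately preceding it is then taken equal to $\theta_*$ — and enlarge the constant $L$ of Lemma~\ref{lemjlarge} so that, besides $L>4R/\kappa$ and $L>c_*$, one also has $L>c_*+\theta_*$. With these choices the four ranges $\{|j|>Lt\}$, $\{-Lt\le j\le0\}$, $\{1\le j\le(c_*-\theta_*)t\}$ and $\{(c_*-\theta_*)t\le j\le Lt\}$ overlap and exhaust $\Z$ for every $t\ge1$. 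In particular, if $|j-c_*t|\le\theta_*t$ then $(c_*-\theta_*)t\le j\le(c_*+\theta_*)t\subset[(c_*-\theta_*)t,Lt]$ since $L>c_*+\theta_*$, so Lemma~\ref{lemgengauss} applied with $\theta=\theta_*$ gives at once
\bqs
\left|\mathscr G_j(t)\right|\le\frac{C}{\sqrt t}\exp\!\left(-\beta\,\frac{(j-c_*t)^2}{t}\right),
\eqs
which is the second estimate of the proposition with $\beta_3=\beta$.

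It remains to treat the exponential regime $|j-c_*t|>\theta_*t$, which I would split into $j<(c_*-\theta_*)t$ and $j>(c_*+\theta_*)t$. In the first case: for $j<-Lt$ the bound $|\mathscr G_j(t)|\le Ce^{-\kappa_1t-\kappa_2|j|}$ is exactly Lemma~\ref{lemjlarge}; for $-Lt\le j\le0$ it is the lemma bounding $\mathscr G_j(t)$ for non-positive indices; and for $1\le j\le(c_*-\theta_*)t$ it is the lemma bounding $\mathscr G_j(t)$ on $1\le j\le(c_*-\theta)t$ taken with $\theta=\theta_*$ — in each subcase one obtains an estimate of the form $Ce^{-\beta_1t-\beta_2|j|}$. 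In the second case: for $j>Lt$ it is again Lemma~\ref{lemjlarge}; for $(c_*+\theta_*)t<j\le Lt$ I would apply Lemma~\ref{lemgengauss} and then convert its Gaussian bound into a purely exponential one. This uses that here $j-c_*t>\theta_*t$ forces $(j-c_*t)^2/t\ge\theta_*^2\,t$, while $j\le(1+c_*/\theta_*)(j-c_*t)$ forces $(j-c_*t)^2/t\ge\frac{\theta_*^2}{\theta_*+c_*}\,|j|$; halving the Gaussian exponent between these two inequalities and bounding $t^{-1/2}\le1$ yields $|\mathscr G_j(t)|\le Ce^{-\beta_1t-\beta_2|j|}$. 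Taking the minimum of the decay rates and the maximum of the prefactors over these finitely many cases produces uniform constants $C,\beta_1,\beta_2,\beta_3$, which is the claim.

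I do not expect any genuine obstacle, since all the analytic effort is contained in Lemmas~\ref{lemloc}--\ref{lemgengauss}; the only step requiring a little care is the last absorption, namely checking that on the tail region $(c_*+\theta_*)t<j\le Lt$ the Gaussian weight $(j-c_*t)^2/t$ controls both $t$ and $|j|$ up to fixed multiplicative constants, so that the $t^{-1/2}$-weighted Gaussian estimate of Lemma~\ref{lemgengauss} can legitimately be re-expressed in the pure exponential form of the first estimate — together with the purely routine verification that the constants coming from the four lemmas can be chosen uniformly across all regimes.
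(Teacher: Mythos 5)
Your proposal is correct and follows exactly the route the paper takes: the paper offers no separate proof of this proposition beyond the phrase ``summarizing all the above lemmas,'' i.e.\ it is precisely the case-by-case assembly you describe. Your explicit conversion of the Gaussian bound into the exponential form on the tail region $(c_*+\theta_*)t<j\le Lt$ is the one detail the paper leaves implicit, and you handle it correctly.
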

The above proposition ensures in particular that there exists a constant $C>0$, such that
\bqs
\sqrt{t}\, \|\mathscr{G}(t)\|_{\ell^\infty(\Z)} +  \|\mathscr{G}(t)\|_{\ell^1(\Z)}\leq C,  \quad  t\geq 1.
\eqs
This in turn implies that any solution of the linear problem \eqref{KPPlin} starting from a compactly supported initial condition satisfies  
\begin{equation}
	\label{bdd-w}
	|w_j(t)| = \left| \sum_{\ell \in \Z} \mathscr{G}_{j-\ell}(t) w_\ell^0 \right|  \leq \frac{C}{\sqrt{t}} \, \| w^0\|_{\ell^1(\Z)}, \quad t \geq 1, \quad j\in\Z.
\end{equation}
It turns out that such an estimate will not be enough in the forthcoming analysis leading to the proof of Theorem~\ref{thmlog}. It is the purpose of the next section to refine our pointwise estimates  for $|j-c_*t| \leq \theta_*t$ and $t$ large enough.

\section{Refined pointwise estimates in the sub-linear regime}\label{secRefined}

From the study conducted in the previous section and Lemma~\ref{lemgengauss}, we expect that $\G_j(t)$ behaves like a Gaussian profile around $j\approx c_*t$ for all $j\in \Z\cap [(c_*-\theta_*)t,(c_*+\theta_*)t]$. It is actually possible to refine our analysis and prove that $\G_j(t)$ can be decomposed as a universal Gaussian profile plus some reminder term which can be bounded and also satisfies a Gaussian estimate. To do so, we work in the asymptotic regime where for any fixed $\vartheta>0$ and $\alpha\in(0,1)$ the time $t$ is large and $|j-c_*t|\leq \vartheta t^\alpha$. Before stating our main result, we introduce the normalized Gaussian profile:
\bqs
\mathcal{G}(x):=\frac{1}{\sqrt{2\pi}}e^{-\frac{x^2}{2}},\quad x\in\R,
\eqs
and we define the following odd cubic polynomial function $\mathcal{P}:\R\mapsto\R$ 
\bqq
\mathcal{P}(x):=\frac{c_*}{24\cosh(\lambda_*)^2}\left(-3x+x^3\right).
\label{equationp}
\eqq

\begin{prop}[Refined asymptotic]\label{proprefined}
For any $\vartheta>0$ and $\alpha\in(0,1)$, there is some $T_0>1$ such that for each $j\in\Z$ with $|j-c_*t|\leq \vartheta t^\alpha$ and $t\geq T_0$, one can decompose the temporal Green's function as follows
\bqq
\mathscr{G}_j(t)=\H_j(t)+\mathscr{R}_j(t),
\label{refinedasymp}
\eqq
where the principal part $\H_j(t)$ is defined as
\bqs
\H_j(t):= \left[\frac{1}{\sqrt{2\cosh(\lambda_*)t}}+\frac{1}{t}\mathcal{P}\left(\frac{j-c_*t}{\sqrt{2\cosh(\lambda_*)t}} \right)\right]\mathcal{G}\left(\frac{j-c_*t}{\sqrt{2\cosh(\lambda_*)t}} \right), 
\eqs
and with a Gaussian bound on the remainder term $\mathscr{R}_j(t)$
\bqs
\left| \mathscr{R}_j(t)  \right| \leq \frac{C}{t^{3/2}} \exp\left(-\beta \frac{(j-c_*t)^2}{t}\right),
\eqs
for some uniform constants $C>0$ and $\beta>0$.\end{prop}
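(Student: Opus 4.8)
The plan is to obtain the refined expansion by deforming the Laplace contour onto the curve $\Gamma_p$ introduced in Section~\ref{seclinear} with $p$ chosen at the saddle dictated by the ratio $(j-c_*t)/t$, and then carrying out a careful Taylor/Laplace expansion of the integrand near $\nu=0$. The starting point is the representation $\mathscr{G}_j(t)=\frac{1}{2\pi\mbi}\int_\Gamma e^{\nu t}\bG_j(\nu)\md\nu$ together with the factorization $\bG_j(\nu)=\Psi(\nu)e^{\varpi(\nu)j}$ valid for $j\geq1$ and $\nu\in B_\epsilon(0)$ from Lemma~\ref{lemorigin}, where $\varpi(\nu)=-\nu/c_*+\frac{\cosh(\lambda_*)}{c_*^3}\nu^2+O(|\nu|^3)$ and $\Psi$ is holomorphic with $\Psi(0)=1/c_*$. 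In the regime $|j-c_*t|\leq\vartheta t^\alpha$ with $\alpha<1$ the relevant $\nu$ on the deformed contour are of size $O(t^{-1/2})$, so all the pieces of the contour lying outside a ball of radius $\sim t^{-1/2+\iota}$ contribute terms that are exponentially small in a positive power of $t$ (this is exactly the content of the estimates proved for $\Gamma^{in}$, $\Gamma^{out}$, $\Gamma^{out}_1$, $\Gamma^{out}_2$ in the proof of Lemma~\ref{lemgengauss}), hence are absorbed into the Gaussian remainder $\mathscr{R}_j(t)$.

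The core computation is then the local contribution. Writing $j=c_*t+m$ with $|m|\leq\vartheta t^\alpha$, one has $\nu t+\varpi(\nu)j = \nu t -\frac{\nu}{c_*}(c_*t+m)+\frac{\cosh(\lambda_*)}{c_*^3}\nu^2(c_*t+m)+O(t|\nu|^3) = -\frac{m}{c_*}\nu+\frac{\cosh(\lambda_*)}{c_*^2}t\nu^2+O(t|\nu|^3+|m||\nu|^2)$. After the substitution $\nu=\frac{c_*}{\sqrt{2\cosh(\lambda_*)t}}\,s$ (and choosing the contour so that locally it is the vertical line through the real saddle), the quadratic part becomes $-\tfrac12 s^2 + \frac{m}{\sqrt{2\cosh(\lambda_*)t}}\cdot(\text{linear in }s)$, i.e. exactly the exponent generating $\mathcal{G}\!\big(\tfrac{m}{\sqrt{2\cosh(\lambda_*)t}}\big)$. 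Expanding $\Psi(\nu)=\frac1{c_*}+\Psi'(0)\nu+O(|\nu|^2)$ and the cubic correction $e^{O(t\nu^3)}=1+O(t\nu^3)$ in the integrand and integrating term by term against the Gaussian produces: the leading term $\frac{1}{\sqrt{2\cosh(\lambda_*)t}}\mathcal{G}(\cdot)$; a next-order term of size $t^{-1}$ which, after collecting the contributions of $\Psi'(0)\nu$ and of the cubic piece of $\varpi$, is precisely the Hermite-type correction $\frac1t\mathcal{P}\!\big(\tfrac{m}{\sqrt{2\cosh(\lambda_*)t}}\big)\mathcal{G}(\cdot)$ with $\mathcal{P}$ the odd cubic polynomial in \eqref{equationp} (the coefficient $\frac{c_*}{24\cosh(\lambda_*)^2}$ and the monomials $-3x+x^3$ come from matching the $\nu^3$ coefficient $\tfrac13\varpi'''(0)$-type term against $\int s^{k}e^{-s^2/2}$, using $\int (s^4-3s^2)e^{-s^2/2}$-style Hermite identities); and all remaining terms are $O(t^{-3/2})$ times a Gaussian in $m/\sqrt t$, which is the claimed bound on $\mathscr{R}_j(t)$. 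One must also verify that for $j\leq0$ the factor $e^{-\omega|j|}$ from Lemma~\ref{lemorigin} makes everything exponentially small, and that the $p\neq\zeta/\gamma$ branches (when $\zeta/\gamma$ exits $[-\eta/2,\iota_\epsilon]$) cannot occur in the sub-linear regime since $|m|\leq\vartheta t^\alpha$ forces $\zeta/\gamma\to0$, so the saddle is always interior for $t$ large.

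The main obstacle is bookkeeping the error terms with the correct uniformity: one needs the $O(\cdot)$ remainders in the expansions of $\varpi$, $\Psi$, and of the contour parametrization $\Phi(\Im\nu,p)$ to be controlled \emph{uniformly} in $j$ and $t$ over the whole sub-linear window, and one must check that replacing the curved contour $\Gamma_p$ by its tangent vertical line, and extending the truncated Gaussian integral $\int_{-\xi_*}^{\xi_*}$ to $\int_{\R}$, each costs only $O(t^{-3/2})e^{-\beta m^2/t}$. Concretely, the dangerous competition is between the cubic term $t\nu^3\sim t\cdot t^{-3/2}=t^{-1/2}$, which is \emph{not} small and must be expanded to first order (contributing to the $t^{-1}$ Hermite term) while its square $t^2\nu^6\sim t^{-1}$ must be shown to land in the remainder together with the $t\nu^3\times m\nu^2$ cross terms; getting the power counting right there, and confirming that the cross term $|m||\nu|^2\lesssim t^\alpha t^{-1}=t^{\alpha-1}$ is genuinely subleading for every $\alpha<1$, is the delicate part. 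Everything else is a standard stationary-phase/Laplace expansion once the contour is fixed.
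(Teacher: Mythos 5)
Your overall strategy is the same as the paper's: Laplace inversion, the factorization $\bG_j(\nu)=\Psi(\nu)e^{\varpi(\nu)j}$ near the origin, exponential smallness of all contour pieces away from a $O(t^{-1/2})$ neighbourhood of the saddle, and a term-by-term Gaussian (Hermite) expansion of the local integral. However, there is a genuine error in your power counting of the cross term. You write the phase as $-\frac{m}{c_*}\nu+\frac{\cosh(\lambda_*)}{c_*^2}t\nu^2+O(t|\nu|^3+|m||\nu|^2)$ with $m=j-c_*t$, and later assert that the cross term $m\nu^2$ is ``genuinely subleading for every $\alpha<1$''. It is not. On the effective scale $\nu\sim t^{-1/2}$ one has $e^{\frac{\cosh(\lambda_*)}{c_*^3}m\nu^2}-1\approx \frac{\cosh(\lambda_*)}{c_*^3}m\nu^2$, and integrating $m\nu^2$ against the Gaussian produces a contribution of size $\frac{m}{t^{3/2}}\,\mathcal{G}\big(\frac{m}{\sqrt{2\cosh(\lambda_*)t}}\big)=\frac{1}{t}\cdot\frac{m}{\sqrt t}\,\mathcal{G}(\cdot)$, i.e.\ \emph{exactly} the order of the claimed correction $\frac{1}{t}\mathcal{P}(x)\mathcal{G}(x)$, and in the worst case ($m\sim\sqrt t$) it is $O(t^{-1})$, not $O(t^{-3/2})$. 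So discarding it both violates the stated remainder bound and, more importantly, yields the wrong polynomial: with only the $\Psi'(0)\nu$ and cubic-of-$\varpi$ contributions you would get the coefficients $-\frac{1}{c_*}+\frac{3\Lambda_*}{4c_*\cosh(\lambda_*)^2}$ for $x$ and $-\frac{\Lambda_*}{4c_*\cosh(\lambda_*)^2}$ for $x^3$, whereas the correct ones are $-\frac{3}{2c_*}+\frac{3\Lambda_*}{4c_*\cosh(\lambda_*)^2}$ and $\frac{1}{2c_*}-\frac{\Lambda_*}{4c_*\cosh(\lambda_*)^2}=\frac{c_*}{24\cosh(\lambda_*)^2}$; the missing piece $\frac{1}{2c_*}(-x+x^3)$ is precisely the cross-term contribution.

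The paper handles this by never splitting off the cross term: it keeps the full quadratic coefficient $\gamma=\frac{j}{t}\frac{\cosh(\lambda_*)}{c_*^2}$ in the phase $\varphi(\nu)j$, computes the Gaussian integral exactly as $\frac{1}{\sqrt{4\pi t\gamma c_*}}e^{-(j-c_*t)^2/(4tc_*\gamma)}$, and only then expands $\gamma$ around $\cosh(\lambda_*)/c_*$ in powers of $(j-c_*t)/t$; this expansion is what generates the extra $\frac{1}{2tc_*}(-x+x^3)\mathcal{G}(x)$ term in Lemma~\ref{lemIj1}. To repair your argument you must treat $m\nu^2$ exactly like $t\nu^3$: expand its exponential to first order, keep that first-order term in the principal part, and verify only its square and its products with the other first-order corrections land in the $O(t^{-3/2})$ Gaussian remainder. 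The rest of your outline (treatment of $j\leq 0$, of the exterior contours, of the $e^{O(t\nu^3)}$ expansion and of the truncation of the Gaussian integral) matches the paper and is fine.
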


\begin{figure}[t!]
  \centering
  \includegraphics[width=.45\textwidth]{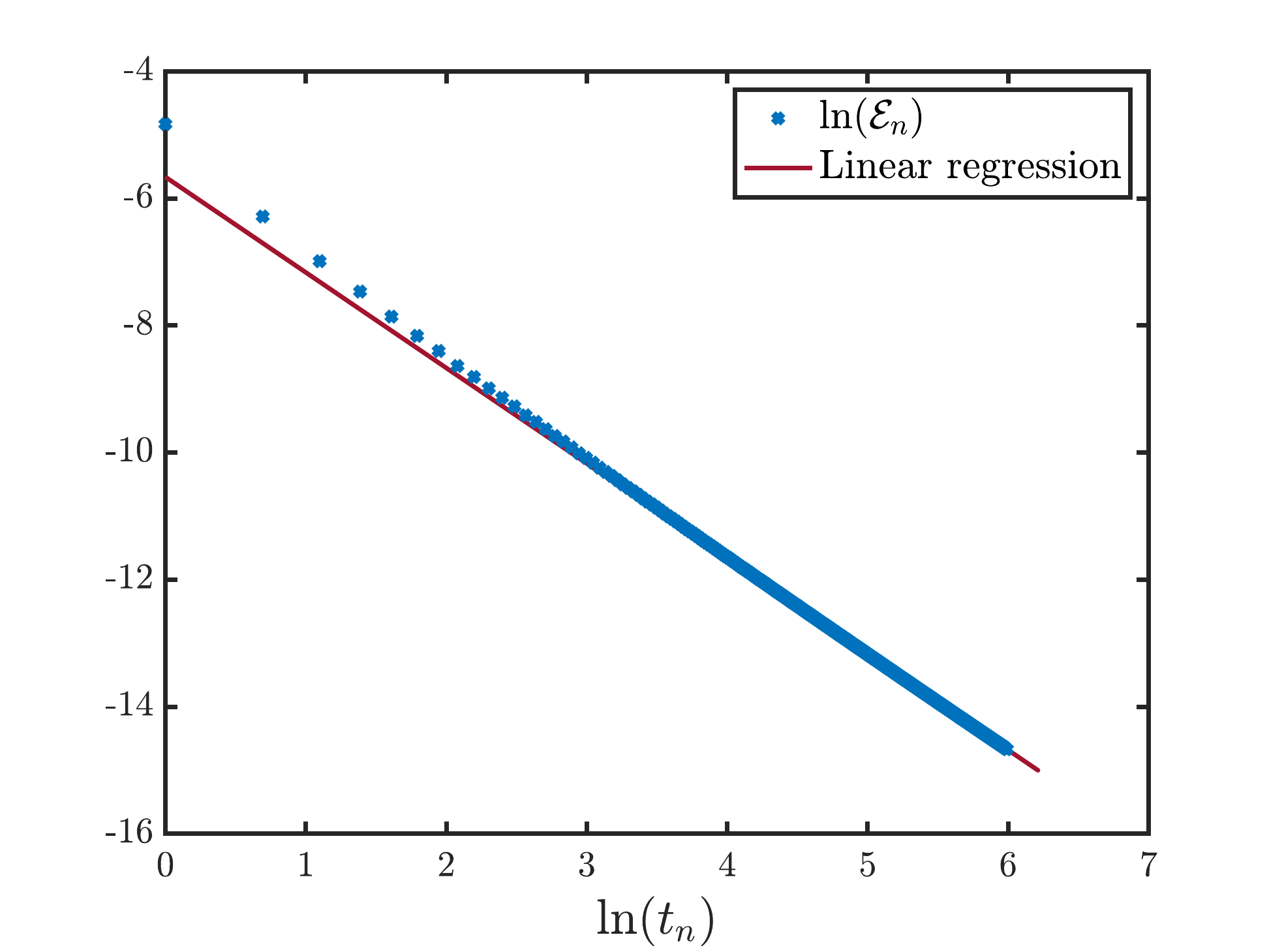}
\includegraphics[width=.45\textwidth]{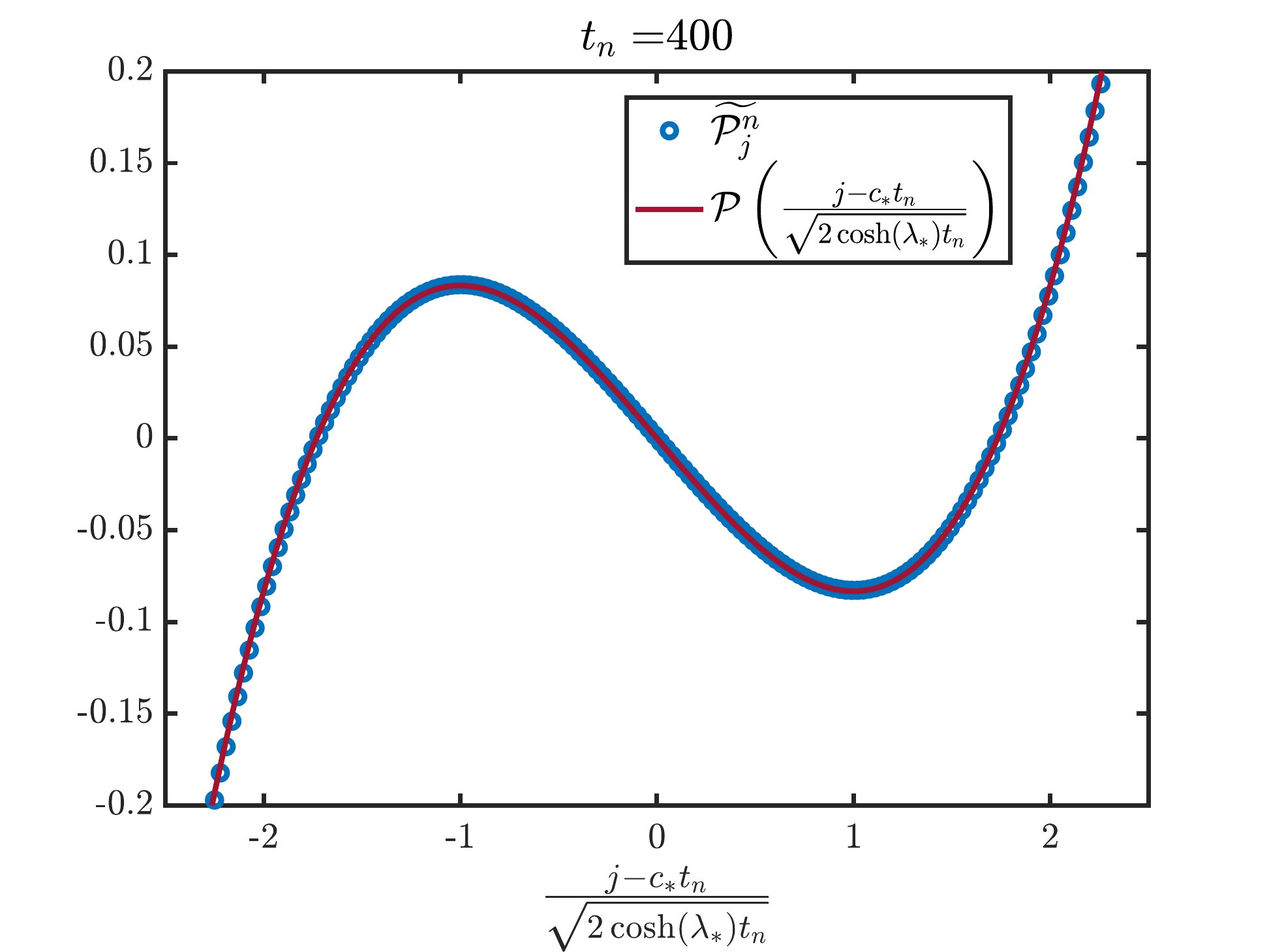}
\caption{Numerical illustrations of Proposition~\ref{proprefined}. Left. We plot $\ln(\mathcal{E}_n)$ (blue crosses), where $\mathcal{E}_n$ is defined in \eqref{definitionEn}, as a function of $\ln(t_n)$ for each $n=1,\cdots N$. We compare $\ln(\mathcal{E}_n)$ to its linear regression (dark red curve). The slope is found to be  $\simeq-1.5033$ which gives a $0.2\%$ error compared to the theoretical value $-\frac{3}{2}$. Right. We compare $\widetilde{\mathcal{P}_j^n}$ (blue circles), defined in \eqref{definitionPn}, to $\mathcal{P}\left(\frac{j-c_*t}{\sqrt{2\cosh(\lambda_*)t}} \right)$ (dark red curve) at time $t=400$. In the computations, we used $f'(0)=1$ giving $(c_*,\lambda_*)\simeq(2.0734,0.9071)$ together with $L=1000$, $\Delta t=0.01$ and $N=40001$.}
  \label{fig:numerics}
\end{figure}

We illustrate in Figure~\ref{fig:numerics} our main result of Proposition~\ref{proprefined}. We numerically solved \eqref{KPPlin} starting from the initial sequence $\boldsymbol{\delta}$ to approximate the temporal Green's functions $(\mathscr{G}_j(t))_{j\in\Z}$ for $t\geq1$. We used a Runge-Kutta method of order 4 to discretize in time the evolution equation \eqref{KPPlin} with fixed time step $\Delta t>0$, and the spatial domain was taken to $\llbracket-L,L\rrbracket$\footnote{Here, we used the notation $\llbracket A,B\rrbracket$ to denote the set of all integers between $A\in\Z$ and $B\in\Z$ with $A\leq B$.} for some given integer $L\geq 1$. And we denote by $\widetilde{\mathscr{G}_j^n}$ the approximation of $\mathscr{G}_j(t)$ at time $t=t_n=n\Delta t$ for each $j \in \llbracket-L,L\rrbracket$ and $n=1,\cdots N$. Let $\mathcal{E}_n$ be defined as
\bqq
\mathcal{E}_n:=\sup_{j \in \llbracket-L,L\rrbracket} \left|\widetilde{\mathscr{G}_j^n}-\H_j(t_n)\right|, \quad n=1,\cdots N.
\label{definitionEn}
\eqq
Then $\mathcal{E}_n$ represents a numerical approximation of $\sup_{j\in\Z}\left|\mathscr{R}_j(t_n)\right|$ which is of the order $O\left( t_n^{-3/2}\right)$ by Proposition~\ref{proprefined}. In the left panel of Figure~\ref{fig:numerics}, we show that $\ln( \mathcal{E}_n)$ is a linear function of $\ln(t_n)$ whose slope is found to be  $\simeq-1.5033$ which compares very well with the theoretical value $-\frac{3}{2}$. In the right panel, we recover the universal nature of the profile $\mathcal{P}$. More precisely, we compare $\widetilde{\mathcal{P}_j^n}$, defined as
\bqq
\widetilde{\mathcal{P}_j^n}:=t_n \left[\widetilde{\mathscr{G}_j^n}-\frac{1}{\sqrt{2\cosh(\lambda_*)t_n}}\mathcal{G}\left(\frac{j-c_*t_n}{\sqrt{2\cosh(\lambda_*)t_n}} \right)\right] \mathcal{G}\left(\frac{j-c_*t_n}{\sqrt{2\cosh(\lambda_*)t_n}} \right)^{-1}, \quad j \in \llbracket-L,L\rrbracket,
\label{definitionPn}
\eqq
to $\mathcal{P}\left(\frac{j-c_*t_n}{\sqrt{2\cosh(\lambda_*)t_n}}\right)$ as a function of $\frac{j-c_*t_n}{\sqrt{2\cosh(\lambda_*)t_n}}$ at fixed $t_n$ and observe a very good match confirming numerically the universal nature of our formula \eqref{equationp} for $\mathcal{P}$.

\subsection{Proof of Proposition~\ref{proprefined}} 

We first explain the strategy of the proof. We are going to write $\G_j(t)$ as
\bqs
\G_j(t)=\frac{1}{2\pi \mbi}\int_{\Gamma_{in}} e^{\nu t} \bG_j(\nu)\md \nu+\frac{1}{2\pi \mbi}\int_{\Gamma_{out}} e^{\nu t} \bG_j(\nu)\md \nu,
\eqs
where $\Gamma_{in}$ is a contour within the ball $B_\epsilon(0)$ and $\Gamma_{out}\subset \left\{\nu\in\mathbb{C}~|~\Re(\nu)\leq -\eta\right\}$ is some contour of the form $\left\{-\delta_0 -\delta_1|\xi|+\mbi \xi ~|~ \xi\in\R, ~ |\xi|\geq \sqrt{\epsilon^2-\eta^2}\right\}$ for well-chosen $\delta_{0,1}>0$ and where $\eta\in(0,\epsilon)$ can be chosen as in the proof of Lemma~\ref{lemgengauss}. Throughout, we assume $\Gamma_{in}\cap \Gamma_{out}=\left\{\nu_0,\overline{\nu_0}\right\}$ with $|\nu_0|=\epsilon$ and $\Re(\nu_0)=-\eta$. As a consequence, for $t$ large enough, we have $|j-c_*t|\leq \vartheta t^\alpha \leq \theta_*t$ and the proof of Lemma~\ref{lemgengauss} gives
\bqs
\left|\frac{1}{2\pi \mbi}\int_{\Gamma_{out}} e^{\nu t} \bG_j(\nu)\md \nu\right| \leq \frac{C}{t}e^{-\frac{\eta_\epsilon}{2}t}\leq \frac{\widetilde{C}}{t^{3/2}} \exp\left(-\beta   \frac{(j-c_*t)^2}{t}\right).
\eqs
As a consequence, we are led to prove that
\bqs
\left| \frac{1}{2\pi \mbi}\int_{\Gamma_{in}} e^{\nu t} \bG_j(\nu)\md \nu -\H_j(t) \right| \leq \frac{C}{t^{3/2}} \exp\left(-\beta \frac{(j-c_*t)^2}{t}\right),
\eqs
whenever $|j-c_*t|\leq \vartheta t^\alpha $ and $t$ large enough. For all $\nu\in  B_\epsilon(0)$, we recall from \eqref{defGjnu0} that for $j\geq1$ one has
\bqs
\bG_j(\nu)=\Psi(\nu)e^{\varpi(\nu)j}, 
\eqs
where $\Psi$ is defined in \eqref{defPsinu} and is holomorphic on $B_\epsilon(0)$ and $\Psi(0)=\frac{1}{c_*}>0$. In fact, simple computation gives
\bqs
\Psi(\nu)=-e^{\lambda_*}\frac{\langle \pi^0(\nu)\mathbf{e},\mathbf{e}\rangle}{e^{\varpi(\nu)}}=\frac{e^{\lambda_*}}{\rho_+(\nu)-\rho_-(\nu)}=\frac{1}{\sqrt{\left(\nu+e^{\lambda^*}+e^{-\lambda_*}\right)^2-4}},
\eqs
such that
\bqs
\Psi(\nu)=\frac{1}{c_*}-\frac{2\cosh(\lambda_*)}{c_*^3}\nu+\nu^2\widetilde{\Psi}(\nu), \quad \forall \, \nu\in B_\epsilon(0), 
\eqs
and $\widetilde{\Psi}$ is holomorphic on $B_\epsilon(0)$ with $|\widetilde{\Psi}(\nu)|\leq C$ for each $\nu\in B_\epsilon(0)$. Next, we are going to need an asymptotic expansion of $\varpi(\nu)$ up to quartic order, that is,
\bqq
\label{ExpansionvarpinuO4}
\varpi(\nu)=- \frac{\nu}{c_*}+\frac{\cosh(\lambda_*)}{c_*^3}\nu^2-\frac{\Lambda_*}{c_*^5}\nu^3+\nu^4\Omega(\nu), \quad \forall \, \nu \in B_\epsilon(0),
\eqq
where $\Omega$ is holomorphic on $B_\epsilon(0)$ with $|\Omega(\nu)|\leq C$ for each $\nu\in B_\epsilon(0)$ and
\bqq
\label{LambdaStar}
\Lambda_*:=2+\frac{c_*^2}{3}>0.
\eqq
For future reference, we also denote by $\varphi(\nu)$ the principal part of $\varpi(\nu)$, that is
\bqs
\varphi(\nu):=- \frac{\nu}{c_*}+\frac{\cosh(\lambda_*)}{c_*^3}\nu^2, \quad \forall \, \nu \in B_\epsilon(0).
\eqs
Finally, we remark that $\beta^*,\beta_*>0$ in \eqref{boundRevarpi} can always be taken larger and smaller respectively such that we can also ensure that 
\bqq
\Re(\varpi(\nu))+\frac{|\Lambda_*|}{c_*^5}|\nu|^3+|\nu|^4|\Omega(\nu)| \leq -\frac{\Re(\nu)}{c_*}+\frac{\beta^*}{c_*^3} \Re(\nu)^2-\frac{\beta_*}{c_*^3}\Im(\nu)^2, \quad \nu \in B_\epsilon(0).
\label{boundRevarpiImp}
\eqq

With all these notations in hands, we will decompose $\bG_j(\nu)$ as follows
\begin{align*}
\bG_j(\nu)&=\left(\frac{1}{c_*}-\frac{2\cosh(\lambda_*)}{c_*^3}\nu -j \frac{\Lambda_*}{c_*^6}\nu^3\right)e^{\varphi(\nu)j}+\nu^2\widetilde{\Psi}(\nu) e^{\varpi(\nu)j}-2\frac{\cosh(\lambda_*)}{c_*^3}\nu\left(e^{\varpi(\nu)j}-e^{\varphi(\nu)j}\right)\\
&~~~+\frac{1}{c_*}\left[e^{-j\frac{\Lambda_*}{c_*^5}\nu^3+j\nu^4\Omega(\nu)}-\left(1-j\frac{\Lambda_*}{c_*^5}\nu^3+j\nu^4\Omega(\nu)\right)\right]e^{\varphi(\nu)j}+\frac{j}{c_*}\nu^4\Omega(\nu)e^{\varphi(\nu)j},
\end{align*}
valid for each $\nu\in B_\epsilon(0)$ and $j\geq1$.

We now introduce the following integrals which will contribute as error terms
\begin{align*}
\mathscr{E}_j^1(t)&:=\frac{1}{2\pi \mbi}\int_{\Gamma_{in}} e^{\nu t} \nu^2\widetilde{\Psi}(\nu) e^{\varpi(\nu)j} \md \nu,\\
\mathscr{E}_j^2(t)&:=\frac{1}{2\pi \mbi}\int_{\Gamma_{in}} e^{\nu t} \nu\left(e^{\varpi(\nu)j}-e^{\varphi(\nu)j}\right) \md \nu,\\
\mathscr{E}_j^3(t)&:=\frac{1}{2\pi \mbi}\int_{\Gamma_{in}} e^{\nu t} \left[e^{-j\frac{\Lambda_*}{c_*^5}\nu^3+j\nu^4\Omega(\nu)}-\left(1-j\frac{\Lambda_*}{c_*^5}\nu^3+j\nu^4\Omega(\nu)\right)\right]e^{\varphi(\nu)j}\md \nu,\\
\mathscr{E}_j^4(t)&:=\frac{1}{2\pi \mbi}\int_{\Gamma_{in}} e^{\nu t} \nu^4\Omega(\nu)e^{\varphi(\nu)j} \md \nu.
\end{align*}

More precisely, we have the following lemma.

\begin{lem}
For any $\vartheta>0$ and $\alpha\in(0,1)$, there is some $T_0>1$ such that for each $j\in\Z$ with $|j-c_*t|\leq \vartheta t^\alpha$ and $t\geq T_0$ one has
\bqs
\left| \mathscr{E}_j^k(t) \right| \leq \frac{C}{t^{3/2}} \exp\left(-\beta \frac{(j-c_*t)^2}{t}\right), \quad k=1,\cdots, 4 \,,
\eqs
for some uniform constants $C>0$ and $\beta>0$.
\end{lem}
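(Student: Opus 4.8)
The plan is to estimate each of the four integrals $\mathscr{E}_j^k(t)$ over the contour $\Gamma_{in}$ by a common scheme: parametrize $\Gamma_{in}$ near the origin by $\Im(\nu)=\xi$ (so $\Re(\nu)=\Phi(\xi,p)$ in the notation of the second claim preceding Lemma~\ref{lemgengauss}), choose the ``center'' parameter $p$ of the curve $\Gamma_p$ exactly as in the proof of Lemma~\ref{lemgengauss}, namely $p=\zeta/\gamma$ when this ratio lies in the admissible window (which it does here since $|j-c_*t|\le\vartheta t^\alpha$ forces $\zeta\to0$), and use the fundamental pointwise bound \eqref{boundRevarpiImp}. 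The point of \eqref{boundRevarpiImp} is that it controls not just $\Re(\varpi(\nu))$ but also the cubic and quartic correction terms $\frac{|\Lambda_*|}{c_*^5}|\nu|^3+|\nu|^4|\Omega(\nu)|$ simultaneously by the same quadratic expression $-\frac{\Re(\nu)}{c_*}+\frac{\beta^*}{c_*^3}\Re(\nu)^2-\frac{\beta_*}{c_*^3}\Im(\nu)^2$, which after multiplying by $j$ reproduces exactly the computation \eqref{EstimatejRenu}. Hence on $\Gamma_p$ with $p=\zeta/\gamma$ we get, just as in Lemma~\ref{lemgengauss}, the master inequality $t\,\Re(\nu)+j\,\Re(\varpi(\nu))\le -c_\star\,\Im(\nu)^2\,t-\frac{\zeta^2}{\gamma}\frac{t}{c_*}$, and since $(c_*-\theta)\beta^*/c_*^2\le\gamma\le L\beta^*/c_*^2$ the last term is bounded above by $-\beta\frac{(j-c_*t)^2}{t}$ for a suitable $\beta>0$.

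With this in hand each integral is handled by extracting the algebraic prefactor and integrating the Gaussian. For $\mathscr{E}_j^1(t)$, the factor $\nu^2\widetilde\Psi(\nu)$ is $O(|\nu|^2)=O(\xi^2+\Phi^2)$ on $\Gamma_{in}$; bounding $|e^{\nu t}e^{\varpi(\nu)j}|$ by $e^{-c_\star\xi^2 t-\beta(j-c_*t)^2/t}$ and using $\int_\R \xi^2 e^{-c_\star\xi^2 t}\,d\xi\lesssim t^{-3/2}$ together with $|\Phi(\xi,p)|\lesssim |p|+\xi^2\lesssim t^{\alpha-1}+\xi^2$ (the contribution of $|p|$ being absorbed since $\alpha<1$) yields the claimed $t^{-3/2}$ decay times the Gaussian. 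For $\mathscr{E}_j^2(t)$ one writes $e^{\varpi(\nu)j}-e^{\varphi(\nu)j}=e^{\varphi(\nu)j}\big(e^{(\varpi-\varphi)(\nu)j}-1\big)$ and uses $(\varpi-\varphi)(\nu)=-\frac{\Lambda_*}{c_*^5}\nu^3+\nu^4\Omega(\nu)=O(|\nu|^3)$, so $|e^{(\varpi-\varphi)j}-1|\le j|\varpi-\varphi|\,e^{j|\varpi-\varphi|}\lesssim j|\nu|^3 e^{j|\nu|^3}$; the extra $e^{j|\nu|^3}$ is exactly what \eqref{boundRevarpiImp} was designed to absorb into the quadratic bound, leaving an extra factor $j|\nu|^3\cdot|\nu|\lesssim t\,(\xi^3+\cdots)\cdot(\xi+\cdots)$ whose Gaussian integral again gives $t^{-3/2}$ (since $t\int\xi^4 e^{-c_\star\xi^2 t}\lesssim t^{-3/2}$). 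The integrals $\mathscr{E}_j^3(t)$ and $\mathscr{E}_j^4(t)$ are estimated the same way: for $\mathscr{E}_j^4$ the prefactor is $\frac{j}{c_*}\nu^4\Omega(\nu)=O(t|\nu|^4)$ and $|e^{\nu t+\varphi(\nu)j}|\le e^{-c_\star\xi^2 t-\beta(j-c_*t)^2/t}$ (using $\Re(\varphi(\nu))$ obeys the same quadratic bound), so $t\int\xi^4 e^{-c_\star\xi^2 t}\,d\xi\lesssim t^{-3/2}$; for $\mathscr{E}_j^3$ one uses the elementary estimate $|e^z-(1+z)|\le |z|^2 e^{|z|}$ with $z=-j\frac{\Lambda_*}{c_*^5}\nu^3+j\nu^4\Omega(\nu)=O(t|\nu|^3)$, so $|z|^2=O(t^2|\nu|^6)$, and $e^{|z|}$ is again absorbed by \eqref{boundRevarpiImp}, leaving $t^2\int\xi^6 e^{-c_\star\xi^2 t}\,d\xi\lesssim t^2\cdot t^{-7/2}=t^{-3/2}$.

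The main obstacle — and the reason the quartic expansion \eqref{ExpansionvarpinuO4} and the reinforced bound \eqref{boundRevarpiImp} were set up beforehand — is bookkeeping the interplay between the \emph{growing} prefactors (powers of $j\sim c_*t$ coming from the Taylor remainders of the exponential) and the \emph{decaying} Gaussian weight: each extra power of $t$ in the prefactor must be paid for by two extra powers of $\Im(\nu)$ under the Gaussian integral, i.e.\ by a factor $t^{-1}$, and the cubic/quartic exponential corrections $e^{j|\nu|^3}$, $e^{j|\nu|^4}$ must be genuinely dominated (not merely comparable) by the quadratic form, which is precisely guaranteed by allowing $\beta^*$ larger and $\beta_*$ smaller in \eqref{boundRevarpiImp}. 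One must also check that replacing the true curve $\Gamma_p$ by its parabolic model via $\Phi(\xi,p)$ costs nothing: the bound $|\partial_\xi\Phi|\le C$ makes $|d\nu|\lesssim d\xi$ on $\Gamma_{in}$, and the deviation $|\Phi(\xi,p)-(p-\frac{\beta^*}{c_*^2}\xi^2)|=O(|\xi|^3+|p|^3)$ from \eqref{TaylorRenu} only improves the exponent. Finally, for the (bounded, $O(1)$-many) $j$ with $j\le0$ one uses instead the exponential-in-$|j|$ bound from Lemma~\ref{lemorigin}, so those indices contribute a term that is trivially $O(e^{-\omega|j|})$ and can be subsumed, and for $j$ with $|j-c_*t|\le\vartheta t^\alpha$ and $j\ge 1$ we are always in the regime $\zeta/\gamma\in(-\eta/2,\iota_\epsilon)$ for $t$ large, so the first case of the Lemma~\ref{lemgengauss} dichotomy is the only one that occurs. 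Assembling the four bounds gives the statement of the lemma.
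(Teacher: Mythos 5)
Your proposal is correct and follows essentially the same route as the paper: decompose $\Gamma_{in}$ into the shifted curve $\Gamma_{\zeta/\gamma}$ plus the vertical pieces, use the reinforced bound \eqref{boundRevarpiImp} so that the cubic/quartic exponential corrections are dominated by the quadratic form as in \eqref{EstimatejRenu}--\eqref{IneqRenu}, and then pay for each power of $j\sim t$ in the prefactors with two powers of $\Im(\nu)$ under the Gaussian integral (the only cosmetic caveat being that the $|p|$-contributions are absorbed via $x^m e^{-\beta_0 x^2}\lesssim e^{-\beta_0 x^2/2}$ applied to $x=|j-c_*t|/\sqrt{t}$, not merely because $\alpha<1$). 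This matches the paper's treatment of $\mathscr{E}_j^1,\dots,\mathscr{E}_j^4$ term by term.
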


\begin{proof}
Recalling our notation 
\bqs
\zeta=\frac{j-c_*t}{2t}, \quad \text{ and } \quad \gamma=\frac{j}{t}\frac{\beta^*}{c_*^2},
\eqs
we can always ensure that $\zeta/\gamma\in(-\eta/2,\iota_\epsilon)$ since we consider $t$ large enough and $|j-c_*t|\leq \vartheta t^{\alpha}$ with $\alpha\in(0,1)$. Here $\iota_\epsilon>0$ is the unique real number such that $\Gamma_p$ defined in \eqref{contourGp} with $p=\iota_\epsilon$ intersects the line $-\eta+\mathbf{i}\R$ on the boundary of $B_\epsilon(0)$ with $\eta \in(0,\eta_\epsilon)$ and where $\eta_\epsilon>0$ is defined as in the proof of Lemma~\ref{lemgengauss}. Then, in each $\mathscr{E}_j^k(t)$ with $k=1,\cdots,4$, we decompose $\Gamma_{in}$ as $\Gamma_{in}=\Gamma_{\zeta/\gamma}\cup\Gamma_1$ with
\bqs
\Gamma_1=\left\{ -\eta + \mbi \xi ~|~  -\sqrt{\epsilon^2-\eta^2}\leq \xi \leq - \xi_* \text{ and } \xi_*\leq \xi \leq \sqrt{\epsilon^2-\eta^2}\right\},
\eqs
and 
\bqs
\xi_*=\sqrt{\frac{c_*^2}{\beta_*}\left(\psi(\zeta/\gamma)-\psi(-\eta)\right)}>0.
\eqs

\paragraph{Study of $\mathscr{E}_j^1(t)$.} First, we can bound $\mathscr{E}_j^1(t)$ by
\begin{align*}
\left|\mathscr{E}_j^1(t)\right|& \lesssim \int_{\Gamma_{\zeta/\gamma}}e^{\Re(\nu)t}|\nu|^2e^{\Re(\varpi(\nu))j}|\md \nu|+\int_{\Gamma_1}e^{\Re(\nu)t}|\nu|^2e^{\Re(\varpi(\nu))j}|\md \nu|\\
&\lesssim \int_{\Gamma_{\zeta/\gamma}}e^{\Re(\nu)t}\left(\frac{\zeta^2}{\gamma^2}+|\Im(\nu)|^2\right)e^{\Re(\varpi(\nu))j}|\md \nu|+\int_{\Gamma_{1}}e^{\Re(\nu)t}\left(\eta^2+|\Im(\nu)|^2\right)e^{\Re(\varpi(\nu))j}|\md \nu|.
\end{align*}
Using similar computations as in the proof of Lemma~\ref{lemgengauss}, we find
\begin{align*}
\frac{\zeta^2}{\gamma^2} \int_{\Gamma_{\zeta/\gamma}}e^{\Re(\nu)t}e^{\Re(\varpi(\nu))j}|\md \nu|& \lesssim \frac{1}{\gamma^2} \left( \frac{|j-c_*t|}{\sqrt{t}}\right)^2 \frac{e^{-\beta_0 \frac{|j-c_*t|^2}{t}}}{t^{3/2}} \lesssim \frac{e^{-\beta_0 \frac{|j-c_*t|^2}{2t}}}{t^{3/2}}, \\
\int_{\Gamma_{\zeta/\gamma}} \left|\Im(\nu)\right|^2 e^{\Re(\nu)t}e^{\Re(\varpi(\nu))j}|\md \nu| &\lesssim e^{-\frac{\zeta^2}{\gamma}\frac{t}{c_*}} \underbrace{\int_{\Gamma_{\zeta/\gamma}} \left|\Im(\nu)\right|^2 e^{-c_\star \Im(\nu)^2 t}|\md \nu|}_{\leq \frac{C}{t^{3/2}}} \lesssim \frac{e^{-\beta_0 \frac{|j-c_*t|^2}{t}}}{t^{3/2}}.
\end{align*}
Let us note that in the first estimate, we used the fact that for each integer $m\geq1$ one can always find some constant $C_m>0$ such that $x^me^{-\beta_0 x^2}\leq C_m e^{-\frac{\beta_0}{2} x^2}$ for any $x\geq0$.   

Finally, the remaining contribution in $\mathscr{E}_j^1(t)$ along $\Gamma_1$ gives
\bqs
\int_{\Gamma_{1}}e^{\Re(\nu)t}\left(\eta^2+|\Im(\nu)|^2\right)e^{\Re(\varpi(\nu))j}|\md \nu| \lesssim e^{-\frac{\eta}{2}t-\frac{\zeta^2}{\gamma}\frac{t}{c_*}}\lesssim \frac{e^{-\beta_0 \frac{|j-c_*t|^2}{t}}}{t^{3/2}}.
\eqs

\paragraph{Study of $\mathscr{E}_j^2(t)$.}To handle the next error term $\mathscr{E}_j^2(t)$, we will use the fact that
\bqs
\left|e^{\varpi(\nu)j}-e^{\varphi(\nu)j}\right| \leq C j |\nu|^3 e^{j\left(\Re(\varpi(\nu))+\frac{|\Lambda_*|}{c_*^5}|\nu|^3+|\nu|^4|\Omega(\nu)|\right)}, \quad \nu \in B_\epsilon(0),
\eqs
together with our estimates \eqref{boundRevarpiImp} and \eqref{IneqRenu} to get
\begin{align*}
\left| \frac{1}{2\pi \mbi}\int_{\Gamma_{\zeta/\gamma}} e^{\nu t} \nu \left(e^{\varpi(\nu)j}-e^{\varphi(\nu)j}\right)\md \nu \right| &\lesssim  j \int_{\Gamma_{\zeta/\gamma}} \left( \frac{|\zeta|^4}{\gamma^4}+|\Im(\nu)|^4 \right) e^{t\Re(\nu)+j\left(\Re(\varpi(\nu))+\frac{|\Lambda_*|}{c_*^5}|\nu|^3+|\nu|^4|\Omega(\nu)|\right)} |\md \nu| \\
&\lesssim j e^{-\frac{\zeta^2}{\gamma}\frac{t}{c_*}} \int_{\Gamma_{\zeta/\gamma}} \left( \frac{|\zeta|^4}{\gamma^4}+|\Im(\nu)|^4 \right) e^{-c_\star \Im(\nu)^2 t} |\md \nu| \\
&\lesssim j \left( \frac{|\zeta|^4}{\sqrt{t}}+ \frac{1}{t^{9/2}}\right) e^{-\frac{\zeta^2}{\gamma}\frac{t}{c_*}}.
\end{align*}
Next, we remark that
\bqs
j \left( \frac{|\zeta|^4}{\sqrt{t}}+ \frac{1}{t^{9/2}}\right) = \frac{j}{t} \left(\left(\frac{|j-c_*t|}{2\sqrt{t}}\right)^4\frac{1}{t^{3/2}} +\frac{1}{t^{7/2}}\right),
\eqs
which then implies that
\bqs
\left| \frac{1}{2\pi \mbi}\int_{\Gamma_{\zeta/\gamma}} e^{\nu t} \nu \left(e^{\varpi(\nu)j}-e^{\varphi(\nu)j}\right)\md \nu \right|  \lesssim \frac{e^{-\beta_0 \frac{|j-c_*t|^2}{t}}}{t^{3/2}}.
\eqs
Similarly, the contribution along $\Gamma_1$ gives a Gaussian estimate with an exponential in time decaying factor leading to
\bqs
\left| \frac{1}{2\pi \mbi}\int_{\Gamma_1} e^{\nu t} \nu \left(e^{\varpi(\nu)j}-e^{\varphi(\nu)j}\right)\md \nu \right| \lesssim j e^{-\frac{\eta}{2}t-\frac{\zeta^2}{\gamma}\frac{t}{c_*}}\lesssim \frac{e^{-\beta_0 \frac{|j-c_*t|^2}{t}}}{t^{3/2}},
\eqs
where we simply used that $|\nu|^4\leq \epsilon^2$ since $\Gamma_1\subset B_\epsilon(0)$ and that
\bqs
 j e^{-\frac{\eta}{2}t} = \frac{j}{t} te^{-\frac{\eta}{2}t}\lesssim \frac{1}{t^{3/2}},
\eqs
for $t$ large enough and $|j-c_*t|\leq \vartheta t^{\alpha}$.

\paragraph{Study of $\mathscr{E}_j^4(t)$.} Before proceeding with $\mathscr{E}_j^3(t)$, we first treat $\mathscr{E}_j^4(t)$ since the computations are very close to the previous case. Indeed, for each $\nu\in\Gamma_{in}$, we use that
\bqs
\left|e^{\nu t} \nu^4\Omega(\nu)e^{\varphi(\nu)j}\right| \leq C |\nu|^4 e^{\Re(\nu)t}e^{\Re(\varphi(\nu))j},
\eqs
such that we deduce
\begin{align*}
\left| \frac{1}{2\pi \mbi}\int_{\Gamma_{\zeta/\gamma}} e^{\nu t} \nu^4\Omega(\nu)e^{\varphi(\nu)j}\md \nu \right| &\lesssim   \int_{\Gamma_{\zeta/\gamma}} \left( \frac{|\zeta|^4}{\gamma^4}+|\Im(\nu)|^4 \right) e^{t\Re(\nu)+j\Re(\varphi(\nu))} |\md \nu| \\
&\lesssim  e^{-\frac{\zeta^2}{\gamma}\frac{t}{c_*}} \int_{\Gamma_{\zeta/\gamma}} \left( \frac{|\zeta|^4}{\gamma^4}+|\Im(\nu)|^4 \right) e^{-c_\star \Im(\nu)^2 t} |\md \nu| \\
&\lesssim \left( \frac{|\zeta|^4}{\sqrt{t}}+ \frac{1}{t^{9/2}}\right) e^{-\frac{\zeta^2}{\gamma}\frac{t}{c_*}}\lesssim \frac{e^{-\beta_0 \frac{|j-c_*t|^2}{t}}}{t^{3/2}}.
\end{align*}
The contribution along $\Gamma_1$ is treated as usual.

\paragraph{Study of $\mathscr{E}_j^3(t)$.} Finally, for the last term $\mathscr{E}_j^3(t)$, we use that
\bqs
\left| e^{-j\frac{\Lambda_*}{c_*^5}\nu^3+j\nu^4\Omega(\nu)}-\left(1-j\frac{\Lambda_*}{c_*^5}\nu^3+j\nu^4\Omega(\nu)\right) \right| \leq C j^2 |\nu|^6e^{j\frac{|\Lambda_*|}{c_*^5}|\nu|^3+j|\nu|^4|\Omega(\nu)|},\quad \nu \in B_\epsilon(0),
\eqs
together with
\bqs
\Re(\varphi(\nu))+\frac{|\Lambda_*|}{c_*^5}|\nu|^3+|\nu|^4|\Omega(\nu)|\leq -\frac{\Re(\nu)}{c_*}+\frac{\beta^*}{c_*^3} \Re(\nu)^2-\frac{\beta_*}{c_*^3}\Im(\nu)^2, \quad \nu \in B_\epsilon(0),
\eqs
to obtain that
\begin{align*}
&\left| \frac{1}{2\pi \mbi}\int_{\Gamma_{\zeta/\gamma}} e^{\nu t} \left[e^{-j\frac{\Lambda_*}{c_*^5}\nu^3+j\nu^4\Omega(\nu)}-\left(1-j\frac{\Lambda_*}{c_*^5}\nu^3+j\nu^4\Omega(\nu)\right)\right]e^{\varphi(\nu)j}\md \nu \right| \\
&\lesssim  j^2 \int_{\Gamma_{\zeta/\gamma}} \left( \frac{|\zeta|^6}{\gamma^6}+|\Im(\nu)|^6 \right) e^{t\Re(\nu)+j\left(\Re(\varphi(\nu))+\frac{|\Lambda_*|}{c_*^5}|\nu|^3+|\nu|^4|\Omega(\nu)|\right)} |\md \nu| \\
&\lesssim j^2 e^{-\frac{\zeta^2}{\gamma}\frac{t}{c_*}} \int_{\Gamma_{\zeta/\gamma}} \left( \frac{|\zeta|^6}{\gamma^6}+|\Im(\nu)|^6 \right) e^{-c_\star \Im(\nu)^2 t} |\md \nu| \\
&\lesssim j^2 \left( \frac{|\zeta|^6}{\sqrt{t}}+ \frac{1}{t^{13/2}}\right) e^{-\frac{\zeta^2}{\gamma}\frac{t}{c_*}}.
\end{align*}
Finally, we note that
\bqs
j^2\frac{|\zeta|^6}{\sqrt{t}}=\left(\frac{j}{t}\right)^2 \left( \frac{|j-c_*t|}{2\sqrt{t}}\right)^6 \frac{1}{t^{3/2}},
\eqs
such that we eventually get
\bqs
\left|\mathscr{E}_j^3(t)\right|\lesssim \frac{e^{-\beta_0 \frac{|j-c_*t|^2}{t}}}{t^{3/2}},
\eqs
by, once again, noticing that the contribution along $\Gamma_1$ can be subsumed into the one obtained along $\Gamma_{\zeta/\gamma}$.
\end{proof}

Next, we move on with the analysis of the leading order terms. We define
\begin{align*}
\mathscr{I}_j^1(t)&:=\frac{1}{c_*}\frac{1}{2\pi \mbi}\int_{\Gamma_{in}}  e^{\nu t} e^{\varphi(\nu)j}\md \nu,\\
\mathscr{I}_j^2(t)&:=-\frac{2\cosh(\lambda_*)}{c_*^3}\frac{1}{2\pi \mbi}\int_{\Gamma_{in}} e^{\nu t}   \nu e^{\varphi(\nu)j} \md \nu,\\
\mathscr{I}_j^3(t)&:=-\frac{\Lambda_*}{c_*^6}\frac{1}{2\pi \mbi}\int_{\Gamma_{in}} e^{\nu t} j  \nu^3 e^{\varphi(\nu)j} \md \nu.
\end{align*}

In order to evaluate the above three integrals, we introduce some new contours
\bqs
\Gamma_d :=\left\{ z -\mbi \sqrt{\epsilon^2-\eta^2} ~|~ -\eta \leq z \leq \frac{\zeta}{\gamma}\right\}, \quad \Gamma_u :=\left\{ z +\mbi \sqrt{\epsilon^2-\eta^2} ~|~ -\eta \leq z \leq \frac{\zeta}{\gamma}\right\}
\eqs
and 
\bqs
\Gamma_i := \left\{\frac{\zeta}{\gamma}+ \mbi \xi ~|~ |\xi| \leq \sqrt{\epsilon^2-\eta^2} \right\},
\eqs
with this time $\zeta=\frac{j-c_*t}{2t}$ and $\gamma=\frac{j}{t}\frac{\cosh(\lambda_*)}{c_*^2}$. We choose $t$ large enough such that 
\bqs
-\frac{\eta}{2} \leq \frac{\zeta}{\gamma} \leq \frac{\eta}{2},
\eqs
which is always possible since $|\zeta|\leq \frac{\vartheta}{2t^{1-\alpha}}$ and $t$ is taken large enough. In the following, we will write $\Gamma_{in}=\Gamma_d\cup\Gamma_u\cup\Gamma_i\subset B_\epsilon(0)$ and we refer to Figure~\ref{fig:Gammain} for an illustration.

\begin{figure}[t!]
  \centering
  \includegraphics[width=.45\textwidth]{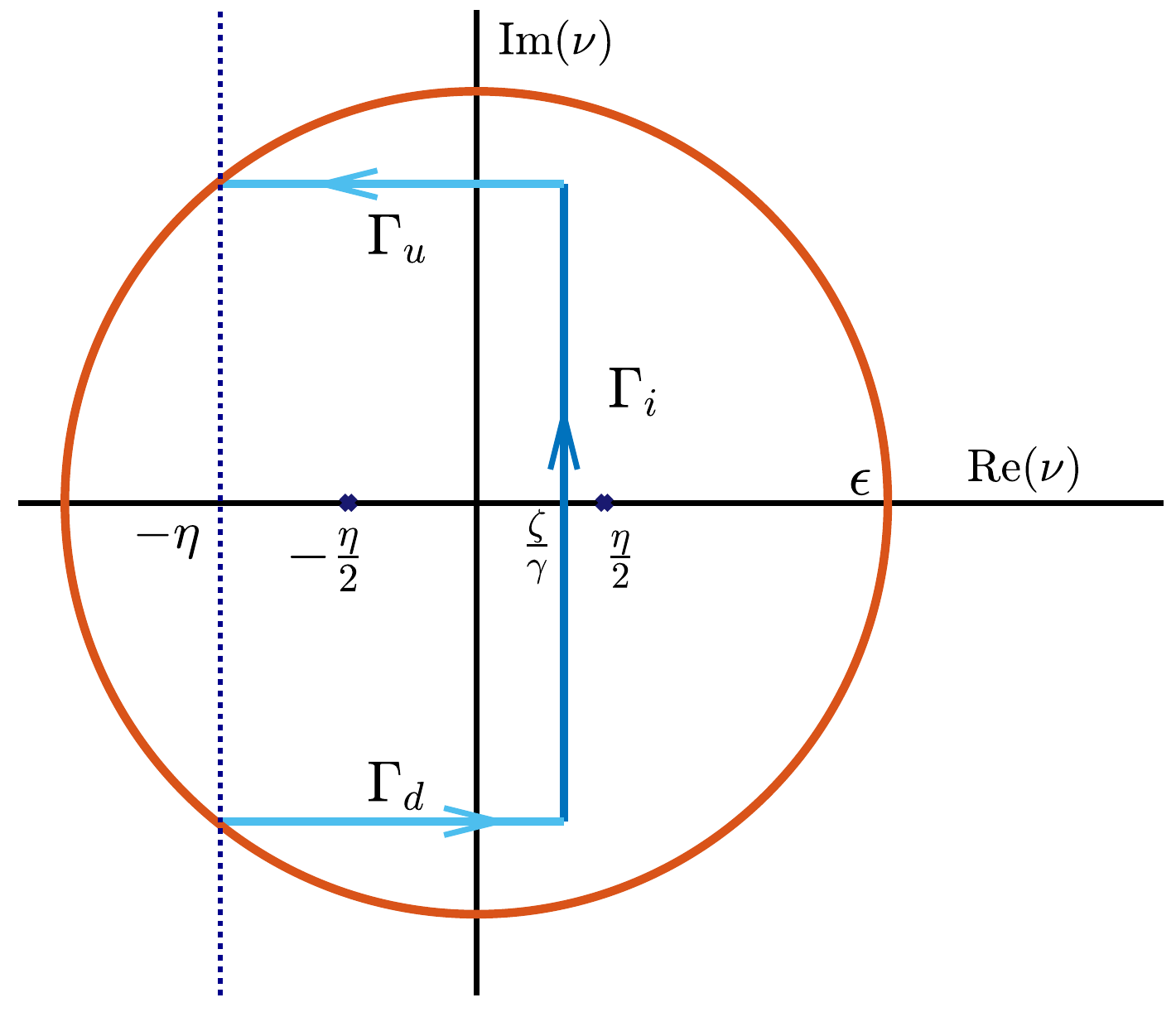}
  \caption{Illustration of the geometry of the contour $\Gamma_{in}=\Gamma_d\cup\Gamma_u\cup\Gamma_i\subset B_\epsilon(0)$ used in the computation of the integrals $\mathscr{I}_j^k(t)$ for $k=1,2,3$. }
  \label{fig:Gammain}
\end{figure}


\begin{lem}\label{lemIj1}
For any $\vartheta>0$ and $\alpha\in(0,1)$, there is some $T_0>1$ such that for each $j\in\Z$ with $|j-c_*t|\leq \vartheta t^\alpha$ and $t\geq T_0$ one can decompose $\mathscr{I}_j^1(t)$ as
\begin{align*}
\mathscr{I}_j^1(t)&=\left(\frac{1}{\sqrt{2t\cosh(\lambda_*)}}+\frac{1}{2tc_*}\left(-\left(\frac{j-c_*t}{\sqrt{2\cosh(\lambda_*)t}} \right)+\left(\frac{j-c_*t}{\sqrt{2\cosh(\lambda_*)t}} \right)^3 \right)\right)\mathcal{G}\left(\frac{j-c_*t}{\sqrt{2\cosh(\lambda_*)t}} \right)\\
&~~~+\widetilde{\mathscr{R}_j^1}(t),
\end{align*}
with
\bqs
\left| \widetilde{\mathscr{R}_j^1}(t)\right| \leq \frac{C}{t^{3/2}} \exp\left(-\beta \frac{(j-c_*t)^2}{t}\right),
\eqs
for some uniform constants $C>0$ and $\beta>0$.
\end{lem}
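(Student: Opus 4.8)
The plan is to exploit the fact that the phase in $\mathscr{I}_j^1(t)$ is \emph{exactly quadratic} in $\nu$, so that the integral is a genuine (truncated) Gaussian integral evaluated by the Laplace method. Writing $g(\nu):=\nu t+j\varphi(\nu)=a\nu^2+b\nu$ with
\[
a:=\frac{j\cosh(\lambda_*)}{c_*^3}>0,\qquad b:=t-\frac{j}{c_*}=-\frac{j-c_*t}{c_*},
\]
completing the square gives $g(\nu)=a\bigl(\nu+\tfrac{b}{2a}\bigr)^2-\tfrac{b^2}{4a}$, and the critical point $\nu_c:=-\tfrac{b}{2a}=\tfrac{(j-c_*t)c_*^2}{2j\cosh(\lambda_*)}$ is \emph{precisely} $\zeta/\gamma$, i.e.\ the point where the vertical segment $\Gamma_i$ crosses the real axis. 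First I would dispose of the horizontal legs $\Gamma_d\cup\Gamma_u$ (imaginary part $\pm\sqrt{\epsilon^2-\eta^2}$, real part $z\in[-\eta,\zeta/\gamma]$): there $\Re g(\nu)=a\bigl(z^2-(\epsilon^2-\eta^2)\bigr)+bz$, and since $a\sim\tfrac{\cosh(\lambda_*)}{c_*^2}t$, $|b|=\tfrac{|j-c_*t|}{c_*}\lesssim t^\alpha$, and $\eta$ can be chosen small enough that $z^2-(\epsilon^2-\eta^2)\le-c_0<0$ for all admissible $z$, one gets $\Re g(\nu)\le -c_1 t$ for $t$ large; as these contours have bounded length their contribution is $O(e^{-c_1 t})$ and is absorbed into $\widetilde{\mathscr{R}_j^1}(t)$. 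On $\Gamma_i=\{\nu_c+\mbi\xi\}$ the phase collapses to $g=-\tfrac{b^2}{4a}-a\xi^2$, and because $a\sim t$ the tails $\int_{|\xi|>\sqrt{\epsilon^2-\eta^2}}e^{-a\xi^2}\md\xi$ are again $O(e^{-c_1 t})$, so
\[
\mathscr{I}_j^1(t)=\frac{e^{-b^2/(4a)}}{2\pi c_*}\int_{\R}e^{-a\xi^2}\md\xi+O(e^{-c_1 t})=\frac{e^{-b^2/(4a)}}{2c_*\sqrt{\pi a}}+O(e^{-c_1 t}).
\]

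Next I would Taylor-expand $\dfrac{e^{-b^2/(4a)}}{2c_*\sqrt{\pi a}}$ in the small parameter $u:=\dfrac{j-c_*t}{c_*t}$, which satisfies $|u|\lesssim t^{\alpha-1}\to 0$. Setting $y:=\dfrac{j-c_*t}{\sqrt{2\cosh(\lambda_*)t}}$ one has $a=\dfrac{\cosh(\lambda_*)t}{c_*^2}(1+u)$ and $\dfrac{b^2}{4a}=\dfrac{y^2}{2(1+u)}$, hence
\[
\frac{1}{\sqrt a}=\frac{c_*}{\sqrt{\cosh(\lambda_*)t}}\,(1+u)^{-1/2},\qquad
e^{-b^2/(4a)}=e^{-y^2/2}\,e^{\Theta},\quad \Theta:=\frac{y^2}{2}\cdot\frac{u}{1+u}.
\]
Using $(1+u)^{-1/2}=1-\tfrac u2+O(u^2)$, $e^{\Theta}=1+\tfrac{y^2u}{2}+O\!\bigl(y^2u^2+\Theta^2e^{\Theta_+}\bigr)$, and the identity $u=\dfrac{\sqrt{2\cosh(\lambda_*)}}{c_*}\cdot\dfrac{y}{\sqrt t}$, the $t^{-1/2}$-order term is $\dfrac{e^{-y^2/2}}{\sqrt{4\pi\cosh(\lambda_*)t}}=\dfrac{1}{\sqrt{2\cosh(\lambda_*)t}}\,\mathcal{G}(y)$, and the $t^{-1}$-order term collects to $\dfrac{1}{2c_*t}(y^3-y)\dfrac{e^{-y^2/2}}{\sqrt{2\pi}}=\dfrac{1}{2tc_*}\bigl(-y+y^3\bigr)\mathcal{G}(y)$, which are precisely the two displayed main contributions of the lemma; everything else goes into $\widetilde{\mathscr{R}_j^1}(t)$ together with the $O(e^{-c_1 t})$ of the first step.

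The main obstacle I anticipate is \emph{uniformity in $y$}: when $\alpha>\tfrac12$ the variable $y$ may grow like $t^{\alpha-1/2}$, so neither $\Theta$ nor $\tfrac{y^3}{\sqrt t}$ is small and one cannot expand $e^{\Theta}$ naively about $0$. The resolution is the \emph{a priori} bound $0\le\tfrac{b^2}{4a}\le y^2$, valid as soon as $|u|\le\tfrac12$, equivalently $\Theta\le\tfrac{y^2}{6}$: this yields $e^{-y^2/2}e^{\Theta}\le e^{-y^2/3}$, so a genuine Gaussian factor can be pulled out of every error term. Re-expressing the Taylor remainders through $y$ shows that each carries the needed negative power of $t$ after multiplication by the $O(t^{-1/2})$ prefactor $\tfrac{1}{2c_*\sqrt{\pi a}}$ --- for instance $u^2\lesssim\tfrac{y^2}{t}$ and $\Theta^2\lesssim\tfrac{y^6}{t}$, so the corresponding contributions to $\mathscr{I}_j^1(t)$ are $O\!\bigl(t^{-3/2}\bigr)$ times a polynomial in $y$ times $e^{-y^2/3}$ --- and then the elementary inequality $|P(y)|e^{-y^2/3}\le C_P e^{-y^2/6}$ bounds all of them by $\dfrac{C}{t^{3/2}}e^{-\beta y^2}=\dfrac{C}{t^{3/2}}e^{-2\beta\cosh(\lambda_*)(j-c_*t)^2/t}$. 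Finally $e^{-c_1 t}$ is subsumed into this Gaussian estimate because $(j-c_*t)^2/t\lesssim t^{2\alpha-1}=o(t)$. The only remaining routine work is to track the numerical constants carefully enough to confirm the two explicit leading terms (and to note that $j\ge 1$ automatically in the regime $|j-c_*t|\le\vartheta t^\alpha$ with $t$ large).
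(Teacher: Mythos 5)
Your proposal is correct and follows essentially the same route as the paper: evaluate the exactly quadratic phase on the vertical segment $\Gamma_i$ through the saddle $\nu_c=\zeta/\gamma$ as a genuine Gaussian integral, discard the horizontal legs $\Gamma_{d,u}$ and the Gaussian tails as $O(e^{-c_1t})$, and then Taylor-expand the prefactor and the exponent in $u=(j-c_*t)/(c_*t)$ to extract the two displayed leading terms. Your explicit treatment of uniformity in $y$ via the a priori bound $\Theta\le y^2/6$ (so that $e^{-y^2/2}e^{\Theta}\le e^{-y^2/3}$) is a welcome, slightly more careful justification of the remainder estimate that the paper states more tersely, but it is a refinement of the same argument rather than a different one.
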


\begin{proof}

We first compute
\begin{align*}
\frac{1}{c_*}\frac{1}{2\pi \mbi}\int_{\Gamma_{i}} e^{\nu t} e^{\varphi(\nu)j}\md \nu &=\frac{1}{c_*} \frac{1}{2\pi} \exp\left(-\frac{(j-c_*t)^2}{4t c_* \gamma} \right) \int_{-\sqrt{\epsilon^2-\eta^2}}^{\sqrt{\epsilon^2-\eta^2}}  e^{-\frac{t\gamma}{c_*}\xi^2} \md \xi \\
&= \frac{1}{2\pi \sqrt{t\gamma c_*}}\exp\left(-\frac{(j-c_*t)^2}{4t c_* \gamma} \right) \int_{-\sqrt{\epsilon^2-\eta^2}\sqrt{\frac{t\gamma}{c_*}}}^{\sqrt{\epsilon^2-\eta^2}\sqrt{\frac{t\gamma}{c_*}}}e^{-z^2}\md z.
\end{align*}
Since $|j-c_*t|\leq \vartheta t^{\alpha}$, we have that
\bqs
\frac{1}{\sqrt{t\gamma c_*}}=\frac{1}{\sqrt{t\cosh(\lambda_*)}}\left(1-\frac{1}{2}\frac{j-c_*t}{tc_*}+ O\left(\left(\frac{j-c_*t}{t}\right)^2\right)\right),
\eqs
together with
\bqs
\left| e^{-\frac{(j-c_*t)^2}{4t c_* \gamma}}-e^{-\frac{(j-c_*t)^2}{4t\cosh(\lambda_*)}} -\frac{(j-c_*t)^3}{4t^2\cosh(\lambda_*)c_*}e^{-\frac{(j-c_*t)^2}{4t\cosh(\lambda_*)}} \right| \lesssim \frac{1}{t} e^{-\beta \frac{(j-c_*t)^2}{t}},
\eqs
for some $\beta>0$ and $t$ large enough. Next we remark that
\bqs
\int_\R e^{-z^2}\md z-\int_{-\sqrt{\epsilon^2-\eta^2}\sqrt{\frac{t\gamma}{c_*}}}^{\sqrt{\epsilon^2-\eta^2}\sqrt{\frac{t\gamma}{c_*}}}e^{-z^2}\md z = 2 \int_{\sqrt{\epsilon^2-\eta^2}\sqrt{\frac{t\gamma}{c_*}}}^{+\infty}e^{-z^2}\md z,
\eqs
for which we can use estimate of the complementary error function $\mathrm{erfc}(x):=2\int_x^{+\infty}e^{-z^2}\md z$ which satisfies for $x>1$
\bqs
\mathrm{erfc}(x)\leq \frac{e^{-x^2}}{x}.
\eqs
As consequence, for $t$ large enough such that $\sqrt{\epsilon^2-\eta^2}\sqrt{\frac{t\gamma}{c_*}}>1$, one has
\bqs
\left|\int_\R e^{-z^2}\md z-\int_{-\sqrt{\epsilon^2-\eta^2}\sqrt{\frac{t\gamma}{c_*}}}^{\sqrt{\epsilon^2-\eta^2}\sqrt{\frac{t\gamma}{c_*}}}e^{-z^2}\md z\right| \lesssim \frac{1}{\sqrt{t}} e^{-\beta t},
\eqs
for some $\beta>0$. 

Using the above computations, we see that
\bqs
\frac{1}{c_*}\frac{1}{2\pi \mbi}\int_{\Gamma_{i}} e^{\nu t} e^{\varphi(\nu)j}\md \nu = \frac{1}{ \sqrt{4\pi t\gamma c_*}}e^{-\frac{(j-c_*t)^2}{4t c_* \gamma}}  \underbrace{- \frac{1}{\pi \sqrt{t\gamma c_*}}e^{-\frac{(j-c_*t)^2}{4t c_* \gamma}} \int_{\sqrt{\epsilon^2-\eta^2}\sqrt{\frac{t\gamma}{c_*}}}^{+\infty}e^{-z^2}\md z}_{:=\mathscr{R}^1_j(t)},
\eqs
with
\bqs
\left|\mathscr{R}^1_j(t)\right| \lesssim \frac{1}{t^{3/2}} e^{-\beta \frac{(j-c_*t)^2}{t}}.
\eqs
Next, we also see that if we define 
\begin{align*}
\mathscr{R}^2_j(t):=\frac{1}{ \sqrt{4\pi t\gamma c_*}}e^{-\frac{(j-c_*t)^2}{4t c_* \gamma}} -\frac{1}{\sqrt{4\pi t \cosh(\lambda_*)}} \left(  1 -\frac{1}{2}\frac{j-c_*t}{tc_*}+\frac{(j-c_*t)^3}{4t^2\cosh(\lambda_*)c_*}  \right)e^{-\frac{(j-c_*t)^2}{4t\cosh(\lambda_*)}},
\end{align*}
then for $t$ large enough we have that
\bqs
\left|\mathscr{R}^2_j(t)\right| \lesssim \frac{1}{t^{3/2}} e^{-\beta \frac{(j-c_*t)^2}{t}},
\eqs
and
\bqs
\frac{1}{c_*}\frac{1}{2\pi \mbi}\int_{\Gamma_{i}} e^{\nu t} e^{\varphi(\nu)j}\md \nu = \frac{1}{\sqrt{4\pi t \cosh(\lambda_*)}} \left(  1 -\frac{1}{2}\frac{j-c_*t}{tc_*}+\frac{(j-c_*t)^3}{4t^2\cosh(\lambda_*)c_*}  \right)e^{-\frac{(j-c_*t)^2}{4t\cosh(\lambda_*)}}+\mathscr{R}^1_j(t)+\mathscr{R}^2_j(t).
\eqs
Finally, we rewrite the above leading order terms as
\begin{align*}
\mathscr{A}_j(t)&:=\frac{1}{\sqrt{4\pi t \cosh(\lambda_*)}} \exp\left(-\frac{(j-c_*t)^2}{4t\cosh(\lambda_*)}\right)=\frac{1}{\sqrt{2t\cosh(\lambda_*)}}\mathcal{G}\left(\frac{j-c_*t}{\sqrt{2\cosh(\lambda_*)t}} \right),\\
\mathscr{B}_j(t)&:=-\frac{1}{2}\frac{1}{\sqrt{4\pi t \cosh(\lambda_*)}} \frac{j-c_*t}{tc_*}e^{-\frac{(j-c_*t)^2}{4t\cosh(\lambda_*)}}+\frac{1}{\sqrt{4\pi t \cosh(\lambda_*)}} \frac{(j-c_*t)^3}{4t^2\cosh(\lambda_*)c_*} e^{-\frac{(j-c_*t)^2}{4t\cosh(\lambda_*)}}\\
&=\frac{1}{2tc_*}\left(- \left(\frac{j-c_*t}{\sqrt{2\cosh(\lambda_*)t}}\right) +\left(\frac{j-c_*t}{\sqrt{2\cosh(\lambda_*)t}}\right) ^3\right)\mathcal{G}\left(\frac{j-c_*t}{\sqrt{2\cosh(\lambda_*)t}} \right),
\end{align*}
such that we have the decomposition
\bqs
\mathscr{I}_j^1(t) =\mathscr{A}_j(t) +\mathscr{B}_j(t)+\mathscr{R}^1_j(t)+\mathscr{R}^2_j(t)+\frac{1}{c_*}\frac{1}{2\pi \mbi}\int_{\Gamma_{u}\cup\Gamma_d} e^{\nu t} e^{\varphi(\nu)j}\md \nu.
\eqs
To conclude the study of $\mathscr{I}_j^1(t)$, it remains to evaluate the integrals on $\Gamma_{d,u}$. We only treat the case $\Gamma_d$ since the other integral can be handled similarly. We need to bound
\bqs
\frac{1}{2\pi \mbi}\int_{\Gamma_{d}} e^{\nu t} e^{\varphi(\nu)j}\md \nu =\frac{1}{2\pi \mbi} \int_{-\eta}^{\frac{\zeta}{\gamma}} e^{t\left(z -\mbi \sqrt{\epsilon^2-\eta^2}\right)} e^{\varphi\left(z -\mbi \sqrt{\epsilon^2-\eta^2}\right)j}\md z
\eqs
and upon setting $\xi_\epsilon:=\sqrt{\epsilon^2-\eta^2}$ we have
\bqs
t(z-\mbi \xi_\epsilon)+\varphi\left(z -\mbi \xi_\epsilon\right)j=\mbi \xi_\epsilon\left( -t+\frac{j}{c_*}\right)-j\frac{\cosh(\lambda_*)}{c_*^3}\xi_\epsilon^2+\frac{t}{c_*}\left( -2\zeta z +\gamma z^2\right)-2\mbi j \xi_\epsilon \frac{\cosh(\lambda_*)}{c_*^3}z.
\eqs
As a consequence,
\begin{align*}
\left|  \frac{1}{2\pi \mbi}\int_{\Gamma_{d}} e^{\nu t} e^{\varphi(\nu)j}\md \nu \right| & \lesssim \exp\left(- \frac{t}{c_*}\gamma \xi_\epsilon^2 - \frac{t}{c_*}\frac{\zeta^2}{\gamma}  \right) \int_{-\eta}^{\frac{\zeta}{\gamma}} e^{\frac{t}{c_*}\gamma\left(z-\frac{\zeta}{\gamma} \right)^2}\md z \\
&\lesssim \exp\left(- \frac{t}{c_*}\gamma \xi_\epsilon^2 - \frac{t}{c_*}\frac{\zeta^2}{\gamma} +\frac{t}{c_*}\gamma\left(\eta+\frac{\zeta}{\gamma} \right)^2  \right).
\end{align*}
Finally, we note that
\bqs
-\xi_\epsilon^2+\left(\eta+\frac{\zeta}{\gamma} \right)^2=-\epsilon^2+\eta^2+\left(\eta+\frac{\zeta}{\gamma} \right)^2\leq -\epsilon^2+\eta^2+\left(\frac{3}{2}\eta \right)^2 < 0,
\eqs
since for fixed $\epsilon>0$, we can always choose $\eta>0$ small enough such that the above strict inequality holds. Thus we get
\bqs
\left|  \frac{1}{2\pi \mbi}\int_{\Gamma_{d}} e^{\nu t} e^{\varphi(\nu)j}\md \nu \right|\lesssim \frac{1}{t^{3/2}} e^{-\beta \frac{(j-c_*t)^2}{t}}.
\eqs
\end{proof}

We now turn our attention to $\mathscr{I}_j^2(t)$.

\begin{lem}\label{lemIj2}
For any $\vartheta>0$ and $\alpha\in(0,1)$, there is some $T_0>1$ such that for each $j\in\Z$ with $|j-c_*t|\leq \vartheta t^\alpha$ and $t\geq T_0$ one can decompose $\mathscr{I}_j^2(t)$ as
\bqs
\mathscr{I}_j^2(t)=-\frac{1}{tc_*}\left(\frac{j-c_*t}{\sqrt{2\cosh(\lambda_*)t}}\right) \mathcal{G}\left(\frac{j-c_*t}{\sqrt{2\cosh(\lambda_*)t}} \right)+\widetilde{\mathscr{R}_j^2}(t),
\eqs
with
\bqs
\left| \widetilde{\mathscr{R}_j^2}(t)\right| \leq \frac{C}{t^{3/2}} \exp\left(-\beta \frac{(j-c_*t)^2}{t}\right),
\eqs
for some uniform constants $C>0$ and $\beta>0$.
\end{lem}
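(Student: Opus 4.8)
The plan is to follow the proof of Lemma~\ref{lemIj1} almost line by line, the only difference being the extra factor $\nu$ in the integrand (and a harmless constant prefactor). I would keep the contour $\Gamma_{in}=\Gamma_d\cup\Gamma_u\cup\Gamma_i\subset B_\epsilon(0)$ together with the notation $\zeta=\frac{j-c_*t}{2t}$ and $\gamma=\frac{j}{t}\frac{\cosh(\lambda_*)}{c_*^2}$, choosing $t$ large enough that $|\zeta/\gamma|\le\eta/2$, and I would use the exact identity $\nu t+\varphi(\nu)j=-\frac{t\zeta^2}{c_*\gamma}+\frac{t\gamma}{c_*}\left(\nu-\frac{\zeta}{\gamma}\right)^2$ on $B_\epsilon(0)$; note that here $\varphi$ is exactly quadratic, so this completion of the square is exact.

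On the dominant piece $\Gamma_i$ I would write $\nu=\frac{\zeta}{\gamma}+\mbi\xi$ with $|\xi|\le\xi_\epsilon:=\sqrt{\epsilon^2-\eta^2}$ and $\md\nu=\mbi\,\md\xi$, so that $\nu t+\varphi(\nu)j=-\frac{t\zeta^2}{c_*\gamma}-\frac{t\gamma}{c_*}\xi^2$. The key observation is that when the extra factor $\nu=\frac{\zeta}{\gamma}+\mbi\xi$ is split, the term $\mbi\xi$ is odd in $\xi$ and therefore integrates to zero against the even Gaussian weight, leaving only
\bqs
-\frac{2\cosh(\lambda_*)}{c_*^3}\,\frac{1}{2\pi}\,\frac{\zeta}{\gamma}\,e^{-\frac{t\zeta^2}{c_*\gamma}}\int_{-\xi_\epsilon}^{\xi_\epsilon}e^{-\frac{t\gamma}{c_*}\xi^2}\md\xi.
\eqs
As in Lemma~\ref{lemIj1}, I would rescale this integral and extend it to $\R$ at the cost of an error exponentially small in $t$ (via $\mathrm{erfc}(x)\le e^{-x^2}/x$ for $x>1$), which yields $\sqrt{\pi c_*/(t\gamma)}$ plus an admissible remainder, and then Taylor-expand the three prefactors around $j=c_*t$. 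Since $|j-c_*t|\le\vartheta t^\alpha$ with $\alpha<1$, one has $\frac{\zeta}{\gamma}=\frac{(j-c_*t)c_*}{2t\cosh(\lambda_*)}\left(1+O\left(\frac{j-c_*t}{t}\right)\right)$, $\sqrt{c_*/(t\gamma)}=\frac{c_*}{\sqrt{t\cosh(\lambda_*)}}\left(1+O\left(\frac{j-c_*t}{t}\right)\right)$ and $e^{-t\zeta^2/(c_*\gamma)}=e^{-\frac{(j-c_*t)^2}{4t\cosh(\lambda_*)}}\left(1+O\left(\frac{(j-c_*t)^3}{t^2}\right)\right)$, just as in Lemma~\ref{lemIj1}. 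Because $\frac{\zeta}{\gamma}$ already carries one factor $\frac{j-c_*t}{t}$, only the leading order of the other two factors matters; collecting it produces precisely $-\frac{1}{tc_*}\left(\frac{j-c_*t}{\sqrt{2\cosh(\lambda_*)t}}\right)\mathcal{G}\left(\frac{j-c_*t}{\sqrt{2\cosh(\lambda_*)t}}\right)$, while every correction term carries an extra factor $\frac{j-c_*t}{t}$ or $\frac{(j-c_*t)^3}{t^2}$ multiplying $(j-c_*t)t^{-3/2}\mathcal{G}\left(\frac{j-c_*t}{\sqrt{2\cosh(\lambda_*)t}}\right)$. Using the elementary bound $|y|^m e^{-\beta y^2}\lesssim e^{-\beta y^2/2}$, all such terms are absorbed into $\frac{C}{t^{3/2}}e^{-\beta(j-c_*t)^2/t}$, which becomes $\widetilde{\mathscr{R}_j^2}(t)$.

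The remaining pieces $\Gamma_d$ and $\Gamma_u$ I would bound exactly as in Lemma~\ref{lemIj1}: parametrizing $\Gamma_d$ by $\nu=z-\mbi\xi_\epsilon$ with $z\in[-\eta,\zeta/\gamma]$, the extra factor satisfies $|\nu|\le\epsilon$ and is harmless, so the same computation gives a bound $\lesssim\exp\left(-\frac{t}{c_*}\gamma\xi_\epsilon^2-\frac{t}{c_*}\frac{\zeta^2}{\gamma}+\frac{t}{c_*}\gamma\left(\eta+\frac{\zeta}{\gamma}\right)^2\right)\lesssim\frac{1}{t^{3/2}}e^{-\beta(j-c_*t)^2/t}$, thanks to $-\xi_\epsilon^2+\left(\eta+\frac{\zeta}{\gamma}\right)^2<0$ for $\eta$ chosen small, and $\Gamma_u$ is identical. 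Summing the three contributions gives the claimed decomposition. I do not expect any genuine obstacle: the single new ingredient compared to Lemma~\ref{lemIj1} is the odd-part cancellation on $\Gamma_i$, which makes $\mathscr{I}_j^2(t)$ one power of $\frac{j-c_*t}{t}$ smaller than $\mathscr{I}_j^1(t)$; everything else is the same bookkeeping of Taylor expansions, so the only care needed is to track the leading constant correctly so that it matches the stated normalisation.
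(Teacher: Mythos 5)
Your proposal is correct and follows essentially the same route as the paper's proof: the paper likewise evaluates the $\Gamma_i$ contribution by writing $\nu=\tfrac{\zeta}{\gamma}+\mbi\xi$, letting the odd part $\mbi\xi$ cancel against the even Gaussian weight, and then Taylor-expanding $\tfrac{\zeta}{\gamma}$ and the Gaussian prefactors (reusing the quantities $\mathscr{A}_j,\mathscr{B}_j,\mathscr{R}^1_j,\mathscr{R}^2_j$ from the proof of Lemma~\ref{lemIj1} rather than redoing the expansion, which is only a cosmetic difference), with the $\Gamma_{d,u}$ pieces absorbed into the remainder exactly as you describe.
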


\begin{proof} We first compute
\begin{align*}
\frac{1}{c_*}\frac{1}{2\pi \mbi}\int_{\Gamma_{i}} e^{\nu t} \nu e^{\varphi(\nu)j}\md \nu &=\frac{1}{c_*} \frac{1}{2\pi} \exp\left(-\frac{(j-c_*t)^2}{4t c_* \gamma} \right) \int_{-\sqrt{\epsilon^2-\eta^2}}^{\sqrt{\epsilon^2-\eta^2}} \left(\frac{\zeta}{\gamma}+ \mbi \xi \right)  e^{-\frac{t\gamma}{c_*}\xi^2} \md \xi \\
&=  \frac{1}{2\pi \sqrt{t\gamma c_*}} \frac{\zeta}{\gamma} \exp\left(-\frac{(j-c_*t)^2}{4t c_* \gamma} \right) \int_{-\sqrt{\epsilon^2-\eta^2}\sqrt{\frac{t\gamma}{c_*}}}^{\sqrt{\epsilon^2-\eta^2}\sqrt{\frac{t\gamma}{c_*}}}e^{-z^2}\md z\\
&=\frac{\zeta}{\gamma}\left(\mathscr{A}_j(t)+\mathscr{B}_j(t)\right)+\frac{\zeta}{\gamma}\left(\mathscr{R}^1_j(t)+\mathscr{R}^2_j(t)\right).
\end{align*}
For $t$ large enough, it is enough to remark that
\bqs
\frac{\zeta}{\gamma}=c_* \frac{j-c_*t}{2t\cosh(\lambda_*)}\left(1+O\left(\frac{j-c_*t}{tc_*}\right)\right),
\eqs
such that
\bqs
\frac{1}{c_*}\frac{1}{2\pi \mbi}\int_{\Gamma_{i}} e^{\nu t} \nu e^{\varphi(\nu)j}\md \nu=\frac{c_*}{2t\cosh(\lambda_*)}\left(\frac{j-c_*t}{\sqrt{2\cosh(\lambda_*)t}}\right) \mathcal{G}\left(\frac{j-c_*t}{\sqrt{2\cosh(\lambda_*)t}} \right)+\mathscr{R}^3_j(t),
\eqs
where 
\bqs
\mathscr{R}^3_j(t):=\left(\frac{\zeta}{\gamma}-c_* \frac{j-c_*t}{2t\cosh(\lambda_*)}\right)\mathscr{A}_j(t)+\frac{\zeta}{\gamma}\mathscr{B}_j(t)+\frac{\zeta}{\gamma}\left(\mathscr{R}^1_j(t)+\mathscr{R}^2_j(t)\right)
\eqs
with
\bqs
\left|\mathscr{R}^3_j(t)\right|\lesssim \frac{1}{t^{3/2}} e^{-\beta \frac{(j-c_*t)^2}{t}}.
\eqs
Similarly to the previous case, the integrals on $\Gamma_{d,u}$ produce terms which can be subsumed into the above Gaussian estimate.
\end{proof}

Finally, we study the last integral $\mathscr{I}_j^3(t)$.

\begin{lem}\label{lemIj3}
For any $\vartheta>0$ and $\alpha\in(0,1)$, there is some $T_0>1$ such that for each $j\in\Z$ with $|j-c_*t|\leq \vartheta t^\alpha$ and $t\geq T_0$ one can decompose $\mathscr{I}_j^3(t)$ as
\bqs
\mathscr{I}_j^3(t)=\frac{-\Lambda_*}{4tc_*\cosh(\lambda_*)^2}\left(-3\left(\frac{j-c_*t}{\sqrt{2\cosh(\lambda_*)t}} \right)+\left(\frac{j-c_*t}{\sqrt{2\cosh(\lambda_*)t}} \right)^3 \right)\mathcal{G}\left(\frac{j-c_*t}{\sqrt{2\cosh(\lambda_*)t}} \right)+\widetilde{\mathscr{R}_j^3}(t),
\eqs
with
\bqs
\left| \widetilde{\mathscr{R}_j^3}(t)\right| \leq \frac{C}{t^{3/2}} \exp\left(-\beta \frac{(j-c_*t)^2}{t}\right),
\eqs
for some uniform constants $C>0$ and $\beta>0$.
\end{lem}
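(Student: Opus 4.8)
The plan is to closely follow the scheme already used for $\mathscr{I}_j^1(t)$ and $\mathscr{I}_j^2(t)$ in Lemmas~\ref{lemIj1} and~\ref{lemIj2}: keep the contour splitting $\Gamma_{in}=\Gamma_d\cup\Gamma_u\cup\Gamma_i$ with $\zeta=\frac{j-c_*t}{2t}$ and $\gamma=\frac{j}{t}\frac{\cosh(\lambda_*)}{c_*^2}$, so that $-\frac{\eta}{2}\le\zeta/\gamma\le\frac{\eta}{2}$ for $t$ large since $|j-c_*t|\le\vartheta t^\alpha$ with $\alpha<1$. On $\Gamma_d$ and $\Gamma_u$ I would bound the integrand by $|j|\,|\nu|^3 e^{t\Re(\nu)+j\Re(\varphi(\nu))}$ and reuse the estimate $t\Re(\nu)+j\Re(\varphi(\nu))\le-\frac{t}{c_*}\gamma\xi_\epsilon^2-\frac{t}{c_*}\frac{\zeta^2}{\gamma}+\frac{t}{c_*}\gamma(\eta+\frac{\zeta}{\gamma})^2$ together with $-\xi_\epsilon^2+(\eta+\frac{\zeta}{\gamma})^2<0$ (valid for $\eta$ small); the polynomial prefactor $|j|\lesssim t$ is then harmless and these pieces contribute $O(t^{-3/2})e^{-\beta(j-c_*t)^2/t}$, absorbed into $\widetilde{\mathscr{R}_j^3}(t)$.

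The substance lies in the integral over $\Gamma_i=\{\zeta/\gamma+\mbi\xi:|\xi|\le\xi_\epsilon\}$. Parametrizing $\nu=\zeta/\gamma+\mbi\xi$ and using that $\varphi$ is quadratic, one has exactly $t\nu+j\varphi(\nu)=-\frac{t\zeta^2}{c_*\gamma}-\frac{t\gamma}{c_*}\xi^2$, while $\nu^3=(\zeta/\gamma)^3+3\mbi(\zeta/\gamma)^2\xi-3(\zeta/\gamma)\xi^2-\mbi\xi^3$. The two odd-in-$\xi$ monomials drop out of the symmetric integral, leaving $(\zeta/\gamma)^3-3(\zeta/\gamma)\xi^2$, and I would evaluate the two Gaussian moments $\int_{-\xi_\epsilon}^{\xi_\epsilon}e^{-\frac{t\gamma}{c_*}\xi^2}\md\xi$ and $\int_{-\xi_\epsilon}^{\xi_\epsilon}\xi^2 e^{-\frac{t\gamma}{c_*}\xi^2}\md\xi$, extending them to $\R$ with the $\mathrm{erfc}$ bound exactly as for $\mathscr{I}_j^1(t)$; these give $\sqrt{\pi c_*/(t\gamma)}$ and $\frac{c_*}{2t\gamma}\sqrt{\pi c_*/(t\gamma)}$ up to $O(t^{-1/2}e^{-\beta t})$. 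Multiplying by the prefactor $-\frac{\Lambda_*}{c_*^6}\frac{j}{2\pi}e^{-\frac{t\zeta^2}{c_*\gamma}}$ and inserting $\frac{\zeta}{\gamma}=\frac{c_*^2(j-c_*t)}{2j\cosh(\lambda_*)}$ and $e^{-\frac{t\zeta^2}{c_*\gamma}}=e^{-\frac{c_*(j-c_*t)^2}{4j\cosh(\lambda_*)}}$, I would then Taylor-expand $j$, $\zeta/\gamma$, $(t\gamma)^{-1/2}$ and the exponential about $j=c_*t$, each correction carrying a relative $O(|j-c_*t|/t)$; combining this with the inequality $x^me^{-\beta_0 x^2}\le C_me^{-\beta_0 x^2/2}$ (used already for $\mathscr{E}_j^1(t)$) collapses all corrections to $O(t^{-3/2})e^{-\beta(j-c_*t)^2/t}$, and the surviving main term is precisely $\frac{-\Lambda_*}{4tc_*\cosh(\lambda_*)^2}\bigl(-3y+y^3\bigr)\mathcal{G}(y)$ with $y=\frac{j-c_*t}{\sqrt{2\cosh(\lambda_*)t}}$, as claimed.

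The hard part is the accounting of powers: the prefactor $j\sim c_*t$ must exactly compensate the $t^{-1}$ furnished by the second Gaussian moment (and, in the cubic term, combine with $(\zeta/\gamma)^3\sim t^{-3}(j-c_*t)^3$ and the $t^{-1/2}$ from $\int e^{-\frac{t\gamma}{c_*}\xi^2}\md\xi$) so that what remains is genuinely of order $t^{-1}$ times a polynomial-in-$y$ Gaussian --- any slip here would promote a remainder to a leading term. The second delicate point is algebraic: the coefficient $\frac{-\Lambda_*}{4tc_*\cosh(\lambda_*)^2}$ is correct only because $\Lambda_*=2+\frac{c_*^2}{3}$ together with $c_*=2\sinh(\lambda_*)$ gives $2\cosh(\lambda_*)^2-\Lambda_*=\frac{c_*^2}{6}$; this is exactly the identity that makes $\mathscr{I}_j^1(t)+\mathscr{I}_j^2(t)+\mathscr{I}_j^3(t)$, modulo the Gaussian remainders and the errors $\mathscr{E}_j^k(t)$, add up to $\frac{1}{\sqrt{2\cosh(\lambda_*)t}}\mathcal{G}(y)+\frac{1}{t}\mathcal{P}(y)\mathcal{G}(y)=\H_j(t)$, thereby closing the proof of Proposition~\ref{proprefined}.
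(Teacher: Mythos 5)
Your proposal is correct and follows essentially the same route as the paper's proof: the same splitting $\Gamma_{in}=\Gamma_d\cup\Gamma_u\cup\Gamma_i$, the same expansion of $\left(\frac{\zeta}{\gamma}+\mbi\xi\right)^3$ with the odd-in-$\xi$ monomials vanishing by symmetry, the same evaluation of the zeroth and second Gaussian moments with the $\mathrm{erfc}$ tail bound, and the same Taylor expansion of $j$, $\zeta/\gamma$ and the exponential about $j=c_*t$ with remainders absorbed via $x^m e^{-\beta_0 x^2}\le C_m e^{-\beta_0 x^2/2}$. Your closing remark on the identity $2\cosh(\lambda_*)^2-\Lambda_*=\frac{c_*^2}{6}$ correctly identifies the algebra that makes the three leading terms assemble into $\H_j(t)$ in the conclusion of Proposition~\ref{proprefined}.
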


\begin{proof}  As usual, the main contribution will come from the integration along $\Gamma_i$ and we have
 \begin{align*}
\frac{1}{2\pi \mbi}\int_{\Gamma_{i}} e^{\nu t} j \nu^3 e^{\varphi(\nu)j}\md \nu &=  \frac{j}{2\pi} \exp\left(-\frac{(j-c_*t)^2}{4t c_* \gamma} \right) \int_{-\sqrt{\epsilon^2-\eta^2}}^{\sqrt{\epsilon^2-\eta^2}} \left(\frac{\zeta}{\gamma}+ \mbi \xi \right)^3  e^{-\frac{t\gamma}{c_*}\xi^2} \md \xi \\
&=  \frac{j}{2\pi} \exp\left(-\frac{(j-c_*t)^2}{4t c_* \gamma} \right) \int_{-\sqrt{\epsilon^2-\eta^2}}^{\sqrt{\epsilon^2-\eta^2}}\left[ \left(\frac{\zeta}{\gamma} \right)^3 +3\mbi \left(\frac{\zeta}{\gamma} \right)^2\xi -3\frac{\zeta}{\gamma} \xi^2-\mbi \xi^3 \right]e^{-\frac{t\gamma}{c_*}\xi^2} \md \xi \\
&=  \frac{j}{2\pi} \exp\left(-\frac{(j-c_*t)^2}{4t c_* \gamma} \right) \int_{-\sqrt{\epsilon^2-\eta^2}}^{\sqrt{\epsilon^2-\eta^2}}\left[ \left(\frac{\zeta}{\gamma} \right)^3 -3\frac{\zeta}{\gamma} \xi^2 \right]e^{-\frac{t\gamma}{c_*}\xi^2} \md \xi.
\end{align*}
This time, for $t$ large enough, we have
\bqs
j \left(\frac{\zeta}{\gamma}\right)^3=c_*t \left(1+\frac{j-c_*t}{c_*t}\right)\left( c_* \frac{j-c_*t}{2t\cosh(\lambda_*)}\right)^3\left(1+O\left(\frac{j-c_*t}{tc_*}\right)\right),
\eqs
which gives that
\begin{align*}
\frac{j}{2\pi} \left(\frac{\zeta}{\gamma} \right)^3 \exp\left(-\frac{(j-c_*t)^2}{4t c_* \gamma} \right) \int_{-\sqrt{\epsilon^2-\eta^2}}^{\sqrt{\epsilon^2-\eta^2}}e^{-\frac{t\gamma}{c_*}\xi^2} \md \xi &=\frac{c_*^5}{4t\cosh(\lambda_*)^2}\left(\frac{j-c_*t}{\sqrt{2\cosh(\lambda_*)t}}\right)^3 \mathcal{G}\left(\frac{j-c_*t}{\sqrt{2\cosh(\lambda_*)t}} \right)\\
&~~~ +\mathscr{R}^4_j(t),
\end{align*}
with
\bqs
\left|\mathscr{R}^4_j(t)\right|\lesssim \frac{1}{t^{3/2}} e^{-\beta \frac{(j-c_*t)^2}{t}}.
\eqs
Regarding the last term, we note first that
\begin{align*}
\int_{-\sqrt{\epsilon^2-\eta^2}}^{\sqrt{\epsilon^2-\eta^2}} \xi^2 e^{-\frac{t\gamma}{c_*}\xi^2} \md \xi&=\frac{1}{\left(\frac{t\gamma}{c_*} \right)^{3/2}}\int_{-\sqrt{\epsilon^2-\eta^2}\sqrt{\frac{t\gamma}{c_*}}}^{\sqrt{\epsilon^2-\eta^2}\sqrt{\frac{t\gamma}{c_*}}} z^2 e^{-z^2} \md z\\
&=-\frac{1}{\left(\frac{t\gamma}{c_*} \right)^{3/2}}\sqrt{\epsilon^2-\eta^2}\sqrt{\frac{t\gamma}{c_*}}e^{-\frac{t\gamma}{c_*}\left(\epsilon^2-\eta^2\right)}+\frac{1}{2\left(\frac{t\gamma}{c_*} \right)^{3/2}}\int_{-\sqrt{\epsilon^2-\eta^2}\sqrt{\frac{t\gamma}{c_*}}}^{\sqrt{\epsilon^2-\eta^2}\sqrt{\frac{t\gamma}{c_*}}}  e^{-z^2} \md z.
\end{align*}
As a consequence, we have that
\bqs
-3 \frac{\zeta}{\gamma}  \frac{j}{2\pi} e^{-\frac{(j-c_*t)^2}{4t c_* \gamma} } \int_{-\sqrt{\epsilon^2-\eta^2}}^{\sqrt{\epsilon^2-\eta^2}} \xi^2 e^{-\frac{t\gamma}{c_*}\xi^2} \md \xi = -3 \frac{\zeta}{\gamma}  \frac{j}{4\pi \left(\frac{t\gamma}{c_*} \right)^{3/2}} e^{-\frac{(j-c_*t)^2}{4t c_* \gamma}} \int_{-\sqrt{\epsilon^2-\eta^2}\sqrt{\frac{t\gamma}{c_*}}}^{\sqrt{\epsilon^2-\eta^2}\sqrt{\frac{t\gamma}{c_*}}}  e^{-z^2} \md z +\mathscr{R}_j^5(t),
\eqs
with
\bqs
\left|\mathscr{R}_j^5(t)\right|\lesssim \frac{1}{t^{3/2}} e^{-\beta \frac{(j-c_*t)^2}{t}}.
\eqs

Finally, we remark that
\bqs
j \frac{\zeta}{\gamma} \frac{1}{\left(\frac{t\gamma}{c_*} \right)^{3/2}}=\frac{c_*^5}{2t\cosh(\lambda_*)^{5/2}} \frac{j-c_*t}{\sqrt{t}}\left(1+O\left(\frac{j-c_*t}{tc_*}\right)\right),
\eqs
for $t$ large enough such that we readily obtain that
\begin{align*}
-3 \frac{\zeta}{\gamma}  \frac{j}{4\pi \left(\frac{t\gamma}{c_*} \right)^{3/2}} e^{-\frac{(j-c_*t)^2}{4t c_* \gamma}} \int_{-\sqrt{\epsilon^2-\eta^2}\sqrt{\frac{t\gamma}{c_*}}}^{\sqrt{\epsilon^2-\eta^2}\sqrt{\frac{t\gamma}{c_*}}}  e^{-z^2} \md z&=-\frac{3c_*^5}{4t\cosh(\lambda_*)^2}\frac{j-c_*t}{\sqrt{2\cosh(\lambda_*)t}}\mathcal{G}\left(\frac{j-c_*t}{\sqrt{2\cosh(\lambda_*)t}} \right)\\
&~~~+\mathscr{R}_j^6(t),
\end{align*}
with
\bqs
\left|\mathscr{R}_j^6(t)\right|\lesssim \frac{1}{t^{3/2}} e^{-\beta \frac{(j-c_*t)^2}{t}}.
\eqs
Similarly to the previous case, the integrals on $\Gamma_{d,u}$ produce terms which can be subsumed into the above Gaussian estimate.
\end{proof}

\paragraph{Conclusion.} We can now put together the results that we have obtained. In a first step, we managed to decompose $\mathscr{G}_j(t)$ as
\bqs
\mathscr{G}_j(t)=\mathscr{I}_j^1(t)+\mathscr{I}_j^2(t)+\mathscr{I}_j^3(t)+\mathscr{E}_j^1(t)+\mathscr{E}_j^2(t)+\mathscr{E}_j^3(t)+\mathscr{E}_j^4(t)+\frac{1}{2\pi \mbi}\int_{\Gamma_{out}} e^{\nu t} \bG_j(\nu)\md \nu,
\eqs
and proved that
\bqs
\left| \mathscr{E}_j^1(t)\right|+\left|\mathscr{E}_j^2(t)\right|+\left|\mathscr{E}_j^3(t)\right|+\left|\mathscr{E}_j^4(t)\right|+\left|\frac{1}{2\pi \mbi}\int_{\Gamma_{out}} e^{\nu t} \bG_j(\nu)\md \nu\right| \lesssim \frac{1}{t^{3/2}} e^{-\beta \frac{(j-c_*t)^2}{t}}
\eqs
Next, we see that if we set
\begin{align*}
\mathscr{H}_j(t)&:=\frac{1}{\sqrt{2t\cosh(\lambda_*)}}\mathcal{G}\left(\frac{j-c_*t}{\sqrt{2\cosh(\lambda_*)t}} \right)-3\left(\frac{1}{2tc_*}-\frac{\Lambda_*}{4tc_*\cosh(\lambda_*)^2}\right) \left(\frac{j-c_*t}{\sqrt{2\cosh(\lambda_*)t}}\right) \mathcal{G}\left(\frac{j-c_*t}{\sqrt{2\cosh(\lambda_*)t}} \right)\\
&~~~+\left(\frac{1}{2tc_*}-\frac{\Lambda_*}{4tc_*\cosh(\lambda_*)^2}\right) \left(\frac{j-c_*t}{\sqrt{2\cosh(\lambda_*)t}}\right)^3 \mathcal{G}\left(\frac{j-c_*t}{\sqrt{2\cosh(\lambda_*)t}}\right),
\end{align*}
then using Lemma~\ref{lemIj1}, Lemma~\ref{lemIj2} and Lemma~\ref{lemIj3}
\bqs
\mathscr{I}_j^1(t)+\mathscr{I}_j^2(t)+\mathscr{I}_j^3(t)=\mathscr{H}_j(t)+\widetilde{\mathscr{R}_j^1}(t)+\widetilde{\mathscr{R}_j^2}(t)+\widetilde{\mathscr{R}_j^3}(t),
\eqs
with
\bqs
\left|\widetilde{\mathscr{R}_j^1}(t)\right| +\left|\widetilde{\mathscr{R}_j^2}(t)\right| +\left|\widetilde{\mathscr{R}_j^3}(t) \right| \lesssim \frac{1}{t^{3/2}} e^{-\beta \frac{(j-c_*t)^2}{t}}.
\eqs
As a consequence, we have that
\bqs
\mathscr{R}_j(t):=\mathscr{G}_j(t)-\mathscr{H}_j(t)=\widetilde{\mathscr{R}_j^1}(t)+\widetilde{\mathscr{R}_j^2}(t)+\widetilde{\mathscr{R}_j^3}(t)+\mathscr{E}_j^1(t)+\mathscr{E}_j^2(t)+\mathscr{E}_j^3(t)+\mathscr{E}_j^4(t)+\frac{1}{2\pi \mbi}\int_{\Gamma_{out}} e^{\nu t} \bG_j(\nu)\md \nu
\eqs
satisfies
\bqs
\left| \mathscr{R}_j(t)  \right| \leq \frac{C}{t^{3/2}} \exp\left(-\beta \frac{(j-c_*t)^2}{t}\right),
\eqs
for some uniform constants $C>0$ and $\beta>0$. Finally, we see that $\H_j(t)$ can be factored into the following condensed formula
\bqs
\H_j(t)= \left[\frac{1}{\sqrt{2t\cosh(\lambda_*)}}+\frac{1}{t}\mathcal{P}\left(\frac{j-c_*t}{\sqrt{2\cosh(\lambda_*)t}} \right)\right]\mathcal{G}\left(\frac{j-c_*t}{\sqrt{2\cosh(\lambda_*)t}} \right), \quad t>0, \quad j\in\Z.
\eqs
where the polynomial function $\mathcal{P}$ is given in \eqref{equationp}. Indeed, with the expression of $\Lambda_*$ in \eqref{LambdaStar}, we readily see that
\bqs
\frac{1}{2c_*}-\frac{\Lambda_*}{4c_*\cosh(\lambda_*)^2}=\frac{c_*}{24\cosh(\lambda_*)^2}.
\eqs
 This concludes the proof of Proposition~\ref{proprefined}.

\subsection{Sharp asymptotics from odd compactly supported initial conditions}
\label{sec2.4}

Now, recalling our notation for the linear operator $\mathscr{L}$ given by
$$(\mathscr{L}\br)_j=e^{\lambda_*} \left(r_{j-1}-2r_j+r_{j+1}\right)-c_*(r_{j+1}-r_j), \quad j\in\Z,$$
we let $w_j(t)$ be the solution of the linear Cauchy problem
\bqq
\frac{\md}{\md t} w_j(t)-\left(\mathscr{L}w(t)\right)_j
=0, ~~~~ t>0,~ j\in\Z,
\label{linear-eqn-w}
\eqq
with a nontrivial, odd and compactly supported initial condition $w^0_j$ for $j\in\Z$ in the sense that
\begin{equation*}
	w_j^0=-w_{-j}^0\ge 0 ~~\forall j\ge 0, ~~~w_j^0=0~~\forall j\ge J_w+1,
\end{equation*}
for some positive $J_w\geq1$. Then, $w_j(t)$ can be written in the form
\bqs
w_j(t)=\sum_{\ell = 1}^{J_w}\left(\G_{j-\ell}(t)-\G_{j+\ell}(t)\right)w_\ell^0, \quad t>0, \quad j\in\Z.
\eqs

Now, from Proposition~\ref{proprefined}, for $t\geq T_0$ and each $|j-c_*t|\leq \vartheta t^\alpha$ with  $\vartheta>0$ and $\alpha\in(0,1)$, we have that the temporal Green's function $\G_{j\pm\ell}(t)$ can be decomposed as 
\bqs
\G_{j\pm\ell}(t)=\left[\frac{1}{\sqrt{2\cosh(\lambda_*)t}}+\frac{1}{t}\mathcal{P}\left(\frac{j\pm\ell-c_*t}{\sqrt{2\cosh(\lambda_*)t}} \right)\right]\mathcal{G}\left(\frac{j\pm\ell-c_*t}{\sqrt{2\cosh(\lambda_*)t}} \right)+\mathscr{R}_{j\pm\ell}(t),
\eqs
with
\bqs
\left| \mathscr{R}_{j\pm\ell}(t)  \right| \leq \frac{C}{t^{3/2}} \exp\left(-\beta \frac{(j\pm\ell-c_*t)^2}{t}\right).
\eqs
Therefore
\begin{align*}
w_j(t)&=\frac{1}{\sqrt{2t\cosh(\lambda_*)}}\sum_{\ell = 1}^{J_w}\left(\mathcal{G}\left(\frac{j-\ell-c_*t}{\sqrt{2\cosh(\lambda_*)t}} \right)-\mathcal{G}\left(\frac{j+\ell-c_*t}{\sqrt{2\cosh(\lambda_*)t}} \right)\right)w_\ell^0\\
&~~~+\frac{1}{t}\sum_{\ell = 1}^{J_w}\left(\mathcal{P}\left(\frac{j-\ell-c_*t}{\sqrt{2\cosh(\lambda_*)t}} \right)\mathcal{G}\left(\frac{j-\ell-c_*t}{\sqrt{2\cosh(\lambda_*)t}} \right)-\mathcal{P}\left(\frac{j+\ell-c_*t}{\sqrt{2\cosh(\lambda_*)t}} \right)\mathcal{G}\left(\frac{j+\ell-c_*t}{\sqrt{2\cosh(\lambda_*)t}} \right) \right)w_\ell^0\\
&~~~+\sum_{\ell = 1}^{J_w}\left(\mathscr{R}_{j-\ell}(t)-\mathscr{R}_{j+\ell}(t)\right)w_\ell^0.
\end{align*}
By Proposition~\ref{proprefined}, we have the bound
\bqs
\left|\sum_{\ell = 1}^{J_w}\left(\mathscr{R}_{j-\ell}(t)-\mathscr{R}_{j+\ell}(t)\right)v_\ell^0\right| \lesssim \frac{1}{t^{3/2}} \exp\left(-\beta_0 \frac{(j-c_*t)^2}{t}\right)\left(\sum_{\ell = 1}^{J_w}w_\ell^0\right),
\eqs
for some $\beta_0>0$.
Next we remark that for $|j-c_*t|\leq \vartheta t^\alpha$ and as $t\rightarrow +\infty$ 
\begin{align*}
\exp\left(-\frac{(j-\ell-c_*t)^2}{4t\cosh(\lambda_*)}\right)&-\exp\left(-\frac{(j+\ell-c_*t)^2}{4t\cosh(\lambda_*)}\right)\\
&=\exp\left(-\frac{(j-c_*t)^2+\ell^2}{4t\cosh(\lambda_*)}\right)\left(e^{\frac{\ell(j-c_*t)}{2t\cosh(\lambda_*)}}-e^{-\frac{\ell(j-c_*t)}{2t\cosh(\lambda_*)}}\right)\\
&=\exp\left(-\frac{(j-c_*t)^2+\ell^2}{4t\cosh(\lambda_*)}\right)\left[\frac{\ell(j-c_*t)}{t\cosh(\lambda_*)}+O\left(\left( \frac{(j-c_*t)}{t}\right)^3\right)\right].
\end{align*}
As a consequence for $|j-c_*t|\leq \vartheta t^\alpha$, we get
\begin{align*}
\frac{1}{\sqrt{2t\cosh(\lambda_*)}}\sum_{\ell = 1}^{J_w}&\left(\mathcal{G}\left(\frac{j-\ell-c_*t}{\sqrt{2\cosh(\lambda_*)t}} \right)-\mathcal{G}\left(\frac{j+\ell-c_*t}{\sqrt{2\cosh(\lambda_*)t}} \right)\right)w_\ell^0\\
& \sim \frac{1}{\sqrt{4\pi t \cosh(\lambda_*)}} \frac{j-c_*t}{t\cosh(\lambda_*)}\exp\left(-\frac{(j-c_*t)^2}{4t\cosh(\lambda_*)} \right)\left( \sum_{\ell=1}^{J_w}\ell w_\ell^0\right),
\end{align*}
as $t\rightarrow +\infty$. Now, restricting ourselves to the diffusive regime $\alpha\in(0,1/2]$, we get that the remaining contributions coming from the difference of the terms in $\mathcal{P}\left(\frac{j\pm\ell-c_*t}{\sqrt{2\cosh(\lambda_*)t}} \right)\mathcal{G}\left(\frac{j\pm\ell-c_*t}{\sqrt{2\cosh(\lambda_*)t}} \right)$ are of higher order. Indeed, in that case, we get
\begin{align*}
\sum_{\ell = 1}^{J_w}&\left(\mathcal{P}\left(\frac{j-\ell-c_*t}{\sqrt{2\cosh(\lambda_*)t}} \right)\mathcal{G}\left(\frac{j-\ell-c_*t}{\sqrt{2\cosh(\lambda_*)t}} \right)-\mathcal{P}\left(\frac{j+\ell-c_*t}{\sqrt{2\cosh(\lambda_*)t}} \right)\mathcal{G}\left(\frac{j+\ell-c_*t}{\sqrt{2\cosh(\lambda_*)t}} \right)\right)w_\ell^0\\
&\sim - C_* \frac{j-c_*t}{t^{3/2}}\exp\left(-\frac{(j-c_*t)^2}{4t\cosh(\lambda_*)} \right)\left( \sum_{\ell=1}^{J_w}\ell w_\ell^0\right),
\end{align*}
as $t\rightarrow +\infty$ for some universal constant $C_*>0$. In summary, we have proved the following lemma.

\begin{lem}\label{lemAsymptLin}
The  solution $(w_j(t))_{j\in\Z}$ of the linear Cauchy problem \eqref{linear-eqn-w} starting from an odd, nontrivial and compactly supported sequence  $(w_j^0)_{j\in\Z} \in\ell^\infty(\Z)$ satisfies,  for $|j-c_*t|\leq \vartheta t^\alpha$  with $\alpha\in(0,1/2]$ and any $\vartheta>0$, the asymptotic expansion 
\begin{equation}
	\label{estimate-w_j}
	w_j(t) \sim \frac{1}{  \cosh(\lambda_*)^{3/2}\sqrt{4\pi}} \frac{j-c_*t}{t^{3/2}}\left( \sum_{\ell=1}^{\infty}\ell w_\ell^0\right)~~~~~\text{as}~t\rightarrow +\infty.
\end{equation}
In particular, there exists some large time $t_0>0$ such that
\begin{equation}
	\label{j_0}
	w_j(t)>0~~~~~~~\text{for all}~t\ge t_0~\text{and}~1 \leq j-c_*t \leq \vartheta\sqrt{t},
\end{equation}
for any $\vartheta>1/\sqrt{t_0}$.
\end{lem}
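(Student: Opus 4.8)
The statement will follow by inserting the refined Green's function expansion of Proposition~\ref{proprefined} into the explicit representation of $w_j(t)$; the argument splits into three steps.

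\textbf{Step 1: representation.} Since $(w_j^0)_{j\in\Z}$ is odd, nontrivial and supported in $\llbracket -J_w,J_w\rrbracket$, linearity of \eqref{linear-eqn-w} together with the defining property of the temporal Green's function $\G_j(t)$ gives
\bqs
w_j(t)=\sum_{\ell=1}^{J_w}\bigl(\G_{j-\ell}(t)-\G_{j+\ell}(t)\bigr)w_\ell^0,\qquad t>0,\ j\in\Z.
\eqs
For $|j-c_*t|\le\vartheta t^\alpha$ with $\alpha\in(0,1)$ and $t$ large, every shifted index obeys $|j\pm\ell-c_*t|\le\vartheta t^\alpha+J_w\le\widetilde\vartheta\,t^\alpha$, so Proposition~\ref{proprefined} applies termwise and yields $\G_{j\pm\ell}(t)=\H_{j\pm\ell}(t)+\mathscr{R}_{j\pm\ell}(t)$ with $|\mathscr{R}_{j\pm\ell}(t)|\lesssim t^{-3/2}\exp\!\bigl(-\beta(j\pm\ell-c_*t)^2/t\bigr)$.

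\textbf{Step 2: extraction of the leading term.} Splitting $w_j(t)$ according to $\G=\H+\mathscr{R}$, I would treat the three contributions separately. The sum of the remainders is, after absorbing the bounded shifts $\pm\ell$ into the Gaussian exponent, of order $t^{-3/2}\exp(-\beta_0(j-c_*t)^2/t)$. For the leading Gaussian part of $\H$, I would Taylor expand $\exp\!\bigl(-(j\mp\ell-c_*t)^2/(4t\cosh\lambda_*)\bigr)$ around the symmetric value: the contributions even in $\ell$ cancel in the difference $\G_{j-\ell}-\G_{j+\ell}$, and the first surviving term is proportional to $\ell(j-c_*t)/t$, so that summing against $w_\ell^0$ produces exactly
\bqs
\frac{1}{\cosh(\lambda_*)^{3/2}\sqrt{4\pi}}\,\frac{j-c_*t}{t^{3/2}}\,\exp\!\Bigl(-\tfrac{(j-c_*t)^2}{4t\cosh\lambda_*}\Bigr)\Bigl(\sum_{\ell=1}^{J_w}\ell\,w_\ell^0\Bigr).
\eqs
The $\tfrac1t\mathcal{P}$ part of $\H$ reduces, by the same antisymmetrisation, to $\tfrac1t$ times the difference, at shifts $\pm\ell$, of an odd function of the rescaled variable $(j-c_*t)/\sqrt{2t\cosh\lambda_*}$; in the \emph{diffusive} regime $\alpha\in(0,1/2]$ this is, like the remainder, of strictly smaller order than the displayed term, which yields \eqref{estimate-w_j}.

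\textbf{Step 3: positivity.} Since $w_\ell^0\ge0$ for all $\ell$ and the sequence is nontrivial, one has $\sum_{\ell\ge1}\ell w_\ell^0>0$; hence on the range $1\le j-c_*t\le\vartheta\sqrt t$ the leading term of \eqref{estimate-w_j} has a fixed positive sign, and by Step~2 it dominates all error contributions once $t\ge t_0$ for a suitable $t_0>0$, so $w_j(t)>0$ there, which is \eqref{j_0}. The step I expect to be the most delicate is precisely this last comparison: the leading term has size only $t^{-3/2}(j-c_*t)$, which is comparable to the $O(t^{-3/2})$ remainder when $j-c_*t$ remains bounded, so one must keep track of the Gaussian weight attached to each error term and exploit that $\alpha\le1/2$ keeps $(j-c_*t)^2/t$ bounded — this is also the reason the asymptotic \eqref{estimate-w_j} is stated only in the diffusive regime.
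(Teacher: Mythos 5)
Your proposal is correct and follows essentially the same route as the paper: the same representation $w_j(t)=\sum_{\ell=1}^{J_w}(\mathscr{G}_{j-\ell}(t)-\mathscr{G}_{j+\ell}(t))w_\ell^0$, termwise application of Proposition~\ref{proprefined}, antisymmetric Taylor expansion of the Gaussian difference producing the $\ell(j-c_*t)/t$ leading term, and relegation of the $\mathcal{P}$-contribution and remainders to higher order in the diffusive regime $\alpha\le 1/2$. The delicate point you flag at the end — that for bounded $j-c_*t$ the leading term is only of size $t^{-3/2}$ and thus comparable to the $O(t^{-3/2})$ remainder bound, so the claimed equivalence and the positivity \eqref{j_0} down to $j-c_*t=1$ require more than the stated error estimates — is a real subtlety that the paper's own argument passes over in exactly the same way, so you have reproduced (and in fact diagnosed more honestly) the published proof.
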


Note that the above Lemma~\ref{lemAsymptLin} gives a precise information on the  solution $(w_j(t))_{j\in\Z}$ of the linear Cauchy problem \eqref{linear-eqn-w} starting from an odd, nontrivial and compactly supported sequence at the diffusive scale. For the construction of the upper barrier in the forthcoming section, we will need a control of the solution beyond this diffusive regime. This is the purpose of the next lemma.
\begin{lem}\label{lemDiffusive} Let  $(w_j(t))_{j\in\Z}$ be the solution of the linear Cauchy problem \eqref{linear-eqn-w} starting from a nontrivial, bounded, compactly supported sequence  $(w_j^0)_{j\in\Z} \not\equiv0$ with $w_j^0=0$ for all $|j|\geq J$ for some $J\geq2$. Then, for each $A>1$, there exists $\eta_A>0$ such that
\bqs
\left| w_j(t)\right| \leq \|w^0\|_{\ell^{\infty}(\Z)}e^{-A\left( \frac{j-J-c_*t}{\sqrt{t+1}}-\eta_A\right)}, \quad t>0,\quad j-J-c_*t \geq \eta_A\sqrt{t+1}.
\eqs
\end{lem}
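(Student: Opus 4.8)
The plan is to prove the estimate by the comparison principle, using a \emph{single} explicit supersolution of~\eqref{linear-eqn-w} built from an exponential weight whose rate degrades like $(t+1)^{-1/2}$; in particular this bypasses the Green's function estimates of Proposition~\ref{propptwestim} and needs no splitting into near/far field.

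First I would set $\Phi(t):=A(t+1)^{-1/2}$, choose $\eta_A:=2Ae^{\lambda_*}\cosh A>0$, put $K:=\|w^0\|_{\ell^\infty(\Z)}e^{A\eta_A}$, and define $\bar w_j(t):=\min\big(K\,e^{-\Phi(t)(j-J-c_*t)},\,\|w^0\|_{\ell^\infty(\Z)}\big)$ for $t\ge0$, $j\in\Z$. Writing $\bar w^{\mathrm{exp}}_j(t):=Ke^{-\Phi(t)(j-J-c_*t)}$, one has $\bar w^{\mathrm{exp}}_{j\pm1}=\bar w^{\mathrm{exp}}_je^{\mp\Phi(t)}$, so that
\bqs
(\partial_t-\mathscr{L})\bar w^{\mathrm{exp}}_j=\bar w^{\mathrm{exp}}_j\big[-\Phi'(t)(j-J-c_*t)+g(\Phi(t))\big],\qquad g(s):=c_*s-c_*(1-e^{-s})-2e^{\lambda_*}(\cosh s-1).
\eqs
The function $g$ satisfies $g(0)=g'(0)=0$ and $g''(s)=c_*e^{-s}-2e^{\lambda_*}\cosh s<0$ for $s\ge0$ (because $c_*=2\sinh\lambda_*<2e^{\lambda_*}$), so $|g(s)|\le e^{\lambda_*}\cosh(A)\,s^2$ on $[0,A]$; on the other hand $-\Phi'(t)=\tfrac{\Phi(t)}{2(t+1)}>0$, so on the region $R:=\{(t,j):j-J-c_*t\ge \eta_A\sqrt{t+1}\}$, using $\Phi(t)\sqrt{t+1}=A$, one gets $-\Phi'(t)(j-J-c_*t)\ge \tfrac{\Phi(t)^2\eta_A}{2A}$. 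With the chosen $\eta_A$ the bracket is $\ge \Phi(t)^2\big(\tfrac{\eta_A}{2A}-e^{\lambda_*}\cosh A\big)\ge0$, hence $(\partial_t-\mathscr{L})\bar w^{\mathrm{exp}}_j\ge0$ on $R$.

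Next I would check that $\bar w$ itself is a supersolution of~\eqref{linear-eqn-w} on all of $(0,\infty)\times\Z$. It is continuous and piecewise smooth in $t$, the two branches joining along $j-J-c_*t=\eta_A\sqrt{t+1}$, where the exponential equals $Ke^{-A\eta_A}=\|w^0\|_{\ell^\infty(\Z)}$; off $R$ one has $\bar w_j=\|w^0\|_{\ell^\infty(\Z)}$ with $\bar w_{j\pm1}\le\bar w_j$, whence $(\mathscr{L}\bar w)_j\le0$ by positivity of the off-diagonal coefficients $e^{\pm\lambda_*}$ of $\mathscr{L}$ and $(\partial_t-\mathscr{L})\bar w_j=-(\mathscr{L}\bar w)_j\ge0$; on $R$ one has $\bar w_j=\bar w^{\mathrm{exp}}_j$ and $\bar w_{j\pm1}\le\bar w^{\mathrm{exp}}_{j\pm1}$, so (using the positive off-diagonal coefficients again) $(\partial_t-\mathscr{L})\bar w_j\ge(\partial_t-\mathscr{L})\bar w^{\mathrm{exp}}_j\ge0$. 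Since at $t=0$ one has $\bar w_j(0)=\|w^0\|_{\ell^\infty(\Z)}$ for all $j\le J+\eta_A$ --- in particular for every $j$ with $|j|<J$ --- while $w^0_j=0$ for $|j|\ge J$, it follows that $\bar w_j(0)\ge|w_j^0|$ for every $j$. The discrete comparison principle for~\eqref{linear-eqn-w}, applied to $\bar w\mp w$, then gives $|w_j(t)|\le\bar w_j(t)$ for all $t\ge0$, $j\in\Z$; restricting to $R$, where $\bar w_j(t)=\bar w^{\mathrm{exp}}_j(t)=\|w^0\|_{\ell^\infty(\Z)}e^{-A(\frac{j-J-c_*t}{\sqrt{t+1}}-\eta_A)}$, yields exactly the asserted inequality.

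The main obstacle I anticipate is not any one computation --- the concavity of $g$, the sign of $-\Phi'$, and the algebra that keeps the shift by $J$ \emph{inside} the parabolic scaling $(t+1)^{-1/2}$ are all elementary --- but rather the careful verification that $\bar w$ is a bona fide supersolution across the gluing curve (handled above via the monotonicity in $\bar w_{j\pm1}$ and positivity of the coefficients of $\mathscr{L}$), together with stating the discrete comparison principle on $\ell^\infty(\Z)$ for a Lipschitz barrier that meets the differential inequality only away from the at-most-one corner time per index; this last point is standard (e.g.\ perturb by $\epsilon e^{Mt}\cosh(\delta j)$ with $M,\delta$ suitable and let $\epsilon\to0$). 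A leaner alternative that avoids the gluing is to use $\bar w^{\mathrm{exp}}$ alone as a barrier on the moving half-lattice $R$ and control the lateral boundary index $\lceil J+c_*t+\eta_A\sqrt{t+1}\rceil-1$ by the $\ell^\infty$-contraction $\|w(t)\|_{\ell^\infty(\Z)}\le\|w^0\|_{\ell^\infty(\Z)}$, but the version above has the advantage of being global in $j$.
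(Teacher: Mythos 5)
Your proof is correct and is essentially the paper's argument: the same exponential barrier $\|w^0\|_{\ell^{\infty}(\Z)}e^{-A\left(\frac{j-J-c_*t}{\sqrt{t+1}}-\eta_A\right)}$, the same quadratic control of the discrete symbol (your $g(s)$ is the paper's $-c_*\xi+\cosh(\lambda_*)\xi^2(1+\omega(\xi))$ evaluated at $\xi=-A/\sqrt{t+1}$), and the same use of the drift term $-\Phi'(t)(j-J-c_*t)\ge \Phi(t)^2\eta_A/(2A)$ to fix $\eta_A\sim A\cosh(\cdot)$. The ``leaner alternative'' you mention at the end --- keeping only the exponential branch on the moving half-lattice and controlling the lateral boundary indices via $\|w(t)\|_{\ell^\infty(\Z)}\le\|w^0\|_{\ell^\infty(\Z)}$ --- is exactly what the paper does through its maximum principle with a single moving boundary (Proposition~\ref{mp}); your global min-gluing is a correct, slightly more laborious variant of the same idea.
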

\begin{proof}
Let $\xi\in\R$ be given and define the sequence $\mathbf{r}:=\left(e^{\xi j}\right)_{j\in\Z}$. For each $j\in\Z$, we have
\bqs
(\mathscr{L}\mathbf{r})_j=\left(e^{\lambda_*}\left(e^{\xi}-2+e^{-\xi}\right)-c_*\left(e^{\xi}-1\right) \right)e^{\xi j}=\left[-c_*\xi+\cosh(\lambda_*)\xi^2\left(1+\omega(\xi)\right)\right]e^{\xi j},
\eqs
where $\omega:\R\mapsto\R$ is an analytic function with $\omega(0)=0$.  Next, for $A>1$, we define $\overline{w}_j(t)$ according to
\bqs
\overline{w}_j(t)=\|w^0\|_{\ell^{\infty}(\Z)}e^{-A\left( \frac{j-J-c_*t}{\sqrt{t+1}}-\eta_A\right)}, \quad t\geq0, \quad j\in\Z, 
\eqs
where $\eta_A>2$ is set to
\bqs
\eta_A:=2\cosh(\lambda_*)A\left(1+\|\omega\|_{L^\infty([-A,0])}\right).
\eqs
As a consequence, upon denoting $\eta=\frac{j-J-c_*t}{\sqrt{t+1}}$, we obtain that
\bqs
\frac{\md}{\md t} \overline{w}_j(t)-\left(\mathscr{L}\overline{w}(t)\right)_j=\frac{A}{(t+1)}\left(\frac{\eta}{2}-\cosh(\lambda_*)A\left(1+\omega\left(-\frac{A}{\sqrt{t+1}}\right)\right)\right)\overline{w}_j(t), \quad t>0, \quad j\in\Z,
\eqs
from which we deduce that
\bqs
\frac{\md}{\md t} \overline{w}_j(t)-\left(\mathscr{L}\overline{w}(t)\right)_j\geq0, \quad t>0, \quad j\geq\zeta(t),
\eqs
where we have set $\zeta(t):=J+c_*t+\eta_A\sqrt{t+1}$. We now check that we can apply the maximum principle from Proposition~\ref{mp} to get that $w_j(t)\leq \overline{w}_j(t)$ for all $j\geq\zeta(t)$ and $t>0$. First, at time $t=0$, since $w^0_j=0$ for $j\geq J$, we get that 
\bqs
w^0_j=0\leq \overline{w}_j(0)=\|w^0\|_{\ell^{\infty}(\Z)}e^{-A\left( j-J-\eta_A\right)}, \quad j\geq \zeta(0)-1=J+\eta_A-1>J.
\eqs
Then, since the solution of the linear Cauchy problem \eqref{linear-eqn-w} satisfies $w_j(t)\leq \|w^0\|_{\ell^{\infty}(\Z)}$ for all $t\geq0$ and $j\in\Z$, we get that
\bqs
w_j(t)\leq \|w^0\|_{\ell^{\infty}(\Z)}\leq\overline{w}_j(t),\text{ for each } \zeta(t)-1\leq j \leq \zeta(t), \quad t>0.
\eqs
From Proposition~\ref{mp}, we obtain that $w_j(t)\leq \overline{w}_j(t)$ for all $j\geq\zeta(t)$ and $t>0$, as claimed. By linearity of equation~\eqref{linear-eqn-w}, it is easy to check that $-\overline{w}_j(t)$ is a subsolution implying that we also have $-\overline{w}_j(t)\leq w_j(t)$ for $j\geq\zeta(t)$ and $t>0$. This concludes the proof of the lemma.
\end{proof}


\section{Logarithmic delay of the position for the level sets}\label{secproofthmlog}

The aim of this section is to prove Theorem \ref{thmlog} which shows the logarithmic delay on the expansion of the level sets of the solution. To start with, we set 
\bqs
v_j(t):=e^{\lambda_*(j-c_*t)}u_j(t), \quad t>0, \quad j\in\Z,
\eqs
such that the sequence $(v_j(t))_{j\in\Z}$ is now a solution to the following modified lattice Fisher-KPP equation
\begin{equation}
	\label{v}
	\begin{aligned}
		\begin{cases}
				\frac{\md}{\md t} v_j(t) =e^{\lambda_*} \left(v_{j-1}(t)\!-\!2v_j(t)+v_{j+1}(t)\right)\!-\!c_*(v_{j+1}(t)-v_j(t))\!-\!\mathcal{R}_j(t;v_j(t)), &t>0,~j\in\Z,\\
			v_j(0)  =v_j^0=e^{\lambda_*j}u_j^0, & j\in\Z.
		\end{cases}
	\end{aligned}
\end{equation}
 Here, the nonlinear term $\mathcal{R}_j(t;s)$ is defined as
\begin{equation}
	\label{term R-nonnegative}
	\mathcal{R}_j(t;s):=f'(0)s-e^{\lambda_*(j-c_*t)}f(e^{-\lambda_*(j-c_*t)}s),
\end{equation}
for $s\in\R$ and  $(t,j)\in(0,+\infty)\times\Z$. We remark that $\mathcal{R}_j(t;s)\geq0$ for all $s\in\R$ and $(t,j)\in(0,+\infty)\times\Z$. This directly comes from our standing assumptions that $0<f(s)\le f'(0)s$ for $s\in(0,1)$ and the fact that we extended linearly $f$ for $u\in(-\infty,0)\cup(1,+\infty)$. We readily notice that $\mathcal{R}_j(t;v_j(t))\ge 0$ for $(t,j)\in(0,+\infty)\times\Z$. 

\subsection{Upper and lower bounds for $v_j(t)$}
\label{sec-preliminary}

In this section, we provide upper and lower barriers for the function $v_j(t)$ for $t$ sufficiently large and $j\in\Z$ ahead of the position $j-c_*t\approx 0$.

\subsubsection{Upper barrier for $v_j(t)$}

We start with the construction of a supersolution to  \eqref{v} for all $t$ large enough and $j\in\Z$ ahead of $j-c_*t\approx 0$ by following the strategy we developed in Section~\ref{seccontinuous}. More precisely, consider $\delta\in(0,1/3)$, that will be as small as needed. We will estimate $v_j(t)$ ahead of $j-c_*t = -t^\delta$. To do so, we construct a supersolution for \eqref{v} as follows:
\begin{equation}
\label{overline v}
\overline v_j(t):=\overline\xi(t) w_j(t)+\frac{1}{(1+t)^{\frac{3}{2}-\beta}}\cos\left(\frac{j-c_*t}{(1+t)^\alpha}\right)\mathbbm{1}_{\left\{j\in\Z~|~ -t^\delta-1\le j-c_*t\le  \frac{3\pi}{2}(1+t)^\alpha\right\}}+\Xi\left(\frac{j-c_*t}{\sqrt{1+t}} \right),
\end{equation}
for $t$ large enough and $j\in\Z$ with $j-c_*t\ge -t^\delta-1$, where the unknown $\overline \xi(t)\in \mathscr{C}^1$ is assumed to be positive and bounded in $(0,+\infty)$, and $\overline \xi'(t)\ge 0$  in $(0,+\infty)$. Here, $(w_j(t))_{j\in\Z}$ is the solution of the linear Cauchy problem \eqref{linear-eqn-w} starting from an odd, nontrivial and compactly supported sequence  $(w_j^0)_{j\in\Z} \in\ell^\infty(\Z)$. The other parameters  $\alpha\in(1/3,1/2)$ and $\beta>0$ will be determined in the course of investigation. The function $\Xi:\R_+\to\R_+$ is defined as
\bqs
\Xi(\eta):=2\|w^0\|_{\ell^{\infty}(\Z)}\Gamma\left(\eta\right)e^{-a\left(\eta-\eta_2\right)}, \quad \eta \geq0,
\eqs
with $a>1$ and $\Gamma\geq0$ a smooth non decreasing cut-off function that satisfies $\Gamma(x)=0$ for $x\in[0,\eta_1]$ and $\Gamma(x)=1$ for $x\geq \eta_2$ with $0<\eta_1<\eta_2$. Note that the cut-off function $\Gamma$ can be constructed so as to ensure that
\bqs
\|\Gamma'\|_{L^{\infty}[\eta_1,\eta_2]}\leq \frac{C_1}{\eta_2-\eta_1}, \text{ and }  \|\Gamma''\|_{L^{\infty}[\eta_1,\eta_2]}\leq \frac{C_2}{(\eta_2-\eta_1)^2},
\eqs
for two positive constants $C_{1,2}>0$ independent of $\eta_{1}$ and $\eta_2$. The parameters $\eta_1,\eta_2>0$ and $a>1$ need to be properly chosen and will be fixed along the proof.

 Our aim is now to prove that $\overline v_j(t)$ is a supersolution of \eqref{v} for $t$ large enough and $j\in\Z$ with $j-c_*t\ge -t^\delta$. For notational convenience,  we define the following sequence
\bqq
p_j(t):=\frac{1}{(1+t)^{\frac{3}{2}-\beta}}\cos\left(\frac{j-c_*t}{(1+t)^\alpha}\right), \quad t>0, \quad j\in\Z.
\label{cosinepj}
\eqq
The key point of the forthcoming computations will be to verify that the action of the linear operator $\mathscr{L}$ on the above cosine perturbation and the function $\Xi$ is well behaved within the range of interest. As we have already seen in Section~\ref{seccontinuous} for the continuous setting, the cosine perturbation is designed in such a way to compensate for the lack of positivity of $w_j(t)$ within the range $-t^{\delta}-1\leq j-c_*t\leq0$. On the other hand, the exponential correction introduced with $\Xi$ will compensate for our lack of information regarding the positivity of $w_j(t)$ beyond the diffusive scale. We conjecture that it may be possible to directly prove that $w_j(t)>0$ for all $j-c_*t\geq1$ and $t$ large enough, but this is beyond the scope of the present paper.

 We divide the half-space $j-c_*t\ge -t^\delta$ with $j\in\Z$ and $t$ large enough into five different zones
\begin{align*}
R_1&:=\left\{j\in\Z~|~ -t^\delta\le j-c_*t\le 1\right\},\quad R_2:=\left\{j\in\Z~|~ 1\le j-c_*t\le \frac{\pi}{4}(1+t)^\alpha\right\},\\
R_3&:= \left\{j\in\Z~|~\frac{\pi}{4}(1+t)^\alpha\le j-c_*t\le \frac{3\pi}{2}(1+t)^\alpha\right\},\\
R_4&:= \left\{j\in\Z~|~  \frac{3\pi}{2}(1+t)^\alpha\le j-c_*t\le \eta_1 (1+t)^{\frac{1}{2}}\right\},\\
R_5&:= \left\{j\in\Z~|~ j-c_*t\geq  \eta_1 (1+t)^{\frac{1}{2}} \right\}.
\end{align*}
We point out that  the interfaces of each zone require rather delicate analysis, due to the nonlocal feature of the equation \eqref{v}.


\paragraph{In region $R_1$.} There holds
\bqs
\frac{-t^\delta}{(1+t)^\alpha}\le \frac{j-c_*t}{(1+t)^\alpha}\le \frac{1}{(1+t)^\alpha}.
\eqs
Due to $\delta<\alpha$ and due to the asymptotics \eqref{estimate-w_j} of $w_j(t)$, we get that
\bqs
p_j(t)\sim \frac{1}{(1+t)^{\frac{3}{2}-\beta}}~~\text{and}~~ -\frac{1}{(1+t)^{\frac{3}{2}-\delta}}\lesssim w_j(t)\lesssim \frac{1}{(1+t)^{\frac{3}{2}}}~~~\text{for}~t~\text{large enough}.
\eqs
Therefore  the cosine perturbation will enable $\overline v_j(t)$ to be positive in this region. To be specific, we first require that $\beta>\delta>0,$ so that the cosine term plays a dominant role here, namely,
\begin{equation}\label{BC-upper}
	\overline v_j(t)\sim \frac{1}{(1+t)^{\frac{3}{2}-\beta}}>0~~~~~\text{for}~t~\text{large enough}.
\end{equation}
 Moreover, a straightforward computation gives that, for $t$ large enough,
 \begin{equation*}
\frac{\md}{\md t}(\overline \xi(t)w_j(t))-\left(\mathscr{L}[\overline{\xi}(t)w(t)]\right)_j=\overline\xi'(t)w_j(t)\gtrsim -\frac{\overline\xi'(t)}{(1+t)^{\frac{3}{2}-\delta}}.
 \end{equation*}
 Define by $j_1$  the leftmost integer in $R_1$. By  double angle formulas, we see for $t$ large enough and $j\in R_1\backslash \left\{j_1\right\}$ that
\begin{align*}
	\frac{\md}{\md t} p_j(t)-\left(\mathscr{L}p(t)\right)_j
	&=\frac{\beta-\frac{3}{2}}{(1+t)^{\frac{3}{2}-\beta+1}}\cos\left(\frac{j-c_*t}{(1+t)^\alpha}\right)+\frac{1}{(1+t)^{\frac{3}{2}-\beta}}\Bigg(\frac{c_*}{(1+t)^\alpha}+\alpha\frac{j-c_*t}{(1+t)^{\alpha+1}}\Bigg)\sin\left(\frac{j-c_*t}{(1+t)^\alpha}\right)
	\\
	&~~~~~~~~+\frac{4e^{\lambda_*}}{(1+t)^{\frac{3}{2}-\beta}}\sin^2\left(\frac{1/2}{(1+t)^\alpha}\right)\cos\left(\frac{j-c_*t}{(1+t)^\alpha}
	\right)\\
	&~~~~~~~~ -\frac{2c_*}{(1+t)^{\frac{3}{2}-\beta}}\sin\left(\frac{1/2}{(1+t)^\alpha}\right)\sin\left(\frac{j-c_*t+1/2}{(1+t)^\alpha}\right)\\
	&\sim \frac{1}{(1+t)^{\frac{3}{2}-\beta+2\alpha}}.
\end{align*}
At $j=j_1$, since $p_{j-1}(t)=0$ for all $t>0$, it follows from the Taylor expansion that
	\begin{align*}
\frac{\md}{\md t} p_{j_1}(t)-\left(\mathscr{L}p(t)\right)_{j_1}&=	\frac{\beta-\frac{3}{2}}{(1+t)^{\frac{3}{2}-\beta+1}}\cos\left(\frac{j_1-c_*t}{(1+t)^\alpha}\right)\\
&~~~+\frac{1}{(1+t)^{\frac{3}{2}-\beta}}\Bigg(\frac{c_*}{(1+t)^\alpha}+\alpha\frac{j_1-c_*t}{(1+t)^{\alpha+1}}\Bigg)\sin\left(\frac{j_1-c_*t}{(1+t)^\alpha}\right)\\
	&~~~+\Bigg( e^{\lambda_*}\cos \left(\frac{j_1-c_*t}{(1+t)^\alpha}\right)-
	\sin \left(\frac{j_1-c_*t}{(1+t)^\alpha}\right)\frac{c_*-e^{\lambda_*}}{(1+t)^\alpha}+O\left(\frac{1}{(1+t)^{2\alpha}}\right)
	\Bigg)
	\frac{1}{(1+t)^{\frac{3}{2}-\beta}}\\
	&\sim\frac{1}{(1+t)^{\frac{3}{2}-\beta}}~~~~~~~~~~~\text{for}~t~\text{large enough}.
	\end{align*}
Therefore, in order to ensure that $\frac{\md}{\md t}\overline{v}_j(t)-\left(\mathscr{L}\overline{v}(t)\right)_j\ge 0$ for $t$ large enough in this region, it suffices for $\overline\xi(t)$ to satisfy
\bqs
\frac{1}{(1+t)^{\frac{3}{2}-\beta+2\alpha}}\gg\frac{\overline \xi'(t)}{(1+t)^{\frac{3}{2}-\delta}}~~~~~\text{for}~t~\text{large enough}.
\eqs
We can then require that
\begin{equation}\label{R1-cdn}
	0\le  \overline \xi'(t)\ll \frac{1}{(1+t)^{2\alpha+\delta-\beta}} ~~~~~\text{for}~t~\text{large enough}.
\end{equation}

\paragraph{In region $R_2$.} There holds
\bqs
\frac{1}{(1+t)^\alpha}
\le \frac{j-c_*t}{(1+t)^\alpha}\le \frac{\pi}{4}.
\eqs
 We notice that $w_j(t)$ is positive for all $t> 0$ in this region, as is the cosine perturbation. Since $\overline \xi(t)$ is assumed a priori to be  positive in $(0,+\infty)$, one has that the function  $\overline v_j(t)>0$ for all $t> 0$ in this region. Moreover,
$$\frac{\md}{\md t}(\overline \xi(t)w_j(t))-\left(\mathscr{L}[\overline{\xi}(t)w(t)]\right)_j=\overline \xi'(t)w_j(t)\ge 0$$ 
for all $t> 0$ in this region, due to our requirement that $\overline \xi'(t)\ge 0$ in $(0,+\infty)$.
By the same calculation as in region $R_1$, we have the following asymptotics in $R_2$
\bqs
\frac{\md}{\md t}p_j(t)-\left(\mathscr{L}p(t)\right)_j \sim \frac{1}{(1+t)^{\frac{3}{2}-\beta+2\alpha}}>0~~~\text{for}~t~\text{large enough}.
\eqs
Consequently,  there holds 
$\frac{\md}{\md t}\overline{v}_j(t)-\left(\mathscr{L}\overline{v}(t)\right)_j\ge 0$ for $t$ large enough in region $R_2$.

\paragraph{In  region $R_3$.} There holds
\bqs
\frac{\pi}{4}
\le \frac{j-c_*t}{(1+t)^\alpha}\le \frac{3\pi}{2}.
\eqs
 Notice that the cosine perturbation may be negative in this region. Let $j_3$ be the rightmost integer in $R_3$. We first look at  $R_3\backslash \left\{j_3\right\}$, where
 it follows from an analogous procedure as in preceding cases that
\bqs
\frac{\md}{\md t}p_j(t)-\left(\mathscr{L}p(t)\right)_j  \ge \frac{-1}{(1+t)^{\frac{3}{2}-\beta+2\alpha}}~~~~~\text{for}~t~\text{large enough}.
\eqs
At  $j_3$, by noticing that $p_{j_3+1}(t)=0$ for all $t>0$, we derive by using the Taylor expansion that 
\begin{align*}
	&~~~\frac{\md}{\md t}p_{j_3}(t)-\left(\mathscr{L}p(t)\right)_{j_3} \\	&=\frac{\beta-\frac{3}{2}}{(1+t)^{\frac{3}{2}-\beta+1}}\cos\left(\frac{j_3-c_*t}{(1+t)^\alpha}\right)+\frac{1}{(1+t)^{\frac{3}{2}-\beta}}\Bigg(\frac{c_*}{(1+t)^\alpha}+\alpha\frac{j_3-c_*t}{(1+t)^{\alpha+1}}\Bigg)\sin\left(\frac{j_3-c_*t}{(1+t)^\alpha}\right)\\	
	&~~~~~~~-\frac{1}{(1+t)^{\frac{3}{2}-\beta}}\Bigg(  e^{\lambda_*} \sin \left(\frac{j_3-c_*t}{(1+t)^\alpha}\right) \frac{1}{(1+t)^\alpha}+O\left(\frac{1}{(1+t)^{3\alpha}}\right)-e^{-\lambda_*}\cos\left(\frac{j_3-c_*t}{(1+t)^\alpha}\right)\Bigg)\\
	&\sim -\frac{e^{-\lambda_*}}{(1+t)^{\frac{3}{2}-\beta+\alpha}}\sin \left(\frac{j_3-c_*t}{(1+t)^\alpha}\right)+\frac{e^{-\lambda_*}}{(1+t)^{\frac{3}{2}-\beta}}\cos\left(\frac{j_3-c_*t}{(1+t)^\alpha}\right)\\
	&\sim\frac{e^{-\lambda_*}}{(1+t)^{\frac{3}{2}-\beta+\alpha}}~~~~~~~~~~~~~\text{for}~t~\text{large enough}.
\end{align*}
On the other hand, one observes from \eqref{estimate-w_j} that 
\bqs
w_j(t)\sim \frac{1}{(1+t)^{\frac{3}{2}-\alpha}}~~~~~\text{for}~t~\text{large enough}.
\eqs
However, the cosine perturbation may be negative in this area. Therefore, to ensure that $\overline v_j(t)>0$ and $\frac{\md}{\md t}\overline{v}_j(t)-\left(\mathscr{L}\overline{v}(t)\right)_j\ge 0$ for $t$ large enough in this region, we require this time $\alpha>\beta$ and
\bqq\label{R3-cdn}
\frac{\overline \xi'(t)}{(1+t)^{\frac{3}{2}-\alpha}}\gg\frac{1}{(1+t)^{\frac{3}{2}-\beta+2\alpha}}~~~\text{for}~t~\text{large enough}.
\eqq

\paragraph{In  region $R_4$.} We first define $\underline{j}_4$ and $\overline{j}_4$, respectively, as the leftmost and rightmost integers in region $R_4$. We note that in $R_4$ both the cosine perturbation and the exponential correction $\Xi$ are identically equal to zero, such that for all  $t$ large enough, we have $\overline v_j(t)=\overline\xi(t)w_j(t)>0$ thanks to \eqref{j_0}. Now standard computations give that for each $j\in R_4\backslash\left\{\underline{j}_4,\overline{j}_4 \right\}$ that
\bqs
\frac{\md}{\md t}\overline{v}_j(t)-\left(\mathscr{L}\overline{v}(t)\right)_j=\overline\xi'(t)w_j(t)\ge 0, \quad \text{ for }~t~\text{large enough},
\eqs
thanks to the nonnegativity assumption of $\overline \xi'(t)$ for $t> 0$. Now at $\underline{j}_4$ we have $p_{\underline{j}_4}(t)=p_{\underline{j}_4+1}(t)=0$ together with $p_{\underline{j}_4-1}(t)\leq0$, so that we get
that
\bqs
\frac{\md}{\md t}\overline{v}_{\underline{j}_4}(t)-\left(\mathscr{L}\overline{v}(t)\right)_{\underline{j}_4}=\overline\xi'(t)w_{\underline{j}_4}(t)-e^{\lambda_*}p_{\underline{j}_4-1}(t)\ge 0, \quad \text{ for }~t~\text{large enough}.
\eqs
On the other hand, at $\overline{j}_4$, we remark that $\Gamma(\eta)=0$ and $\Gamma\left(\eta+\frac{1}{\sqrt{1+t}}\right)>0$ with $\eta=\frac{\overline{j}_4-c_*t}{\sqrt{1+t}}$ such that we obtain that
\bqs
\frac{\md}{\md t}\overline{v}_{\overline{j}_4}(t)-\left(\mathscr{L}\overline{v}(t)\right)_{\overline{j}_4}=\overline\xi'(t)w_{\overline{j}_4}(t)-e^{-\lambda_*}\Xi\left(\eta+\frac{1}{\sqrt{1+t}}\right).
\eqs
By definition of the cut-off function, we have that $\Gamma^{(k)}(\eta_1)=0$ for all $k\geq0$, such that 
\bqs
\Xi\left(\eta+\frac{1}{\sqrt{1+t}}\right)=O\left(\frac{1}{(1+t)^{k/2}} \right), \text{ for any } k\geq0, \text{ and } t \text{ large},
\eqs
and thus
\bqs
\frac{\md}{\md t}\overline{v}_{\overline{j}_4}(t)-\left(\mathscr{L}\overline{v}(t)\right)_{\overline{j}_4}=\overline\xi'(t)w_{\overline{j}_4}(t)+o\left(\overline\xi'(t)w_{\overline{j}_4}(t)\right)\geq0, \quad \text{for}~t~\text{large enough}.
\eqs

\paragraph{Partial conclusion.} Gathering \eqref{R1-cdn} and \eqref{R3-cdn}, we should impose as in the continuous case
$$\frac{1}{(1+t)^{3\alpha-\beta}}\ll \overline \xi'(t)\ll \frac{1}{(1+t)^{2\alpha+\delta-\beta}}~~~~~\text{for}~t~\text{large enough}.$$
This is possible so long as $\delta<\alpha$, which is exactly what we have assumed. Let us take 
$$\overline \xi'(t)\sim \frac{1}{(1+t)^{3\alpha-2\beta}}~~ \text{and}~~\overline \xi(t)=1-\frac{1}{(1+t)^{3\alpha-2\beta-1}}~~\text{for}~t\ge 0,$$
which yields that $3\alpha-2\beta>2\alpha+\delta-\beta$, i.e., $\beta<\alpha-\delta$.
Due to our assumption that the function $\overline\xi(t)$ is positive and bounded in $(0,+\infty)$, it suffices to require
$3\alpha-2\beta-1>0$. Hence, we can fix $\delta\in(0,1/4)$ very small, then there exist $\alpha\in(1/3,1/2)$ and $\beta>0$ such that 
\begin{equation}
	\label{parameters}
	0<\delta<\beta<\min\left(\alpha-\delta,\frac{3\alpha-1}{2}\right)<\alpha<\frac{1}{2}.
\end{equation}

\paragraph{In region $R_5$.} We now turn our attention to the final region $R_5=\left\{ j\in\Z~|~ j-c_*t \geq \eta_1\sqrt{1+t}\right\}$ where one gets contributions from the correction $\Xi$. We first remark that for all $t$ large enough one has
\bqs
0<\overline{v}_j(t)=\overline\xi(t) w_j(t)+\Xi\left(\frac{j-c_*t}{\sqrt{t+1}}\right), \quad  \eta_1 \leq \frac{j-c_*t}{\sqrt{t+1}} \leq \eta_2 ,
\eqs
for any choice of $0<\eta_1<\eta_2$ thanks to Lemma~\ref{lemAsymptLin}. We now use Lemma~\ref{lemDiffusive} and let $A>a>1$  such that there exists $\eta_A>0$ for which
\bqs
\left| w_j(t)\right| \leq \|w^0\|_{\ell^{\infty}(\Z)}e^{-A\left( \frac{j-J-c_*t}{\sqrt{t+1}}-\eta_A\right)}, \quad \frac{j-J-c_*t}{\sqrt{t+1}} \geq \eta_A,
\eqs
where $J=J_\omega+1\geq2$ is the range of the support of the initial condition.  From the proof of Lemma~\ref{lemDiffusive}, we know that we can take: 
\bqs
\eta_A= 4\cosh(\lambda_*)A\left(1+\|\omega\|_{L^\infty([-A,0])}\right),
\eqs
for some analytic function $\omega$ verifying $\omega(0)=0$. We can also always assume that $t$ is large enough such that $\frac{J}{\sqrt{t+1}}\leq \eta_A$. With $\eta_2=2\eta_A$, we get that
\begin{align*}
\overline{v}_j(t)&\geq -\overline\xi(t)\|w^0\|_{\ell^{\infty}(\Z)}e^{-A\left( \frac{j-J-c_*t}{\sqrt{t+1}}-\eta_A\right)}+2\|w^0\|_{\ell^{\infty}(\Z)}e^{-a\left( \frac{j-c_*t}{\sqrt{t+1}}-\eta_2\right)}\\
&\geq \|w^0\|_{\ell^{\infty}(\Z)}\left(-e^{-(A-a)\left( \frac{j-c_*t}{\sqrt{t+1}}-\eta_2\right)}+2\right)e^{-a\left( \frac{j-c_*t}{\sqrt{t+1}}-\eta_2\right)}>0, \quad \quad \frac{j-c_*t}{\sqrt{t+1}} \geq \eta_2.
\end{align*}
As a consequence, from now on we fix $\eta_2>0$ as
\bqq
\eta_2:=8\cosh(\lambda_*)A\left(1+\|\omega\|_{L^\infty([-A,0])}\right),
\label{eqeta2}
\eqq
and then chose $\eta_1>0$, depending on $a$, as
\bqq
\label{eqeta1}
\eta_1:=3\cosh(\lambda_*)a.
\eqq
We remark that since $1<a<A$ and $\eta_2=2\eta_A$, we always have 
\bqs
0<\eta_1=3\cosh(\lambda_*)a<2\eta_1<8\cosh(\lambda_*)a<8\cosh(\lambda_*)A\left(1+\|\omega\|_{L^\infty([-A,0])}\right)=\eta_2.
\eqs
Next, we verify that $\overline{v}_j(t)$ is indeed a supersolution in region $R_5$. We divide it into two subregions:
\bqs
R_5^1:=\left\{ j\in\Z ~|~ \eta_1\leq \frac{j-c_*t}{\sqrt{1+t}} \leq \eta_2\right\}, \quad R_5^2:=\left\{ j\in\Z ~|~ \frac{j-c_*t}{\sqrt{1+t}} \geq \eta_2\right\},
\eqs 
and throughout we denote $\eta=\frac{j-c_*t}{\sqrt{1+t}}$.

\paragraph{In region  $R_5^1$.} What changes in the intermediate range $R_5^1$ is that one gets an extra contribution from the cut-off function $\Gamma$. More precisely, if we denote by $\underline{j}_5^1$ and $\overline{j}_5^1$, respectively, as the leftmost and rightmost integers in region $R_5^1$. On the one hand, for any $j\in R_5^1\backslash\left\{\underline{j}_5^1,\overline{j}_5^1\right\}$ we compute:
\begin{align*}
e^{a(\eta-\eta_2)}\left(\mathscr{L}\left[\Gamma(\eta)e^{-a(\eta-\eta_2)}\right]\right)_j&=e^{\lambda_*}\left[\Gamma\left(\eta+\frac{1}{\sqrt{t+1}}\right)e^{-\frac{a}{\sqrt{t+1}}}-2\Gamma(\eta)+\Gamma\left(\eta-\frac{1}{\sqrt{t+1}}\right)e^{\frac{a}{\sqrt{t+1}}}\right]\\
&~~~-c_*\left[\Gamma\left(\eta+\frac{1}{\sqrt{t+1}}\right)e^{-\frac{a}{\sqrt{t+1}}}-\Gamma(\eta)\right]\\
&=e^{\lambda_*}\left[\frac{a^2}{(t+1)}\Gamma(\eta)+\frac{1}{(t+1)}\Gamma''(\eta)-\frac{2a}{(t+1)}\Gamma'(\eta)\right]\\
&~~~-c_*\left[\frac{1}{\sqrt{t+1}}\Gamma'(\eta)+ \frac{1}{2(t+1)}\Gamma''(\eta)-\frac{a}{\sqrt{t+1}}\Gamma(\eta)\right]\\
&~~~-c_*\left[\frac{a^2}{2(t+1)}\Gamma(\eta)-\frac{a}{(t+1)}\Gamma'(\eta)\right]+O\left( \frac{1}{(t+1)^{3/2}}\right)\\
&=-\frac{c_*}{\sqrt{t+1}}\left[\Gamma'(\eta)-a\Gamma(\eta)\right]+\frac{\cosh(\lambda_*)a^2}{(t+1)}\Gamma(\eta)+\frac{\cosh(\lambda_*)}{(t+1)}\Gamma''(\eta)\\
&~~~-\frac{2\cosh(\lambda_*)a}{(t+1)}\Gamma'(\eta)+O\left( \frac{1}{(t+1)^{3/2}}\right).
\end{align*} 
On the other hand, for any $j\in R_5^1$, we have
\begin{align*}
e^{a(\eta-\eta_2)}\left(\frac{\md}{\md t}\left[\Gamma(\eta)e^{-a(\eta-\eta_2)}\right]\right)&=\left(\Gamma'(\eta)-a\Gamma(\eta)\right)\frac{\md \eta}{\md t}\\
&=\left(\Gamma'(\eta)-a\Gamma(\eta)\right)\left(-\frac{c_*}{\sqrt{t+1}}-\frac{\eta}{2(t+1)}\right).
\end{align*}
As a consequence, for any $j\in R_5^1\backslash\left\{\underline{j}_5^1,\overline{j}_5^1\right\}$, one has
\begin{align*}
e^{a(\eta-\eta_2)}&\left(\frac{\md}{\md t}\left[\Gamma(\eta)e^{-a(\eta-\eta_2)}\right]-\left(\mathscr{L}\left[\Gamma(\eta)e^{-a(\eta-\eta_2)}\right]\right)_j\right)\\
&=\frac{1}{(t+1)}\left[a\left(\frac{\eta}{2}-a\cosh(\lambda_*)\right)\Gamma(\eta)+\left(2\cosh(\lambda_*)a-\frac{\eta}{2}\right)\Gamma'(\eta)  -\cosh(\lambda_*)\Gamma''(\eta)\right]\\
&~~~+O\left( \frac{1}{(t+1)^{3/2}}\right)\\
&\geq \frac{1}{(t+1)}\left[\underbrace{\frac{a^2\cosh(\lambda_*)}{2}\Gamma(\eta)+\left(2\cosh(\lambda_*)a-\frac{\eta}{2}\right)\Gamma'(\eta)  -\cosh(\lambda_*)\Gamma''(\eta)}_{:=\Theta_a(\eta)} \right]+O\left( \frac{1}{(t+1)^{3/2}}\right).
\end{align*}
Since $\Gamma'(\eta)> 0$ for $\eta\in(\eta_1,\eta_2)$, we get that
\bqs
\frac{a^2\cosh(\lambda_*)}{2}\Gamma(\eta)+\left(2\cosh(\lambda_*)a-\frac{\eta}{2}\right)\Gamma'(\eta)>0
\eqs
 for all $\eta\in(\eta_1,4\cosh(\lambda_*)a]$. And since we have
\bqs
 \|\Gamma''\|_{L^{\infty}[\eta_1,\eta_2]}\leq \frac{C_2}{(\eta_2-\eta_1)^2}\leq \frac{C_2}{\left(5\cosh(\lambda_*)a\right)^2},
\eqs
we get that $\Theta_a(\eta)>0$  for all $\eta\in(\eta_1,4\cosh(\lambda_*)a]$ provided that $a>1$ is large enough.  Now, for $\eta\in(4\cosh(\lambda_*)a,\eta_2)$, we get that
\bqs
\frac{a^2\cosh(\lambda_*)}{2}\Gamma(\eta)+\left(2\cosh(\lambda_*)a-\frac{\eta}{2}\right)\Gamma'(\eta)\geq \frac{a^2\cosh(\lambda_*)}{2}\Gamma(\eta)+2\cosh(\lambda_*)a\Gamma'(\eta)-\frac{C_1}{2(1-\eta_1/\eta_2)}.
\eqs
And since $2\eta_1=6\cosh(\lambda_*)a< 8 \cosh(\lambda_*)a < \eta_2$, we get that
\bqs
\frac{C_1}{2(1-\eta_1/\eta_2)}\leq C_1.
\eqs
As a consequence, for $\eta\in(4\cosh(\lambda_*)a,\eta_2)$, we have
\bqs
\Theta_a(\eta)\geq \frac{a^2\cosh(\lambda_*)}{2}\Gamma(\eta)+2\cosh(\lambda_*)a\Gamma'(\eta)-C_1-\frac{C_2}{\left(5\cosh(\lambda_*)a\right)^2}>0,
\eqs
provided that $a>1$ is large enough. As a conclusion, we can always find $a>1$ large enough such that $\Theta_a(\eta)>0$ for all $\eta\in(\eta_1,\eta_2)$, this then implies that 
\bqs
\frac{\md}{\md t} \overline{v}_j(t)-\left(\mathscr{L}\overline{v}(t)\right)_j\geq 0,~~~\text{for}~t~\text{large enough},
\eqs
in the range $j\in R_5^1\backslash\left\{\underline{j}_5^1,\overline{j}_5^1\right\}$. Finally, at the extremal end points of $R_5^1$ we get the following contributions. First, at $j=\underline{j}_5^1$, we observe that
\begin{align*}
e^{a(\eta-\eta_2)}\left(\mathscr{L}\left[\Gamma(\eta)e^{-a(\eta-\eta_2)}\right]\right)_{\underline{j}_5^1}&=-\Gamma(\eta)e^{\lambda_*}+\left(e^{\lambda_*}-c_*\right)\left[\Gamma\left(\eta+\frac{1}{\sqrt{t+1}}\right)e^{-\frac{a}{\sqrt{t+1}}}-\Gamma(\eta)\right]\\
&=-\Gamma(\eta)e^{\lambda_*}+O\left(\frac{1}{\sqrt{t+1}}\right),
\end{align*}
together with
\bqs
e^{a(\eta-\eta_2)}\left(\frac{\md}{\md t}\left[\Gamma(\eta)e^{-a(\eta-\eta_2)}\right]\right)=\left(\Gamma'(\eta)-a\Gamma(\eta)\right)\left(-\frac{c_*}{\sqrt{t+1}}-\frac{\eta}{2(t+1)}\right)=O\left(\frac{1}{\sqrt{t+1}}\right).
\eqs
As a consequence, we have at $j=\underline{j}_5^1$
\begin{align*}
e^{a(\eta-\eta_2)}&\left(\frac{\md}{\md t}\left[\Gamma(\eta)e^{-a(\eta-\eta_2)}\right]-\left(\mathscr{L}\left[\Gamma(\eta)e^{-a(\eta-\eta_2)}\right]\right)_{\underline{j}_5^1}\right)=\Gamma(\eta)e^{\lambda_*}+O\left(\frac{1}{\sqrt{t+1}}\right) \geq0,
\end{align*}
for $t$ large enough, from which we deduce that $\frac{\md}{\md t} \overline{v}_j(t)-\left(\mathscr{L}\overline{v}(t)\right)_j\geq 0$. Similar computations at the other boundary point $j=\overline{j}_5^1$ also yields to $\frac{\md}{\md t} \overline{v}_j(t)-\left(\mathscr{L}\overline{v}(t)\right)_j\geq 0$ for $t$ large enough.
\paragraph{In region  $R_5^2$.} Once again, we denote by $\underline{j}_5^2$ the left most integer in region $R_5^2$. In this regime, we have $\Gamma(\eta)=1$ such that we get for each $j\in R_5^2\backslash\left\{\underline{j}_5^2\right\}$
\begin{align*}
\frac{\md}{\md t} \overline{v}_j(t)-\left(\mathscr{L}\overline{v}(t)\right)_j&=\overline{\xi}'(t)w_j(t)+\frac{2a \|w^0\|_{\ell^{\infty}(\Z)}}{(t+1)}\left(\frac{\eta}{2}-\cosh(\lambda_*)a\left(1+\omega\left(-\frac{a}{\sqrt{t+1}}\right)\right)\right)e^{-a\left( \eta-\eta_2\right)}\\
&\geq \left(-\overline{\xi}'(t)+\frac{2a}{(t+1)}\left(\frac{\eta_2}{2}-\cosh(\lambda_*)a\left(1+\omega\left(-\frac{a}{\sqrt{t+1}}\right)\right)\right)\right)\|w^0\|_{\ell^{\infty}(\Z)}e^{-a\left( \eta-\eta_2\right)}.
\end{align*}
By analyticity of the function $\omega$ and the fact that $\omega(0)=0$, we can always ensure that 
\bqs
1+\omega\left(-\frac{a}{\sqrt{t+1}}\right)\leq \frac{3}{2}, ~~~\text{for}~t~\text{large enough}.
\eqs
As a consequence, using the fact that $\eta_2=2\eta_A> 8 \cosh(\lambda_*)a$, we get
\bqs
\frac{\md}{\md t} \overline{v}_j(t)-\left(\mathscr{L}\overline{v}(t)\right)_j\geq \left(-\overline{\xi}'(t)+\frac{5a\cosh(\lambda_*)}{(t+1)}\right)\|w^0\|_{\ell^{\infty}(\Z)}e^{-a\left( \eta-\eta_2\right)}\geq0,~~~\text{for}~t~\text{large enough},
\eqs
since
\bqs
\overline \xi'(t)\sim \frac{1}{(1+t)^{3\alpha-2\beta}},
\eqs
with $3\alpha-2\beta>1$. Now, at $j=\underline{j}_5^2$, we have
\bqs
e^{a(\eta-\eta_2)}\left(\mathscr{L}\left[\Gamma(\eta)e^{-a(\eta-\eta_2)}\right]\right)_{\underline{j}_5^2}=-e^{\lambda_*}+e^{\lambda_*}\left[\Gamma\left(\eta-\frac{1}{\sqrt{t+1}}\right)e^{\frac{a}{\sqrt{t+1}}}-1\right]=-e^{\lambda_*}+O\left(\frac{1}{\sqrt{t+1}}\right),
\eqs
for $t$ large enough. Then we conclude that $\frac{\md}{\md t} \overline{v}_j(t)-\left(\mathscr{L}\overline{v}(t)\right)_j\geq 0$ is also satisfied at $j=\underline{j}_5^2$ for $t$ large enough.

\paragraph{Final conclusion.} First, we set $\zeta(t):=c_*t-t^\delta$. From the above analysis, one can choose $T_0>0$ sufficiently large such that $\zeta(T_0)-1>J$ (recall that $u_j^0=0$ for $j\ge J$) and such that
$\frac{\md}{\md t}\overline{v}_j(t)-\left(\mathscr{L}\overline{v}(t)\right)_j\ge 0$ for $t\ge T_0$ and  $j\in\Z$ with $j\geq \zeta(t)$. This together with \eqref{term R-nonnegative} then implies that
$\frac{\md}{\md t}\overline{v}_j(t)-\left(\mathscr{L}\overline{v}(t)\right)_j+\mathcal{R}_j(t;\overline v_j(t))\ge 0$ for $t\ge T_0$ and $j\in\Z$ with $j\ge \zeta(t)$.
 Moreover, due to the choice of $T_0$, we have $\overline v_j(T_0)>0=e^{\lambda_*j} u^0_j=v_j(0)$ for $j\in\Z$ with $j \ge\zeta(T_0)-1$. For $j\in\left[\zeta(t) -1, \zeta(t)\right)$, we observe that $v_j(t-T_0)=e^{\lambda_*(j-c_*(t-T_0))}u_j(t-T_0)\le e^{\lambda_*(c_*T_0- t^\delta)}$ for $t\ge T_0$ (since  $0\le u_j(t)\le  1$ for all $t\ge 0$ and $j\in\Z$), while $\overline v_j(t)\sim (1+t)^{-3/2+\beta}$ for $t\ge T_0$ by \eqref{BC-upper}, up to increasing $T_0$. Up to increasing $T_0$ again, we further have $(1+t)^{-3/2+\beta}>e^{\lambda_*(c_*T_0-t^\delta)}$ for all $t\ge T_0$, which will yield that $\overline v_j(t)\geq  v_j(t-T_0)$ at $j\in\left[\zeta(t) -1, \zeta(t)\right)$ for all $t\ge T_0$. 
 We then conclude that $\overline v_j(t)$ is a supersolution of \eqref{v} for all $t\ge T_0$ and  $j\in\Z$ with $j\ge  \zeta(t) $.  It follows from the comparison principle Proposition~\ref{cp} that
 \begin{equation}
 	\label{511-conclusion}
 	\overline v_j(t+T_0)\geq v_j(t)~~~~\text{for}~t\ge 0,~j-c_*t\geq -t^\delta.
 \end{equation}

\subsubsection{Lower barrier for $v_j(t)$}

In the special case that $f$ is linear in a small neighborhood of 0, namely, $f(s)=f'(0)s$ for $s\in[0,s_0)$, with $s_0\in(0,1)$ small, we would be able to control $v_j(t)$ by some multiple of $w_j(t)$ from below. Nevertheless,  the nonlinear term $f$ is not linear in the vicinity of 0 in general, for which we still hope to manage controlling $v_j(t)$ by $w_j(t)$. Accordingly, we need to do it in an area where the nonlinear term is negligible. Let $\delta$, $\beta$ and $\alpha$ be fixed as in \eqref{parameters}. The idea is to estimate $v_j(t)$ ahead of $j- c_*t= t^\delta $. To do so, let us construct a lower barrier as follows:
\begin{equation}
	\label{underline v}
	\underline v_j(t):=\underline\xi(t) \widetilde{w}_j(t)-\frac{1}{(1+t)^{\frac{3}{2}-\beta}}\cos\left(\frac{j-c_*t}{(1+t)^\alpha}\right)\mathbbm{1}_{\left\{j\in\Z~|~ t^\delta-1\le j-c_*t\le  \frac{3\pi}{2}(1+t)^\alpha\right\}},
\end{equation}
for $t$ large enough and  $j\in\Z$ with $1<t^\delta-1\le  j-c_*t$, where we assume that the unknown $\underline \xi(t)\in \mathscr{C}^1$  is positive and bounded away from 0 in $(0,+\infty)$ and satisfies $\underline \xi'(t)\le 0$ in $(0,+\infty)$, which will be made clear in the sequel. Here $\widetilde{w}_j(t)$ is defined out of $w_j(t)$ as follows. We first define $\chi(t)=\sup\left\{ z \geq1 ~|~ w_k(t)>0 \text{ for all } 1\leq k-c_*t  \leq z\right\}$, and set $\widetilde{w}_j(t)=w_j(t)\mathbbm{1}_{\left\{j\in\Z~|~ 1 \le j-c_*t\le \chi(t)\right\}}$.

Again, we define the sequence $p(t)=(p_j(t))_{j\in\Z}$  as
\bqs
p_j(t)=\frac{1}{(1+t)^{\frac{3}{2}-\beta}}\cos\left(\frac{j-c_*t}{(1+t)^\alpha}\right), \quad t>0, \quad j\in\Z.
\eqs 

Let now verify that $\underline v_j(t)$ is a subsolution of \eqref{v} for $t$ large enough and $j\in\Z$ with $j-c_*t\ge t^\delta$.  We first note that, since $f\in\mathscr{C}^2([0,1])$, there exist $M>0$ and $s_0\in(0,1)$ such that $f(s)-f'(0)s\ge -Ms^2$ for $s\in [0,s_0)$. Gathering this with the linear extension of $f$ on $(-\infty,0)$, one deduces, for $t$ large enough and $j\in\Z$ with $j-c_*t\ge t^\delta$,
\begin{align*}
	\mathcal{R}_j(t;s)= &f'(0)s-e^{\lambda_*(j-c_*t)}f(e^{-\lambda_*(j-c_*t)}s)\\
	=& e^{\lambda_*(j-c_*t)}\left(f'(0)e^{-\lambda_*(j-c_*t)}s-f(e^{-\lambda_*(j-c_*t)}s)	\right),\\
	\le & M e^{-\lambda_*(j-c_*t)}s^2\le M e^{-\lambda_*t^\delta}s^2
\end{align*}
as long as $e^{-\lambda_*t^\delta}s\in (0,s_0)$ for $t$ large enough and $j\in\Z$ with $j-c_*t\ge t^\delta$, while $\mathcal{R}_j(t;s)=0$ when $s\le 0$. We shall require  $\underline v_j(t)$ to satisfy $e^{-\lambda_*t^\delta} \underline v_j(t)<s_0$ in  $\{j\in\Z~|~j-c_*t\ge t^\delta\}$ for $t$ large enough, which is possible due to the asymptotics of $w_j(t)$ as well as our assumptions on $\underline\xi(t)$ and on the parameters.


As proceeded in the previous section, we start with the region $\{j\in\Z~|~j-c_*t\ge \frac{3\pi}{2}(1+t)^\alpha\}$ where $\underline v_j(t)=\underline\xi(t) \widetilde{w}_j(t)$. From Lemma~\ref{lemAsymptLin} and \eqref{j_0}, for $t$ large enough, we have that $\frac{3\pi}{2}(1+t)^\alpha\leq \chi(t)$ such that we define two regions
\bqs
 R_1:= \left\{j\in\Z~|~ \frac{3\pi}{2}(1+t)^\alpha\leq j-c_*t\leq  \chi(t) \right\} \text{ and } R_2:= \left\{j\in\Z~|~ \chi(t)< j-c_*t\right\}.
 \eqs 
\paragraph{In region $R_1$.} By definition, for all $j\in R_1$, we have $\underline v_j(t)=\underline\xi(t) \widetilde{w}_j(t)=\underline\xi(t)w_j(t)>0$. Let us denote $j_1$ the rightmost integer in $R_1$. Then, for all $j\in R_1\backslash\left\{j_1\right\}$, we get
\begin{align*}
	\frac{\md}{\md t}\underline{v}_j(t)-\left(\mathscr{L}\underline{v}(t)\right)_j+\mathcal{R}_j(t; \underline v_j(t))&\le 	\frac{\md}{\md t}\underline{v}_j(t)-\left(\mathscr{L}\underline{v}(t)\right)_j+ M e^{-\lambda_*t^\delta}(\underline v_j(t))^2\\
	&=  \underline\xi'(t) w_j(t)+M e^{-\lambda_* t^\delta}\underline\xi^2(t)(w_j(t))^2\\
	&=  \left( \underline\xi'(t)+M\underline\xi^2(t) e^{-\lambda_*t^\delta} w_j(t)\right) w_j(t).
\end{align*}
 Taking into account the boundedness of $w_j$ from \eqref{bdd-w}, we see that 
 \begin{equation}
 	\label{bound-w}
 	e^{-\lambda_* t^\delta} w_j(t)\le C(1+t)^{-2},
 \end{equation}
 with some $C>0$, for all $t$ large enough and $j\in\Z$ with  $j-c_*t\ge t^\delta$. It is then sufficient for $\underline\xi(t)$ to solve 
 \begin{equation}
 	\label{ODE}
 	\underline\xi'(t)=-CM\underline\xi^2(t)(1+t)^{-2}~~~~~\text{for}~t>0.
 \end{equation}
From \eqref{bound-w} and \eqref{ODE}, one has that $	\frac{\md}{\md t}\underline{v}_j(t)-\left(\mathscr{L}\underline{v}(t)\right)_j+\mathcal{R}_j(t; v_j(t))\le 0$ for $t$ large enough and $j\in R_1\backslash\left\{j_1\right\}$. Now, at $j=j_1$, we first notice that
\bqs
\frac{\md}{\md t}\underline{v}_{j_1}(t)-\left(\mathscr{L}\underline{v}(t)\right)_{j_1}= \underline{\xi}'(t)w_{j_1}(t)+e^{-\lambda_*}\underline{\xi}(t)w_{j_1+1}(t) \leq \underline{\xi}'(t)w_{j_1}(t),
\eqs
since $w_{j_1+1}(t)\leq 0$ by definition of $\chi(t)$. As a consequence, we deduce that
\bqs
\frac{\md}{\md t}\underline{v}_{j_1}(t)-\left(\mathscr{L}\underline{v}(t)\right)_{j_1}+\mathcal{R}_{j_1}(t; \underline v_{j_1}(t))\leq 0,
\eqs
for $t$ large enough.
\paragraph{In region $R_2$.} For each $j\in R_2$, we first note that $\underline v_j(t)=0$. Then, we denote $j_2$ the leftmost integer in $R_2$ and we readily obtain that
 \bqs
\frac{\md}{\md t}\underline{v}_{j}(t)-\left(\mathscr{L}\underline{v}(t)\right)_{j}+\mathcal{R}_{j}(t; \underline v_{j}(t))= 0,
\eqs
for $t$ large enough and all $j\in R_2\backslash\left\{j_2\right\}$. Now, at $j=j_2$, we simply have
\bqs
\frac{\md}{\md t}\underline{v}_{j_2}(t)-\left(\mathscr{L}\underline{v}(t)\right)_{j_2}+\mathcal{R}_{j_2}(t; \underline v_{j_2}(t))= -e^{\lambda_*}\underline{\xi}(t)w_{j_2-1}(t) <0,
\eqs
for $t$ large enough.

 Next, we divide the remaining region into two zones: 
 \bqs
 R_3:= \left\{j\in\Z~|~ t^\delta\le j-c_*t\le  (1+t)^\beta\right\} \text{ and } R_4:= \left\{j\in\Z~|~ (1+t)^\beta\le j-c_*t\le  \frac{3\pi}{2}(1+t)^\alpha\right\},
 \eqs 
 and quickly check that $\underline v_j(t)$ is a sub-solution in these regions too, the computations being similar as in the previous cases.

\paragraph{In region $R_3$.}
Thanks to our choice of $\delta$, $\beta$ and $\alpha$, by requiring $\underline\xi(0)>0$ to be sufficiently small, we derive the following estimate
\begin{equation}
	\label{BC-lower}
	\underline v_j(t)\lesssim \frac{j-c_*t}{(1+t)^{\frac{3}{2}}}-\frac{1}{(1+t)^{\frac{3}{2}-\beta}}\le 0 ~~~~~\text{for}~t~\text{large enough},
\end{equation} 
 which then implies $\mathcal{R}_j(t;\underline v_j(t))=0$.
 Since $w_j(t)>0$ for $t$ large enough in $R_3$ and since $\underline\xi'(t)\le 0$ in $(0,+\infty)$, we have
 \begin{equation*}
 	\frac{\md}{\md t}(\underline \xi(t)w_j(t))-\left(\mathscr{L}[\underline{\xi}(t)w(t)]\right)_j=\underline \xi'(t)w_j(t) 
 	\le 0 ~~~~~\text{for}~t~\text{large enough}.
 \end{equation*}
  Moreover, it follows from a direct calculation that
\begin{equation*}
		\frac{\md}{\md t}p_j(t)-\left(\mathscr{L}p(t)\right)_j \sim \frac{1}{(1+t)^{\frac{3}{2}-\beta+2\alpha}}~~~~~\text{for}~t~\text{large enough}.
\end{equation*}
We eventually get, in  region $R_3$,
\begin{align*}
	\frac{\md}{\md t}\underline{v}_j(t)-\left(\mathscr{L}\underline{v}(t)\right)_j
\sim \underline \xi'(t)w_j(t)-\frac{1}{(1+t)^{\frac{3}{2}-\beta+2\alpha}}< 0~~~~~\text{for}~t~\text{large enough}.
\end{align*}

\paragraph{In region $R_4$.}
If $\underline v_j(t)\le 0$, that is, $j-c_*t\sim (1+t)^\beta$, then $\mathcal{R}_j(t;\underline v_j(t))=0$ and the analysis in the previous case shows that $\frac{\md}{\md t}\underline{v}_j(t)-\left(\mathscr{L}\underline{v}(t)\right)_j\le 0$ for $t$ large enough. Now, it is left to discuss the situation that $\underline v_j(t)>0$ for $t$ large enough in this area $R_4$. We note here that 
\begin{equation*}
 0<\underline v_j(t)\lesssim \frac{1}{(1+t)^{\frac{3}{2}-\alpha}}~~~\text{for}~t~\text{large enough}.
\end{equation*}
It is obvious to see that  $e^{-\lambda_*t^\delta}\frac{1}{(1+t)^{3-2\alpha}}\ll \frac{1}{(1+t)^{\frac{3}{2}-\beta+2\alpha}}$  for $t$ large enough, which will imply $e^{-\lambda_*t^\delta}(\underline v_j(t))^2\ll \frac{1}{(1+t)^{\frac{3}{2}-\beta+2\alpha}}$  for all $t$ large in this area, whence , for $t$ large enough in $R_4$,
\begin{align*}
		\frac{\md}{\md t}\underline{v}_j(t)-\left(\mathscr{L}\underline{v}(t)\right)_j+\mathcal{R}_j(t; \underline v_j(t))&\le 	\frac{\md}{\md t}\underline{v}_j(t)-\left(\mathscr{L}\underline{v}(t)\right)_j+ M e^{-\lambda_*t^\delta}(\underline v_j(t))^2\\
	&\sim  \frac{\underline\xi'(t)}{(1+t)^{\frac{3}{2}-\beta}}-\frac{1}{(1+t)^{\frac{3}{2}-\beta+2\alpha}}\le 0.
\end{align*}

\textbf{Conclusion}.
We require that $\underline\xi(t)$ is the solution of the ODE \eqref{ODE} starting from a sufficiently small initial datum $\underline\xi(0)=\underline\xi_0>0$. Then, $\underline\xi(t)$ is positive and uniformly bounded from above and below in $[0,+\infty)$ such that
$$0<\frac{\underline\xi_0}{1+\underline\xi_0CM}\le \underline\xi(t)\le \underline\xi_0<+\infty~~~~~\text{for}~t\ge 0.$$
There is $t_0>0$  large enough such that $e^{-\lambda_*t^\delta}\frac{1}{(1+t)^{3-2\alpha}}< \frac{1}{(1+t)^{\frac{3}{2}-\beta+2\alpha}}$ for $t\ge t_0$ and such that  $e^{-\lambda_*t^\delta}\underline v_j(t)<s_0$ and $e^{-\lambda_*t^\delta} w_j(t)\le C(1+t)^{-2}$ for all $t\ge t_0$ and $j\in\Z$ with $j\geq \zeta(t)$ where we have set $\zeta(t):=c_*t+ t^\delta$. Moreover,  it follows from above analysis that $\frac{\md}{\md t}\underline{v}_j(t)-\left(\mathscr{L}\underline{v}(t)\right)_j+\mathcal{R}_j(t;\underline v_j(t))\le 0$ for $t\ge t_0$ and $j\in\Z$ with $j\geq\zeta(t)$.
 Now let us prove that, at time $t=t_0$, there is $\kappa>0$ small enough such that $v_j(t_0+1)\geq \kappa\underline v_j(t_0)$ for $j\in\Z$ with $j\geq \zeta(t_0)$. As a matter of fact, we notice that the sequence function $z_j(t):=e^{-f'(0)t}w_j(t)$  is the solution of
\begin{equation}
	\label{linear eqn}
	\frac{\md}{\md t}z_j(t)-(\mathscr{L}z(t))_j+f'(0)z_j(t)= 0~~~~t>0,~j\in\Z,
\end{equation}
with compactly supported initial value $z_j(0)=w_j(0)$, while $v_j(t)$ satisfies \eqref{linear eqn} with ``='' replaced by ``$\ge$''. Moreover, there exists $\kappa_1>0$ small enough such that  $$v_j(1)=e^{\lambda_*(j-c_*)}u_j(1)>\kappa_1w_j(0)=\kappa_1 z_j(0)~~~\text{for}~j\in \Z.$$
The comparison principle immediately yields that $v_j(t+1)>\kappa_1 e^{-f'(0)t}w_j(t)$ for all $t\ge 0$ and $j\in\Z$. In particular, we have $v_j(t_0+1)>\kappa_1 e^{-f'(0)t_0}w_j(t_0)$ for $j\in\Z$ with $j\geq\zeta(t_0)-1$. We can then choose $\kappa>0$ small enough such that $\kappa_1 e^{-f'(0)t_0}w_j(t_0)>\kappa\underline\xi_0 w_j(t_0)$ for $j\in\Z$ with $j\geq\zeta(t_0)-1$ and such that particularly $\kappa_1 e^{-f'(0)t_0}w_j(t_0)>\kappa\left(\underline\xi_0 w_j(t_0)+(1+t_0)^{-3/2+\beta}\right)$ for $j\in\Z$ with $c_*t_0+\frac{\pi}{2}(1+t_0)^\alpha\le j \le c_*t_0+\frac{3\pi}{2}(1+t_0)^\alpha$. This implies that $$v_j(t_0+1)>\kappa\underline v_j(t_0)~~~\text{for}~j\in\Z ~\text{with}~j\geq\zeta(t_0)-1.$$
For $j\in[\zeta(t)-1,\zeta(t))$, we have, up to increasing $t_0$ if necessary, $\kappa\underline v_j(t)< 0$ for all $t\ge t_0$ by \eqref{BC-lower} and  $v_j(t+1)>e^{\lambda_*t^\delta}u_j(t+1)>0$  for all $t\ge t_0$, which implies that $v_j(t+1)>\kappa\underline v_j(t)$ for all $t\ge t_0$ and $j\in\Z$ with $j\in[\zeta(t)-1,\zeta(t))$.
 Therefore, $\kappa\underline v_j(t)$ is a subsolution of \eqref{v} for $t\ge t_0$ and $j\in\Z$ with $ j\ge \zeta(t)$.  The comparison principle Proposition~\ref{cp} yields that 
 \begin{equation}
 	\label{512-conclusion}
  v_j(t)\geq \kappa \underline v_j(t-1)~~~~\text{for}~t\ge t_0+1,~j\in\Z~\text{with}~ j-c_*t\geq t^\delta.
 \end{equation}

\subsection{Proof of Theorem \ref{thmlog}}
Using the upper and lower barriers of $v_j$ constructed in preceding sections as  key ingredients, we are now in position to prove Theorem \ref{thmlog}, which gives a refined estimate, up to $O(1)$ precision, of the level sets for initially localized solutions $u_j(t)$  to \eqref{KPP} for large times.

\begin{proof}[Proof of Theorem \ref{thmlog}]
 The proof is based on the comparison between $v_j(t)$ and a variant of the shifted minimal traveling front  with logarithmic correction  for all large times in the moving zone $|j-c_*t|\le t^\eta$ for some small $\eta$.  Define 
 \begin{equation*}
 	V_j(t)=t^{\frac{3}{2}}v_j(t)~~~\text{for}~t\ge 1,~j\in\Z.
 \end{equation*} 
Then, $V_j(t)$ satisfies
 \begin{equation*}
 	\begin{aligned}
 		\begin{cases}
 			\frac{\md}{\md t} V_j(t) =e^{\lambda_*}\! \left(V_{j-1}(t)\!-\!2V_j(t)\!+\!V_{j+1}(t)\right)\!-\!c_*(V_{j+1}(t)\!-\!V_j(t))\! +\frac{3}{2t}V_j(t)\!-\!\widehat{\mathcal{R}}_j(t;V_j(t)), &t>1,~j\in\Z,\\
 			V_j(1)  =v_j(1), & j\in\Z,
 		\end{cases}
 	\end{aligned}
 \end{equation*}
 with nonnegative term $\widehat{\mathcal{R}}_j(t;s)$ given explicitly by 
 \begin{equation}
 	\label{hat R-nonnegative}
 	\widehat{\mathcal{R}}_j(t;s):=f'(0)s-e^{\lambda_*(j-c_*t+\frac{3}{2\lambda_*}\ln t)}f(e^{-\lambda_*(j-c_*t+\frac{3}{2\lambda_*}\ln t)}s), ~~~t\ge 1,~j\in\Z, ~s\in\R.
 \end{equation} 
  Let $\delta$, $\beta$ and $\alpha$ be chosen as in \eqref{parameters}, which also implies that $\beta<1/4$. Fix now
 \begin{equation}
 \label{eta}
 \eta=\beta+\varepsilon<\alpha
 \end{equation} 
  for some $\varepsilon>0$ small enough.

\paragraph{Step 1: Upper bound.}    We notice from \eqref{511-conclusion} that  $t^{\frac{3}{2}}\overline v_j(t+T_0)\geq V_j(t)$ for $t\ge 0$ and $j\in\Z$ with $j-c_*t\geq-t^\delta$, with $\overline v_j(t)$ given in \eqref{overline v}. This implies that there exists some constant $C>0$ such that, for $t$ large enough,
\bqs
V_j(t) \leq t^{\frac{3}{2}}\overline v_j(t+T_0) \leq C t^\eta~~~~~~~\text{for}~ \xi(t)<j\leq \xi(t)+1,
\eqs
where we have set $\xi(t):=c_*t+t^\eta -\frac{3}{2\lambda_*}\ln t$.
  Define now the sequence $\psi_j(t)$  by
\begin{equation*}
\psi_j(t)=e^{\lambda_*(j-c_*t +\frac{3}{2\lambda_*}\ln t})U_{c_*}\left(j-c_*t+\frac{3}{2\lambda_*}\ln t+b\right)~~~~\text{for}~ \zeta(t)-1\leq j \leq \xi(t)+1,
\end{equation*}
for $t$ large enough, with $\zeta(t):=c_*t-t^\eta -\frac{3}{2\lambda_*}\ln t$ and where $b\in\R$ is fixed such that $\psi_j(t) \geq t^{\frac{3}{2}}\overline v_j(t+T_0)$ for $t$ large enough and $\xi(t)<j\leq \xi(t)+1$. This is always possible thanks to our normalization for the minimal traveling front $U_{c_*}$ in \eqref{normalization} which ensures that, for $t$ large enough,
\bqs
\psi_j(t) \sim e^{-\lambda_*b}t^{\eta}~~~~~~~\text{for}~\xi(t)<j\leq \xi(t)+1.
\eqs
 Substituting $\psi_j(t)$ into the equation of $V_j(t)$ leads to
\begin{align*}
\left|\frac{\md}{\md t}\psi_j(t)-e^{\lambda_*}\right.&\left.\left(\psi_{j-1}(t)-2\psi_j(t)+\psi_{j+1}(t)\right)+c_*\left(\psi_{j+1}(t)-\psi_j(t)\right) -\frac{3}{2t}\psi_j(t)+\widehat{\mathcal{R}}_j(t;\psi_j(t))\right|\\
&=\frac{3}{2\lambda_* t} e^{\lambda_*(j-c_*t+\frac{3}{2\lambda_*}\ln t)}\left|U'_{c_*}\left(j-c_*t+\frac{3}{2\lambda_*}\ln t+b\right)\right|\lesssim t^{-(1-\eta)}\,,
\end{align*}
for $t$ large enough and $j\in\Z$ with $\zeta(t)\leq j \leq \xi(t)$. Now, set $s_j(t):=\left(V_j(t)-\psi_j(t)\right)^+$, then $s_j(t)$ satisfies
\begin{equation}\label{s+}
	\begin{aligned}
		\begin{cases}
			\!	\frac{\md}{\md t}s_j(t)\! -\! \left(\mathscr{L}s(t)\right)_j\! -\frac{3}{2t}s_j(t) \!+\!\mathcal{Q}_j(t;\!s_j(t)) \!\lesssim \!\frac{1}{t^{1-\eta}},\!\! \!\!\!\!&~~~\zeta(t)\leq j \leq \xi(t),\\
			\!s_j(t)  = O\big(t^{\frac{3}{2}}e^{-\lambda_*t^\eta}\big), ~~~~~~  &~~~\zeta(t)-1\leq j <\zeta(t),\\
			\!s_j(t)  =0, ~~~~~~~&~~~\xi(t)<j\leq \xi(t)+1,
		\end{cases}
	\end{aligned}
\end{equation}
for $t$ large enough.
Here, $\mathcal{Q}_j(t;s_j(t))=0$ if $s_j(t)=0$, and otherwise,
\begin{align*}
\mathcal{Q}_j(t;s_j(t))&=\widehat{\mathcal{R}}_j(t;V_j(t))-\widehat{\mathcal{R}}_j(t;\psi_j(t))\\
&=f'(0)s_j(t)-e^{\lambda_*(j-c_*t+\frac{3}{2\lambda_*}\ln t)}\left(f(e^{-\lambda_*(j-c_*t+\frac{3}{2\lambda_*}\ln t)}V_j(t))-f(e^{-\lambda_*(j-c_*t+\frac{3}{2\lambda_*}\ln t)}\psi_j(t))\right)\\
&=f'(0)s_j(t)-b_j(t)s_j(t)\ge0, 
\end{align*} 
in which $b_j$ is a continuous function, and all $b_j$'s are bounded in $\ell^\infty$ norm by $f'(0)$
since $0<f(s)\le f'(0)s$ for $s\in(0,1)$. We claim that, there holds
\begin{equation}
\label{claim1}
\lim\limits_{t\to+\infty}\sup_{j\in\Z,~|j-c_*t +\frac{3}{2\lambda_*}\ln t|\le t^\eta} s_j(t)=0.
\end{equation}
We use a comparison argument to verify this. Define 
\begin{equation*}
~~~\overline s_j(t)=\frac{1}{t^\lambda}\cos\left(\frac{j-c_*t}{t^\gamma}\right)~~~~~~~\text{for}~t~\text{large enough and}~\zeta(t)-1\leq j \leq \xi(t)+1,
\end{equation*}
 with $0<\eta<1/4<\gamma<1/3$  such that  $2\gamma+\eta<1$ and with $0<\lambda<1-2\gamma-\eta$.  We notice that $\overline s_j(t)\sim t^{-\lambda}\gg t^{\frac{3}{2}}e^{-\lambda_*t^\eta}$ for $t$ large enough and $j\in\Z$ with $\zeta(t)-1\leq j \leq \xi(t)+1$. Through a direct computation, one also gets that, for $t$ large enough and $j\in\Z$ with $\zeta(t)\leq j \leq \xi(t)$,
\begin{equation*}
\frac{\md}{\md t}\overline s_j(t) 	 - \left(\mathscr{L}\overline s(t)\right)_j  -\frac{3}{2t}\overline s_j(t)\sim \frac{1}{t^{2\gamma+\lambda}}\gg \frac{1}{t^{1-\eta}}.
\end{equation*} 
Since $\mathcal{Q}_j(t;\overline s_j(t))$ is nonnegative, $\overline s_j(t)$ is a supersolution of \eqref{s+} for $t$ large enough and $j\in\Z$ with $\zeta(t)\leq j \leq \xi(t)$.
Our claim \eqref{claim1} is then reached by noticing that
$$\lim\limits_{t\to+\infty}\sup_{j\in\Z,~|j-c_*t +\frac{3}{2\lambda_*}\ln t|\le t^\eta} \overline s_j(t)=0.$$
Consequently, one gets $V_j(t)\le \psi_j(t)+o(1)$ uniformly in $j\in\Z$ with $|j-c_*t +\frac{3}{2\lambda_*}\ln t|\le t^\eta$ as $t\to+\infty$. This implies
\begin{equation}
\label{u-upper bound}
u_j(t)\le U_{c_*}\left(j-c_*t+\frac{3}{2\lambda_*}\ln t+b\right)+o(1)e^{-\lambda_* (j-c_*t+\frac{3}{2\lambda_*}\ln t)},
\end{equation}
uniformly in $j\in\Z$ with $1\le j-c_*t  +\frac{3}{2\lambda_*}\ln t \le t^\eta$ as $t\to+\infty$. 

\paragraph{Step 2: Lower bound.} The proof of this part is similar to Step 1. We sketch it for the sake of completeness.  By virtue of \eqref{512-conclusion}, we deduce that  $V_j(t)\geq \kappa t^{\frac{3}{2}}\underline v_j(t-1)$  for $t\ge t_0+1$ and $j\in\Z$ with $j-c_*t\ge t^\delta$, where $\underline v_j(t)$ is given in \eqref{underline v}.    We then infer that there exists some constant $C>0$ such that for $t$ large enough,
\bqs
V_j(t) \geq \kappa t^{\frac{3}{2}}\underline v_j(t-1) \geq C t^\eta~~~~~~~\text{for}~\xi(t)<j\leq \xi(t)+1.
\eqs
Define the sequence $\phi_j(t)$  by
\begin{equation*}
\phi_j(t)=e^{\lambda_*(j-c_*t+\frac{3}{2\lambda_*}\ln t)}U_{c_*}\left(j-c_*t+\frac{3}{2\lambda_*}\ln t+a\right).
\end{equation*} 
for $t$ large enough and $\zeta(t)-1\leq j \leq \xi(t)+1$.
Here, we fix $a\in\R$  such that 
	$\phi_j(t)\leq \kappa t^{\frac{3}{2}}\underline v_j(t-1)$ for $t$ large enough and $\xi(t)<j\leq \xi(t)+1$. It is also noticed that necessarily $a>b$. 
Substituting $\phi_j(t)$ into the equation of $V_j(t)$ yields
\begin{equation*}
\left|\frac{\md}{\md t}\phi_j(t)-\left(\mathscr{L} \phi(t)\right)_j  -\frac{3}{2t}\phi_j(t) +\widehat{\mathcal{R}}_j(t;\phi_j(t))\right|\lesssim t^{-(1-\eta)}
\end{equation*}
for $t$ large enough and $j\in\Z$ with $\zeta(t)\leq j \leq \xi(t)$.
Set $z_j(t):=\left(V_j(t)-\phi_j(t)\right)^-$, then $z_j(t)$ satisfies
\begin{equation*}
	\begin{aligned}
		\begin{cases}
			\frac{\md}{\md t}z_j(t) - \!\left(\mathscr{L}z(t)\right)_j  -\frac{3}{2t}z_j(t) \!+\!\mathcal{H}_j(t;s_j(t))\lesssim \frac{1}{t^{1-\eta}},\!\!\!&~~~\zeta(t)\leq j \leq \xi(t),\\
			z_j(t)  = O\big(t^{\frac{3}{2}}e^{-\lambda_*t^\eta}\big), ~~~~~~  &~~~\zeta(t)-1\leq j < \zeta(t),\\
			z_j(t)  =0, ~~~~~~~&~~~\xi(t)< j \leq \xi(t)+1,
		\end{cases}
	\end{aligned}
\end{equation*}
for $t$ large enough.
Here, $\mathcal{H}_j(t;z_j(t))=0$ when $z_j(t)=0$; otherwise,
\begin{align*}
\mathcal{H}_j(t;z_j(t))&=\widehat{\mathcal{R}}_j(t;V_j(t))-\widehat{\mathcal{R}}_j(t;\phi_j(t))\\
&=f'(0)z_j(t)-e^{\lambda_*(j-c_*t +\frac{3}{2\lambda_*}\ln t)}\left(f(e^{-\lambda_*(j-c_*t+\frac{3}{2\lambda_*}\ln t)}V_j(t))-f(e^{-\lambda_*(j-c_*t+\frac{3}{2\lambda_*}\ln t)}\phi_j(t))\right)\\
&=f'(0)z_j(t)-d_j(t)z_j(t)\ge0, 
\end{align*} 
in which $d_j$ is a continuous function, and all $d_j$'s are bounded in $\ell^\infty$ norm by $f'(0)$
since $0<f(s)\le f'(0)s$ for $s\in(0,1)$. Following the proof of \eqref{claim1} in Step 1, one can show that
\begin{equation*}
\lim\limits_{t\to+\infty}\sup_{j\in\Z,~|j-c_*t +\frac{3}{2\lambda_*}\ln t|\le t^\eta} z_j(t)=0.
\end{equation*}
 It then follows that $V_j(t)\ge \phi_j(t)+o(1)$ uniformly in $j\in\Z$ with $|j-c_*t  +\frac{3}{2\lambda_*}\ln t|\le t^\eta$ as $t\to+\infty$, whence
\begin{equation}
\label{u-lower bound}
u_j(t)\ge U_{c_*}\left(j-c_*t+\frac{3}{2\lambda_*}\ln t+a\right)+o(1) e^{-\lambda_* (j-c_*t+\frac{3}{2\lambda_*}\ln t)},
\end{equation}
uniformly in  $j\in\Z$ with $1\le j-c_*t  +\frac{3}{2\lambda_*}\ln t\le t^\eta$ as $t\to+\infty$.

\paragraph{Step 3: Conclusion.} Combining \eqref{u-upper bound} and \eqref{u-lower bound}, 
along with the asymptotics of $U_{c_*}$, it follows that for any small $\varepsilon>0$, there exists $T>0$ sufficiently large and   $\hat a, \hat b\in\R$ satisfying $-\infty<\hat b<b<a<\hat a<+\infty$ such that
\begin{equation}\label{5-conclusion}
 	(1-\varepsilon)U_{c_*}\left(j-c_*t+\frac{3}{2\lambda_*}\ln t+\hat a\right)\le u_j(t)\le 	(1+\varepsilon) U_{c_*}\left(j-c_*t+\frac{3}{2\lambda_*}\ln t+\hat b\right)
\end{equation}
uniformly in  $j\in\Z$ with $1\le j-c_*t +\frac{3}{2\lambda_*}\ln t\le t^\eta$ for $t\ge T$. We therefore conclude that, for every $m\in(0,1)$, there exists $C\in\R$ such that 
$$ j_m(t)\subset\left\{j\in\Z~|~c_*t-\frac{3}{2\lambda_*}\ln t-C\le j\le  c_*t-\frac{3}{2\lambda_*}\ln t+C\right\}~~~\text{for all}~t~\text{large enough}.$$
The proof of Theorem \ref{thmlog} is thereby complete.
\end{proof}

\section{Convergence to the logarithmically shifted critical pulled front}\label{secconv}

This section is devoted to the proof of Theorem \ref{thmconv}. As an immediate conclusion from Theorem \ref{thmlog}, we know that the transition zone of $u_j(t)$ between  the two equilibria 0 and 1 is located around the position $c_*t-\frac{3}{2\lambda_*}\ln t$ for $t$ large enough. Also, it is crucial to note that the proof of Theorem \ref{thmlog} shows in particular that $u_j(t)$ is indeed sandwiched between two finitely shifted minimal traveling fronts with logarithmic delay as $t\to+\infty$ in a well chosen moving zone. Therefore, the Liouville type result, Proposition 3.3 of \cite{GH2006}, can be directly applied and we then follow the strategy proposed in the continuous case \cite{HNRR13} to accomplish our proof. Finally, throughout the section, for $x\in\R$ the integer part of $x$ will be denoted as $\lfloor x \rfloor \in \Z$.


\begin{proof}[Proof of Theorem \ref{thmconv}.] 
	Let $C>0$ be such that $-C<b<a<C$. Assume by contradiction that \eqref{convfront} is not true, then there exist $\varepsilon>0$ and a sequence $(t_n)_{n\in\N}$ such that $t_n\to+\infty$ as $n\to+\infty$ and 
	\begin{equation*}
	\min_{|\zeta|\le C}\sup_{j\in\Z,~j\ge 0}\left|u_j(t_n)-U_{c_*}\left(j-c_*t_n+\frac{3}{2\lambda_*}\ln t_n+\zeta\right)\right|\ge \varepsilon
	\end{equation*}
	for all $n\in\N$. Since  $U_{c_*}(-\infty)=1$ and $U_{c_*}(+\infty)=0$, together with properties
	\eqref{spreading1refined} and \eqref{spreading2refined}, there exists $L>0$ such that 
	\begin{equation}
	\label{contradiction}
	\min_{|\zeta|\le C}\max_{j\in\Z,~|j|\le L}\left|u_{j+j_n}(t_n)-U_{c_*}(j+\zeta)\right|\ge \varepsilon~~~\text{with}~~j_n=\left\lfloor c_*t_n-\frac{3}{2\lambda_*}\ln t_n\right\rfloor
	\end{equation}
	for all $n\in\N$. Up to extraction of a subsequence, the functions $(u_n)_j(t)=u_{j+j_n}(t+t_n)$ with $j_n=\left\lfloor c_*t_n-\frac{3}{2\lambda_*}\ln t_n\right\rfloor$ converge, in $C^1_{loc}(\R)$ for each  $j\in\Z$, to a solution $(u_\infty)_j(t)$ of 
	\begin{equation*}
	\frac{\md}{\md t}(u_\infty)_j(t)=(u_\infty)_{j-1}(t)-2(u_\infty)_j(t)+(u_\infty)_{j+1}(t)+f((u_\infty)_j(t))~~~\text{for}~(t,j)\in\R\times\Z,
	\end{equation*}
	such that $0\le (u_\infty)_j(t)\le 1$ in $\R\times\Z$. Furthermore, \eqref{spreading1refined} and \eqref{spreading2refined} imply that 
	\begin{equation}
	\lim_{y\to+\infty} \inf_{(t,j)\in\R\times\Z,\atop j\le c_*t-y} (u_\infty)_j(t)=1,~~~	\lim_{y\to+\infty} \sup_{(t,j)\in\R\times\Z,\atop j\geq c_*t+y} (u_\infty)_j(t)=0.
	\end{equation}
	On the other hand, fix $t\in\R$ and $j\in\Z$  with $j\ge 1$, we then have $j+\frac{3}{2\lambda_*}\ln \big((t+t_n)/t_n\big)\ge 1$ for $n$ large enough. Moreover, notice also that $t+t_n\ge 1$ and $1\le j+ \frac{3}{2\lambda_*}\ln \big((t+t_n)/t_n\big)\le (t+t_n)^\eta$  for $n$ large enough, with $\eta$ chosen in \eqref{eta}. Hence, it follows from \eqref{5-conclusion} that, for any small $\varepsilon>0$,
	\begin{equation*}
	(1-\varepsilon)U_{c_*}\Big(j+\frac{3}{2\lambda_*}\ln \big((t+t_n)/t_n\big)+\hat a\Big)\le (u_n)_{j+\lfloor c_*t\rfloor}(t)\le (1+\varepsilon) U_{c_*}\Big(j+\frac{3}{2\lambda_*}\ln \big((t+t_n)/t_n\big)+\hat b\Big)
	\end{equation*}
	for all $n$ large enough. Therefore,
	\begin{equation*}
	U_{c_*}(j+\hat a)\le (u_\infty)_{j+\lfloor c_*t\rfloor}(t)\le U_{c_*}(j+\hat b)~~~~~\text{for}~t\in\R~\text{and}~j\ge 1.
	\end{equation*} 
	Applying the Liouville-type result \cite[Proposition 3.3]{GH2006} by taking any positive integer $N$ as the period, one then gets the existence of $z\in\R$ such that $\hat b\le z\le \hat a$ and $$(u_\infty)_j(t)=U_{c_*}(j-c_*t+z)~~~~\text{for}~ (t,j)\in\R\times\Z.$$
	Since $u_n$ converges, up to extraction of a subsequence, to $u_\infty$ locally uniformly in $\R\times\Z$, it follows in particular that $(u_n)_j(0)-U_{c_*}(j+z)\to 0$ uniformly in $j\in\Z$ with $|j|\le L$, that is,
	\begin{equation*}
\max_{j\in\Z,~|j|\le L}\left|u_{j+j_n}(t_n)-U_{c_*}(j+z)\right|\to 0~~~\text{with}~~j_n=\left\lfloor c_*t_n-\frac{3}{2\lambda_*}\ln t_n\right\rfloor.
\end{equation*}
	Notice that $z\in [\hat b,\hat a]\subset[-C,C]$, one then gets a contradiction with \eqref{contradiction}. This completes the proof of \eqref{convfront}.
	
	Fix now any $m\in(0,1)$ and let $(t_n)_{n\in\N}$ and $(j_n)_{n\in\N}$ be sequences of positive real numbers and of positive integers, respectively, such that $t_n\to+\infty$ as $n\to+\infty$ and $u_{j_n}(t_n)=m$ for all $n\in\N$.
	Set 
	$$\xi_n=j_n-\left\lfloor c_*t_n-\frac{3}{2\lambda_*}\ln t_n\right\rfloor,$$
	Theorem \ref{thmlog} implies that the sequence $(\xi_n)_{n\in\N}$ is bounded and then, up to extraction of a subsequence, $\xi_n\to\xi_0\in\Z$ as $n\to+\infty$. From the argument of \eqref{convfront}, the functions 
	\begin{equation*}
	(v_n)_j(t)=u_{j+j_n}(t+t_n)~~~\text{with}~j_n=\xi_n+\left\lfloor c_*t_n-\frac{3}{2\lambda_*}\ln t_n\right\rfloor
	\end{equation*}
	converge, up to another subsequence, locally uniformly in $t$ for all $j\in\Z$, to $(v_\infty)_j(t)=U_{c_*}(j-c_*t+\xi_0+z_0)$ for some $z_0\in[-C,C]$. Since $(v_n)_{j=0}(0)=u_{j_n}(t_n)=m$, one then has $\xi_0+z_0=U^{-1}_{c_*}(m)$. Therefore, the limit $v_\infty$ is uniquely determined and the whole sequence $(v_n)_{n\in\N}$ then converges to the traveling wave $U_{c_*}\left(j-c_*t+U^{-1}_{c_*}(m)\right)$. The conclusion of Theorem~\ref{thmconv} follows.
\end{proof}

\section*{Acknowledgements} 
We are indebted to the referees for an extremely careful reading of an earlier version of the manuscript, which has greatly helped us to improve the presentation. G.F. acknowledges support from the ANR via the project Indyana under grant agreement ANR- 21- CE40-0008 and an ANITI (Artificial and Natural Intelligence Toulouse Institute) Research Chair.  This work was partially supported by the French National Institute of Mathematical Sciences and their Interactions (Insmi) via the platform MODCOV19 and by Labex CIMI under grant agreement ANR-11-LABX-0040.

 \appendix
\section{Maximum and comparison principles}
We recall that for a given sequence $\mathbf{z}=(z_j)_{j\in\Z}$, the linear operator $\mathscr{L}$ acts on $\mathbf{z}$ as
\bqs
(\mathscr{L}\mathbf{z})_j=e^\lambda_*\big(z_{j-1}-2z_j+z_{j+1}\big)-c_*(z_{j+1}-z_j),
\eqs
where the couple $(c_*,\lambda_*)$ is solution to \eqref{systemclamb} and satisfies in particular $c_*=e^{\lambda_*}-e^{-\lambda_*}$.
\begin{prop}[Maximum principle with a single moving boundary] \label{mp}
	Assume that $\mathbf{z}(t)=(z_j(t))_{j\in\Z}$ satisfies 
	\begin{equation*}
		\begin{aligned}
			\begin{cases}
				\frac{\md}{\md t} z_j(t)-(\mathscr{L}\mathbf{z}(t))_j\le 0,&t>0,~j\ge \zeta(t),\\
				z_j(0)\le 0, &j\ge \zeta(0)-1,\\
				z_j(t)\le 0, &t>0,~j\in[\zeta(t)-1,\zeta(t)).
			\end{cases}
		\end{aligned}
	\end{equation*}
	with  $\zeta: \R_+\to\R$ being a continuous function such that $\zeta(t)\ge \zeta(0)-1$ for all $t>0$.  
	Then  $z_j(t)\le 0$ for $t>0$ and $j\ge \zeta(t)$. 
\end{prop}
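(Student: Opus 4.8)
The plan is to argue by contradiction, penalizing $\mathbf z$ by an exponential space–time barrier and then performing a discrete maximum–point analysis that carefully accounts for the moving boundary. First I would record the algebraic structure of $\mathscr{L}$: since $(c_*,\lambda_*)$ solves \eqref{systemclamb} one has $c_*=e^{\lambda_*}-e^{-\lambda_*}$, so
\bqs
(\mathscr{L}\mathbf z)_j=e^{\lambda_*}z_{j-1}-2\cosh(\lambda_*)z_j+e^{-\lambda_*}z_{j+1},
\eqs
with both off-diagonal coefficients $e^{\pm\lambda_*}$ positive and $e^{\lambda_*}+e^{-\lambda_*}=2\cosh(\lambda_*)$; this positivity is exactly what drives the maximum principle. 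Then I would build the barrier by setting $\psi_j:=e^{\beta j}+e^{-\beta j}$ for $\beta>0$ small. A direct computation gives $(\mathscr{L}\psi)_j=\mu(\beta)e^{\beta j}+\mu(-\beta)e^{-\beta j}$ with $\mu(s)=e^{\lambda_*}(e^{s}+e^{-s}-2)-c_*(e^{s}-1)$, and since $\mu(0)=0$, $\mu'(0)=-c_*<0$, for $\beta$ small one has $\mu(\beta)<0$ and $\mu(-\beta)>0$, hence $(\mathscr{L}\psi)_j\le\mu(-\beta)\psi_j=:K\psi_j$. Consequently, for any $\epsilon>0$ and any fixed horizon $T>0$, the sequence $\Phi_j(t):=\epsilon\,e^{(K+1)t}\psi_j$ is a strict supersolution on the whole lattice: $\frac{\md}{\md t}\Phi_j-(\mathscr{L}\Phi)_j\ge\epsilon\,e^{(K+1)t}\psi_j>0$.

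Next I would study $z_j^\epsilon(t):=z_j(t)-\Phi_j(t)$ on $\{(t,j):0\le t\le T,\ j\ge\zeta(t)\}$. Using that $\mathbf z$ is bounded on this region (which holds in all our applications, e.g.\ Lemma~\ref{lemDiffusive}) while $\psi_j\to+\infty$ as $j\to+\infty$, and that $\zeta$ is continuous hence bounded below on $[0,T]$, one gets $z_j^\epsilon(t)\to-\infty$ as $j\to+\infty$ uniformly for $t\in[0,T]$. Therefore the supremum of $z^\epsilon$ over the region is either $\le0$ — in which case letting $\epsilon\to0$ concludes — or it equals a positive number $M$ attained at some $(t_0,j_0)$ with $j_0\ge\zeta(t_0)$. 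Suppose the latter. If $t_0=0$ then $z_{j_0}(0)=M+\Phi_{j_0}(0)>0$ with $j_0\ge\zeta(0)\ge\zeta(0)-1$, contradicting the initial hypothesis. If $t_0>0$, maximality in time yields $\frac{\md}{\md t}z_{j_0}^\epsilon(t_0)\ge0$; moreover $j_0+1>\zeta(t_0)$ lies in the region while $j_0-1$ lies either in the region or in the strip $[\zeta(t_0)-1,\zeta(t_0))$, so in all cases $z_{j_0\pm1}^\epsilon(t_0)\le M$. Hence
\bqs
(\mathscr{L}z^\epsilon)_{j_0}(t_0)\le e^{\lambda_*}M-2\cosh(\lambda_*)M+e^{-\lambda_*}M=0,
\eqs
so that $\frac{\md}{\md t}z_{j_0}^\epsilon(t_0)-(\mathscr{L}z^\epsilon)_{j_0}(t_0)\ge0$. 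But combining the subsolution inequality for $\mathbf z$ with the strict supersolution property of $\Phi$ gives $\frac{\md}{\md t}z_{j_0}^\epsilon(t_0)-(\mathscr{L}z^\epsilon)_{j_0}(t_0)<0$, a contradiction. Thus $z_j(t)\le\Phi_j(t)$ on the region; letting $\epsilon\to0$ yields $z_j(t)\le0$ for $0\le t\le T$ and $j\ge\zeta(t)$, and since $T>0$ is arbitrary the proposition follows.

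The step I expect to be the most delicate is the bookkeeping at the left moving boundary: one must verify that the discrete stencil $\{j_0-1,j_0,j_0+1\}$ at the maximizing point is always controlled — either through the interior inequality, or through the boundary-strip condition $z_j(t)\le0$ on $[\zeta(t)-1,\zeta(t))$ — and that the interplay between the hypothesis $\zeta(t)\ge\zeta(0)-1$ and the initial bound on $\{j\ge\zeta(0)-1\}$ leaves no integer able to enter the region $\{j\ge\zeta(t)\}$ without its value having been controlled beforehand. The exponential weight $\psi_j$ serves only to compactify the problem in the $j$-variable and is otherwise routine; likewise, the subsolution variant $-\Phi_j$ gives the opposite inequality by linearity, which is how one obtains the two-sided bound in Lemma~\ref{lemDiffusive}.
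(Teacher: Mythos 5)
Your argument is correct in substance but follows a genuinely different route from the paper. The paper rewrites the inequality as $\frac{\md}{\md t}z_j+2\cosh(\lambda_*)z_j\le e^{\lambda_*}z^+_{j-1}+e^{-\lambda_*}z^+_{j+1}$, integrates by Duhamel, uses the boundary strip to absorb the $j-1$ term into $\sup_{j\ge\zeta(t)}z^+_j$, and concludes by Gronwall that this supremum vanishes. You instead penalize by an exponential barrier $\Phi_j(t)=\epsilon e^{(K+1)t}(e^{\beta j}+e^{-\beta j})$ and run a discrete maximum-point analysis; your computations of $(\mathscr{L}\mathbf z)_j=e^{\lambda_*}z_{j-1}-2\cosh(\lambda_*)z_j+e^{-\lambda_*}z_{j+1}$, of $\mu(\pm\beta)$, and of the sign of $(\mathscr{L}z^\epsilon)_{j_0}(t_0)$ at the maximum are all right, as is the treatment of the stencil point $j_0-1$ via the strip condition. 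What each approach buys: the Gronwall route is shorter and avoids any barrier, but it tacitly requires $\sup_{j\ge\zeta(s)}z^+_j(s)$ to be finite and (as written) integrates the inequality in $s$ for fixed $j$, which implicitly uses that $j$ stays in the region for all intermediate times; your route localizes everything at a single space-time point and so handles a non-monotone $\zeta$ more transparently, at the cost of an explicit sub-exponential growth assumption on $\mathbf z$.

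Two caveats. First, your boundedness assumption on $\mathbf z$ is not in the statement of the proposition; it is harmless here because the paper's own proof needs the analogous finiteness of $\sup_j z_j^+$ and because it holds in every application, but you should state it as a standing hypothesis rather than as an aside. Second, the step you flag as delicate is indeed the only real gap: when $t_0>0$ you invoke $\frac{\md}{\md t}z^\epsilon_{j_0}(t_0)\ge0$, which presupposes that $j_0\ge\zeta(t)$ on a left-neighborhood of $t_0$. The missing verification is short: if instead there are times $t_n\uparrow t_0$ with $\zeta(t_n)>j_0$, then continuity of $\zeta$ and $\zeta(t_0)\le j_0$ force $\zeta(t_n)\in(j_0,j_0+1)$ eventually, so $j_0\in[\zeta(t_n)-1,\zeta(t_n))$ and the strip condition gives $z_{j_0}(t_n)\le0$, hence $z_{j_0}(t_0)\le0$ by continuity, contradicting $z^\epsilon_{j_0}(t_0)=M>0$. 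With that paragraph added (and the case $t_0=0$ handled by the initial condition together with $\zeta(t)\ge\zeta(0)-1$), your proof is complete.
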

\begin{proof}[Proof of Proposition \ref{mp}]
 From the equation, we derive that (we obey the convention that $a^+=\max(0,a)$)
 \begin{equation*}
 		\frac{\md}{\md t} z_j(t)+2\cosh(\lambda_*)z_j(t)\le e^{\lambda_*}z^+_{j-1}(t)+e^{-\lambda_*}z^+_{j+1}(t)~~~~~\text{for}~t>0,~j\ge \zeta(t).
 \end{equation*}
This further implies that
\begin{equation*}
	z_j(t)\le z_j(0)e^{-2\cosh(\lambda_*)t}+\int_0^t e^{-2\cosh(\lambda_*)(t-s)}\big(e^{\lambda_*}z^+_{j-1}(s)+e^{-\lambda_*}z^+_{j+1}(s)\big)\md s ~~~~~\text{for}~t>0,~j\ge \zeta(t).
\end{equation*}
By applying the boundary condition, we derive
$$\sup_{j\ge \zeta(t)} z^+_{j-1}(t)\le \sup_{j\ge \zeta(t)}z^+_{j}(t)~~~\text{and}~~~\sup_{j\ge \zeta(t)} z^+_{j+1}(t)\le \sup_{j\ge \zeta(t)}z^+_{j}(t)~~~\text{for}~t>0,$$ 
 whence $$\sup_{j\ge \zeta(t)}\big(e^{\lambda_*}z^+_{j-1}(t)+e^{-\lambda_*}z^+_{j+1}(t)\big)
\le 2\cosh(\lambda_*)\sup_{j\ge \zeta(t)}z^+_j(t)~~~~~\text{for}~t>0.$$
Together with $z_j(0)\le 0$ for $j\ge \zeta(t)\ge \zeta(0)-1$ with $t>0$, we obtain that
\begin{equation*}
	z^+_j(t)\le 2\cosh(\lambda_*)\int_0^t e^{-2\cosh(\lambda_*)(t-s)}\sup_{j\ge \zeta(s)}z^+_j(s)\md s ~~~~~\text{for}~t>0,~j\ge \zeta(t).
\end{equation*}
Therefore, $\sup_{j\ge \zeta(t)}z^+_j(t)\le 2\cosh(\lambda_*)\int_0^t e^{-2\cosh(\lambda_*)(t-s)}\sup_{j\ge \zeta(s)}z^+_j(s)\md s$ for $t>0$. The Gronwall's inequality implies that $\sup_{j\ge \zeta(t)}z^+_j(t)=0$ for all $t>0$, that is, $z_j(t)\le 0$ for $t>0$ and $j\ge \zeta(t)$.
\end{proof}
By adaptation of above maximum principle, we have the following comparison principle.
\begin{prop}[Comparison principle with a single moving boundary]\label{cp}
	Assume that  $\overline v_j(t)$ $($resp. $\underline v_j(t)$$)$ satisfies the equation \eqref{v} with ``='' replaced by ``$\ge$'' $($resp. ``$\le$''$)$ for $t>0$ and $j\ge \zeta(t)$, where the function $\zeta:\R_+\to\R$ is  continuous such that $\zeta(t)\ge \zeta(0)-1$. Moreover, $\overline v_j(0)\ge \underline v_j(0)$ for $j\ge \zeta(0)-1$ and $\overline v_j(t)\ge\underline v_j(t)$ for $t>0$ and $j\in[\zeta(t)-1,\zeta(t))$. Then, $\overline v_j(t)\ge \underline v_j(t)$ for all $t>0$ and $j\ge \zeta(t)$.
\end{prop}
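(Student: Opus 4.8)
The statement to prove is the comparison principle, Proposition~\ref{cp}. The plan is to reduce it directly to the maximum principle, Proposition~\ref{mp}, which has already been established just above. The key observation is that the nonlinear term $\mathcal{R}_j(t;s)$ appearing in equation~\eqref{v} is not arbitrary: from its definition \eqref{term R-nonnegative}, $\mathcal{R}_j(t;s)=f'(0)s-e^{\lambda_*(j-c_*t)}f(e^{-\lambda_*(j-c_*t)}s)$, and since $f\in\mathscr{C}^2([0,1])$ with the linear extension outside $[0,1]$, the map $s\mapsto \mathcal{R}_j(t;s)$ is Lipschitz continuous, uniformly in $(t,j)$, with a constant controlled by $\|f'\|_\infty$. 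This is exactly what is needed to linearize the difference of the two (sub/super) solutions.

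First I would set $z_j(t):=\underline v_j(t)-\overline v_j(t)$. Subtracting the differential inequality satisfied by $\underline v$ from the one satisfied by $\overline v$, one obtains, for $t>0$ and $j\ge\zeta(t)$,
\begin{equation*}
\frac{\md}{\md t}z_j(t)-(\mathscr{L}z(t))_j \le -\big(\mathcal{R}_j(t;\underline v_j(t))-\mathcal{R}_j(t;\overline v_j(t))\big) = -c_j(t)\,z_j(t),
\end{equation*}
where $c_j(t):=\int_0^1 \partial_s\mathcal{R}_j(t;\overline v_j(t)+\tau z_j(t))\,\md\tau$ is a bounded function, $|c_j(t)|\le M_0$ for some universal $M_0>0$. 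Thus $z_j(t)$ satisfies a linear differential inequality $\frac{\md}{\md t}z_j(t)-(\mathscr{L}z(t))_j+c_j(t)z_j(t)\le 0$ together with the boundary/initial data $z_j(0)\le0$ for $j\ge\zeta(0)-1$ and $z_j(t)\le0$ for $j\in[\zeta(t)-1,\zeta(t))$, $t>0$.

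Second, I would run the same Duhamel/Gronwall argument as in the proof of Proposition~\ref{mp}, now with the extra zeroth-order term. Writing $z_j^+(t)=\max(0,z_j(t))$ and using $\frac{\md}{\md t}z_j(t)+\big(2\cosh(\lambda_*)+c_j(t)\big)z_j(t)\le e^{\lambda_*}z_{j-1}^+(t)+e^{-\lambda_*}z_{j+1}^+(t)$, one integrates to get, for $j\ge\zeta(t)$,
\begin{equation*}
z_j^+(t)\le \int_0^t e^{(2\cosh(\lambda_*)+M_0)(s-t)}\,e^{M_0(t-s)}\big(e^{\lambda_*}z_{j-1}^+(s)+e^{-\lambda_*}z_{j+1}^+(s)\big)\,\md s,
\end{equation*}
and then, exactly as before, $\sup_{j\ge\zeta(s)}(e^{\lambda_*}z_{j-1}^+(s)+e^{-\lambda_*}z_{j+1}^+(s))\le 2\cosh(\lambda_*)\sup_{j\ge\zeta(s)}z_j^+(s)$ thanks to the boundary condition on $[\zeta(s)-1,\zeta(s))$. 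This yields $\sup_{j\ge\zeta(t)}z_j^+(t)\le 2\cosh(\lambda_*)\int_0^t e^{2\cosh(\lambda_*)(s-t)}\sup_{j\ge\zeta(s)}z_j^+(s)\,\md s$ after absorbing the $M_0$ factors, and Gronwall's lemma forces $\sup_{j\ge\zeta(t)}z_j^+(t)\equiv0$, i.e. $\underline v_j(t)\le\overline v_j(t)$ for all $t>0$ and $j\ge\zeta(t)$.

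I do not expect a genuine obstacle here; the only point requiring a little care is the handling of the zeroth-order term $c_j(t)z_j(t)$ in the Gronwall step — the exponential weight $e^{-2\cosh(\lambda_*)t}$ used in Proposition~\ref{mp} must be replaced by $e^{-(2\cosh(\lambda_*)+\|c\|_\infty)t}$ (or one simply notes that adding a bounded term to the diagonal does not affect the conclusion of a Gronwall argument). One should also note that truncation terms such as the indicator functions in the definitions of $\overline v_j$ and $\underline v_j$, or the cut-off $\widetilde w_j$, do not interfere: they were already accounted for when verifying the (super/sub)solution inequalities region by region in Section~\ref{sec-preliminary}, so at the level of Proposition~\ref{cp} we only use the abstract differential inequalities as stated.
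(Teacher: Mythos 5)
Your proof is correct and follows exactly the route the paper intends: the paper gives no separate argument for Proposition~\ref{cp} beyond the remark that it is an ``adaptation of the above maximum principle,'' and your linearization of $\mathcal{R}_j(t;\cdot)$ via its uniform Lipschitz bound, followed by the same Duhamel/Gronwall scheme with a bounded zeroth-order perturbation, is precisely that adaptation. The only cosmetic issue is the exact exponent in your Duhamel kernel (it should read $e^{-(2\cosh(\lambda_*)-M_0)(t-s)}$ rather than $e^{-2\cosh(\lambda_*)(t-s)}$), but as you yourself note this has no bearing on the Gronwall conclusion.
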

We now turn our attention to proving a maximum principle with two moving boundaries.
\begin{prop}[Maximum principle with two moving boundaries] \label{mp2}
	Assume that $\mathbf{z}(t)=(z_j(t))$, defined for $t>0$ and $j\in[\zeta(t)-1,\xi(t)+1]$, satisfies 
	\begin{equation*}
		\begin{aligned}
			\begin{cases}
				\frac{\md}{\md t} z_j(t)-(\mathscr{L}\mathbf{z}(t))_j\le 0,&t>0,~\zeta(t) \leq j \leq \xi(t),\\
				z_j(0)\le 0, &\zeta(0)-1\leq j \leq \xi(0)+1,\\
				z_j(t)\le 0, &t>0,~j\in[\zeta(t)-1,\zeta(t))\cup (\xi(t),\xi(t)+1].
			\end{cases}
		\end{aligned}
	\end{equation*}
	with  $\zeta: \R_+\to\R$ and $\xi: \R_+\to\R$ being two continuous functions. Then  $z_j(t)\le 0$ for $t>0$ and $\zeta(t) \leq j \leq \xi(t)$. 
\end{prop}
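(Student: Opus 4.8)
The plan is to mimic the proof of Proposition~\ref{mp} almost verbatim, with the only change being that we keep both boundary contributions under control. First I would rewrite the differential inequality. Using $c_*=e^{\lambda_*}-e^{-\lambda_*}$, for $t>0$ and $\zeta(t)\le j\le \xi(t)$ we have
\bqs
\frac{\md}{\md t} z_j(t)-\left(\mathscr{L}\mathbf{z}(t)\right)_j=\frac{\md}{\md t} z_j(t)-e^{\lambda_*}z_{j-1}(t)+2\cosh(\lambda_*)z_j(t)-e^{-\lambda_*}z_{j+1}(t)\le0,
\eqs
so that, bounding $z_{j\pm1}(t)\le z_{j\pm1}^+(t)$,
\bqs
\frac{\md}{\md t} z_j(t)+2\cosh(\lambda_*)z_j(t)\le e^{\lambda_*}z_{j-1}^+(t)+e^{-\lambda_*}z_{j+1}^+(t), \quad t>0,\quad \zeta(t)\le j\le \xi(t).
\eqs
Integrating this linear differential inequality and using $z_j(0)\le0$ gives, for $\zeta(t)\le j\le\xi(t)$,
\bqs
z_j(t)\le \int_0^t e^{-2\cosh(\lambda_*)(t-s)}\left(e^{\lambda_*}z_{j-1}^+(s)+e^{-\lambda_*}z_{j+1}^+(s)\right)\md s.
\eqs

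The key point, and the only place where having \emph{two} moving boundaries rather than one matters, is the following observation: for each fixed $s>0$ and each $j$ with $\zeta(s)\le j\le\xi(s)$, the indices $j-1$ and $j+1$ either still lie in $[\zeta(s),\xi(s)]$, or lie in the boundary layers $[\zeta(s)-1,\zeta(s))\cup(\xi(s),\xi(s)+1]$ where $\mathbf{z}(s)\le0$ by hypothesis, so in all cases $z_{j-1}^+(s)\le \sup_{\zeta(s)\le k\le\xi(s)} z_k^+(s)$ and likewise for $z_{j+1}^+(s)$. Consequently
\bqs
\sup_{\zeta(s)\le j\le\xi(s)}\left(e^{\lambda_*}z_{j-1}^+(s)+e^{-\lambda_*}z_{j+1}^+(s)\right)\le 2\cosh(\lambda_*)\sup_{\zeta(s)\le j\le\xi(s)}z_j^+(s),
\eqs
and taking the supremum over $j$ in the integral inequality yields
\bqs
\sup_{\zeta(t)\le j\le\xi(t)}z_j^+(t)\le 2\cosh(\lambda_*)\int_0^t e^{-2\cosh(\lambda_*)(t-s)}\sup_{\zeta(s)\le j\le\xi(s)}z_j^+(s)\,\md s, \quad t>0.
\eqs
Setting $m(t):=\sup_{\zeta(t)\le j\le\xi(t)}z_j^+(t)\ge0$, this reads $m(t)\le 2\cosh(\lambda_*)\int_0^t e^{-2\cosh(\lambda_*)(t-s)}m(s)\,\md s$; Gronwall's inequality (after multiplying by $e^{2\cosh(\lambda_*)t}$) forces $m\equiv0$ on $(0,+\infty)$, i.e. $z_j(t)\le0$ for all $t>0$ and $\zeta(t)\le j\le\xi(t)$, which is the claim.

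I do not expect any genuine obstacle here: the proof is a routine copy of Proposition~\ref{mp}. The one thing to be mildly careful about is the bookkeeping of which neighboring indices $j\pm1$ fall into the interior versus the boundary strips — but since the strips $[\zeta(t)-1,\zeta(t))$ and $(\xi(t),\xi(t)+1]$ each have width~$1$ and the lattice step is~$1$, every index adjacent to an interior index is either interior or in a strip, so the domination of the right-hand side by $2\cosh(\lambda_*)\,m(s)$ goes through exactly as in the one-boundary case. A minor implicit regularity assumption (that $t\mapsto m(t)$ is, say, measurable/bounded so that Gronwall applies) is harmless given the standing smoothness of the sequences involved.
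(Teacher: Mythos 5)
Your proposal follows the Gronwall scheme of Proposition~\ref{mp}, whereas the paper proves the two-boundary version by a different route: a contradiction argument at a space-time maximum of $z$ over the compact set $[0,t_0]\times\{j\in[\zeta(t),\xi(t)]\}$, propagating a putative positive maximum to neighbouring sites until it reaches one of the width-one boundary strips. Your key combinatorial observation --- that for $\zeta(s)\le j\le\xi(s)$ the neighbours $j\pm1$ are either again interior or lie in $[\zeta(s)-1,\zeta(s))\cup(\xi(s),\xi(s)+1]$ where $z\le0$, so that $e^{\lambda_*}z^+_{j-1}(s)+e^{-\lambda_*}z^+_{j+1}(s)\le 2\cosh(\lambda_*)\sup_{\zeta(s)\le k\le\xi(s)}z_k^+(s)$ --- is correct and is indeed the only new ingredient on the spatial side.

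There is, however, a genuine gap in the time-integration step, and it is precisely the point where the two-boundary setting differs from Proposition~\ref{mp}. There, $\mathbf{z}$ is defined on all of $\Z$ and the hypothesis $\zeta(t)\ge\zeta(0)-1$ is imposed; here $z_j(s)$ is only defined for $j\in[\zeta(s)-1,\xi(s)+1]$, and $\zeta,\xi$ are merely continuous with no monotonicity (in the application the window $[\zeta(t),\xi(t)]$ travels to the right, so a site interior at time $t$ was earlier to the \emph{right} of $\xi$, outside the domain). Hence, for a fixed $j$ with $\zeta(t)\le j\le\xi(t)$, the differential inequality need not hold for all $s\in(0,t)$, $z_j(s)$ need not even be defined there, and $z_j(0)\le 0$ is not available unless $j\in[\zeta(0)-1,\xi(0)+1]$; you therefore cannot integrate from $0$ to $t$ as written. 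The repair is to integrate from $\tau:=\inf\{\sigma\in[0,t]~|~\zeta(s)\le j\le\xi(s)\ \forall s\in[\sigma,t]\}$ instead: if $\tau=0$ then $j\in[\zeta(0),\xi(0)]$ by continuity and $z_j(0)\le0$; if $\tau>0$ then $j=\zeta(\tau)$ or $j=\xi(\tau)$, so for $s<\tau$ close to $\tau$ the site $j$ sits in one of the width-one strips, whence $z_j(s)\le0$ and, by continuity, $z_j(\tau)\le0$. Since the integrand is nonnegative, $\int_\tau^t\le\int_0^t$ and your Gronwall inequality for $m(t)$ survives. With this patch the argument closes; note that the paper's maximum-point proof avoids the issue altogether because it invokes the differential inequality at a single time only and needs no integration in $t$.
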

\begin{proof}
We assume by contradiction that there exist some $t_0>0$ and $j_0\in[\zeta(t_0),\xi(t_0)]$ such that $z_{j_0}(t_0)>0$. We let $\Omega_{t_0}=[0,t_0] \times \left\{j\in\Z~|~ j\in[\zeta(t),\xi(t)]\right\}$. Then,  we get the existence of $(t_*,j_*)\in\Omega_{t_0}$ such that
\bqs
z_{j_*}(t_*)=\underset{(t,j)\in\Omega_{t_0}}{\max} z_j(t)>0.
\eqs
There are two cases. Let us first assume that $0<t_*<t_0$, whence $\frac{\md}{\md t} z_{j_*}(t_*)=0$ and the inequation gives
\bqs
-e^{-\lambda_*}\left(z_{j_*+1}(t_*)-z_{j_*}(t_*)\right)-e^{\lambda_*}\left(z_{j_*-1}(t_*)-z_{j_*}(t_*)\right)\le 0.
\eqs
This implies that $z_{j_*}(t_*)=z_{j_*+1}(t_*)=z_{j_*-1}(t_*)=\underset{(t,j)\in\Omega_{t_0}}{\max} z_j(t)>0$ together with $\frac{\md}{\md t} z_{j_*}(t_*)=\frac{\md}{\md t} z_{j_*-1}(t_*)=\frac{\md}{\md t} z_{j_*+1}(t_*)=0$. By induction, if $j_\star$ denotes the leftmost integer in $[\zeta(t_*),\xi(t_*)]$, then we get that $z_{j_\star}(t_*)=z_{j_\star+1}(t_*)>0$ and $\frac{\md}{\md t} z_{j_\star}(t_*)=0$, using once again the inequation, we get that necessarily $0<z_{j_\star-1}(t_*)\leq0$ since $z_{j_\star-1}(t_*)\in[\zeta(t_*)-1,\zeta(t_*))$, which is impossible. If now $t_*=t_0$, we only have that $\frac{\md}{\md t} z_{j_*}(t_*)\geq 0$, but then 
\bqs
0\leq \frac{\md}{\md t} z_{j_*}(t_*) -e^{-\lambda_*}\left(z_{j_*+1}(t_*)-z_{j_*}(t_*)\right)-e^{\lambda_*}\left(z_{j_*-1}(t_*)-z_{j_*}(t_*)\right)\le 0,
\eqs
from which we obtain $\frac{\md}{\md t} z_{j_*}(t_*)=0$ with $z_{j_*}(t_*)=z_{j_*+1}(t_*)=z_{j_*-1}(t_*)=\underset{(t,j)\in\Omega_{t_0}}{\max} z_j(t)>0$. And we can repeat the previous arguments to reach a contradiction.
\end{proof}
By adaptation of above maximum principle, we have the following comparison principle.
\begin{prop}[Comparison principle with two moving boundaries]\label{cp2}
	Assume that  $\overline v_j(t)$ $($resp. $\underline v_j(t)$$)$ satisfies the equation \eqref{v} with ``='' replaced by ``$\ge$'' $($resp. ``$\le$''$)$ for $t>0$ and $\zeta(t) \leq j \leq \xi(t)$, where the functions $\zeta:\R_+\to\R$ and $\xi: \R_+\to\R$ are continuous. Moreover, $\overline v_j(0)\ge \underline v_j(0)$ for $\zeta(0)-1\leq j \leq \xi(0)+1$ and $\overline v_j(t)\ge\underline v_j(t)$ for $t>0$ and $j\in[\zeta(t)-1,\zeta(t))\cup (\xi(t),\xi(t)+1]$. Then, $\overline v_j(t)\ge \underline v_j(t)$ for all $t>0$ and $\zeta(t) \leq j \leq \xi(t)$.
\end{prop}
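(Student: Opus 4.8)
The plan is to deduce Proposition~\ref{cp2} from the two–moving–boundary maximum principle, Proposition~\ref{mp2}, after a preliminary linearization of the reaction term. Set $z_j(t):=\overline v_j(t)-\underline v_j(t)$, defined for $t>0$ and $\zeta(t)-1\leq j\leq \xi(t)+1$. Subtracting the differential inequalities satisfied by $\overline v_j$ and $\underline v_j$ gives, for $t>0$ and $\zeta(t)\leq j\leq \xi(t)$,
\[
\frac{\md}{\md t}z_j(t)-\left(\mathscr{L}z(t)\right)_j\geq -\left(\mathcal{R}_j(t;\overline v_j(t))-\mathcal{R}_j(t;\underline v_j(t))\right).
\]
Recalling \eqref{term R-nonnegative}, i.e. $\mathcal{R}_j(t;s)=f'(0)s-e^{\lambda_*(j-c_*t)}f\!\left(e^{-\lambda_*(j-c_*t)}s\right)$, and using that $f$ has been extended linearly outside $[0,1]$ so that $f\in\mathscr{C}^1(\R)$ with $\|f'\|_{L^\infty(\R)}<+\infty$, the mean value theorem applied to $s\mapsto e^{\lambda_*(j-c_*t)}f(e^{-\lambda_*(j-c_*t)}s)$ yields $\mathcal{R}_j(t;\overline v_j(t))-\mathcal{R}_j(t;\underline v_j(t))=b_j(t)\,z_j(t)$ with $\sup_{j,t}|b_j(t)|\leq M:=2\|f'\|_{L^\infty(\R)}$. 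Hence $z_j(t)$ satisfies
\[
\frac{\md}{\md t}z_j(t)-\left(\mathscr{L}z(t)\right)_j+b_j(t)z_j(t)\geq 0,\qquad t>0,\ \zeta(t)\leq j\leq \xi(t),
\]
together with $z_j(0)\geq 0$ for $\zeta(0)-1\leq j\leq \xi(0)+1$ and $z_j(t)\geq 0$ for $t>0$, $j\in[\zeta(t)-1,\zeta(t))\cup(\xi(t),\xi(t)+1]$, and it remains to prove $z_j(t)\geq 0$ on the interior range.

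To absorb the sign‑indefinite zeroth order coefficient $b_j(t)$, I would pass to the weighted sequence $\widetilde z_j(t):=e^{-\lambda t}z_j(t)$ with a fixed $\lambda>M$. A direct computation shows that $\widetilde z_j$ still obeys all the boundary and initial inequalities, and that on the interior range
\[
\frac{\md}{\md t}\widetilde z_j(t)-\left(\mathscr{L}\widetilde z(t)\right)_j+\bigl(b_j(t)+\lambda\bigr)\widetilde z_j(t)\geq 0,
\]
where now $b_j(t)+\lambda\geq \lambda-M>0$. It then suffices to re-run, essentially verbatim, the contradiction argument in the proof of Proposition~\ref{mp2}, the only difference being the presence of the strictly positive zeroth order term, which only helps. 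Indeed, if $\widetilde z_{j_0}(t_0)<0$ for some $t_0>0$ and $j_0\in[\zeta(t_0),\xi(t_0)]$, consider the compact set $\Omega_{t_0}=[0,t_0]\times\{j\in\Z~|~\zeta(t)\leq j\leq \xi(t)\}$ and pick $(t_*,j_*)$ where $\widetilde z$ attains its minimum over $\Omega_{t_0}$; the boundary and initial hypotheses force $t_*>0$ and $\zeta(t_*)\leq j_*\leq \xi(t_*)$. At such a point $\frac{\md}{\md t}\widetilde z_{j_*}(t_*)\leq 0$ (with equality if $t_*<t_0$), $\left(\mathscr{L}\widetilde z\right)_{j_*}(t_*)=e^{\lambda_*}(\widetilde z_{j_*-1}-\widetilde z_{j_*})+e^{-\lambda_*}(\widetilde z_{j_*+1}-\widetilde z_{j_*})\geq 0$ because $\widetilde z_{j_*}(t_*)$ is the minimum (the neighboring values being either interior values $\geq\widetilde z_{j_*}(t_*)$ or moving–boundary values $\geq 0>\widetilde z_{j_*}(t_*)$), and $\bigl(b_{j_*}(t_*)+\lambda\bigr)\widetilde z_{j_*}(t_*)<0$; adding the three contributions contradicts the displayed inequality. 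Exactly as in Proposition~\ref{mp2}, the degenerate situation in which some neighbor $\widetilde z_{j_*\pm1}(t_*)$ also equals the minimum is treated by propagating the equality sideways until the nearest of the two moving interfaces is reached, where $\widetilde z\geq 0$ gives the contradiction at once. Undoing the weight, $z_j(t)\geq 0$, i.e. $\overline v_j(t)\geq\underline v_j(t)$, for all $t>0$ and $\zeta(t)\leq j\leq \xi(t)$.

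The main obstacle here is bookkeeping rather than analytic depth: one must linearize $\mathcal{R}$ with a coefficient bounded \emph{uniformly} in $(t,j)$ — which is precisely what the global linear extension of $f$ provides, since $\partial_s\,e^{\lambda_*(j-c_*t)}f(e^{-\lambda_*(j-c_*t)}s)=f'(e^{-\lambda_*(j-c_*t)}s)$ is controlled by $\|f'\|_{L^\infty(\R)}$ — choose the exponential weight large enough to make the zeroth order coefficient strictly positive, and carefully track the behavior of $\widetilde z$ at the two moving interfaces $j\in[\zeta(t)-1,\zeta(t))$ and $j\in(\xi(t),\xi(t)+1]$, including the sideways‑propagation step, as in the proof of Proposition~\ref{mp2}. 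No estimate on the Green's function or on the traveling fronts is needed. (Alternatively, one could mimic the Duhamel–Gronwall proof of Proposition~\ref{mp} applied to $z_j^-$, which is equally short given that the index range $\zeta(t)\leq j\leq \xi(t)$ is finite for each $t$.)
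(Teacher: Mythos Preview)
Your proposal is correct and is precisely the ``adaptation'' the paper alludes to: the paper does not give a detailed proof of Proposition~\ref{cp2}, merely stating that it follows \emph{by adaptation of the above maximum principle} (Proposition~\ref{mp2}). You supply exactly the missing steps --- linearizing $\mathcal{R}_j$ via the mean value theorem (valid since the linear extension makes $f\in\mathscr{C}^1(\R)$ with $\|f'\|_{L^\infty}<\infty$), absorbing the resulting bounded zeroth-order coefficient with an exponential-in-time weight, and then running the minimum-point contradiction argument of Proposition~\ref{mp2}.

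One small simplification: once you have the strictly positive coefficient $b_j(t)+\lambda\geq\lambda-M>0$, the sideways-propagation step you invoke from Proposition~\ref{mp2} is actually unnecessary. At the negative minimum $(t_*,j_*)$ you already obtain
\[
\underbrace{\tfrac{\md}{\md t}\widetilde z_{j_*}(t_*)}_{\leq 0}\;-\;\underbrace{(\mathscr{L}\widetilde z)_{j_*}(t_*)}_{\geq 0}\;+\;\underbrace{(b_{j_*}(t_*)+\lambda)\widetilde z_{j_*}(t_*)}_{<0}\;<\;0,
\]
which directly contradicts the inequality $\geq 0$; the propagation device in Proposition~\ref{mp2} was needed only because there the zeroth-order term is absent, so the three contributions merely sum to $\leq 0$. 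Your alternative Gronwall route (mirroring Proposition~\ref{mp}) would work equally well.
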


\end{document}